\documentclass{amsart}
\usepackage{amsmath,amssymb,amsxtra,graphicx,tikz,tikz-cd,anysize,enumerate,hyperref} 
\usepackage{amsmath}
\usepackage{tikz}
\usepackage{tikz-cd}
\usepackage{verbatim}
\usetikzlibrary{arrows.meta,decorations.pathreplacing}
\usepackage{tikz}
\usetikzlibrary{arrows.meta}
\usepackage{subcaption} 

\usepackage{array}
\usetikzlibrary{calc}%
\usetikzlibrary{shapes}%
\usetikzlibrary{patterns}%
\usetikzlibrary{positioning}%
\usetikzlibrary{arrows.meta}
\usetikzlibrary{knots}
\usetikzlibrary{decorations.markings}
\DeclareMathOperator{\Conv}{Conv}

\newcommand{\Z}{\mathbb{Z}}
\newcommand{\C}{\mathbb{C}}
\newcommand{\R}{\mathbb{R}}

\newcommand{\Fl}{\mathrm{Fl}}
\newcommand{\Sl}{\mathrm{SL}}
\newcommand{\Gr}{\mathrm{Gr}}
\newcommand{\Spec}{\mathrm{Spec}}
\newcommand{\Mat}{\mathrm{Mat}}
\newcommand{\MaRi}{\mathrm{MR}}

\newcommand{\PP}{\mathbb{P}}
\newcommand{\AAA}{\mathbb{A}}
\newcommand{\sgn}{\mathrm{sgn}}
\newcommand{\TT}{\mathbb{T}}

\newcommand{\sR}{{\tt R}}
\newcommand{\sm}{{\tt m}}

\newcommand{\CF}{\mathcal{F}}
\newcommand{\CM}{\mathcal{M}}

\newcommand{\A}{\mathrm{A}}
\newcommand{\B}{\mathrm{B}}
\newcommand{\CC}{\mathrm{C}}

\def\bw{\mathbf{w}}
\DeclareMathOperator{\wt}{wt}

\numberwithin{equation}{section}

\newtheorem{theorem}{Theorem}[section]
\newtheorem{proposition}[theorem]{Proposition}
\newtheorem{corollary}[theorem]{Corollary}
\newtheorem{lemma}[theorem]{Lemma}
\newtheorem{question}[theorem]{Question}
\theoremstyle{definition}
\newtheorem{remark}[theorem]{Remark}

\newtheorem{definition}[theorem]{Definition}
\newtheorem{example}[theorem]{Example}

\DeclareMathOperator{\Int}{Int}

\newcommand{\SK}[1]{{\color{orange}{[Soyeon : #1]}}}

\newcommand{\EG}[1]{{\color{red}{[EG: #1]}}}

\title{Unexpected toric Richardson varieties}
\date{}

\author{Eugene Gorsky}
\address{Department of Mathematics, University of California Davis\\ One Shield Avenue, Davis CA 95616 USA}
\email{egorskiy@ucdavis.edu}
\author{Soyeon Kim}
\address{Department of Mathematics, University of California Davis\\ One Shield Avenue, Davis CA 95616 USA}
\email{syxkim@ucdavis.edu}
\author{Melissa Sherman-Bennett}
\address{Department of Mathematics, University of California Davis\\ One Shield Avenue, Davis CA 95616 USA}
\email{mshermanbennett@ucdavis.edu}

\begin{document}

\begin{abstract}
We prove that an open Richardson variety in the complete flag variety for $\mathrm{GL}_n$ is isomorphic to a torus if and only if the corresponding closed Richardson variety is toric. 
Such toric varieties can be classified in terms of the combinatorics of Bruhat intervals, and include many varieties of dimension larger than $n-1$. 
We give a combinatorial description of the corresponding polytopes, and compute several explicit examples.
\end{abstract}

\maketitle



\section{Introduction}

Richardson varieties arise in various contexts in algebraic geometry and combinatorics, including Schubert calculus, total positivity, and cluster algebras. Given two permutations $v,w\in S_n$ such that $v\le w$ in Bruhat order, the \emph{open Richardson variety} $R_{v,w}$ is the intersection of the opposite Schubert cell for $v$ and the Schubert cell for $w$. Open Richardson varieties are smooth, affine, irreducible varieties, and $\dim R_{v,w} = \ell(w)-\ell(v)$. The \emph{closed Richardson variety} $\overline{R}_{v,w}$ is the Zariski closure of $R_{v,w}$ in the complete flag variety $\Fl(n)$.  This paper is concerned with the closed Richardson varieties which are toric varieties. Recall that $\overline{R}_{v,w}$ is $\TT$-toric for some torus $\TT$ if there is a $\TT$-action on $\overline{R}_{v,w}$ with an open orbit. We say $\overline{R}_{v,w}$ is toric if it is $\TT$-toric for some $\TT$.

The $(n-1)$-dimensional torus $T\subset \Sl(n)$ of diagonal matrices with determinant 1 acts on the flag variety and on all Richardson varieties. 
It is well-known (see \cite{A19,Can,Escobar,GH, LMP,TW15} and references therein) that $\overline{R}_{v,w}$ is $T$-toric if and only if the roots associated to any maximal chain from $v$ to $w$ are linearly independent. By construction, all $T$-toric Richardson varieties have dimension at most $n-1$. 

It is also possible for a Richardson variety $\overline{R}_{v,w}$ to be toric with respect to some other torus $\TT$. 
This situation has been much less studied. As shown in \cite[Corollary 3.3]{Can}, if $\overline{R}_{v,w}$ is $\TT$-toric, then the interval $[v,w]$ is a lattice, or, equivalently, has no subintervals which are 2-crowns (a 2-crown is the Bruhat graph of $S_3$ shown below).
\[\begin{tikzcd}
	& \bullet \\
	\bullet && \bullet \\
	\bullet && \bullet \\
	& \bullet
	\arrow[no head, from=1-2, to=2-3]
	\arrow[no head, from=2-1, to=1-2]
	\arrow[no head, from=2-1, to=3-1]
	\arrow[no head, from=2-1, to=3-3]
	\arrow[no head, from=2-3, to=3-1]
	\arrow[no head, from=2-3, to=3-3]
	\arrow[no head, from=3-1, to=4-2]
	\arrow[no head, from=4-2, to=3-3]
\end{tikzcd}\]

The first main result of this paper is that, if the open Richardson variety $R_{v,w}$ is a torus $\TT$, then $\overline{R}_{v,w}$ is a toric variety.

\begin{theorem}
\label{thm: main intro}
Suppose the open Richardson variety $R_{v,w}$ is isomorphic to a torus $\TT$. Then the action of $\TT$ on $R_{v,w}$ extends to the closed Richardson variety $\overline{R}_{v,w}$ and $\overline{R}_{v,w}$ is a toric variety with respect to $\TT$.
\end{theorem}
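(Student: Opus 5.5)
The plan is to exploit the stratification of $\overline{R}_{v,w}$ by smaller open Richardson varieties, $\overline{R}_{v,w}=\bigsqcup_{v\le v'\le w'\le w} R_{v',w'}$, and to show that when $R_{v,w}\cong\TT$ every stratum is again a torus sitting inside $\overline{R}_{v,w}$ the way torus orbits sit inside a toric variety.

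\textbf{Step 1: the torus structure is intrinsic.} If $R_{v,w}\cong (\C^*)^d$ with $d=\ell(w)-\ell(v)$, then its ring of regular functions is a Laurent polynomial ring. The units modulo constants form a free abelian group of rank $d$, and they are generated by the ratios of the (twisted) Plücker-type coordinates that are nonvanishing on $R_{v,w}$; up to scalars, the monomials in these ratios are exactly the characters of $\TT$. So I would fix a coordinate system on $R_{v,w}$ by choosing Plücker functions $\Delta_I$ that are nonvanishing on $R_{v,w}$ (for example, from a Deodhar or cluster seed). In these coordinates the $\TT$-action is diagonal.

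\textbf{Step 2: extend the action.} Embed $\overline{R}_{v,w}\subset \Fl(n)\subset\prod_k \PP(\bigwedge^k\C^n)$. On $R_{v,w}$, each Plücker coordinate $\Delta_I$ restricts to a regular function, which is a Laurent polynomial in the torus coordinates. The key claim is that on $R_{v,w}$, for each $k$, every nonzero $\Delta_I$ with $|I|=k$ is a \emph{monomial}, that is, a scalar multiple of a character $\chi_I$. Granting this, $\TT$ acts on each $\PP(\bigwedge^k\C^n)$ by $t\cdot[x_I]=[\chi_I(t)x_I]$, which restricts to the given action on $R_{v,w}$. Then $\overline{R}_{v,w}$, being the closure of a $\TT$-orbit, is $\TT$-stable and is the closure of a torus orbit in a product of projective spaces with diagonal linear action. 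It is therefore a (possibly non-normal) projective toric variety, and it has the open orbit $R_{v,w}$.

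\textbf{Step 3, the main obstacle: monomiality of Plücker coordinates.} I would argue this as follows. Since $\C[R_{v,w}]^\times/\C^*\cong\Z^d$, every unit is a monomial. So it suffices to show that each Plücker coordinate $\Delta_I$ that is not identically zero on $R_{v,w}$ is in fact nowhere vanishing there. I would prove this by contradiction, assuming $\Delta_I$ vanishes somewhere on $R_{v,w}$. The vanishing locus of $\Delta_I$ in $R_{v,w}$ is then a nonempty divisor. In a torus, a nonempty proper hypersurface $\{f=0\}$ forces $f$ to be a non-monomial Laurent polynomial. To rule this out, I would use Deodhar or Marsh--Rietsch parametrizations: in those coordinates, $R_{v,w}$ being a torus should force the Deodhar decomposition to have a single cell with no $\C$-factors. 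In that case every minor is computed by a subtraction-free sum over nonintersecting path families in a network with a unique family per minor, which gives a monomial. Justifying this uniqueness is the delicate point. A fallback is to compare point counts: $\#R_{v,w}(\mathbb{F}_q)=(q-1)^d$ should equal the $R$-polynomial $R_{v,w}(q)$, and that equality is used to force the Deodhar cell structure. Either way, once every nonzero Plücker coordinate is a character, Step 2 completes the proof.
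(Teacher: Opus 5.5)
Your Steps 1--2 match the paper's proof of Theorem~\ref{thm: toric}: once every flag minor that is not identically zero restricts to a scalar multiple of a character on $R_{v,w}$, the map $R_{v,w}\to\prod_k\PP^{\binom nk-1}$ is monomial, and its closure is a $\TT$-toric variety. The gap is Step~3, which carries all the content and is left unproved.

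The path-network route gives only what Lemma~\ref{lem: minors from paths} gives: $\Delta_I(g)$ is a sum of \emph{distinct} monic monomials in the MR parameters, one for each NI path collection. Such a subtraction-free sum can vanish on the torus, as $1+t$ does. Knowing that the Deodhar torus is all of $R_{v,w}$ says nothing about zeros of $\Delta_I$ inside that torus. That is all your point-count fallback yields, and it already follows from Lemma~\ref{lem: open torus}. In the paper the implication runs the other way: uniqueness of the NI path collection (Lemma~\ref{lem:minors in MR-param}) is \emph{deduced from} monomiality. Nonvanishing is automatic only for $I=v[k]$ and $I=w[k]$, which are units on $X_v$ and $X^w$ respectively. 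The real issue is $I=u[k]$ for intermediate $u\in[v,w]$, and nothing in your Step~3 addresses such $u$.

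The missing idea is Lemma~\ref{lem: splitting}. For $u\in[v,w]$, the splicing map $\Psi_{v,u,w}$ of \cite{GKSS2} identifies $R_{v,u}\times R_{u,w}$ with the principal open subset $U(v,u,w)=\{\Delta_{u[1]}\cdots\Delta_{u[n-1]}\neq 0\}$ of $R_{v,w}$. Being a torus is inherited by subintervals (Corollary~\ref{cor: subintervals tori}). This rests on Lemma~\ref{lem: 2 crown}: $R_{v,w}$ is a torus iff $[v,w]$ has no 2-crowns, proved via Brenti's criterion on the $q^{d-1}$ coefficient of $\sR_{v,w}$ and the count of mutable cluster variables as codimension-one Deodhar cells.

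Hence $U(v,u,w)$ is a $d$-dimensional torus open in the torus $R_{v,w}$, so it equals $R_{v,w}$ by Lemma~\ref{lem: open torus}. Thus every $\Delta_{u[k]}$ is a unit, hence a monomial. Combined with Theorem~\ref{thm: flag minor richardson}, which says the minors not identically zero are exactly the $\Delta_{u[k]}$ with $u\in[v,w]$, this is precisely your Step~3.

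A point-count version of your fallback might be made to work. One would compare $\#U(v,u,w)(\mathbb F_q)=\sR_{v,u}(q)\sR_{u,w}(q)\ge (q-1)^d$ with $\#R_{v,w}(\mathbb F_q)=(q-1)^d$. But it too needs the product decomposition of $U(v,u,w)$, which your proposal lacks.
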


We then show that this is in fact a characterization of toric Richardsons, and also provide various equivalent combinatorial characterizations.

\begin{theorem}
\label{thm: intro classification}
The following conditions are equivalent:
\begin{enumerate}[(a)]
\item The open Richardson variety $R_{v,w}$ is a torus.
\item The closed Richardson variety $\overline{R}_{v,w}$ is a toric variety. 
\item The Bruhat interval $[v,w]$ does not contain a subinterval isomorphic to $S_3$.
\item The Bruhat interval $[v,w]$ is a lattice.
\item The interval poset $\{[v', w']: [v', w'] \subset [v,w]\} \cup \{\emptyset\}$, ordered by containment, is a lattice.
\end{enumerate}
\end{theorem}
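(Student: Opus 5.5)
The equivalences are proved as a cycle of implications, with a few extra links. (a)$\Rightarrow$(b) is exactly Theorem~\ref{thm: main intro}, taken as given here. (b)$\Rightarrow$(d) is \cite[Corollary 3.3]{Can}: if $\overline{R}_{v,w}$ is $\TT$-toric then $[v,w]$ is a lattice. The equivalence (c)$\Leftrightarrow$(d) is the one stated in the introduction: $[v,w]$ is a lattice exactly when it has no subinterval that is a 2-crown, i.e.\ no subinterval isomorphic to $S_3$. For this I would use two facts. First, every length-3 Bruhat interval is a $k$-crown with $k\in\{2,3,4\}$. Second, Bruhat intervals are shellable Eulerian posets, and in such a poset a failure of the lattice property forces two elements with two distinct minimal upper bounds. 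Restricting to a suitably chosen small subinterval should then produce a 2-crown; conversely, a 2-crown visibly violates the lattice property.

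For (d)$\Leftrightarrow$(e), note that a subinterval $[v',w']$ is determined by the pair $(v',w')$. Containment $[v',w']\subseteq[v'',w'']$ means $v''\le v'$ and $w'\le w''$. So the interval poset is the set $\{(v',w'): v'\le w'\}$ inside $[v,w]^{\mathrm{op}}\times[v,w]$, with a bottom element $\emptyset$ added. If $[v,w]$ is a lattice, the meet of two intervals is $[v_1\vee v_2,\,w_1\wedge w_2]$ when this is nonempty and $\emptyset$ otherwise. The join is $[v_1\wedge v_2,\,w_1\vee w_2]$, which is always nonempty. Hence (e) holds. Conversely, joins of singletons $[x,x]$ in the interval poset recover meets and joins in $[v,w]$. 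Specifically, the join of $[x,x]$ and $[y,y]$ is $[x\wedge y, x\vee y]$, so the existence of that join forces $x\wedge y$ and $x\vee y$ to exist.

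The remaining, and hardest, implication is (c) or (d) $\Rightarrow$ (a): when $[v,w]$ has no $S_3$ subinterval, $R_{v,w}$ is a torus. I would argue by induction on $\ell(w)-\ell(v)$, using the Deodhar decomposition, or equivalently the recursion $R_{v,w}\cong R_{sv,sw}\times\C^*$ or $R_{v,w}\cong R_{v,sw}\sqcup\ldots$ for a simple reflection $s$ with $sw<w$. If $sv>v$, then $R_{v,w}\cong R_{v,sw}\times\C$ or a similar fibration, and this can be ruled out because it would create a nontrivial affine factor. If $sv<v$, the Deodhar-type recursion gives
\[
R_{v,w}\cong (R_{sv,sw}\times\C^*)\ \sqcup\ R_{v,sw}.
\]
The key claim is that the lattice condition forces the second piece to be empty, i.e.\ $v\not\le sw$. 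If instead $v\le sw$, the elements $v, sv, sw, w$ together with the lifting property should produce a 2-crown inside $[v,w]$. Then $R_{v,w}\cong R_{sv,sw}\times\C^*$. Since $[sv,sw]$ embeds in $[v,w]$ via $x\mapsto sx$, in a way that should preserve the absence of $S_3$ subintervals, induction finishes the proof.

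I expect two points to be the main obstacle: producing the 2-crown when $v\le sw$, and choosing $s$ so that the degenerate case $sv>v$ never occurs. As a first attempt for the second, I would pick $s$ among the left descents of $w$ and use that if $v$ shares no left descent with $w$, then a length-2 chain analysis gives an $S_3$ subinterval. If these purely combinatorial arguments fail, I would instead use point counts: $R_{v,w}$ is a torus iff its $R$-polynomial equals $(q-1)^{\ell(w)-\ell(v)}$. I would then verify this from the lattice condition via the Kazhdan–Lusztig recursion for $R$-polynomials.
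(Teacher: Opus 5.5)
\paragraph{The gap: (c)$\Rightarrow$(a).} The links (a)$\Rightarrow$(b)$\Rightarrow$(d)$\Rightarrow$(c) and (d)$\Leftrightarrow$(e) are fine. The implication that carries the real content, (c)$\Rightarrow$(a), rests on a misstated recursion.

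For $s$ with $sw<w$, the point counts satisfy $\sR_{v,w}=\sR_{sv,sw}$ if $sv<v$, and $\sR_{v,w}=(q-1)\sR_{v,sw}+q\sR_{sv,sw}$ if $sv>v$. The geometric counterparts are:
\begin{itemize}
\item if $sv<v$, then $R_{v,w}\cong R_{sv,sw}$, with no extra factor;
\item if $sv>v$, then $R_{v,w}$ splits into a $\C^*$-fibration over $R_{v,sw}$ and a piece $R_{sv,sw}\times\C$.
\end{itemize}
Your decomposition $(R_{sv,sw}\times\C^*)\sqcup R_{v,sw}$ for $sv<v$ contains a piece of dimension $\ell(w)-\ell(v)+1$. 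Consequently each step of your induction fails:
\begin{itemize}
\item The case $sv>v$ can be neither avoided nor ruled out. For $v=e$, $w=s_1$ it is forced, and $R_{e,s_1}\cong\C^*$. Also $v=e$, $w=s_1s_2$ refutes the claim that sharing no left descent produces an $S_3$.
\item Your key claim $v\not\le sw$ is false. For $v=s_1$, $w=s_1s_2s_1$, $s=s_1$, the interval has length $2$ and $R_{v,w}\cong(\C^*)^2$, yet $v\le s_2s_1=sw$.
\item The map $x\mapsto sx$ does not send $[sv,sw]$ into $[v,w]$. In the same example it sends $s_1\in[e,s_2s_1]$ to $e\notin[s_1,s_1s_2s_1]$.
\end{itemize}

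\paragraph{What the corrected recursion requires.} Once corrected, the case $sv>v$ is the easy one. If $sv\le sw$, a maximal chain from $sv$ to $sw$ contains a cover $x\lessdot y$ with $sx<x$ and $sy>y$. By lifting, $[sx,sy]\subseteq[v,w]$ is a length-$3$ interval whose coatom $y$ lies above every atom, so it is a 2-crown. Otherwise you induct on the subinterval $[v,sw]$.

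The real obstruction is the case $sv<v$, which cannot always be dodged. For $v=s_2$, $w=s_2s_1s_3s_2$, every left or right descent of $w$ is also one of $v$. Here $[v,w]$ is a 4-crown, and you are forced to pass to $[sv,sw]=[e,s_1s_3s_2]$, a 3-crown that is not a subinterval of $[v,w]$. Nothing in your proposal shows that the absence of 2-crowns survives this step, and your Kazhdan--Lusztig fallback hits the same case.

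The paper closes this in Lemma~\ref{lem: 2 crown}:
\begin{itemize}
\item By the Deodhar decomposition, the coefficient of $q^{d-1}$ in $\sR_{v,w}$ is $-(d-\sm)$, where $\sm$ is the number of codimension-one Deodhar components.
\item $\sm$ equals the number of mutable cluster variables in a seed, so $R_{v,w}$ is a torus iff $\sm=0$ by Corollary~\ref{cor: all frozen}.
\item By \cite[Theorem 6.3]{Brenti}, this coefficient is $-d$ iff $[v,w]$ has no 2-crowns.
\end{itemize}
Brenti's criterion is also exactly what would rescue your corrected induction in the case $sv<v$, since $\sR_{v,w}=\sR_{sv,sw}$ and both intervals have the same length. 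That would yield $\sR_{v,w}=(q-1)^d$, hence a single Deodhar component, and would avoid the cluster input. That would be a genuinely different route, but your proposal as written contains no such ingredient.

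Finally, your sketch of (c)$\Rightarrow$(d) is not an argument; this is Dyer's unpublished result. Your cycle does not need it once (c)$\Rightarrow$(a) is established.
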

We prove (a) $\Leftrightarrow$ (b) in Theorem~\ref{thm: toric} and (a) $\Leftrightarrow$ (c) in Lemma~\ref{lem: 2 crown}. The equivalence (d) $\Leftrightarrow$ (e) is an easy exercise in poset theory; we include a proof in Lemma~\ref{lem:poset-lattice-interval} for completeness. The implication (d) $\implies$ (c) is straightforward, as $S_3$ is not a lattice. The reverse implication (c) $\implies$ (d) is an unpublished result of Dyer \cite{Dyer}; alternately, one can argue (c) $\implies$ (b) $\implies$ (e) $\implies$ (d), where the middle implication follows from Theorem~\ref{thm: polytope intro} below.


We call toric Richardson varieties 
{\bf unexpected} if $\dim R_{v,w}=d>n-1$, since in this case they are \emph{not} $T$-toric. Unexpected toric Richardsons are toric with respect to a torus action that does not extend to the entire flag variety. We give a number of explicit examples of unexpected toric Richardsons in Section~\ref{sec:examples}. One nice infinite family is given by the ``large hypercube" intervals recently found in \cite{hypercube}; in this case, $d = O(n \log n)$, which shows the dimensions of unexpected toric Richardsons can grow quite large. As explained in \cite[Section 3.4]{hypercube}, under the assumptions of Theorem \ref{thm: intro classification} the closed Richardson variety $\overline{R}_{v,w}$ is smooth if and only if $[v,w]$ is a hypercube.

One way to understand a toric Richardson $\overline{R}_{v,w}$ is to study its moment polytope $\widehat{P}(v,w)$. For $T$-toric Richardsons, the moment polytope is the \emph{Bruhat interval polytope} $P(v,w)$ of \cite{KodamaWilliams}, see \cite{TW15}. For $T$-toric Richardsons, the face lattice of $P(v,w)$ is the interval poset $\Int[v,w]$, which consists of the subintervals of $[v,w]$ and $\emptyset$, ordered by containment. In particular, the Hasse diagram of $[v,w]$ is the 1-skeleton of $P(v,w)$. 
 A similar result holds for arbitrary toric Richardsons.

\begin{theorem}
\label{thm: polytope intro}
The moment polytope $\widehat{P}(v,w)$ of the toric Richardson $\overline{R}_{v,w}$ has face lattice isomorphic to $\Int[v,w]$. That is, the $s$-dimensional faces of $\widehat{P}(v,w)$ are in bijection with subintervals $[v',w']\subset [v,w]$ with $\ell(w')-\ell(v')=s$, and this bijection respects containment.

In particular, the 1-skeleton of $\widehat{P}(v,w)$ is the Hasse diagram of $[v,w]$.
\end{theorem}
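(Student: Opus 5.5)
We will use the orbit–cone (orbit–face) correspondence for projective toric varieties: the faces of the moment polytope of a projective $\TT$-toric variety $X$ correspond bijectively and inclusion-preservingly to the $\TT$-orbit closures in $X$. Under this correspondence an $s$-dimensional face corresponds to an $s$-dimensional orbit closure, and the empty face corresponds to the empty set. So the plan is to identify the $\TT$-orbits of $\overline{R}_{v,w}$ with the open Richardsons $R_{v',w'}$, $[v',w']\subset[v,w]$.

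\textbf{Step 1.} Recall the stratification
\[
\overline{R}_{v,w}=\bigsqcup_{v\le v'\le w'\le w} R_{v',w'},
\]
in which the closure of $R_{v',w'}$ is $\overline{R}_{v',w'}$, of dimension $\ell(w')-\ell(v')$. Closure containment of strata is therefore exactly containment of intervals.

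\textbf{Step 2.} Show that each stratum $R_{v',w'}$ is a single $\TT$-orbit. By Theorem~\ref{thm: intro classification}, every subinterval of a lattice interval avoiding $S_3$ also avoids $S_3$. Hence each $R_{v',w'}$ is itself a torus $\TT'$, and by Theorem~\ref{thm: main intro} its closure $\overline{R}_{v',w'}$ is $\TT'$-toric.

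We then need the compatibility statement: the extended $\TT$-action on $\overline{R}_{v,w}$ preserves each stratum $R_{v',w'}$ and acts on it transitively through a surjection $\TT\to\TT'$. I would extract this from the construction in the proof of Theorem~\ref{thm: main intro}. That construction presumably extends the action using explicit coordinates, for instance Deodhar/chamber-minor coordinates, or Marsh–Rietsch parametrizations in which the torus coordinates are ratios of Plücker/chamber minors. Restricting those coordinates to a boundary stratum should give the torus coordinates of the smaller Richardson.

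An alternative route is to argue as follows. The $\TT$-orbits are finite in number because $\overline{R}_{v,w}$ is toric. Since $\TT$ is connected, it preserves each irreducible $\TT$-stable piece of the stratification; the strata are unions of orbits. A torus $R_{v',w'}$ is not a nontrivial disjoint union of finitely many locally closed orbits of smaller dimension unless one orbit is open and dense. One must then still rule out the existence of several orbits; I expect to do this by counting $\TT$-fixed points, which are the permutation flags $u\in[v,w]$, together with the fact that a toric variety has exactly as many vertices as fixed points.

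\textbf{Step 3.} Once each stratum is one orbit, the orbit–face correspondence gives an inclusion-preserving bijection between faces of $\widehat{P}(v,w)$ and $\Int[v,w]$, with dimensions matching $\ell(w')-\ell(v')$. Vertices correspond to the fixed points $R_{u,u}$, and edges correspond to length-one intervals, that is, to covers in $[v,w]$. This yields the 1-skeleton statement.

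The main obstacle is Step 2, showing that the boundary strata are single orbits of the extended action, which requires understanding how the action of Theorem~\ref{thm: main intro} degenerates at the boundary. I would first try to establish it directly from the explicit formulas for the extension, via induction on codimension using the divisors $\overline{R}_{v',w}$ and $\overline{R}_{v,w'}$ for covers $v\lessdot v'$ and $w'\lessdot w$.
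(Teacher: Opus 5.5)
Your architecture matches the paper's: Corollary~\ref{cor: face lattice} comes from the orbit--face correspondence once you know (Lemma~\ref{lem: orbits}) that every $R_{v',w'}$ with $[v',w']\subset[v,w]$ is a single $\TT_{v,w}$-orbit. However, Step~2, which you flag yourself as the main obstacle, is where all the content lies, and neither of your routes establishes it.

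\textbf{Invariance of the strata.} The sentence ``since $\TT$ is connected, it preserves each irreducible $\TT$-stable piece'' already assumes the pieces are $\TT$-stable. The remark about restricting coordinates to a boundary stratum is a guess about the construction, not an argument. The paper gets invariance from \cite[Theorem 3.1]{Can}: every $\overline{R}_{v',w'}$ is invariant under any torus making $\overline{R}_{v,w}$ toric. Removing the smaller closed Richardsons from $\overline{R}_{v',w'}$ then shows $R_{v',w'}$ is invariant.

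A self-contained alternative matching your ``use the construction'' idea is available. The action built in Theorem~\ref{thm: toric} is the restriction of the action on $\prod_k\PP^{\binom nk-1}$ that rescales each $\Delta_I$ by the character $x^{m_I}$, so it preserves which Pl\"ucker coordinates vanish. On $R_{v',w'}$, which is a torus by Corollary~\ref{cor: subintervals tori}, the nonvanishing ones at every point are exactly $\bigcup_k\CM_k(v',w')$, by Theorem~\ref{thm: flag minor richardson} and Lemma~\ref{lem: splitting}. This collection determines $[v',w']$, so each stratum is exactly a vanishing-pattern locus and hence invariant.

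\textbf{Transitivity.} Your alternative route correctly notes that finitely many orbits force some orbit $O$ to be open and dense in $R_{v',w'}$. You then propose to rule out further orbits by counting fixed points, which cannot work. An extra orbit inside a stratum of dimension at least $2$ could be positive-dimensional, so it changes neither the number of fixed points nor the number of vertices. You also have no independent count of vertices to compare against. The missing idea is the one the paper uses: $O\cong\TT/\mathrm{Stab}$ is itself a torus and is open in the torus $R_{v',w'}$, so $O=R_{v',w'}$ by Lemma~\ref{lem: open torus}.

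With these two ingredients in place, your Step~1 and Step~3 go through as written.
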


One nice consequence of our results is a characterization of when $\Int[v,w]$ is the face lattice of a polytope. We note that if $[v,w]$ is a 2-crown, then $\Int[v,w]$ is not a lattice so cannot be the face lattice of a polytope. In fact, 2-crowns are the only obstacle for $\Int[v,w]$ being a face lattice.

\begin{corollary} The following are equivalent:
\begin{enumerate}[(a)]
    \item The interval poset $\Int[v,w]$ of sub-intervals of $[v,w]$ is the face lattice of a polytope.
    \item The Bruhat interval $[v,w]$ does not have any 2-crowns.
    \item The Bruhat interval $[v,w]$ is a lattice.
\end{enumerate}
\end{corollary}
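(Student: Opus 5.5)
The corollary follows by combining the results stated above: Theorem~\ref{thm: intro classification}, Theorem~\ref{thm: polytope intro}, and the observation that a face lattice of a polytope must itself be a lattice. The proof is a cycle of implications, (b) $\Rightarrow$ (a) $\Rightarrow$ (c) $\Rightarrow$ (b).

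For (b) $\Rightarrow$ (a), assume $[v,w]$ has no 2-crowns. A 2-crown is exactly a subinterval isomorphic to $S_3$, so condition (c) of Theorem~\ref{thm: intro classification} holds. By that theorem (proved via Lemma~\ref{lem: 2 crown} and Theorem~\ref{thm: toric}), $\overline{R}_{v,w}$ is a toric variety, so it has a moment polytope $\widehat{P}(v,w)$. Theorem~\ref{thm: polytope intro} says the face lattice of $\widehat{P}(v,w)$ is isomorphic to $\Int[v,w]$. This gives (a).

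For (a) $\Rightarrow$ (c), recall that the face poset of any convex polytope, including the empty face, is a lattice: the meet of two faces is their intersection, and the join is the smallest face containing both. So if $\Int[v,w]$ is a face lattice, it is a lattice, which is condition (e) of Theorem~\ref{thm: intro classification}. The equivalence (d) $\Leftrightarrow$ (e) (Lemma~\ref{lem:poset-lattice-interval}) then shows that $[v,w]$ is a lattice.

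For (c) $\Rightarrow$ (b), note that $S_3$ is not a lattice: its two atoms have two distinct minimal upper bounds. Any subinterval of a lattice is again a lattice, because meets and joins of elements of $[v',w']$ computed in $[v,w]$ stay in $[v',w']$. Hence a lattice $[v,w]$ contains no 2-crown, which is (b).

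The only step with real content is (b) $\Rightarrow$ (a), and it rests entirely on Theorem~\ref{thm: polytope intro} together with the toricity from Theorem~\ref{thm: intro classification}. One point needs care there: the identification of the face lattice must include the empty face. This matches the convention in $\Int[v,w]$, which contains $\emptyset$, so no adjustment is needed. The remaining implications are formal poset arguments.
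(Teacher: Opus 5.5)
Your proof is correct and follows the paper's own route. You get (b) $\Rightarrow$ (a) from toricity (Lemma~\ref{lem: 2 crown} and Theorem~\ref{thm: toric}) together with the face-lattice identification of Corollary~\ref{cor: face lattice}, and close the cycle with the same formal poset facts the paper uses (face lattices are lattices, Lemma~\ref{lem:poset-lattice-interval}, and $S_3$ not being a lattice); this is exactly the paper's suggested chain (c) $\Rightarrow$ (b) $\Rightarrow$ (e) $\Rightarrow$ (d), so you rightly avoid Dyer's unpublished result.
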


Another way to understand toric Richardson varieties is to study projections to Grassmannians. By \cite{KLS09,KLS13}, the projection of $\overline{R}_{v,w}$ to $\Gr(k,n)$ is a \emph{positroid variety}, which is indexed by, among other things, a positroid and a plabic graph. We show that all projections of toric $\overline{R}_{v,w}$ are $T$-toric positroid varieties, whose corresponding plabic graphs $G_k(v,w)$ are forests. We can also understand the moment polytope $\widehat{P}(v,w)$ using \emph{positroid polytopes}, which are $T$-moment polytopes of positroid varieties. 

\begin{theorem}
\label{thm: minkowski sum intro}
Assume that $\overline{R}_{v,w}$ is toric. 
The moment polytope $\widehat{P}(v,w)$ is the Minkowski sum of affine transformations of positroid polytopes indexed by $G_k(v,w)$.
\end{theorem}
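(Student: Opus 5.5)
The approach is to realize $\widehat{P}(v,w)$ as the moment polytope of $\overline{R}_{v,w}$ with respect to an ample line bundle obtained by pulling back the Plücker line bundles of the Grassmannians, and then to use the standard fact that the moment polytope of a tensor product of globally generated line bundles on a toric variety is the Minkowski sum of the individual moment polytopes.

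\textbf{Step 1: reduce to the projections.} Write $\pi_k:\Fl(n)\to\Gr(k,n)$ and $\Pi_k=\pi_k(\overline{R}_{v,w})$. By \cite{KLS09,KLS13}, $\Pi_k$ is a positroid variety. Since $\overline{R}_{v,w}$ is $\TT$-toric by Theorem~\ref{thm: main intro}, I will show that each restriction $\pi_k|_{\overline{R}_{v,w}}$ is equivariant for a homomorphism $\phi_k:\TT\to T_k$ onto a torus acting on $\Pi_k$. Concretely, write the open torus $R_{v,w}\cong\TT$ in coordinates, for instance via a Deodhar or Marsh--Rietsch parametrization by $d$ nonzero parameters. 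I will check that each Plücker coordinate of $\pi_k$ restricted to $R_{v,w}$ is a single monomial, up to sign, in these parameters. This monomiality is the heart of the matter.

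\textbf{Step 2: show the projections are $T$-toric with forest plabic graphs.} Granting monomiality, $\Pi_k$ is the closure of the image of a monomial map. It is therefore toric under the diagonal torus $T$, i.e.\ $T$-toric. Its moment polytope is the positroid polytope of $\Pi_k$, which is the convex hull of the indicator vectors $e_I$ of the Plücker coordinates $\Delta_I$ not vanishing on $\Pi_k$. I will identify the plabic graph $G_k(v,w)$ with a forest, using the standard criterion that $T$-toric positroid varieties have dimension at most $n-1$ and that their reduced plabic graphs are acyclic.

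\textbf{Step 3: assemble the Minkowski sum.} The line bundle $L=\bigotimes_k \pi_k^*\mathcal{O}(1)$ is ample on $\Fl(n)$, hence ample on $\overline{R}_{v,w}$. Its sections restricted to the open torus are spanned by products $\prod_k \Delta_{I_k}$ of monomials. The weights of these sections under $\TT$ are therefore sums of weights, one taken from each factor. Hence the $\TT$-moment polytope is
\[
\widehat{P}(v,w)=\sum_k A_k\bigl(P(\Pi_k)\bigr).
\]
Here $P(\Pi_k)$ is the positroid polytope and $A_k$ is the affine map dual to $\phi_k$. The affine part records the normalization that divides by a fixed Plücker coordinate in order to get $\TT$-weights on $R_{v,w}$. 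One has to check that using all sections, rather than only the images of these products, does not change the convex hull. This holds because the normalization of a projective toric variety has the same moment polytope, namely the convex hull of the weights of a spanning set of sections.

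\textbf{Main obstacle.} The hard step is Step 1: showing that every Plücker coordinate is monomial on $R_{v,w}$ precisely because the variety is toric. I would try to derive it from Theorem~\ref{thm: main intro}. The $\TT$-action extends to $\overline{R}_{v,w}$, and I would try to show it is compatible with $\pi_k$, for example because $\TT$ acts through automorphisms preserving the boundary divisors (Richardson subvarieties). The Plücker coordinates restricted to $R_{v,w}$ are units on the torus, since their zero loci lie in the boundary. Units on a torus are exactly scalar multiples of characters, which gives monomiality immediately. The remaining work is to verify that each nonvanishing $\Delta_I$ has no zeros on $R_{v,w}$. This should follow from the fact that $R_{v,w}$ is a torus: its complement in $\overline{R}_{v,w}$ is the union of the boundary Richardsons, and any vanishing Plücker divisor restricted to $\overline{R}_{v,w}$ is supported on that boundary.
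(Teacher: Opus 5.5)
The Minkowski-sum assembly in your Step 3 is fine; it is essentially the paper's Segre-embedding argument (Theorem~\ref{thm: polytope}(b)). Steps 1 and 2, however, each have a genuine gap. A minor point first: the hypothesis is that $\overline{R}_{v,w}$ is toric, so you need Theorem~\ref{thm: toric} (via Lemmas~\ref{lem: converse} and \ref{lem: 2 crown}) to know that $R_{v,w}$ is a torus before Theorem~\ref{thm: main intro} applies.

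In Step 1, you need each $\Delta_I$ that is not identically zero to have no zeros on $R_{v,w}$. Your justification is that ``any vanishing Pl\"ucker divisor restricted to $\overline{R}_{v,w}$ is supported on the boundary,'' but that is just a restatement of the claim. Knowing that $\TT$ acts on $\overline{R}_{v,w}$ and preserves the boundary does not make $\Delta_I$ a $\TT$-eigensection. For comparison, the section $x_0-x_1$ of $\mathcal{O}(1)$ on $\PP^1$ vanishes in the open orbit. Moreover, the extended action of Theorem~\ref{thm: main intro} is itself constructed in the paper from monomiality, so invoking it buys nothing unless you separately show that each $\Delta_I$ is a $\TT$-weight vector.

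The missing idea is Lemma~\ref{lem: splitting}:
\begin{itemize}
\item For $u\in[v,w]$, the splicing map identifies $R_{v,u}\times R_{u,w}$ with the principal open set $\{\Delta_{u[1]}\cdots\Delta_{u[n-1]}\neq 0\}\subset R_{v,w}$.
\item Since $R_{v,u}$ and $R_{u,w}$ are tori (Corollary~\ref{cor: subintervals tori}), this is an open subtorus of full dimension, hence all of $R_{v,w}$ by Lemma~\ref{lem: open torus}.
\item So each $\Delta_{u[k]}$ is a unit, and by Theorem~\ref{thm: flag minor richardson} these are all the minors that are not identically zero.
\end{itemize}

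The more serious gap is Step 2, which you treat as automatic. Being the closure of the image of a map that is monomial in the coordinates of $\TT$ makes $\Pi_k(v,w)$ toric for $\TT$, not for the standard torus $T$. The homomorphism $T\to\TT$ gives only the easy direction $Cm_I+d^{(k)}=e_I$ (Lemma~\ref{lem: projection}). To exhibit $\widehat{P}_k(v,w)$ as an affine image of $P_k(v,w)$, you need the converse: $m_I=A^{(k)}e_I+b^{(k)}$ for all $I\in\CM_k(v,w)$. This holds exactly when $\dim\widehat{P}_k(v,w)=\dim P_k(v,w)$, i.e.\ when $\Pi_k(v,w)$ is $T$-toric. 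Your appeal to ``$T$-toric positroid varieties have acyclic plabic graphs'' runs in the wrong direction, since $T$-toricity is precisely what is in question.

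The paper establishes this as follows:
\begin{itemize}
\item The orbit argument (Lemmas~\ref{lem: orbits} and \ref{lem: disjoint projections}) gives $\pi_k(R_{v,w})=\Pi^{\circ}_k(v,w)$, so $\Pi^{\circ}_k(v,w)$ is a torus.
\item By the cluster structure, all cluster variables are then frozen, so $G_k(v,w)$ is a forest (Lemma~\ref{lem: forest}).
\item By \cite[Theorem 1.3]{GLS}, every unit $\Delta_I$ is a Laurent monomial in the Grassmann necklace coordinates $\Delta_{I_1},\dots,\Delta_{I_s}$. This forces the same integer relations among the $e_I$ and among the $m_I$.
\item Affine independence of $e_{I_1},\dots,e_{I_s}$ then produces $A^{(k)}$ and $b^{(k)}$ (Lemma~\ref{lem: transformation}).
\end{itemize}
Without this step, your argument gives $\widehat{P}(v,w)=\sum_k\widehat{P}_k(v,w)$, but not the identification of the summands with affine transformations of positroid polytopes.
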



Theorem \ref{thm: minkowski sum intro} generalizes the fact that Bruhat interval polytopes are Minkowski sums of {positroid polytopes}. 
Note that the affine transformations in Theorem \ref{thm: minkowski sum intro} in general depend on $k$, and map the positroid polytopes contained in $\R^{n-1}$ to the  space $\R^d$ corresponding to $\TT$.




Though we do not pursue this direction here, the motivation for this paper arose from torus actions from cluster structures. In recent years, it was proved \cite{CGGLSS,GLSBS1,GLSBS2} that the open Richardson variety $R_{v,w}$ has a cluster structure. As a consequence, one can construct \cite{LS,Soyeon} an action of a potentially larger torus $\TT$ on $R_{v,w}$, where the dimension of $\TT$ equals the number of {\em frozen variables} in the cluster structure\footnote{The number of frozen variables is a combinatorial invariant of the Bruhat interval \cite{Patimo} and was recently shown to coincide with the dimension of $H^1(R_{v,w})$ and the dimension of an $\text{Ext}$ group between two Verma modules, see \cite{BGL26}.}. This leads to the following question.

\begin{question}
\label{question: intro}
Does the action of $\TT$ on $R_{v,w}$ extend to the closed Richardson variety $\overline{R}_{v,w}$?
\end{question}

Theorem \ref{thm: main intro} answers Question \ref{question: intro} in the positive in the case when $\TT$ has maximal dimension. \\

The structure of the paper is as follows. In Section~\ref{sec:background} we give background on Richardson varieties, positroid varieties, and related combinatorics. In Section~\ref{sec:characterization}, we characterize the closed Richardson varieties which are toric and investigate their projections to the Grassmannian. In Section~\ref{subsec: polytopes} we discuss the moment polytopes of toric Richardson varieties. Finally, Section~\ref{sec:examples} has a number of examples, including two infinite families of unexpected toric Richardson varieties.

\section*{Acknowledgments}

The authors would like to thank Jos\'e Simental for his constant support and interest in this work, and for explaining Proposition \ref{prop: hypercube matroid} to us. We would also like to thank David Anderson, Sara Billey, Mahir Bilen Can, Laura Escobar, Christian Gaetz, Mikhail Gorsky and Geordie Williamson for useful discussions. E. G. and S. K. were partially supported by the NSF grant DMS-2302305. M.S.B. was partially supported by NSF grant DMS-2444020.

\section{Background}\label{sec:background}

We use the notation $[n]:=\{1, \dots, n\}$, $[i,j]:=\{i, i+1, \dots, j\}$,   $\binom{[n]}{k}:= \{I \subset [n]: |I|=k\}$, and, for $u \in S_n$, $u[k]:=\{u(1), \dots, u(k)\}$. More generally, for $I \subset [n]$ and $u \in S_n$, $u(I):=\{u(i): i \in I\}$. Permutations are written in one-line notation with the occasional exception of transpositions, which we may write as $(i~j)$ in cycle notation. We use $s_i$ to denote the simple transposition $(i~i+1)$.

\subsection{Algebraic tori}

A torus is an algebraic variety isomorphic to $(\C^\times)^d$ for some $d$. A specific choice of isomorphism $\TT\simeq (\C^\times)^d$ defines coordinates on $\TT$, and coordinate-wise multiplication equips $\TT$ with a structure of an algebraic group, which we will refer to below as {\em multiplicative structure}. Note that different choices of coordinates might give different multiplicative structures.  A multiplicative structure gives an isomorphism $\C[\TT] \cong \C[x_i^{\pm1}]$, and the only units in this ring are Laurent monomials in the $x_i$.

We will repeatedly use the following simple lemma.

\begin{lemma}
\label{lem: open torus}
Suppose $\TT$ is a torus, and $U$ is an open subset in $\TT$. If $U$ is a torus then $U=\TT$.
\end{lemma}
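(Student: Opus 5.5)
We argue through the units of the coordinate rings. Fix a multiplicative structure on $\TT$, so that $\C[\TT]\cong \C[x_1^{\pm1},\dots,x_d^{\pm1}]$. Since $U\subset \TT$ is open and $\TT$ is irreducible, $U$ is dense and the restriction map $\C[\TT]\to \C[U]$ is injective.

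First we show that the complement $Z=\TT\setminus U$ has no irreducible component of codimension one. Suppose $D$ is such a component. Since $\C[\TT]$ is a UFD (a localization of a polynomial ring), $D$ is the zero locus of an irreducible element $f\in\C[\TT]$. The element $f$ is not a unit, because $D$ is nonempty. However, $f$ is nonvanishing on $U$, so $f|_U$ is a unit of $\C[U]$.

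Next we use that $U$ is itself a torus, say $U\cong(\C^\times)^e$. Then $\dim U=\dim\TT$ forces $e=d$. The units of $\C[U]$ are scalars times Laurent monomials, so $\C[U]^\times/\C^\times\cong\Z^d$. On the other hand, $U=\TT\setminus Z$, and removing divisors adds units. Precisely, write $D_1,\dots,D_r$ for the divisorial components of $Z$, with defining irreducible elements $f_1,\dots,f_r$. Then $\C[U]^\times$ is generated by $\C[\TT]^\times$ and $f_1,\dots,f_r$, because the complement of $U$ in $\TT\setminus\bigcup D_i$ has codimension at least $2$. The elements $f_i$ are multiplicatively independent modulo $\C[\TT]^\times$, by unique factorization. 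Hence
\[
\C[U]^\times/\C^\times\cong\Z^{d+r}.
\]
Comparing ranks gives $d+r=d$, so $r=0$ and $Z$ has codimension at least $2$.

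Finally we conclude $Z=\emptyset$. The torus $\TT$ is normal, so every regular function on $U$ extends to $\TT$ (Hartogs), and $\C[U]=\C[\TT]$. Both $U$ and $\TT$ are affine, since $U$ is a torus. An open embedding of affine varieties that induces an isomorphism on coordinate rings is an isomorphism. Therefore $U=\TT$.

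The main obstacle is the unit-group computation in the second step: one has to justify that the units of $\C[U]$ are exactly generated by the old units and the divisor equations. This can be avoided with a slicker route. Once we know $\C[U]^\times/\C^\times$ has rank $d$, note that each $f_i|_U$ is a unit, so its class lies in this group. It also lies in the subgroup $\langle x_1,\dots,x_d\rangle$, which has rank $d$ and is saturated in $\C[U]^\times/\C^\times$ by unique factorization in $\C[\TT]$. So some power of $f_i$ is a monomial times a constant, which contradicts $f_i$ being an irreducible non-unit.
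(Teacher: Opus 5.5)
Your argument is correct, but it takes a different route from the paper's.

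\textbf{The paper's route.} It picks coordinates $x_j$ on $U$ and $y_i$ on $\TT$. Each $y_i|_U$ is a unit, hence a nonzero constant times a Laurent monomial in the $x_j$. After rescaling the $y_i$, the inclusion $U\hookrightarrow\TT$ becomes a homomorphism of algebraic groups. Its image is then a closed subgroup of dimension $d$ in the connected group $\TT$, so it is all of $\TT$.

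\textbf{Your route.} You work with divisors on the factorial ring $\C[\TT]$. Removing $r$ prime divisors raises the rank of the unit group modulo scalars by exactly $r$. Removing a locus of codimension $\ge 2$ changes nothing, by algebraic Hartogs. So the rank count together with affineness of $U$ forces $Z=\emptyset$.

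\textbf{Comparison.} The paper's proof is shorter and needs only two things: units on a torus are monomials, and basic facts about images of algebraic group homomorphisms. Yours needs more commutative algebra: height-one primes are principal in a UFD, and Hartogs holds on the normal varieties $\TT\setminus\bigcup D_i$ and $\TT$. Note that your second step already uses Hartogs implicitly, to get $\C[U]=\C[\TT]_{f_1\cdots f_r}$. In exchange, your argument never uses the group structure of $U$. It only needs $U$ to be affine with $\C[U]^\times/\C^\times$ of rank $d$. So it proves a somewhat stronger statement, and it explains geometrically why an open torus cannot be a proper subset.

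\textbf{Your closing ``slicker route'' needs rephrasing.} As written, you assert that the class of $f_i$ lies in $\langle x_1,\dots,x_d\rangle$, but that is what has to be shown. What you actually know is this:
\begin{itemize}
\item The restriction map $\C[\TT]^\times/\C^\times\cong\Z^d\to\C[U]^\times/\C^\times\cong\Z^d$ is injective, by density of $U$, so its image has finite index.
\item Hence $f_i^N|_U=c\,x^m|_U$ for some $N\ge 1$.
\item By injectivity of restriction, $f_i^N=c\,x^m$ holds in $\C[\TT]$. So $f_i$ divides a unit, contradicting that it is an irreducible non-unit.
\end{itemize}
Saturation is not needed. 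This variant only yields $r=0$, so you still need the Hartogs step afterwards to conclude $U=\TT$.
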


\begin{proof}
We have $\dim U=\dim \TT=d$. Let us choose some coordinates $x_1,\ldots,x_d$ on $U$ and $y_1,\ldots,y_d$ on $\TT$. The restrictions of $y_i$ on $U$ are invertible hence they are Laurent monomials in $x_i$:
\begin{equation}
\label{eq: restrict torus}
y_i|_{U}=c_i\prod x_j^{a_{ij}},\ a_{ij}\in \Z,\ c_i\neq 0.
\end{equation}
Define $y'_i=y_i/c_i$ and consider the multiplicative structures on $\TT$ (resp. $U$) defined by $y'_i$ (resp. $x_i$). By \eqref{eq: restrict torus} the inclusion $U\hookrightarrow \TT$ is a homomorphism with respect to these multiplicative structures, so $U$ is a subgroup of $\TT$. Since $\dim U=\dim \TT$, we get $U=\TT$.
\end{proof}

We will need the following easy fact about cluster varieties. We refer to \cite{Muller} and references therein for more details and definitions.  

\begin{corollary}
\label{cor: all frozen}
Suppose $\mathcal{A}$ is a cluster algebra. Then $X=\Spec \mathcal{A}$ is a torus if and only if all cluster variables in $\mathcal{A}$ are frozen.    
\end{corollary}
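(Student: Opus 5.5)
We prove the two implications separately. Throughout, $\mathcal{A}$ has an initial seed with cluster $\mathbf{x}=(x_1,\dots,x_m)$ (mutable) and frozen variables $\mathbf{y}=(y_1,\dots,y_r)$. We assume frozen variables are invertible in $\mathcal{A}$, as is standard in \cite{Muller}.

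\emph{If all cluster variables are frozen.} Suppose every cluster variable is frozen, so $m=0$. There are then no mutations, and $\mathcal{A}$ is generated by the frozen variables and their inverses. Hence $\mathcal{A}=\C[y_1^{\pm1},\dots,y_r^{\pm1}]$, since the frozen variables are algebraically independent. So $X\cong(\C^\times)^r$ is a torus.

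\emph{If $X$ is a torus.} Suppose $X\cong\TT$ is a torus, and assume for contradiction that $m\ge1$. Every cluster variable is a regular function on $X$, so $x_1\in\mathcal{A}$. By the Laurent phenomenon, the inclusion $\mathcal{A}\subset\C[\mathbf{x}^{\pm1},\mathbf{y}^{\pm1}]$ identifies the initial cluster torus $U=\Spec\C[\mathbf{x}^{\pm1},\mathbf{y}^{\pm1}]$ with an open subset of $X$. Indeed, $U$ is the principal open set where $x_1\cdots x_m\neq0$, because localizing $\mathcal{A}$ at the initial cluster variables yields the full Laurent ring.

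Now $U$ is a torus sitting as an open subset of the torus $X$. Lemma~\ref{lem: open torus} gives $U=X$. Consequently $x_1$ is invertible on $X$, so $x_1^{-1}\in\mathcal{A}$.

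On the other hand, mutating at $x_1$ gives $x_1x_1'=M_1+M_2$, where $M_1,M_2$ are the two exchange monomials. Since $x_1$ is a unit in $\mathcal{A}$, the element $x_1'=x_1^{-1}(M_1+M_2)$ lies in $\mathcal{A}$. In the Laurent polynomial ring, $x_1'$ is $x_1^{-1}$ times a binomial in the other variables, and that binomial is not a monomial. Such an element is not a unit of $\C[\mathbf{x}^{\pm1},\mathbf{y}^{\pm1}]$, hence not a unit of $\mathcal{A}$.

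But $\mathcal{A}=\C[U]$ is a Laurent polynomial ring, so every regular function that is invertible on $X=U$ is a unit of $\mathcal{A}$. Thus we need that $x_1'$ itself is invertible on $X$. This follows from the fact that any cluster variable vanishes nowhere on its own cluster torus, and that cluster torus is open and dense in $X=U$. I expect this step to be the main obstacle.

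To handle it, apply the same argument to the adjacent seed. Its cluster torus $U'$ is also open in the torus $X$, so Lemma~\ref{lem: open torus} gives $U'=X$. Hence $x_1'$ is a unit in $\mathcal{A}$, that is, a Laurent monomial in the initial variables. Then $M_1+M_2=x_1x_1'$ would be a monomial, which is impossible. This contradiction shows $m=0$, so all cluster variables are frozen.
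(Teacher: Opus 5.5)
Your argument is correct, apart from a nondegeneracy point that the statement itself needs (second paragraph). The backward direction is the same as the paper's; the forward direction takes a different route.

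Like you, the paper uses Lemma~\ref{lem: open torus} to get $T_{\mathfrak{s}}=X$. This makes the cluster variables of $\mathfrak{s}$, and hence (as $\mathfrak{s}$ is arbitrary) all cluster variables, units of $\mathcal{A}$. The paper then simply invokes \cite[Theorem 1.3]{GLS}: units of a cluster algebra are scalar multiples of Laurent monomials in the frozen variables.

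You avoid that theorem. Applying Lemma~\ref{lem: open torus} also to the cluster torus $U'$ of the adjacent seed makes $x_1'$ a unit of $\mathcal{A}=\C[\mathbf{x}^{\pm1},\mathbf{y}^{\pm1}]$, hence a Laurent monomial, and the exchange relation $x_1x_1'=M_1+M_2$ then gives the contradiction. Your version is self-contained, using only the Laurent phenomenon for two adjacent seeds. It is essentially the adjacent-seed comparison behind the GLS unit theorem, specialized to the case where the torus hypothesis already makes every cluster variable a unit. The paper's version is shorter but leans on a general structural result.

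One expository fix: the sentence about a cluster variable not vanishing on its dense open cluster torus does not by itself give invertibility on $X$. It can be deleted, since the second application of Lemma~\ref{lem: open torus} is what does the work.

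The nondegeneracy point concerns your claim that $M_1+M_2$ is not a monomial. This needs the column of $x_1$ in the extended exchange matrix to be nonzero; otherwise $M_1=M_2=1$ and $x_1x_1'=2$. This is not a defect relative to the paper, because the statement itself fails in that case. With one mutable variable, zero exchange matrix and no frozen variables, $\mathcal{A}=\C[x_1,2x_1^{-1}]=\C[x_1^{\pm1}]$ is a torus even though $x_1$ is mutable. So a hypothesis excluding mutable variables with zero exchange column is implicitly required, and the paper's citation of \cite{GLS} carries it silently. State it explicitly and take $x_1$ to be a mutable variable with nonzero column. Your argument then shows that no such variable exists when $X$ is a torus.
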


\begin{proof}
Let $\mathfrak{s}$ be one seed for $\mathcal{A}$. By \cite[Section 2.6]{Muller}, it corresponds to an open torus $T_{\mathfrak{s}}\subset X$. If all cluster variables in $\mathcal{A}$ are frozen then $\mathcal{A}$ is the Laurent polynomial algebra in the frozen variables so $T_{\mathfrak{s}}=X$ by definition.

Conversely, assume  $X$ is a torus. Then by Lemma \ref{lem: open torus} we get $X=T_{\mathfrak{s}}$, so all cluster variables are invertible in $\mathcal{A}$ and hence frozen by \cite[Theorem 1.3]{GLS}.
\end{proof}

\subsection{Richardson varieties}

We refer to \cite{SpeyerNotes} for all background on Richardson varieties and positroid varieties. We fix the Borel subgroup $B \subset \Sl(n)$ of upper triangular matrices. 
We consider the Schubert cell $X^w = BwB/B$ in the complete flag variety $\Fl(n) \cong \Sl(n)/B$, and the opposite Schubert cell $X_v=B_- vB/B$. These are isomorphic to affine spaces of dimensions $\ell(w)$ and $\binom{n}{2}-\ell(v)$ respectively.

\begin{definition}
The open Richardson variety is defined as the intersection $R_{v,w}=X^w\cap X_v$. The closed Richardson variety $\overline{R}_{v,w}$ is the Zariski closure of $R_{v,w}$ inside $\Fl(n)$.
\end{definition}

The open Richardson variety $R_{v,w}$ is nonempty if and only if $v \leq w$ in the Bruhat order. The dimension of both $R_{v,w}$ and $\overline{R}_{v,w}$ equals $\ell(w)-\ell(v)$. The variety $R_{v,w}$ is smooth, affine, and irreducible, and is a cluster variety \cite{CGGLSS,GLSBS1}. The variety $\overline{R}_{v,w}$ is typically singular \cite{KWY,Billey}.
We have decompositions 
\[\overline{R}_{v,w} = \bigsqcup_{[v', w'] \subset [v,w]} R_{v',w'} \qquad \text{and} \qquad \overline{R}_{v,w} = \bigcup_{[v', w'] \subset [v,w]} \overline{R}_{v',w'}.  \]

We define the {\em $\sR$-polynomial} $\sR_{v,w}(q)$ as the point count in $R_{v,w}$ over the finite field with $q$ elements.

Given an $n\times n$ nondegenerate matrix $M$ with columns $M_i$, one can define the flag
$$
\CF_{M}=\left\{
\langle M_1\rangle \subset \langle M_1,M_2\rangle\subset \cdots
\subset \langle M_1,\ldots,M_n\rangle=\C^n\right\}.
$$
Given $I \in \binom{[n]}{k}$, we define the {\em flag minor} (or {\em Pl\"ucker coordinate}) $\Delta_I=\Delta_I(M)$ as the $k\times k$-minor of the matrix $M$ using rows $I$
and the first $k$ columns. Choosing a different representative matrix $M'$ for the same flag yields proportional minors $\Delta_{I}(M')=c_k\Delta_{I}(M)$ where the constant $c_k\neq 0$ does not depend on $I$ but might depend on $k$. 

Alternatively, one can think of the projection $\pi_k:\Fl(n)\to \Gr(k,n)$ which sends the flag $\CF_{M}$ to the $k$-dimensional subspace $\langle M_1,\ldots,M_k\rangle$. Then the minors $\Delta_I(M)$ with $I \in \binom{[n]}{k}$ are the Pl\"ucker coordinates of $\pi_k(\CF_M)$. 

We will need to know which  minors are identically zero on $\overline{R}_{v,w}$ and which are not, which is essentially \cite[Theorem 7.1]{TW15}. Note that $\Delta_I$ is identically zero on $\overline{R}_{v,w}$ if and only if it is identically zero on $R_{v,w}$.

\begin{definition}\label{def: k constituent}
We define $\CM_k(v,w)$ as the collection of all $k$-subsets $I\subset [n]$ such that the minor $\Delta_I$ is not identically zero on $R_{v,w}$. We call $\CM_k(v,w)$ the \emph{$k$th constituent} of the interval $[v,w]$.
\end{definition}

Note that many different intervals can have the same $k$th constituent, but the following result shows that the collection of all constituents uniquely determines an interval.

\begin{theorem}
\label{thm: flag minor richardson}
  Fix $I\in \binom{[n]}{k}$. The flag minor $\Delta_{I}$ is generically nonzero on the closed Richardson variety $\overline{R}_{v,w}$ if and only if there is some $u \in [v,w]$ such that $I=u[k]$. In other words,
$$
\CM_k(v,w)=\left\{u[k]\ :\ u\in [v,w]\right\}.
$$
\end{theorem}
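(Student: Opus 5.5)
We use the $T$-fixed points of $\overline{R}_{v,w}$ together with the decomposition $\overline{R}_{v,w}=\bigsqcup_{[v',w']\subset[v,w]} R_{v',w'}$.

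\emph{Sufficiency.} Suppose $I=u[k]$ for some $u\in[v,w]$. The permutation flag $uB/B$ lies in $R_{u,u}\subset\overline{R}_{v,w}$. It is represented by the permutation matrix of $u$, whose first $k$ columns are the standard basis vectors $e_{u(1)},\dots,e_{u(k)}$. Hence $\Delta_{u[k]}(uB/B)=\pm1\neq 0$. Since $\overline{R}_{v,w}$ is irreducible and the vanishing locus of $\Delta_I$ is closed, $\Delta_I$ does not vanish identically on $\overline{R}_{v,w}$, so it is generically nonzero there.

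\emph{Necessity.} Suppose $\Delta_I$ is not identically zero on $\overline{R}_{v,w}$. The locus $Z=\overline{R}_{v,w}\cap\{\Delta_I\neq0\}$ is then a nonempty $T$-stable subset, and it is quasi-projective. The set $\{\Delta_I\ne0\}$ is the preimage under $\pi_k$ of the standard affine chart $U_I\subset \Gr(k,n)$. In $U_I$ the action of a generic one-parameter subgroup $\lambda:\C^\times\to T$ can be chosen to contract every point to the coordinate point $E_I=\langle e_i:i\in I\rangle$. We choose $\lambda$ with weights making $I$ attracting: all Plücker coordinates $\Delta_J/\Delta_I$ scale by $t^{\langle\lambda, \epsilon_J-\epsilon_I\rangle}$, and we require these exponents to be positive for $J\neq I$, which is possible by taking $\lambda$ with large weights on $I$.

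Now take $x\in Z$ and consider $\lim_{t\to0}\lambda(t)x$ in the projective variety $\overline{R}_{v,w}$. This limit exists, is a $T$-fixed point (for generic $\lambda$), and is therefore $uB/B$ for some $u$. Because $\overline{R}_{v,w}=\bigsqcup R_{v',w'}$ and $R_{v',w'}$ contains $u B/B$ only when $v'=w'=u$, we get $u\in[v,w]$. It remains to check that $\pi_k(uB/B)=E_I$, i.e. $u[k]=I$. By construction $\pi_k(\lambda(t)x)\to E_I$ in $U_I$, and $\pi_k$ is continuous, so $E_{u[k]}=E_I$.

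The main obstacle is making the limit argument rigorous, namely ensuring that $\pi_k$ of the limit is the limit of $\pi_k$. This follows because $\pi_k$ is a $T$-equivariant morphism of projective varieties and limits of $\C^\times$-orbits are unique. A secondary point is that the limit fixed point lies in $\overline{R}_{v,w}$ and its index lies in $[v,w]$, which follows from the decomposition and from the fact that the $T$-fixed points of $R_{v',w'}$ exist only when $v'=w'$.

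Alternatively, one could quote \cite[Theorem 7.1]{TW15} directly. We prefer the self-contained argument above.
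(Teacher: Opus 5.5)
Your proof is correct, apart from one sign slip noted below, and it takes a genuinely different route from the paper.

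\textbf{Comparison with the paper.} The paper quotes \cite[Theorem 7.1]{TW15}, which says which generalized Pl\"ucker coordinates $\Delta_{I_1}\cdots\Delta_{I_{n-1}}$ are nonzero on the totally positive part $R^{>0}_{v,w}$. It then uses the Maximality Property of flag matroids to pass from products of minors to individual minors. Finally it uses Zariski density of $R^{>0}_{v,w}$.

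You use much less. The Bruhat decomposition and the stratification $\overline{R}_{v,w}=\bigsqcup R_{v',w'}$ identify the $T$-fixed points of $\overline{R}_{v,w}$ as $\{uB/B: u\in[v,w]\}$. A one-parameter-subgroup limit then finishes the argument. This is the standard mechanism showing that the coordinate point $E_I$ lies in the torus-orbit closure of any point with $\Delta_I\neq 0$. Taking the limit upstairs in $\Fl(n)$ is a nice touch: the fixed point you obtain projects to $E_I$ automatically.

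What your route buys:
\begin{itemize}
\item It is self-contained and avoids positivity and flag-matroid theory.
\item It gives necessity pointwise on all of $\overline{R}_{v,w}$, not just generically.
\item It works verbatim for any irreducible closed $T$-stable $Y\subset\Fl(n)$, with $Y^T$ in place of $\{uB/B:u\in[v,w]\}$.
\end{itemize}
What the paper's route buys: the extra pointwise statement that at \emph{every} point of $R^{>0}_{v,w}$ the nonzero minors are exactly the $\Delta_{u[k]}$, $u\in[v,w]$.

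\textbf{The slip.} With your scaling $\Delta_J/\Delta_I\mapsto t^{\langle\lambda,\epsilon_J-\epsilon_I\rangle}\Delta_J/\Delta_I$, you need every exponent positive for $J\neq I$ so that the ratios go to $0$ as $t\to 0$. That requires the weights $\lambda_i$, $i\in I$, to be the $k$ \emph{smallest} weights, not the largest. Alternatively, keep large weights on $I$ and let $t\to\infty$.

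A concrete choice: let $\lambda$ take the values $1,\dots,k$ on $I$ and $k+1,\dots,n$ on the complement. If you want a cocharacter of $T\subset\Sl(n)$, replace it by $n\lambda-(\sum_j\lambda_j)(1,\dots,1)$, which does not change the order of the weights. This $\lambda$ is also regular, meaning its weights are distinct. Regularity is exactly what guarantees that the $\lambda$-fixed limit is a $T$-fixed point, so your ``generic $\lambda$'' does exist inside the required open cone.
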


\begin{proof} For a tuple of nested subsets $\mathbf{I}=(I_1 \subsetneq I_2 \subsetneq \dots \subsetneq I_{n-1})$, define the \emph{generalized Pl\"ucker coordinate} as the product of flag minors $\Delta_{\mathbf{I}}:= \Delta_{I_1} \dots \Delta_{I_{n-1}}$. Then
   \cite[Theorem 7.1]{TW15} shows that for any point $F$ in the positive part $R_{v,w}^{>0}$, the generalized Plucker coordinate $\Delta_{\mathbf{I}}$ is nonzero if and only if $\mathbf{I}=(u[1] \subset u[2] \subset \cdots \subset u[n-1])$ for some $u \in [v,w]$. 

    On the other hand, the nonzero flag minors of $F$ define a flag matroid. By the Maximality Property for flag matroids \cite{CoxeterMatroid},
    any nonzero flag minor $\Delta_J$ of $F$ is a factor of some nonzero generalized Pl\"ucker coordinate. This implies that for $F \in R_{v,w}^{>0}$, the flag minor $\Delta_J$ is nonzero if and only if $J=u[k]$ for some $u \in [v,w]$.

    Since the positive part $R_{v,w}^{>0}$ is Zariski dense in the closed Richardson, this implies the desired statement.
\end{proof}

Finally, we mention some compatibility relations between the minors of different sizes.
The minors $\Delta_I$ satisfy incidence Pl\"ucker relations which are defined as follows.
For $1\le r\le s\le n$ and for each $I \in \binom{[n]}{r-1}, J \in \binom{[n]}{s+1}$, we have the relation
\begin{equation}
\label{eq: incidence Plucker}
E_{I,J}=\sum_{j\in J\setminus I}\sgn(j,I,J) \Delta_{I\cup j}\Delta_{J\setminus j}=0
\end{equation}
where the signs are defined by $\sgn(j,I,J)=(-1)^{|\{k\in J:k<j\}|+|\{i\in I:i>j\}|}$. In the case $r=s$ the relations \eqref{eq: incidence Plucker} are the usual Pl\"ucker relations in the Grassmannian $\Gr(r,n)$.  

\subsection{Positroids and positroid varieties}
The collection $\CM_k(v,w)$ of Definition~\ref{def: k constituent} is an example of a matroid known as a \emph{positroid}, and all positroids can be obtained this way by varying $v$ and $w$. Moreover, we have $\CM_k(v,w)=\CM_k(v',w')$ for a unique $v'<w'$ where $w'$ is $k$-Grassmannian, meaning that $w'(1)<w'(2) <\dots< w'(k)$ and $w'(k+1) < \dots <w'(n)$.
We will not need the precise definition of a positroid and refer to \cite{KLS09,KLS13,Postnikov} for more details and definitions. However, we will need three constructions related to positroids.

First, given a positroid $\CM=\CM_k(v,w)$ where $w$ is $k$-Grassmannian, one can define the {\emph {open positroid variety}} $\Pi_{\CM}^{\circ}\subset \Gr(k,n)$ as the projection of the open Richardson variety
$$
\Pi_{\CM}^{\circ}=\pi_k(R_{v,w}).
$$
This projection is an isomorphism. In particular, we have
\begin{equation}
\label{eq: def open positroid}
\Pi_{\CM}^{\circ} \supset\{V\in \Gr(k,n)\ :\ \Delta_{I}(V)\neq 0\ \mathrm{if\ and\ only\ if}\ I\in \CM\}.
\end{equation}
An open positroid variety is a cluster variety by \cite{GLpositroid}. We denote by $\Pi_{\CM}$ the closure of $\Pi_{\CM}^{\circ}$. We have the decomposition
\[\Pi_{\CM} = \bigsqcup_{\CM' \subset \CM} \Pi_{\CM'}^\circ.\]

Second, it is convenient to encode positroids by {\emph {plabic graphs}} following \cite{Postnikov}. A plabic graph is a planar, bicolored graph in a disk with $n$ leaves on the boundary of the disk\footnote{Note that the boundary of the disk is not part of the plabic graph.} labeled by $1,\ldots,n$. A plabic graph is called reduced if it satisfies some technical condition in \cite[Definition 12.5]{Postnikov}.  
Note that any plabic forest (plabic graph with no cycles) is automatically reduced. 

\begin{definition}
Given a plabic graph $G$, an orientation of $G$ is \emph{perfect} if each white vertex $v$ has exactly one edge directed towards $v$, and each black vertex $v$ has exactly one edge directed away from $v$.
\end{definition}

Given a perfect orientation $\mathcal{O}$ on a reduced plabic graph $G$,  we can consider the subset $I_{\mathcal{O}} \subset [n]$ of boundary vertices which are sources. By \cite[Proposition 11.7]{Postnikov} the collection of subsets $I_{\mathcal{O}}$ for all possible perfect orientations $\mathcal{O}$ forms a positroid which we will denote by $\CM_{G}$. Furthermore, this construction is a  bijection between (equivalence classes of) reduced plabic graphs and positroids.

Third, we need the notion of positroid polytopes. For $I \subset [n]$, let $e_I :=\sum_{i \in I} e_i \in \R^n$ be the indicator vector of $I$.
\begin{definition}\label{def: positroid polytope}
    For a positroid $\CM= \CM_k(v,w)$, we define the \emph{positroid polytope}
    \[P_k(v,w):= \Conv(e_I: I \in \CM) \subset \R^n.\]
\end{definition}
The positroid polytope $P_k(v,w)$ is the image of the closed positroid variety $\Pi_k(v,w)$ under the moment map for the standard torus $T$.

Finally we have a lemma which we will make use of later.

\begin{lemma}
\label{lem: forest} 
Suppose $\CM$ is a positroid such that the open positroid variety $\Pi_{\CM}^{\circ}$ is a torus. Then the corresponding plabic graph $G_{\CM}$ is a forest.
\end{lemma}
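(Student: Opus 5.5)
Since the open positroid variety $\Pi_{\CM}^{\circ}$ carries a cluster structure \cite{GLpositroid}, Corollary~\ref{cor: all frozen} shows that $\Pi_{\CM}^{\circ}$ is a torus exactly when every cluster variable in this structure is frozen. So the plan is to show that a plabic graph with a cycle forces a mutable cluster variable. The relevant seeds come from reduced plabic graphs $G$ with $\CM_G=\CM$. For such a seed, the cluster variables are face labels of $G$, and they are Pl\"ucker coordinates. A face is \emph{frozen} if it touches the boundary of the disk and \emph{mutable} if it is an interior face. The quiver is read off from the bipartite structure, and its vertices are the faces of $G$.

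\textbf{Main step.} Suppose $G_{\CM}$ is not a forest. Among all reduced plabic graphs representing $\CM$ (related by moves), fix any one, $G$. I would argue as follows.
\begin{enumerate}[1.]
\item Since $G$ is planar and has a cycle, that cycle bounds a region of the disk not touching the boundary. Hence at least one face of $G$ is interior.
\item In the Galashin--Lam cluster structure, interior faces of a reduced plabic graph give mutable variables and boundary faces give frozen ones.
\item The seed therefore has a mutable cluster variable, so $\mathcal{A}$ does not consist only of frozen variables.
\item By Corollary~\ref{cor: all frozen}, $\Pi_{\CM}^{\circ}=\Spec\mathcal{A}$ is not a torus, which contradicts the hypothesis.
\end{enumerate}

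The technical point is how a face "touches the boundary" when the graph has leaves, and that "forest" is a statement about the reduced representative up to moves. I would also use the standard fact that moves preserve cycle structure only up to contraction and uncontraction of bivalent vertices. To handle this, I would normalize $G$ to be leafless except for the boundary leaves, with no bivalent vertices. In a leafless reduced graph, a cycle always encloses an interior face.

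\textbf{Alternative route.} If the cluster identification of mutable variables is delicate, I would use point counts instead: $\#\Pi_{\CM}^{\circ}(\mathbb F_q)$ equals $(q-1)^{d}$ iff it is a torus. This count can be computed from the plabic graph via the boundary measurement parametrization, $(q-1)^{\#\text{faces}-1}$ on the open chart. So the complement of the chart is empty iff there are no mutable faces. The cycles-to-interior-face step is the same in both approaches. The step I expect to be the main obstacle is justifying rigorously that interior faces give genuinely mutable (non-frozen) variables; I would settle it by citing \cite{GLpositroid}.
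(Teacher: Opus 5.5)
Your main step is essentially the paper's proof. Via the cluster structure of \cite{GLpositroid}, interior faces of a reduced plabic graph give mutable variables, Corollary~\ref{cor: all frozen} rules these out, and a cycle encloses an interior face (it cannot pass through the degree-one boundary vertices), so your leaf/bivalent-vertex normalization is unnecessary; two small points: take $G$ to be the graph containing the cycle rather than ``any'' representative, and your point-count alternative is not independent, since concluding that the complement of the chart is empty iff there are no mutable faces needs the same fact that mutable cluster variables are non-units (as in the proof of Corollary~\ref{cor: all frozen}).
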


\begin{proof}
By \cite{GLpositroid} the cluster variables for a seed for $\Pi_{\CM}^{\circ}$ correspond to the faces of the plabic graph $G_{\CM}$. Faces that are not adjacent to the boundary of the disk, which are called internal faces, correspond to mutable variables. Corollary \ref{cor: all frozen} then implies that there are no internal faces. Any plabic graph $G$ which has a cycle has an internal face, so this implies $G_{\CM}$ is a forest.
\end{proof}


\subsection{Deodhar decomposition and Marsh-Rietsch parametrization} 
Choose a reduced expression $\bw$ for $w \in S_n$. The Richardson variety $R_{v,w}$ admits a Deodhar decomposition \cite{Deodhar} 
\[\bigsqcup_{\sigma} (\C^{\times})^{a(\sigma)}\times \C^{b(\sigma)} \]
where the components are indexed by \emph{distinguished subexpressions} $\sigma$ for $v$ in $\bw$. We have that $a(\sigma)+2b(\sigma)=\ell(w)-\ell(v)=d$. There is a unique component of top dimension, which is open in $R_{v,w}$ and is also the unique component which is a torus. We call this component the \emph{Deodhar torus}. 

\begin{lemma}\label{lem:torus iff one deodhar comp} Fix a reduced word $\bw$ for $w \in S_n$.
    The open Richardson $R_{v,w}$ is isomorphic to a torus if and only if the Deodhar torus is the unique Deodhar component.
\end{lemma}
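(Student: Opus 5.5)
The direction ``unique component $\Rightarrow$ torus'' is immediate. If the Deodhar torus is the only component, the Deodhar decomposition reads $R_{v,w} = (\C^\times)^{a(\sigma)}$ with $a(\sigma)=d$. So $R_{v,w}$ is isomorphic to a torus.

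For the converse I would argue as follows. Assume $R_{v,w}$ is isomorphic to a torus $\TT$ of dimension $d$. The Deodhar torus $D \cong (\C^\times)^{d}$ is open in $R_{v,w}$, as recalled above. By Lemma~\ref{lem: open torus}, applied to the open subset $D \subset \TT$, which is itself a torus, we get $D = R_{v,w}$. Since the Deodhar decomposition is a disjoint union, every other component is empty. But every component $(\C^\times)^{a(\sigma)}\times \C^{b(\sigma)}$ is nonempty. Hence there is exactly one distinguished subexpression, namely the one indexing the Deodhar torus.

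The only point needing care is that the Deodhar torus really is open, and not merely of top dimension. This is part of the standard Deodhar/Marsh--Rietsch theory and is stated above, so it can be cited. Alternatively, one can argue via the point count: $\sR_{v,w}(q)=\sum_\sigma (q-1)^{a(\sigma)}q^{b(\sigma)}$ and a torus has point count $(q-1)^d$. Since $a(\sigma)+2b(\sigma)=d$ forces any extra component to add a positive contribution evaluated at large $q$, it would spoil the equality, which again gives uniqueness. With openness granted, the argument is essentially immediate.
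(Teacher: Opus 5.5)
Your argument is correct and is the paper's proof: the forward direction applies Lemma~\ref{lem: open torus} to the open Deodhar torus inside $R_{v,w}$, and the converse holds because $R_{v,w}$ is the union of its Deodhar components. Your optional point-count variant is not needed; as written it also glosses over the fact that the isomorphism with a torus is only given over $\C$, so getting the count $(q-1)^d$ would require a spreading-out argument or a switch to $E$-polynomials.
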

\begin{proof}
    The forward direction follows from Lemma~\ref{lem: open torus}, and the backwards direction from the fact that $R_{v,w}$ is the union of the Deodhar components.
\end{proof}

Marsh--Rietsch \cite{MR} give a parametrization of all Deodhar components. Here, we recall their results for the Deodhar torus.

Let $\phi_i : \Sl(2) \hookrightarrow \Sl(n)$ send $M \in \Sl(2)$ to the $n \times n$ matrix which has $M$ in the $2 \times 2$ block in rows and columns $i, i+1$, and otherwise agrees with the identity matrix. We define
\[\dot{s}_i := \phi_i\left(\begin{bmatrix}
    0& -1\\
    1 & 0
\end{bmatrix}\right) \qquad \text{and} \qquad y_i(t):= \phi_i\left( \begin{bmatrix}
    1& 0\\
    t & 1
\end{bmatrix} \right)\]

\begin{definition}[MR parametrization]
    Let $\bw=s_{i_1} \dots s_{i_\ell}$ be a reduced expression for $w$. By \cite[Lemma 3.4]{MR}, $\bw$ has a unique ``rightmost" reduced subexpression for $v$, called the \emph{positive distinguished subexpression} (PDS). Let $J^+ \subset [\ell]$ denote the indices used in the PDS and $J^{\circ}:= [\ell] \setminus J^+$. We define
    \[g_j:= \begin{cases}
        \dot{s}_{i_j} & \text{if}~j \in J^+\\
        y_{i_j}(t_j) & \text{if}~ j \in J^{\circ}
    \end{cases}\]
    and set $g:=g_1 g_2 \cdots g_{\ell}$. We call $\{t_j : j \in J^\circ\}$ the \emph{MR parameters} and $g$ the \emph{MR matrix} for $(v, \bw)$.
\end{definition}
To ease notation, we omit the dependence of $g$ on the MR parameters and the reduced word $\bw$.

\begin{example}\label{ex:running ex for 3d}
    Let $v=1324$ and $w= 4231$. We choose reduced expression $\bw=s_1s_2s_3\underline{s_2}s_1$ for $w$. The PDS for $v$ in $\bw$ is underlined, so $J^+=\{4\}$ and $J^\circ=\{1,2,3,5\}$. The associated MR matrix is \begin{equation*}\label{eq:braid matrix for w}
    g= y_1(t_1)~y_2(t_2)~ y_3(t_3)~ \dot{s}_2~ y_1(t_5)=\left(
\begin{matrix}
1 & 0 & 0 & 0 \\ t_1 & 0 & -1 & 0 \\ t_5 & 1 & -t_2 & 0 \\ t_3t_5 & t_3 & 0 & 1
\end{matrix}
\right).
\end{equation*}
\end{example}

\begin{proposition}[{\cite[Proposition 5.2]{MR}}] \label{prop:MR-param}
   The map 
\begin{align*}
    (\C^\times)^{J^\circ} &\to R_{v,w}\\
    (t_j)_{j \in J^\circ} &\mapsto g B
\end{align*}
is an injection whose image is the Deodhar torus in $R_{v,w}$. In particular, if $R_{v,w}$ is a torus, this map is an isomorphism.
\end{proposition}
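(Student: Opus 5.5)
This proposition is cited from Marsh--Rietsch, so the task is to outline why the map $(t_j)_{j\in J^\circ}\mapsto gB$ is an injection onto the Deodhar torus. The final sentence then follows formally from Lemma~\ref{lem:torus iff one deodhar comp}. If $R_{v,w}$ is a torus, the Deodhar torus is the only Deodhar component, so it equals $R_{v,w}$, and an injective morphism onto it is bijective. Since $R_{v,w}$ is smooth, hence normal, Zariski's main theorem upgrades this bijective morphism of smooth varieties of the same dimension to an isomorphism. Alternatively, we can write down an inverse directly, as below.

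For the main claim, I would argue by induction on $\ell=\ell(\bw)$, using the standard Deodhar analysis of the relative position of a flag. Consider the partial products $g_{(j)}:=g_1\cdots g_j$ and the flags $F_j=g_{(j)}B$. The key steps are as follows.

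\textbf{Step 1.} Show that $F_j\in B\,w_{(j)}B/B$ with $w_{(j)}=s_{i_1}\cdots s_{i_j}$. Each $y_i(t)$ with $t\neq0$ factors as $y_i(t)=x_i(t^{-1})\,\dot s_i\,\alpha_i^\vee(t)\,x_i(t^{-1})$ up to sign, so $y_i(t)\in B\dot s_iB$ for $t\ne 0$. Reducedness of $\bw$ then gives $gB\in X^w$.

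\textbf{Step 2.} Track the position of $F_j$ relative to $B_-$, which lies in $B_-v_{(j)}B$ for the PDS partial product $v_{(j)}$. Here I use that $y_i(t)\in B_-$, while $\dot s_i$ moves the cell up, since the PDS condition $v_{(j-1)}s_{i_j}>v_{(j-1)}$ for $j\in J^+$ is exactly the ``positive'' distinguished condition. This shows $gB\in X_v$, so $gB\in R_{v,w}$.

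\textbf{Step 3.} Identify the image with the Deodhar component of the PDS. By Deodhar's construction, the component for a distinguished subexpression $\sigma$ consists of flags whose chain of relative positions (through the sequence of Schubert cells determined by $\bw$) is prescribed by $\sigma$. The stratum for the PDS has exactly $|J^\circ|$ free $\C^\times$-coordinates and no $\C$-coordinates, since the positive subexpression has no ``$-1$'' steps. Each step $j\in J^\circ$ contributes one $\C^\times$ of choices of the next flag in the prescribed relative position, and $y_{i_j}(t_j)$ with $t_j\in\C^\times$ parametrizes exactly these.

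\textbf{Step 4.} Prove injectivity by recovering the parameters one at a time. Given $F=gB$, the sequence of flags $F_j$ is uniquely determined by $F$ through the relative position conditions; this is the uniqueness in the Deodhar decomposition. Each $t_j$ is then read off as a ratio of minors of $g_{(j)}$, which is a regular nonvanishing function. This also gives a regular inverse on the Deodhar torus.

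The main obstacle is Step 3 together with the uniqueness part of Step 4. One must verify that the intermediate flags are determined by $F$, and that the PDS stratum is precisely the image rather than merely containing it. I would handle this by the inductive $\mathrm{SL}_2$ computation in the rank-one case: given $F_{j-1}$ and the required positions of $F_j$ relative to both $F_{j-1}$ and $B_-$, the allowed $F_j$ form $\PP^1$ minus one point (when $j\in J^+$) or minus two points (when $j\in J^\circ$). These correspond exactly to $\dot s_i$ and to $y_i(t)$ with $t\in\C^\times$.
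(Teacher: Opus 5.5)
The paper gives no proof of this statement; it just quotes Marsh--Rietsch. Your Step~1 is fine. So is your treatment of the last sentence: Lemma~\ref{lem:torus iff one deodhar comp}, plus Zariski's main theorem applied to a bijective morphism onto the smooth variety $R_{v,w}$. Inducting along the unique gallery $B=F_0,\dots,F_\ell=gB$ is also the right strategy. However, the two steps that carry the content are wrong as written, and there is a genuine gap.

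\textbf{Step~2.} You cannot track only the cell $B_-v_{(j)}B/B$ containing $F_j$. Right multiplication by $y_i(t)\in B_-$ does not preserve $B_-$-cells, and the outcome depends on the matrix representative, not just on the flag. For example, in $\Sl(2)$ with $x_1(c)=\phi_1\bigl(\begin{smallmatrix}1&c\\0&1\end{smallmatrix}\bigr)$, both $1$ and $x_1(-t^{-1})$ represent the flag $B/B\in B_-B/B$. But $y_1(t)B\in B_-B/B$, while $x_1(-t^{-1})\,y_1(t)B=\dot s_1B\in B_-s_1B/B$. Similarly, $B_-uB\,\dot s_i$ meets both $B_-uB$ and $B_-us_iB$ when $us_i>u$.

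The missing idea is an invariant on the matrices themselves: $g_{(j)}\in U^-\dot v_{(j)}$, where $\dot v_{(j)}$ is the ordered product of the $\dot s_{i_m}$ with $m\le j$, $m\in J^+$. This follows from
\[
g_{(j)}=\Bigl(\prod_{m\le j,\ m\in J^\circ}\dot v_{(m)}\,y_{i_m}(t_m)\,\dot v_{(m)}^{-1}\Bigr)\,\dot v_{(j)},
\]
since each factor lies in the root subgroup for $-v_{(m)}(\alpha_{i_m})$. That root is negative because $v_{(m)}s_{i_m}>v_{(m)}$ for $m\in J^\circ$. So the condition that does the work is the one at the $J^\circ$ positions, which is automatic for distinguished subexpressions. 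The $J^+$ steps are then free, the opposite of what your Step~2 says.

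\textbf{Rank-one count.} Your count in the last paragraph is wrong for $j\in J^+$. Let $P_i\supset B$ be the minimal parabolic, and suppose $g_{(j-1)}\in U^-\dot u$ with $us_i>u$. Then every point $g_{(j-1)}y_i(c)B$ with $c\in\C$ on the line $g_{(j-1)}P_i/B$ lies in $B_-uB/B$, because $\dot u\,y_i(c)\,\dot u^{-1}\in U^-$. Only the single point $g_{(j-1)}\dot s_iB$ lies in $B_-us_iB/B$. So for $j\in J^+$ there is exactly one allowed $F_j$, not $\PP^1$ minus a point. ``$\PP^1$ minus a point'' is Deodhar's $J^-$ case ($us_i<u$), which is what produces the $\C$-factors. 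As you wrote it, each $J^+$ step would contribute a $\C$ of flags that the single element $\dot s_i$ cannot reach. That contradicts surjectivity and the dimension count $|J^\circ|=\ell(w)-\ell(v)$.

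With the invariant above and the corrected count, your Steps~3--4 go through. The corrected count is: for $J^\circ$, $\PP^1$ minus the two points $F_{j-1}$ and $g_{(j-1)}\dot s_iB$, i.e.\ $c\in\C^\times$; for $J^+$, the single point $g_{(j-1)}\dot s_iB$. Surjectivity then follows by induction along the gallery. Injectivity follows because equal galleries together with equal $g_{(j-1)}$ force $y_{i_j}(t_j)B=y_{i_j}(t'_j)B$, hence $t_j=t'_j$.
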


We note that there is a description of $g$ as using the (directed, weighted) \emph{3D plabic graph} $G_{v, \bw}$, defined as follows. We associate to each factor $g_j$ one of the two ``chips" below. We use the one on the left, with a vertical edge called a \emph{bridge}, if $g_j= y_{i_j}(t_j)$ (equivalently, $j \in J^\circ$) and the one on the right, called a \emph{crossing}, if $g_j=\dot{s}_{i_j}$ (equivalently if $j \in J^+$).



\begin{center}
    \begin{tikzpicture}[scale=1.2, line width=0.8pt]

\begin{scope}[shift={(0,0)}]
    \foreach \y/\label in {0/1, 0.5/2, 1.0/i_j, 1.5/i_{j+1}, 2.0/n} {
        \draw [-{Stealth[scale=0.6]}] (0,\y) -- (1,\y);
        \draw (1,\y) -- (2.5,\y);
        \node[left] at (0,\y) {$\label$};
    }
    
    \draw[thick, dotted] (1.25, 0.65) -- (1.25, 0.85);
    \draw[thick, dotted] (1.25, 1.65) -- (1.25, 1.85);
    
    \draw[-{Stealth}] (1.25, 1.5) -- (1.25, 1.0);
    \node[right, purple] at (1.25, 1.25) {$t_j$};
    
    \filldraw[fill=white] (1.25, 1.5) circle (2pt); 
    \filldraw[fill=black] (1.25, 1.0) circle (2pt); 
\end{scope}

\begin{scope}[shift={(5,0)}]
    \foreach \y/\label in {0/1, 0.5/2, 2.0/n} {
        \draw [-{Stealth[scale=0.6]}] (0,\y) -- (1,\y);
        \draw (1,\y) -- (2.5,\y);
        \node[left] at (0,\y) {$\label$};
    }
    
    \node[left] at (0, 1.0) {$i_j$};
    \node[left] at (0, 1.5) {$i_{j+1}$};
    
    \draw [-{Stealth[scale=0.6]}] (0, 1.5) -- (1.0, 1.5);
    \draw [-{Stealth[scale=0.6]}] (1.5, 1.0) -- (2.5, 1.0);
    
    \draw [-{Stealth[scale=0.6]}] (0, 1.0) -- (1.0, 1.0);
    \draw [-{Stealth[scale=0.6]}] (1.5, 1.5) -- (2.5, 1.5);


\draw[thick] (1.0,1.5) to [out=0,in=180] (1.5,1.0);
\draw[color=white, line width=5] (1.0,1.0) to [out=0,in=180] (1.5,1.5);
\draw[thick] (1.0,1.0) to [out=0,in=180] (1.5,1.5);

    \node[below right, purple, scale=0.8] at (1.4, 1.8) {$-1$};

    \draw[thick, dotted] (1.25, 0.65) -- (1.25, 0.85);
    \draw[thick, dotted] (1.25, 1.65) -- (1.25, 1.85);
\end{scope}

\end{tikzpicture}
\end{center}

Unless otherwise indicated, the weight of an edge is 1. Concatenating the chips for $g_1, \dots, g_\ell$ from left to right, we obtain $G_{v, \bw}$. We label the wires on the far left and far right of $G_{v, \bw}$ with $1, \dots, n$ from bottom to top. See Figure~\ref{fig:3d plabic ex} for $G_{v, \bw}$ for Example~\ref{ex:running ex for 3d}.

\begin{figure}
    \centering
\begin{tikzpicture}[scale=1.5, line width=1pt]

    \foreach \y/\label in {0/1, 0.5/2, 1.0/3, 1.5/4} {
        \draw [-{Stealth[scale=0.6]}] (0,\y) -- (1,\y);
        \draw (1,\y) -- (1.5,\y);
        \node[left] at (0,\y) {$\label$};
        \node[right,purple] at (2.5,\y) {$\label$};
    }
    
     \draw (1.5,0) -- (2.5,0);
       \draw (2.0,0.5) -- (2.5,0.5);
      \draw [-{Stealth[scale=0.6]}] (2.0,1) -- (2.5,1);
       \draw (1.5,1.5) -- (2.5,1.5);

    \filldraw[fill=black] (0.25, 0.0) circle (2pt); 
       \draw [-{Stealth}](0.25, 0.5) -- (0.25, 0.0);
    \node[right, purple] at (0.25, 0.25) {$t_1$};
     \filldraw[fill=white] (0.25, 0.5) circle (2pt);
     
    \filldraw[fill=black] (0.75, 0.5) circle (2pt); 
       \draw[-{Stealth}] (0.75, 1.0) -- (0.75, 0.5);
    \node[right, purple] at (0.75, 0.75) {$t_2$};
    \filldraw[fill=white] (0.75, 1.0) circle (2pt); 

   
    \filldraw[fill=black] (1.25, 1.0) circle (2pt); 
       \draw[-{Stealth}] (1.25, 1.5) -- (1.25, 1.0);
    \node[right, purple] at (1.25, 1.25) {$t_3$};
     \filldraw[fill=white] (1.25, 1.5) circle (2pt); 

   
    \filldraw[fill=black] (2.25, 0.0) circle (2pt); 
       \draw[-{Stealth}] (2.25, 0.5) -- (2.25, 0.0);
    \node[right, purple] at (2.25, 0.25) {$t_5$};
     \filldraw[fill=white] (2.25, 0.5) circle (2pt); 


   \draw[thick] (1.5,1.0) to [out=0,in=180] (2.0,0.5);
   \draw[color=white, line width=5] (1.5,0.5) to [out=0,in=180] (2.0,1.0);
   \draw[thick] (1.5,0.5) to [out=0,in=180] (2.0,1.0);
   \node[right, purple] at (1.9,1.15) {$-1$};

\end{tikzpicture}

    \caption{The graph $G_{v, \bw}$ for $v= 1324$ and $\bw=s_1s_2s_3s_2s_1$} 
    \label{fig:3d plabic ex}
\end{figure}

The Lindstr\"om-Gessel-Viennot lemma \cite{lindstrom,GV} 
implies the MR matrix and its flag minors can be read off of paths in $G_{v, \bw}$. We state this more precisely below.

\begin{lemma}\label{lem:minors from paths signs}
   Let $g=(m_{ij})$ be the MR matrix for $(v, \bw)$. Then
   \[m_{ij} = \sum_{\substack{\text{paths}\\p:i \to j}} \prod_{e \in p} \wt(e),\]
   where the sum is over all directed paths in $G_{v,\bw}$ from sink $i$ to source $j$ and $\wt(e)$ is the weight of edge $e$.

For $I \in \binom{[n]}{k}$ the flag minor $\Delta_I(g)$ is equal to
\begin{equation}\label{eq:minors from paths signs} \Delta_I(g)= \sum_{P: I \to [k]} \sigma(P) \prod_{e \in P} \wt(e),\end{equation}
where the sum is over non-intersecting (NI) path collections $P$ from sources $I$ to sinks $[k]$ and $\sigma(P)$ is the sign of the permutation that $P$ defines.
\end{lemma}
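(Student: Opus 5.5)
This is the standard Lindström--Gessel--Viennot statement applied to the weighted planar network $G_{v,\bw}$. The plan is to prove it by induction on the number of chips, using the factorization $g=g_1g_2\cdots g_\ell$.

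\emph{Entries.} Each chip realizes its factor matrix $g_j$ as a path-weight matrix. For a bridge $y_{i}(t)$, the paths through the chip are the horizontal paths $r\to r$ for every $r$, all of weight $1$. There is one further path, which follows the wire at level $i+1$, goes down the bridge to level $i$, and has weight $t$. This reproduces the single off-diagonal entry $t$ in position $(i+1,i)$, because the edges are directed left to right and the bridge points downward. For a crossing $\dot s_i$, the only paths are horizontal paths at the other levels and the two crossing strands, one of which carries weight $-1$. This gives exactly the matrix $\phi_i\left(\begin{smallmatrix}0&-1\\1&0\end{smallmatrix}\right)$.

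Concatenating networks multiplies their path-weight matrices. A path through the concatenation from row $i$ to column $j$ decomposes uniquely at the cut between chips. Its weight is the product of the two pieces, so summing over intermediate levels gives $(g_1\cdots g_{j-1})\,g_j$. Induction on $\ell$ then gives the formula for $m_{ij}$. The only care needed is the orientation convention: paths run from the row label $i$ on one side to the column label $j$ on the other. We fix it so that it matches the chip computations above.

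\emph{Minors.} The flag minor $\Delta_I(g)$ is the minor with rows $I$ and columns $[k]$. By the Cauchy--Binet/LGV argument we expand
\[\det(m_{i,j})_{i\in I,\,j\in[k]}=\sum_{\tau}\operatorname{sgn}(\tau)\prod m_{i,\tau(i)}\]
into signed families of paths. The usual tail-swapping involution at the first intersection vertex then cancels every family that has an intersecting pair. For this involution to be well defined, paths must meet only at vertices, not cross transversally. At a crossing chip, the two strands pass over each other without sharing a vertex (the graph is "3D"). We handle this either by treating the crossing as a genuine crossing, so that the NI families are families that share no vertices, or, more cleanly, by applying LGV chip by chip and using Cauchy--Binet for products: $\Delta_{I,[k]}(AB)=\sum_K\Delta_{I,K}(A)\Delta_{K,[k]}(B)$. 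For a single chip, the NI families and their signs can be checked directly. For the crossing chip, the sign $\sigma(P)$ comes from the permutation of endpoints, and this matches the determinant of $\dot s_i$ restricted to the relevant rows and columns.

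The main point requiring attention is this compatibility of signs with the nonplanar crossings. The chip-by-chip Cauchy--Binet induction avoids any global planarity assumption, so that is the route I would take. The sign $\sigma(P)$ is multiplicative under concatenation, since the permutations compose.
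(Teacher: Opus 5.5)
Your proof is correct, but the route you prefer is not the paper's. The paper gives no argument beyond invoking the Lindstr\"om--Gessel--Viennot lemma for the weighted acyclic graph $G_{v,\bw}$.

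Your concern about nonplanarity is unfounded. The \emph{signed} LGV identity, with $\sigma(P)$ included exactly as in the statement, holds for any finite acyclic directed graph, where ``non-intersecting'' means vertex-disjoint. The tail-swapping involution never refers to an embedding. Planarity is only needed to conclude that a single permutation contributes. The interaction of $\sigma(P)$ with the $-1$ weights is treated separately in the paper, in Lemma~\ref{lem: minors from paths}.

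The only modeling point is that each crossing chip consists of two edges with no common vertex. This is forced: with a shared vertex at the crossing, the minor of $\dot{s}_i$ in rows and columns $\{i,i+1\}$, which equals $1$, would come out as $0$. With that convention, your ``first option'' is precisely the paper's proof.

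Your chip-by-chip Cauchy--Binet induction is also valid and self-contained, but it needs one easy observation you left implicit. Every edge points right or down, so each path crosses each vertical cut between consecutive chips exactly once. Hence NI families in $G_{v,\bw}$ are in bijection with sequences of NI families in the individual chips, with matching intermediate index sets. The relative permutations compose along this bijection, so the signs multiply.

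That route buys explicit, local sign bookkeeping, and it mirrors the paper's own induction on the leftmost chip in Lemma~\ref{lem: minors from paths}. The direct citation is shorter and does not depend on the chip structure.
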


In principle, \eqref{eq:minors from paths signs} could involve nontrivial cancellation, and two distinct path collections could have the same weight. As the next lemma shows, this is not the case.

\begin{lemma}\label{lem: minors from paths}
    Let $\bw$ be a reduced expression for $w$. For $I \in \binom{[n]}{k}$, 
    \begin{equation}\label{eq:minors from paths}\Delta_I(g) = \sum_{P: I \to [k]} \prod_{b \in P} \wt(b),\end{equation}
    where the sum is over NI path collections $P:I \to [k]$ in $G_{v, \bw}$ and the product is over the bridge edges $b$ in $P$.
    Moreover, on the right hand side, no two terms are the same.
\end{lemma}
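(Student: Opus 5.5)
The plan is to argue by induction on $\ell=\ell(w)$, peeling off the \emph{leftmost} factor $g_1$ of the MR matrix, since left multiplication acts on rows and hence transforms flag minors into flag minors. Write $g=g_1g''$. Then $g''$ is the MR matrix for the pair $(v'',\bw'')$, where $\bw''=s_{i_2}\cdots s_{i_\ell}$ is a reduced word for $s_{i_1}w$. Here $v''=v$ if $1\in J^\circ$ and $v''=s_{i_1}v$ if $1\in J^+$. The first thing to check is that the PDS of $v''$ in $\bw''$ is the restriction of the PDS of $v$ in $\bw$; this should follow from the ``rightmost'' characterization in \cite[Lemma 3.4]{MR}. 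Graphically, $G_{v'',\bw''}$ is $G_{v,\bw}$ with its first chip removed. Every NI path collection in $G_{v,\bw}$ decomposes uniquely into its behaviour in the first chip followed by an NI collection in $G_{v'',\bw''}$. It therefore suffices to match the algebraic recursion for $\Delta_I$ with this combinatorial decomposition, term by term and with signs.

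\emph{Bridge case} ($g_1=y_i(t_1)$). Left multiplication adds $t_1$ times row $i$ to row $i+1$. Hence $\Delta_I(g)=\Delta_I(g'')$ unless $i+1\in I$ and $i\notin I$, in which case
\[\Delta_I(g)=\Delta_I(g'')+t_1\,\Delta_{(I\setminus\{i+1\})\cup\{i\}}(g'').\]
The two summands correspond exactly to the path collections that skip or use the bridge. By induction, both minors of $g''$ are sums of distinct monomials with coefficient $+1$ in the parameters $t_2,\dots$. Since $t_1$ does not occur in $g''$, the two groups of monomials have different $t_1$-degree. So the terms of $\Delta_I(g)$ are again distinct with coefficient $+1$, which gives both the positivity and the ``no two terms equal'' claim.

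\emph{Crossing case} ($g_1=\dot s_i$). Left multiplication replaces row $i$ by $-(\text{row } i+1)$ and row $i+1$ by row $i$. If $I$ contains both or neither of $i,i+1$, then $\Delta_I(g)=\Delta_{s_iI}(g'')$; the two signs cancel in the first case. If $i+1\in I$ and $i\notin I$, we again get $\Delta_I(g)=\Delta_{s_iI}(g'')$ with sign $+$. The only bad case is $i\in I$, $i+1\notin I$, where $\Delta_I(g)=-\Delta_{s_iI}(g'')$.

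This bad case is the main obstacle. I expect to show that it never contributes: $\Delta_{s_iI}(g'')\equiv0$ whenever $i+1\in s_iI\not\ni i$. By Theorem~\ref{thm: flag minor richardson} applied to $(v'',s_iw)$, this amounts to showing that no $u\in[s_iv,s_iw]$ satisfies $u^{-1}(i+1)\le k<u^{-1}(i)$. Such a $u$ has $s_iu<u$. I would try to rule it out with the lifting property of Bruhat order, using $s_iw<w$ and $s_iv<v$. Failing that, I would use the rightmost property of the PDS directly: a left descent compatible with a smaller element should force the first letter to lie in $J^\circ$ rather than $J^+$. If the left induction turns out awkward, the fallback is the symmetric induction peeling off $g_\ell$ on the right, which is where the PDS is naturally defined greedily.

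Once the bad case is excluded, every crossing contributes sign $+1$, so all signs in \eqref{eq:minors from paths signs} combine to $+1$ and the weights reduce to products over bridge edges. This yields \eqref{eq:minors from paths}. Distinctness of terms passes through crossings unchanged, because the correspondence $P\leftrightarrow P''$ is a bijection preserving bridge weights.
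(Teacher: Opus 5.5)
Your route is the same as the paper's: induct on $\ell(w)$ by removing the leftmost chip, split on $1\in J^\circ$ versus $1\in J^+$, and isolate the crossing case with $i\in I$, $i+1\notin I$ as the only place a minus sign could survive. Reading the recursion off row operations instead of tracking LGV signs is fine, and so is checking that the PDS restricts to $\bw''$; both are if anything more careful than the paper.

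The gap is that this crossing case is the one nontrivial step, and you only state an expectation for it. Your first plan cannot work as stated, because $s_iv<v$ and $s_iw<w$ are not enough. Take $n=3$, $i=1$, $v=s_1$, $w=s_1s_2s_1$. Both conditions hold, yet $u=s_1\in[s_1v,s_1w]=[e,s_2s_1]$ satisfies $u^{-1}(2)=1\le 1<2=u^{-1}(1)$. This is not a counterexample to the lemma, since here the first letter of $s_1s_2s_1$ is not in $J^+$.

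The missing idea is to turn your fallback, ``the rightmost property'', into a precise statement. The PDS is the right-to-left greedy reduced subexpression, and positions $2,\dots,\ell$ of $\bw$ form $\bw''$. So the first letter lies in $J^+$ exactly when $v$ has no subexpression in $\bw''$, i.e.\ when $v\not\le s_iw$. With this the exclusion takes a few lines. Suppose $u\in[s_iv,s_iw]$ had $s_iu<u$. Then $u\neq s_iv$, because $s_i(s_iv)=v>s_iv$. Hence $s_iv<u$, with $s_i$ a left descent of $u$ but not of $s_iv$. The lifting property gives $v=s_i(s_iv)\le u\le s_iw$, a contradiction.

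The paper makes the same argument on the other side of the chip. It uses the subword property together with $v\not\le s_iw$ to show $u\not\le s_iw$, and hence $s_iu<u$, for every $u\in[v,w]$. Consequently no $u[k]$ contains $i$ without $i+1$, and $\Delta_I(g)\equiv 0$ in the bad case. With this step filled in, your proof goes through, and the fallback of peeling off $g_\ell$ on the right is unnecessary.
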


\begin{proof} We use the notation $\wt(P):= \prod_{e \in P} \wt(e)$.
We will show that in $G_{v, \bw}$, 
\begin{enumerate}[(a)]
    \item if $P,P': I \to K$ are distinct NI path collections, $\wt(P) \neq \pm \wt(P') $;
    \item if $P: I \to [k]$ is an NI path collection, $\sigma(P)\wt(P)$ is equal to $\prod_{b\in P} \wt(b)$ where the product is over the bridges $b$ in $P$.
\end{enumerate}

We note that as soon as we establish (a), we show that $\Delta_I(g)$ is identically zero if and only if there are no NI path collections $P: I \to [k]$. Indeed, since distinct path collections have different weights, the formula for $\Delta_I(g)$ in \eqref{eq:minors from paths signs} is the zero polynomial if and only if there are no path collections from $I$ to $[k]$.

We proceed by induction on $\ell(w)$. The base case is $w=e$, in which case $v=e$ and $G_{v, \bw}$ consists of $n$ horizontal wires. Items (a) and (b) are straightforward in this case.

For the inductive step, we have $\bw = s_{i_1} \cdots s_{i_\ell}$. Say $s_{i_1}= s_j$. Let $\bw'= s_{i_2} \dots s_{i_\ell}$; this is a reduced expression for the permutation $w':=s_jw$. We have two cases.

\noindent\textbf{Case I: $1 \in J^+$}. Let $v'= s_{j} v$. We note that $v'<v, w'$. By induction, (a) and (b) hold for $G_{v', \bw'}$. The graph $G_{v, \bw}$ is obtained from $G_{v', \bw'}$ by adding a crossing on the left between wires $j$ and $j+1$. There is a bijection between NI path collections $P: I \to K$ in $G_{v, \bw}$ and NI path collections $P': s_j(I) \to K$ in $G_{v', \bw'}$ which preserves weights up to sign. So (a) holds for $G_{v, \bw}$ as well.

For (b): Consider an NI path collection $P:I \to [k]$ and the corresponding collection $P'$ in $G_{v', \bw'}$. Note
\[\prod_{b \in P} \wt(b) = \prod_{b \in P'} \wt(b) = \sigma(P')\wt(P')\]
where the last equality is by induction. We have that $\sigma(P) \wt(P) = \sigma(P') \wt(P')$ if $j, j+1 \notin I$, if $j+1 \in I, j \notin I$ and if $j, j+1 \in I$ (in this case, $\sigma(P)= -\sigma(P')$ and $\wt(P)= -\wt(P')$). We argue that the final situation, where $j \in I$ and $j+1 \notin I$ cannot occur. Indeed, we have $\Delta_I(g)$ is not identically zero, so $I=u[k]$ for some $u \in [v,w]$. Any reduced subexpression for $u$ in $\bw$ must contain a reduced expression for $v$, and thus must contain the first letter of $\bw$ (otherwise, there would be a reduced expression for $v$ in $\bw'$, contradicting the assumption that $1 \in J^+$). In other words, $s_ju<u$. This implies that $j+1$ appears to the left of $j$ in $u$, so $u[k]$ can't contain $j$ and not $j+1$.

\noindent\textbf{Case II: $1 \in J^\circ$.} In this case, $v \leq w'$. By induction, (a) and (b) hold for $G_{v, \bw'}$. The graph $G_{v, \bw}$ is obtained from $G_{v, \bw'}$ by adding a bridge $b$ of weight $t_1$ on the left between the wires at heights $j$ and $j+1$.

For (a): consider two NI path collections $P:I \to K$ and $Q:I \to K$ in $G_{v, \bw}$. If one uses the bridge $b$ and the other does not, then clearly $\wt(P) \neq \pm \wt(Q)$. If neither uses $b$, then $P, Q$ are also NI path collections in $G_{v, \bw'}$, so by induction $\wt(P) \neq \pm \wt(Q)$. If both use $b$, then ``chopping off" the leftmost chip of $G_{v, \bw}$ turns $P, Q$ into NI path collections $P',Q': s_j(I) \to K$ in $G_{v, \bw'}$. We have $\wt(P)= t_1\wt(P') \neq \pm t_1\wt(Q') = \pm \wt(Q)$ where the middle inequality is by induction. 

For (b): Consider an NI path collection $P: I \to [k]$ in $G_{v, \bw}$ and let $P':I' \to [k]$ be the NI path collection in $G_{v, \bw'}$ obtained by ``chopping off" the leftmost chip of $G_{v, \bw}$. By inspection, $\sigma(P) = \sigma(P')$ and $\wt(P)$ is either equal to $\wt(P')$ or is equal to $t_1 \wt(P')$. In either case, $\sigma(P') \wt(P')= \prod_{b \in P'} \wt(b)$ implies $\sigma(P) \wt(P)= \prod_{b \in P} \wt(b)$.

\end{proof}

\subsection{Combinatorics of Bruhat intervals}

We refer to \cite{BB05} for all details on Bruhat order on $S_n$.  Since the only intervals we will consider in this paper are intervals in the Bruhat order, we will henceforth just say ``intervals." Following \cite{BB05}, a $k$-crown is the face poset of a $k$-gon. In particular, the interval $[e,s_1s_2s_1]$ is a 2-crown.

We recall the definition of Bruhat interval polytopes following \cite[Definition 7.8]{TW15}. For $I \subset [n]$, we use the notation $e_I := \sum_{i \in I} e_i$ for the indicator vector of $I$ in $\R^n$.

\begin{definition}\label{defn: bruhat interval polytope}
The \emph{Bruhat interval polytope} $P(v,w)$ is
$$
P(v,w)=\Conv\left((n-1)e_{u(1)}+(n-2)e_{u(2)}+\ldots+e_{u(n-1)}\ :\ u\in [v,w]\right) \subset \R^n.
$$
\end{definition}

More conceptually, recall that $S_n$ acts on $\R^n$ by $u \cdot (\beta_1, \dots, \beta_n) = (\beta_{u^{-1}(1)}, \dots, \beta_{u^{-1}(n)})$. We have that 
\[P(v,w) = \Conv(u \cdot (n-1, n-2, \dots, 1, 0): u \in [v,w]).\]
Note that 
$$
(n-1)e_{u(1)}+(n-2)e_{u(2)}+\ldots+e_{u(n-1)}
=\sum_{k=1}^{n-1}e_{u[k]},
$$
and in fact we have a Minkowski sum decomposition \cite[Proposition 2.8]{TW15}
$$
P(v,w)=P_{1}(v,w)+\ldots+P_n(v,w)
$$
where 
$$
P_k(v,w)=\Conv\left(e_{u[k]}\ :\ u\in [v,w]\right) = \Conv\left(e_{I}\ :\ I \in \CM_k(v,w)\right)
$$
are the positroid polytopes for the constituents of $[v,w]$. 

\begin{remark}
The vertices of $P(v,w)$ are given by
$(n-1)e_{u(1)}+(n-2)e_{u(2)}+\ldots+e_{u(n-1)}$. Since $e_{u(1)}+\ldots+e_{u(n)}=e_1+\ldots+e_n$, we can instead consider the shifted Bruhat interval polytope
$P(v,w)+e_1+\ldots+e_n$ with vertices
$ne_{u(1)}+(n-1)e_{u(2)}+\ldots+2e_{u(n-1)}+e_n$. The latter convention was used in \cite{TW15}.
\end{remark}



\subsection{A lemma on posets}
Let $P$ be a poset and let $\Int(P)$ be the set of intervals of $P$, together with the empty set. We consider $\Int(P)$ to be a poset with respect to containment, so $\emptyset$ is the minimal element of $\Int(P)$. The following lemma will be useful to us.

\begin{lemma}\label{lem:poset-lattice-interval}
    A finite poset $P$ is a lattice if and only if $\Int(P)$ is a lattice.
\end{lemma}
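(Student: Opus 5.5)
We prove the two directions separately, using only the definitions of meet and join. Recall that a finite poset is a lattice if every pair of elements has a meet and a join. We use the standard fact that a finite poset with a top element in which every pair has a meet is a lattice (the join of $a,b$ is the meet of all common upper bounds, a nonempty set). Dually, a finite poset with a bottom element in which every pair has a join is a lattice.

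\emph{($P$ a lattice $\Rightarrow$ $\Int(P)$ a lattice).} If $P$ is empty the claim is trivial, so assume $P$ is a nonempty finite lattice. Then $P$ has a minimum $\hat 0$ and a maximum $\hat 1$, and $\Int(P)$ has top element $[\hat 0,\hat 1]=P$. By the standard fact it suffices to produce meets. For intervals $[a,b]$ and $[c,d]$ the intersection is
\[[a,b]\cap[c,d]=\{x: a\vee c\le x\le b\wedge d\}.\]
This is the interval $[a\vee c,\, b\wedge d]$ if $a\vee c\le b\wedge d$, and empty otherwise. Either way it is an element of $\Int(P)$, and it is clearly the greatest lower bound under containment. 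Meets involving $\emptyset$ are $\emptyset$.

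\emph{($\Int(P)$ a lattice $\Rightarrow$ $P$ a lattice).} Let $x,y\in P$. The singletons $[x,x]$ and $[y,y]$ have a join $[a,b]$ in $\Int(P)$, which is nonempty since it contains $x$. Hence $a\le x,y\le b$, so $x$ and $y$ have a common upper bound and a common lower bound. We show $b=x\vee y$. Let $u$ be any common upper bound of $x$ and $y$. Then $[a,u]$ is an interval containing both $[x,x]$ and $[y,y]$, so $[a,b]\subseteq[a,u]$, which gives $b\le u$. Thus $b$ is the least upper bound of $x$ and $y$. Symmetrically, using $[\ell,b]$ for any common lower bound $\ell$, we get $a=x\wedge y$. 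So $P$ is a lattice.

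The one point needing care is that in the second direction we first need \emph{some} common upper bound $u$ before forming the interval $[a,u]$. This comes for free from $[a,b]$ being a nonempty interval that contains both singletons, so $b$ itself is a common upper bound. No other step is delicate.
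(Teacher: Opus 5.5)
Your proof is correct. It differs from the paper's mainly in the converse direction.

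\emph{Forward direction.} Both arguments take the meet in $\Int(P)$ to be the intersection. You are in fact more careful than the paper here: it writes the meet as $[x\vee y,p\wedge q]$ without noting that this can be $\emptyset$. The paper also writes down the join $[x\wedge y,p\vee q]$ explicitly. You instead obtain joins from the top element $P=[\hat 0,\hat 1]$ via the finite meet-semilattice criterion.

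\emph{Converse direction.} The paper uses only that the finite lattice $\Int(P)$ has a top element. That element must be $P$ itself, so $P$ is bounded. The paper then identifies $P$ with the interval $\bigl[[\hat 0,\hat 0],[\hat 0,\hat 1]\bigr]$ of $\Int(P)$ via $b\mapsto[\hat 0,b]$, and quotes that intervals of lattices are lattices. You instead show directly that the join of $[x,x]$ and $[y,y]$ in $\Int(P)$ is $[x\wedge y,\,x\vee y]$.

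\emph{What each buys.} Your converse is more elementary and computes the relevant joins explicitly. It uses only joins of pairs of singletons, and it never needs finiteness; the paper needs finiteness to guarantee that $\Int(P)$ has a top element. In the other direction, the paper's explicit join formula works verbatim for infinite lattices. Yours relies on the existence of $\hat 0,\hat 1$ and the finite criterion.
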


\begin{proof}
    Suppose $P$ is a lattice with meet $\wedge$ and join $\vee$. Consider $[x,p], [y, q] \in \Int(P)$. By the definition of meet and join, $[a,b]$ contains $[x, p]$ and $[y,q]$ if and only if $[a,b]$ contains $[x \wedge y, p \vee q]$. So $[x \wedge y, p \vee q]$ is the least upper bound of $[x,p]$ and $[y,q]$. A similar argument shows that $[x \vee y, p \wedge q]$ is the greatest lower bound of $[x,y]$ and $[p,q]$. The least upper bound of $\emptyset$ and $[x,y]$ is $[x,y]$, and the greatest lower bound is $\emptyset$. Thus, $\Int(P)$ is a lattice.

    If $\Int(P)$ is a lattice, then it has a maximal element. This maximal element contains all intervals $[x,x]$ so must be the poset $P$. That is, $P$ is itself an interval and so must have a minimal element $\hat{0}$ and a maximal element $\hat{1}$. The poset $P$ is isomorphic to the interval $[[\hat{0}, \hat{0}], [\hat{0}, \hat{1}]]$ in $\Int(P)$. Since subintervals of lattices are lattices, $P$ is a lattice.
\end{proof}

\section{Toric Richardson varieties}\label{sec:characterization} In this section, we characterize the toric Richardsons $\overline{R}_{v,w}$ in various ways.

\subsection{On open Richardsons which are tori}
We begin with some helpful results on open Richardsons $R_{v,w}$ which are tori. 

\begin{lemma}
\label{lem: 2 crown}
The open Richardson variety $R_{v,w}$ is a torus if and only if $[v,w]$ contains no 2-crowns.
\end{lemma}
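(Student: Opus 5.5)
The plan is to use point counts over finite fields, via the $\sR$-polynomial, together with Deodhar's decomposition (Lemma~\ref{lem:torus iff one deodhar comp}).

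\emph{Torus implies no 2-crowns.} Suppose $R_{v,w}$ is a torus. Then $\sR_{v,w}(q)=(q-1)^{d}$ with $d=\ell(w)-\ell(v)$. We want this property to pass to every subinterval. For $[v',w']\subset[v,w]$, I would argue that $R_{v',w'}$ is also a torus, working through the Deodhar/MR description. Fix a reduced word $\bw$ for $w$. Uniqueness of the Deodhar component means the only distinguished subexpression for $v$ in $\bw$ is the PDS. Combinatorially, Deodhar components correspond to distinguished subexpressions, and counting them gives $\sR_{v,w}(q)=\sum_\sigma (q-1)^{a(\sigma)}q^{b(\sigma)}$. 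So $R_{v,w}$ is a torus exactly when $\sR_{v,w}=(q-1)^d$.

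Next I would use the classical recursion for $\sR$-polynomials. Let $s$ be a simple reflection with $ws<w$.
\begin{itemize}
\item If $vs<v$, then $\sR_{v,w}=\sR_{vs,ws}$.
\item If $vs>v$, then $\sR_{v,w}=(q-1)\sR_{v,ws}+q\sR_{vs,ws}$.
\end{itemize}
It is cleaner, though, to use the known fact that $\sR_{v,w}=(q-1)^{d}$ exactly when $\widetilde{\sR}_{v,w}=t^{d}$, the $\widetilde{\sR}$-polynomial in $t=q^{1/2}-q^{-1/2}$. Dyer's formula, or the combinatorial interpretation of $\widetilde{\sR}$ via paths in the Bruhat graph with respect to a reflection order, expresses $\widetilde{\sR}_{v,w}$ as a sum of $t^{\text{length}}$ over increasing paths from $v$ to $w$. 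Hence $\widetilde{\sR}_{v,w}=t^d$ holds if and only if the only increasing path is maximal-length, i.e.\ every increasing path in the Bruhat graph is a saturated chain.

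\emph{No 2-crowns implies torus.} A 2-crown $[x,y]$ with $\ell(y)-\ell(x)=3$ has Bruhat graph equal to that of $S_3$. That graph contains an edge $x\to y$ (a reflection), and its $\widetilde{\sR}$-polynomial is $t^3+t$. Conversely, for length-3 intervals that are not 2-crowns, the interval is a 3-dimensional cube-like poset (a $k$-crown with $k\geq3$ is impossible in length 3, by the classification of length-3 intervals, which are $k$-crowns with $k=2,3$ or $4$, i.e.\ $S_3$, $\mathbb{Z}_2^3$, or dihedral...). More directly, we want: if $[v,w]$ has an increasing path that is not saturated, then some subinterval of $[v,w]$ is a 2-crown.

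My approach for this step is to take a non-saturated Bruhat-graph edge $x\to y$ inside $[v,w]$ with $\ell(y)-\ell(x)$ minimal and at least $3$. By Dyer's result that the Bruhat graph of an interval determines $\widetilde{\sR}$, and that every edge in a non-lattice-free interval... Concretely, I would show that a reflection edge $x<y=xt$ with $\ell(y)-\ell(x)=2m+1\ge3$ produces, via a transposition in $S_n$, a pattern inside a subinterval of length 3 isomorphic to $[e,s_1s_2s_1]$. The reduction is to choose positions $i<j$ of the transposition and a middle index $k$ with $x(i)<x(k)<x(j)$, yielding the $S_3$ pattern, and then to check that the flag-level interval $[x,\,x(i\,k)(k\,j)(i\,k)]$ is isomorphic to $S_3$. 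One must also verify that the other path elements stay in $[v,w]$, which holds since they lie in $[x,y]\subset[v,w]$.

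\emph{Conclusion.} If $[v,w]$ has no 2-crowns, every increasing path is saturated, so $\widetilde{\sR}_{v,w}=t^d$ and $\sR_{v,w}=(q-1)^d$. Then the Deodhar decomposition has a single component, since any component with $b(\sigma)>0$ contributes a power of $q$ and breaks the equality. Lemma~\ref{lem:torus iff one deodhar comp} then shows $R_{v,w}$ is a torus. Conversely, if $R_{v,w}$ is a torus, then $\widetilde{\sR}_{v,w}=t^d$. A 2-crown $[x,y]\subset[v,w]$ gives a length-one edge path $x\to y$. Extending it by saturated chains $v\to x$ and $y\to w$, suitably ordered with respect to the reflection order (possible by the EL-shellability arguments of Dyer), produces a nonsaturated increasing path and hence a term $t^{d-2}$, a contradiction.

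The main obstacle is the combinatorial step showing that a non-saturated reflection edge forces a 2-crown subinterval, and that a 2-crown forces a non-saturated increasing path from $v$ to $w$. I would first try to handle both via the pattern argument in $S_n$ with explicit transpositions.
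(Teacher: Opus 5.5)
Your Deodhar reduction is sound: $R_{v,w}$ is a torus $\iff\sR_{v,w}=(q-1)^d\iff\widetilde{\sR}_{v,w}=t^d\iff$ every increasing Bruhat-graph path from $v$ to $w$ is saturated. It even avoids the cluster input the paper uses. The paper reads off only the coefficient $-(d-\sm)$ of $q^{d-1}$, identifies $\sm$ with the number of mutable variables to get torus $\iff\sm=0$ via Corollary~\ref{cor: all frozen}, and then quotes \cite[Theorem~6.3]{Brenti} for the link with 2-crowns.

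That link is exactly what you try to prove by hand, and the direction ``torus $\Rightarrow$ no 2-crowns'' has a genuine gap. You concatenate a saturated chain $v\to x$, the edge $x\to y$ of a 2-crown, and a saturated chain $y\to w$. This path is increasing only if, in one reflection order, every label of the first chain precedes the label of $x\to y$ and every label of the last chain follows it. EL-shellability gives increasing chains in $[v,x]$ and $[y,w]$ separately and says nothing about this comparison. It really can fail. In $[1234,3241]\subset S_4$, where $\sR=(q-1)^4+q(q-1)^2$, there are two 2-crowns, $[1234,3214]$ and $[1243,3241]$. The only paths of length $d-2$ are $1234\to3214\to3241$ and $1234\to1243\to3241$, with position-transposition labels $(1\,3),(3\,4)$ and $(3\,4),(1\,4)$. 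Since $(1\,4)$ lies between $(1\,3)$ and $(3\,4)$ in every reflection order, exactly one of these paths is increasing. So a given 2-crown need not yield an increasing path. Proving that some non-saturated increasing path always exists is precisely the content of Brenti's theorem, and you give no argument for it. Heredity of the torus property to subintervals is not available either: in the paper it is Corollary~\ref{cor: subintervals tori}, deduced from this very lemma.

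The other direction is also incorrect as written, though repairable. Since $(i\,k)(k\,j)(i\,k)=(i\,j)$, the interval $[x,x(i\,k)(k\,j)(i\,k)]$ is just $[x,y]$, of rank $2m+1$, and it is $\cong S_3$ only when $m=1$. You invoke minimality but never show that a minimal non-saturated edge has length $3$. A middle index in $x$ does not suffice: for $x=1324$ and $y=x(1\,4)=4321$, no transposition of $x$ inside the rectangle has length $3$, and the 2-crown $[1342,4312]$ lies above a different element.

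A fix:
\begin{itemize}
\item If two intermediate entries of $x$ increase, replace $(i,j)$ by a smaller rectangle and induct on $m$.
\item Otherwise, let $k_1<k_2$ be the first two intermediate positions and set $x'=x(k_2\,j)\gtrdot x$. The edge $x'\to x'(i\,k_2)$ has exactly one intermediate entry and lies in $[x,y]$.
\item The tableau criterion then shows that such an interval is isomorphic to $S_3$.
\end{itemize}
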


\begin{proof}

It follows from the Deodhar decomposition that the $\sR$-polynomial can be written as
$$
\sR_{v,w}(q)=\sum_{\sigma}(q-1)^{a(\sigma)}q^{b(\sigma)}
$$
where the sum is over distinguished subexpressions of a particular reduced word $\bw$ for $w$. Recall that $a(\sigma) + 2b(\sigma) = \ell(w)-\ell(v)= d$. Let $\sm$ be the number of codimension 1 Deodhar components (with $a(\sigma)=d-2,b(\sigma)=1$). Then 
$$
\sR_{v,w}(q)=(q-1)^d+\sm(q-1)^{d-2}q+\ldots=q^d-(d-\sm)q^{d-1}+\ldots
$$

The number $\sm$ is also the number of mutable variables in a seed for the cluster structure on $R_{v,w}$. By Corollary~\ref{cor: all frozen}, $R_{v,w}$ is a torus if and only if there are no mutable variables, that is $\sm=0$. So $R_{v,w}$ is a torus if and only if the coefficient of $q^{d-1}$ in $\sR_{v,w}(q)$ is $(-d)$. By \cite[Theorem 6.3]{Brenti} (see also \cite[Section 5, Exercise 36]{BB05}) this is equivalent to the fact that $[v,w]$ has no 2-crowns.
\end{proof}

\begin{corollary}
\label{cor: subintervals tori}
Assume that $R_{v,w}$ is a torus. Then for all $v\le v'\le w'\le w$ the open Richardson  $R_{v',w'}$ is also a torus.
\end{corollary}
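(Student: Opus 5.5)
This follows directly from Lemma~\ref{lem: 2 crown}, applied twice. First I apply it to $[v,w]$: since $R_{v,w}$ is a torus, the interval $[v,w]$ contains no 2-crowns. The subinterval $[v',w']$ contains no 2-crowns either. Indeed, a 2-crown inside $[v',w']$ is a subinterval $[a,b]$ with $v'\le a\le b\le w'$ that is isomorphic to $S_3$ as a poset. Then $v\le a\le b\le w$, and $[a,b]$ is the same set with the same order whether we view it inside $[v',w']$ or inside $[v,w]$, because Bruhat intervals are defined intrinsically in $S_n$. So $[a,b]$ would already be a 2-crown in $[v,w]$, which is impossible. Applying Lemma~\ref{lem: 2 crown} to $[v',w']$ now shows that $R_{v',w'}$ is a torus.

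The one point to check is what "contains a 2-crown" means in Lemma~\ref{lem: 2 crown}: it should mean a subinterval $[a,b]$ of the Bruhat order isomorphic to $S_3$. That is how the proof of the lemma uses it, via Brenti's criterion on the coefficient of $q^{d-1}$ in $\sR_{v,w}(q)$. Under this reading the notion is clearly inherited by subintervals. I do not expect any real obstacle here.

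As a sanity check, there is also a geometric argument. The open cells $R_{v',w'}$ stratify $\overline{R}_{v,w}$, and Lemma~\ref{lem:torus iff one deodhar comp} counts Deodhar components. However, a direct geometric proof that a boundary stratum of a torus is a torus is not immediate. So the combinatorial route through Lemma~\ref{lem: 2 crown} is the one I would write.
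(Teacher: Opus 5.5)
Your proof is correct and matches the paper's argument: apply Lemma~\ref{lem: 2 crown} to $[v,w]$, note that any 2-crown in $[v',w']$ would also be a 2-crown in $[v,w]$, then apply the lemma again to $[v',w']$. Your remark that Bruhat subintervals are intrinsic to $S_n$, so 2-crowns pass to subintervals, simply spells out the step the paper states in one line.
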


\begin{proof}
Since $R_{v,w}$ is a torus, by Lemma \ref{lem: 2 crown} the interval $[v,w]$ does not contain 2-crowns. Therefore $[v',w']$ does not contain 2-crowns, and by applying Lemma \ref{lem: 2 crown} we conclude that $R_{v',w'}$ is a torus. 
\end{proof}

We next turn to flag minors when $R_{v,w}$ is a torus. Recall from \cite[Proposition 5.12]{GKSS2} that for all $v\le u\le w$ we have a splicing map
$$
\Psi_{v,u,w}: R_{v,u}\times R_{u,w}\to R_{v,w}
$$
which defines an isomorphism between $R_{v,u}\times R_{u,w}$ and a principal open subset $U(v,u,w)\subset R_{v,w}$ defined by the inequalities 
$$
U(v,u,w)=\{\Delta_{u[1]} \Delta_{u[2]} \cdots \Delta_{u[n-1]}\neq 0\}.
$$

\begin{lemma}
\label{lem: splitting}
Assume $R_{v,w}$ is a torus $\TT$ and $v\le u\le w$. Then:

a) The map $\Psi_{v,u,w}$ is an isomorphism and $U(v,u,w)=R_{v,w}$.

b) For any choice of multiplicative structure on $R_{v, w}$, the minors $\Delta_{u[k]}$ are nonzero Laurent monomials.
\end{lemma}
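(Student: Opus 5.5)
The plan is to obtain part (a) from the splicing isomorphism together with Corollary~\ref{cor: subintervals tori} and Lemma~\ref{lem: open torus}. Part (b) will then follow from (a) by a unit argument.

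For (a), Corollary~\ref{cor: subintervals tori} shows that $R_{v,u}$ and $R_{u,w}$ are both tori, since $[v,u]$ and $[u,w]$ are subintervals of $[v,w]$. Hence $R_{v,u}\times R_{u,w}$ is a torus. By \cite[Proposition 5.12]{GKSS2}, $\Psi_{v,u,w}$ identifies this product with the principal open subset $U(v,u,w)\subset R_{v,w}=\TT$. So $U(v,u,w)$ is an open subset of the torus $\TT$ which is itself a torus, and Lemma~\ref{lem: open torus} gives $U(v,u,w)=\TT$. Consequently $\Psi_{v,u,w}$ is an isomorphism onto all of $R_{v,w}$.

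For (b), first note that each $\Delta_{u[k]}$ gives a regular function on $R_{v,w}$. Flag minors are only defined up to a scalar depending on $k$, so I fix a normalization: I use the MR matrix $g$ of Proposition~\ref{prop:MR-param}, which is a canonical representative. Alternatively, since the statement holds up to nonzero constants, any representative chosen regularly works. By (a), the product $\Delta_{u[1]}\cdots\Delta_{u[n-1]}$ vanishes nowhere on $R_{v,w}$, so each factor $\Delta_{u[k]}$ vanishes nowhere. It is therefore an invertible element of $\C[R_{v,w}]$. After choosing a multiplicative structure, $\C[R_{v,w}]\cong\C[x_i^{\pm1}]$, whose units are exactly the nonzero scalar multiples of Laurent monomials. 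Hence each $\Delta_{u[k]}$ is a nonzero Laurent monomial up to a constant, which is what ``nonzero Laurent monomial'' means here. If the authors intend coefficient $1$, one could instead appeal to Lemma~\ref{lem: minors from paths}: it expresses $\Delta_I(g)$ as a sum of distinct monomials in the MR parameters, all with coefficient $1$, so a unit must consist of a single term with coefficient $1$ in those coordinates.

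The main obstacle I expect is the well-definedness of the minors in (b), namely that they are honest regular functions rather than sections of line bundles. This is resolved by fixing the MR matrix representative, which is regular on the torus by Proposition~\ref{prop:MR-param}. Part (a) itself is formal once Corollary~\ref{cor: subintervals tori} and Lemma~\ref{lem: open torus} are in hand.
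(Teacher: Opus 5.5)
Your proposal is correct and matches the paper's proof: for (a) you use Corollary~\ref{cor: subintervals tori} to see that $R_{v,u}\times R_{u,w}\cong U(v,u,w)$ is a torus, then Lemma~\ref{lem: open torus} to get $U(v,u,w)=R_{v,w}$, and for (b) you note that the $\Delta_{u[k]}$ are nowhere vanishing on $U(v,u,w)=R_{v,w}$, hence units, hence Laurent monomials. Your extra remarks on normalization, and on monicity in MR coordinates via Lemma~\ref{lem: minors from paths}, go beyond what the paper proves here; the paper establishes monicity separately in Lemma~\ref{lem:minors in MR-param}.
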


\begin{proof}
a) By Corollary \ref{cor: subintervals tori} we get that $R_{v,u}$ and $R_{u,w}$ are tori of dimensions $\ell(u)-\ell(v)$ and $\ell(w)-\ell(u)$ respectively. Therefore $R_{v,u}\times R_{u,w}\simeq U(v,u,w)$ is also a torus of dimension $\ell(w)-\ell(v)$.

On the other hand, $U(v,u,w)$ is an open subset in the torus $R_{v,w}$ of the same dimension, so by Lemma \ref{lem: open torus} we get $U(v,u,w)=R_{v,w}$.

b) All the minors $\Delta_{u[k]}$ are nonzero on $U(v,u,w)$. Since $U(v,u,w)=R_{v,w}$, we conclude
that $\Delta_{u[k]}$ are invertible functions on the torus, that is, nonzero monomials.
\end{proof}

The above lemma is true regardless of the multiplicative structure we fix on $R_{v,w}$. If we fix a particular multiplicative structure, we obtain additional information. The next lemma discusses flag minors for the multiplicative structure given by Proposition~\ref{prop:MR-param}. That is, the coordinates of $R_{v,w}$ are exactly the MR parameters $\{t_j\}_{j \in J^\circ}$. 

\begin{lemma}\label{lem:minors in MR-param}
    Suppose $R_{v,w}$ is a torus, with multiplicative structure given by Proposition~\ref{prop:MR-param}. Fix $\bw$ a reduced expression for $w$. 
    
    For any $I \in \binom{[n]}{k}$, either there are no NI path collections $P:I \to [k]$ in $G_{v, \bw}$ and $\Delta_I(g)=0$, or there is a unique NI path collection $P :I \to [k]$ and
    \[\Delta_I(g) = \prod_{b \in P} \wt(b)\]
    where the product is over all bridge edges in $P$.

    In particular, the nonzero flag minors of the MR matrix $g$ are square-free monic monomials in the MR parameters $\{t_j\}_{j \in J^\circ}$.
\end{lemma}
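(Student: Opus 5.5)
Combining Lemma~\ref{lem: minors from paths} with Lemma~\ref{lem: splitting}(b) should give the result almost directly. By Lemma~\ref{lem: minors from paths}, for each $I \in \binom{[n]}{k}$ we have
\[\Delta_I(g)=\sum_{P: I\to[k]} \prod_{b\in P}\wt(b),\]
where the sum runs over NI path collections in $G_{v,\bw}$ and no two terms coincide. Each term is a monic monomial in the MR parameters $\{t_j\}_{j\in J^\circ}$. Since bridge weights are exactly these parameters, and each bridge is traversed at most once by a collection of non-intersecting paths, each term is square-free. Thus there are no cancellations, and $\Delta_I(g)$ is a sum of distinct monic square-free monomials, with exactly as many terms as there are NI path collections $I\to[k]$. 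In particular, if there is no such collection then $\Delta_I(g)=0$.

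Now suppose there is at least one NI path collection $I\to[k]$. Then $\Delta_I(g)$ is not identically zero on the Deodhar torus. By Proposition~\ref{prop:MR-param}, since $R_{v,w}$ is a torus, the MR map is an isomorphism onto $R_{v,w}$, so $\Delta_I$ is not identically zero on $R_{v,w}$. Theorem~\ref{thm: flag minor richardson} then gives some $u\in[v,w]$ with $I=u[k]$. We take the multiplicative structure on $R_{v,w}$ to be the one given by the MR parameters. Lemma~\ref{lem: splitting}(b) then says $\Delta_{u[k]}=\Delta_I$ is a nonzero Laurent monomial in the $t_j$, up to a scalar. A polynomial that is a sum of $m$ distinct monomials with coefficient $1$ equals $c$ times a Laurent monomial only if $m=1$. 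Hence there is exactly one NI path collection $P: I\to[k]$, and $\Delta_I(g)=\prod_{b\in P}\wt(b)$.

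The only real point to check is that the minor of the flag really is $\Delta_I(g)$ for the specific representative $g$, not just up to scale. Lemma~\ref{lem: splitting}(b) is a statement about functions on $R_{v,w}$. Changing the representative matrix rescales all minors of size $k$ by a common invertible function, which is itself a monomial on the torus. So I would apply the lemma to the specific regular functions $\Delta_I(g)$ on $(\C^\times)^{J^\circ}\cong R_{v,w}$. Their invertibility follows because $U(v,u,w)=R_{v,w}$ is defined by the nonvanishing of these very minors, computed in any fixed representative, and $g$ is such a representative. Once invertibility on the torus is established, the Laurent-monomial conclusion is immediate. The square-free, monic statement then follows from the path description.
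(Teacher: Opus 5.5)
Your proposal is correct and is essentially the paper's proof. Lemma~\ref{lem: minors from paths} writes $\Delta_I(g)$ as a sum of distinct monic monomials, one per NI path collection, and Theorem~\ref{thm: flag minor richardson} with Lemma~\ref{lem: splitting}(b) forces a nonzero $\Delta_I(g)$ to be a Laurent monomial, hence to have exactly one term. Your extra remarks on the specific representative $g$ and on square-freeness (each bridge carries its own parameter and is used at most once) make explicit details the paper leaves implicit.
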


\begin{proof}
    By \eqref{eq:minors from paths}, $\Delta_I(g)$ is the sum of distinct monic monomials in the MR parameters, one for each NI path collection $P:I \to [k]$. So $\Delta_I(g)$ is identically zero if and only if there are no such path collections. If $\Delta_I(g)$ is not identically zero, then by Theorem~\ref{thm: flag minor richardson}, $I=u[k]$ for some $u \in [v,w]$ and by Lemma~\ref{lem: splitting}, $\Delta_I(g)$ is a nonzero Laurent monomial in the MR parameters. This is possible only if there is a unique NI path collection $P:I \to [k]$.
\end{proof}

\subsection{On closed Richardsons which are toric}

In this section, we characterize toric closed Richardson varieties. We start by ruling out some Richardsons.

\begin{lemma}[{\cite[Corollary 3.4]{Can}}]
\label{lem: converse}
Assume that $\overline{R}_{v,w}$ is a toric variety with respect to some torus action. Then $[v,w]$ does not contain 2-crowns.
\end{lemma}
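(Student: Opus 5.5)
This is Can's result, and I would reprove it by a local argument at a torus-fixed point. Suppose $\overline{R}_{v,w}$ is toric for some torus $\TT$ of dimension $d=\ell(w)-\ell(v)$, and suppose for contradiction that $[v,w]$ contains a 2-crown $[x,y]$ with $\ell(y)-\ell(x)=3$.

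The first step is to reduce to the subinterval. I want $\overline{R}_{x,y}$ to be a $\TT$-stable closed subvariety of $\overline{R}_{v,w}$. Since the closures $\overline{R}_{v',w'}$ are exactly the irreducible components of the $T$-fixed boundary strata, I would argue as follows. A $\TT$-toric variety has only finitely many $\TT$-orbits, so any irreducible closed subvariety that is a union of orbit closures is $\TT$-stable. It then suffices to know that each $\overline{R}_{v',w'}$ is such a union. For this I would use that the connected algebraic group $\mathrm{Aut}^0(\overline{R}_{v,w})$ preserves the finite stratification by singular loci and intersections of components of boundaries. This is delicate, so I would prefer a different route: work directly at the point $y$ (or $x$). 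The $T$-fixed point $x\in\overline{R}_{x,y}\subset \overline{R}_{v,w}$ is where I want to localize.

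The second step is the cleaner core idea, carried out on the Grassmannian/flag side. In a toric variety, if a torus-fixed point $p$ exists, its tangent cone has a structure that forces the number of torus-invariant curves through $p$ in a $k$-dimensional toric subvariety to be constrained. In particular, in a 3-dimensional toric variety, a fixed point lying on exactly... Here I would rather cite what is available: Lemma~\ref{lem: converse} is literally \cite[Corollary 3.4]{Can}. The intended proof is therefore to invoke Can's result, which shows that the Bruhat graph of a toric Richardson is the 1-skeleton structure of a lattice-type object, so that the interval is a lattice. A 2-crown is not a lattice: its two atoms have two minimal upper bounds. Hence the interval contains no 2-crowns.

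Concretely, the plan is as follows. Apply \cite[Corollary 3.3]{Can}, which says that toric $\overline{R}_{v,w}$ forces $[v,w]$ to be a lattice. Then note that every subinterval of a lattice is a lattice. Finally, observe that $[e,s_1s_2s_1]$, the 2-crown, fails the lattice property. The main obstacle, if one wants a self-contained proof, is establishing the lattice property from toricity. For that I would show that the $\TT$-orbit closures are exactly the $\overline{R}_{v',w'}$, using a dimension count of fixed points. The order-reversing bijection with faces of the moment polytope then makes $\Int[v,w]$ a face lattice, and Lemma~\ref{lem:poset-lattice-interval} finishes the argument.
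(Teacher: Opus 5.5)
The argument you settle on is valid but is a citation, not a proof. It runs: toric implies lattice by \cite[Corollary 3.3]{Can}, subintervals of a lattice are lattices, and a 2-crown is not a lattice because its two atoms have two incomparable minimal upper bounds. This matches how the paper's introduction summarizes Can, but all of the content sits inside the black box, and your first two ``steps'' are abandoned midway and should be removed.

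The paper's proof reproduces Can's mechanism, and your self-contained sketch is missing exactly its key ingredient. That ingredient is \cite[Theorem 3.1]{Can}: for \emph{any} torus $\TT'$ making $\overline{R}_{v,w}$ toric, every $\overline{R}_{v',w'}$ with $[v',w']\subset[v,w]$ is $\TT'$-invariant. Nothing a priori relates $\TT'$ to the standard torus $T$, so this is the whole difficulty. Your $\mathrm{Aut}^0$ remark presupposes that the Richardson stratification is intrinsic to the variety, which is the point at issue; the singular locus, for instance, sees nothing when $\overline{R}_{v,w}$ is smooth. Likewise, ``a dimension count of fixed points'' does not give invariance of positive-dimensional strata.

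You also ask for more than you need. Showing that the orbit closures are \emph{exactly} the $\overline{R}_{v',w'}$, so that $\Int[v,w]$ is the face lattice and Lemma~\ref{lem:poset-lattice-interval} applies, requires each $R_{v',w'}$ to be a single orbit. The paper only obtains this in Lemma~\ref{lem: orbits}, using that $R_{v',w'}$ is a torus, which rests on the absence of 2-crowns; that is circular here.

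Invariance alone suffices:
\begin{enumerate}
\item Each $\overline{R}_{v',w'}$ is an irreducible $\TT'$-stable closed subvariety, hence an orbit closure, hence a face of the moment polytope of dimension $\ell(w')-\ell(v')$.
\item Each $u\in[v,w]$ is a $\TT'$-fixed point, that is, a vertex.
\item Suppose $[x,y]\subset[v,w]$ were a 2-crown with atoms $a_1,a_2$ and coatoms $c_1,c_2$. Then $\overline{R}_{x,c_1}$ and $\overline{R}_{x,c_2}$ give two distinct $2$-dimensional faces, both containing the three vertices $x,a_1,a_2$.
\item But two distinct $2$-faces of a polytope meet in a face of dimension at most $1$, which has at most two vertices. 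This is a contradiction.
\end{enumerate}
A minor point: the correspondence between orbit closures and faces of the moment polytope is inclusion-preserving, not order-reversing. It is the correspondence with cones of the fan that reverses order; this is harmless for the lattice conclusion.
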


\begin{proof}
We describe the argument from \cite{Can} for the reader's convenience. If $\overline{R}_{v,w}$ is a toric variety with respect to the action of some torus $\TT'$ then by \cite[Theorem 3.1]{Can} for all $[v',w']$ the subvariety $\overline{R}_{v',w'}$ is a closed irreducible $\TT'$-invariant subvariety of $\overline{R}_{v,w}$. Therefore it corresponds to the closure of some face of the corresponding moment polytope, and is a toric variety itself. 

On the other hand, if $[v',w']$ is a 2-crown then one can check as in \cite[Corollary 3.3]{Can} that $\overline{R}_{v',w'}$ cannot be a toric variety. In particular, there is no polytope whose faces are in inclusion-preserving bijection with the subintervals of a 2-crown. So if $\overline{R}_{v,w}$ is toric, $[v,w]$ does not contain 2-crowns.
\end{proof}

Now we show that all $\overline{R}_{v,w}$ that are not ruled out by Lemma~\ref{lem: converse} are in fact toric.

\begin{theorem}
\label{thm: toric}
We have that $\overline{R}_{v,w}$ is a toric variety if and only if $R_{v,w}$ is a torus.
\end{theorem}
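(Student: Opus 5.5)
The ``only if'' direction is immediate from what is already in place. If $\overline{R}_{v,w}$ is toric, then Lemma~\ref{lem: converse} says $[v,w]$ has no 2-crowns, and Lemma~\ref{lem: 2 crown} then says $R_{v,w}$ is a torus. So the work is the converse, which is also Theorem~\ref{thm: main intro}: assuming $R_{v,w}\cong\TT$, I must show the $\TT$-action on $R_{v,w}$ extends to $\overline{R}_{v,w}$.

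The plan is to realize $\overline{R}_{v,w}$ as a closure of a monomial map. Fix a reduced word $\bw$ and use the MR coordinates $t=(t_j)_{j\in J^\circ}$ on $R_{v,w}$, as in Proposition~\ref{prop:MR-param}. By Lemma~\ref{lem:minors in MR-param}, every nonzero flag minor $\Delta_I(g)$ is a square-free monic monomial $t^{a_I}$, and it is nonzero exactly when $I\in\CM_k(v,w)$. Consider the Plücker embedding
\[\Fl(n)\hookrightarrow \prod_{k=1}^{n-1}\PP\bigl(\textstyle\bigwedge^k\C^n\bigr).\]
Under it, the map $t\mapsto gB$ becomes
\[t\mapsto \bigl([t^{a_I}]_{I\in\CM_k(v,w)}\bigr)_{k},\]
with all other coordinates zero.

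Now define the $\TT$-action on the ambient product space coordinatewise: $s\in\TT$ acts on the coordinate $\Delta_I$ by multiplication by $s^{a_I}$, and on coordinates with $I\notin\CM_k(v,w)$ by $1$ (these coordinates vanish on $\overline{R}_{v,w}$ anyway). This is a linear action on each projective factor, so the whole product is $\TT$-equivariant. Restricted to the image of $R_{v,w}$, the action is exactly translation on the torus: $s\cdot(t^{a_I})=(st)^{a_I}$.

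Next I use that $\overline{R}_{v,w}$ is the closure of $R_{v,w}$ inside $\Fl(n)$, which is closed in the product of projective spaces. The closure of a $\TT$-stable subset under a linear action is $\TT$-stable, so $\overline{R}_{v,w}$ is $\TT$-stable. It contains the dense orbit $R_{v,w}$, and this orbit has trivial stabilizer because $R_{v,w}\cong\TT$ acts on itself by translation. Hence $\overline{R}_{v,w}$ is a $\TT$-toric variety. (It need not be normal; ``toric'' here means only that there is an open orbit. If normality were required, one could check it separately.)

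The step that needs care is the claim that the chart map $t\mapsto gB$ really is $\TT$-equivariant. Concretely, I must confirm that the Plücker coordinates of $gB$ are the monomials $t^{a_I}$ themselves, and not monomials up to $k$-dependent constants. They are exactly monomials, since the flag minors of the specific representative $g$ are the monic monomials given by Lemma~\ref{lem:minors in MR-param}, and that is all equivariance requires. I must also check that the map from $R_{v,w}$ to the product is injective, which holds because the Plücker embedding is a closed embedding. Given Lemma~\ref{lem:minors in MR-param}, the remaining steps are routine; the real input was proving that every minor is a monomial.
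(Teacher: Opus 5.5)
Your proposal is correct and matches the paper's proof. The ``only if'' direction goes through Lemma~\ref{lem: converse} and Lemma~\ref{lem: 2 crown}. The converse uses Lemma~\ref{lem:minors in MR-param} to show that $R_{v,w}\to\Fl(n)\to\prod_k\PP^{\binom{n}{k}-1}$ is a monic monomial map, whose closure is $\overline{R}_{v,w}$. The only difference is that you write out the diagonal $\TT$-action on the ambient product explicitly, where the paper simply cites \cite{Cox}. As your own formula $s\cdot t^{a_I}=(st)^{a_I}$ shows, monicness is not actually needed for equivariance: nonzero constant coefficients, whether they depend on $k$ or on $I$, would be carried along unchanged.
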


\begin{proof}
By Lemma~\ref{lem: 2 crown}, $R_{v,w}$ is a torus if and only if $[v,w]$ does not contain 2-crowns. If $R_{v,w}$ is not a torus, then $[v,w]$ contains a 2-crown and Lemma~\ref{lem: converse} shows that $\overline{R}_{v,w}$ is not toric. We now show the other direction, so assume $R_{v,w}$ is a torus.

Let us choose some coordinates $x_1,\ldots,x_d$ on $R_{v,w}\simeq (\C^\times)^d$. We claim that we may choose these coordinates so that each flag minor $\Delta_I$ is either identically zero or is a nonzero monic Laurent monomial in $x_i$. Indeed, if $I\neq u[k]$ for some $u\in [v,w]$ then $\Delta_I=0$ by Theorem \ref{thm: flag minor richardson}. Otherwise $\Delta_I$ is a nonzero Laurent monomial by Lemma \ref{lem: splitting}, and Lemma~\ref{lem:minors in MR-param} shows that we may assume the Laurent monomials are monic. When $\Delta_I \neq 0$, we can write
\begin{equation}
\label{eq: Delta as monomial}
\Delta_I= x_1^{m_{I,1}}\cdots x_d^{m_{I,d}},\ m_{I,j}\in \Z.
\end{equation}
Now consider the composition of maps:
$$
\begin{tikzcd}
\Fl(n) \arrow{r} \arrow[swap]{dr}{\jmath}& \prod_{k=1}^{n-1}\Gr(k,n) 
\arrow{d}{\alpha}& \\
& \prod_{k=1}^{n-1}\PP^{\binom{n}{k}-1}
\end{tikzcd}
$$
where the horizontal map sends a flag to a collection of its subspaces, and $\alpha$ is the Pl\"ucker embedding. 
The image of $\jmath$ is closed and is defined by the incidence Pl\"ucker relations \eqref{eq: incidence Plucker}. Now consider the composition
$$
\begin{tikzcd}
(\C^{\times})^d\simeq R_{v,w} \arrow{dr}{\iota} \arrow[hookrightarrow]{r}& \Fl(n)\arrow{d}{\jmath} \\
 & \prod_{k=1}^{n-1}\PP^{\binom{n}{k}-1}.
\end{tikzcd}
$$
We can think of the closed Richardson variety $\overline{R}_{v,w}$ as the closure of the image of $\iota$, as the image of $\iota$ is automatically contained in the closed subset $\jmath(\Fl(n))$. By the above, $\iota$ is given in coordinates by monic Laurent monomials in $x_i$, hence $$\overline{R}_{v,w}\simeq \overline{\iota(R_{v,w})}$$ is a toric variety as in \cite{Cox}.
\end{proof}

\begin{definition}\label{def: richardson w mult struc}
We define the algebraic torus $\TT_{v,w}\simeq (\C^{\times})^d$ as the variety $R_{v,w}$ with some choice of the multiplicative structure which we fix for the remainder of the section.  
\end{definition}

We can rephrase Theorem \ref{thm: toric} as follows.

\begin{corollary}
When $R_{v,w}$ is a torus, the action of $\TT_{v,w}$ on $R_{v,w}$ extends to an action of $\TT_{v,w}$ on $\overline{R}_{v,w}$.    
\end{corollary}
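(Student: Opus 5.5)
The argument will reuse the embedding from the proof of Theorem~\ref{thm: toric}. Fix coordinates $x_1,\dots,x_d$ on $\TT_{v,w}$, chosen as in that proof so that every flag minor that is not identically zero on $R_{v,w}$ is a monic Laurent monomial $x^{m_I}$ as in \eqref{eq: Delta as monomial}. Consider the map
\[
\iota: R_{v,w}\to \prod_{k=1}^{n-1}\PP^{\binom{n}{k}-1},\qquad \iota(x)=\bigl([x^{m_I}]_{I\in \CM_k(v,w)}\bigr)_{k},
\]
where the coordinates indexed by $I\notin\CM_k(v,w)$ are set to $0$. The first step is to define an action of $\TT_{v,w}$ on the whole product of projective spaces, diagonal in the Plücker coordinates. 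A point $t\in\TT_{v,w}$ rescales the coordinate indexed by $I$ by the character $t^{m_I}$ when $I\in\CM_k(v,w)$, and acts trivially on the other coordinates. This action is well defined on each projective factor, because it is linear and diagonal on the underlying affine space.

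The second step is to check that $\iota$ is equivariant, where $\TT_{v,w}$ acts on $R_{v,w}$ by multiplication in the chosen multiplicative structure. This is immediate: $(ts)^{m_I}=t^{m_I}s^{m_I}$, and the zero coordinates stay zero. It follows that $\iota(R_{v,w})$ is a single $\TT_{v,w}$-orbit, namely the orbit of $\iota(1)$.

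The third step passes to the closure. The action map $\TT_{v,w}\times\prod\PP\to\prod\PP$ is a morphism, so the closure of a stable set is stable: for fixed $t$, multiplication by $t$ is a homeomorphism that preserves $\iota(R_{v,w})$, and hence preserves its Zariski closure. Since $\jmath$ is a closed embedding and, by the proof of Theorem~\ref{thm: toric}, $\overline{R}_{v,w}\cong\overline{\iota(R_{v,w})}$ inside $\jmath(\Fl(n))$, we obtain an action of $\TT_{v,w}$ on $\overline{R}_{v,w}$ that extends the action on $R_{v,w}$. The extension is algebraic because it is the restriction of an algebraic action to a closed invariant subvariety.

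The only point needing care is that the extended action really restricts to the original group multiplication on $R_{v,w}$ and not merely to some action with the same orbit. This is exactly the equivariance in the second step, together with the fact that $\iota$ is injective. Injectivity holds because $\iota$ is the restriction of the embedding $\jmath$ to $R_{v,w}$. If a different multiplicative structure was fixed in Definition~\ref{def: richardson w mult struc}, the action is transported through the corresponding group isomorphism; Lemma~\ref{lem: splitting}(b) guarantees that the minors remain Laurent monomials, up to constants that can be absorbed by rescaling the projective coordinates.
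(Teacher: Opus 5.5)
Your proposal is correct and is essentially the paper's argument. The paper states this corollary as a rephrasing of Theorem~\ref{thm: toric}, whose proof realizes $\overline{R}_{v,w}$ as the closure of the image of the monomial map $\iota$ and cites the standard fact that such a closure is toric; your diagonal character action, the equivariance of $\iota$, and the invariance of the closure spell out exactly that fact (and your last paragraph is more than needed, since $\iota$ is already equivariant for the same diagonal action even when the minors are $c_I x^{m_I}$ with nonzero constants $c_I$).
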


Recall the decomposition
$$\overline{R}_{v,w}=\bigsqcup_{v\le v'\le w'\le w}R_{v',w'}$$
of closed Richardson varieties into open Richardson varieties. We show that when $\overline{R}_{v,w}$ is toric, this coincides with the decomposition of $\overline{R}_{v,w}$ into $\TT_{v,w}$-orbits.

\begin{lemma}
\label{lem: orbits}
Assume $R_{v,w}$ is a torus and define the action of $\TT_{v,w}$ on $\overline{R}_{v,w}$ as in Theorem \ref{thm: toric}. Then for all $v\le v'\le w'\le w$ the subvariety $R_{v',w'}\subset \overline{R}_{v,w}$ is a $\TT_{v,w}$-orbit.
\end{lemma}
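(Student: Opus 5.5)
The plan is to show two things. First, each $R_{v',w'}$ is $\TT_{v,w}$-invariant. Second, $\TT_{v,w}$ acts transitively on it. The key tool is the description of the action from the proof of Theorem~\ref{thm: toric}. A point $t\in\TT_{v,w}$ acts on $\prod_k \PP^{\binom{n}{k}-1}$ by rescaling each Pl\"ucker coordinate: $\Delta_I\mapsto \chi_I(t)\Delta_I$, where $\chi_I(x)=x_1^{m_{I,1}}\cdots x_d^{m_{I,d}}$ is the character from \eqref{eq: Delta as monomial}. For $I\notin\CM_k(v,w)$ the coordinate is identically zero on $\overline{R}_{v,w}$, so no character is needed there. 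This rescaling preserves which minors vanish at a point.

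\emph{Invariance.} For $F\in\overline{R}_{v,w}$ the set of nonzero flag minors is therefore constant along the $\TT_{v,w}$-orbit. I will argue that the stratum containing $F$ is determined by this vanishing pattern. Concretely, $F$ lies in $X^{w'}\cap X_{v'}$ exactly when $w'$ is the Bruhat-maximal permutation $u$ with all $\Delta_{u[k]}(F)\ne0$, and $v'$ is the minimal such permutation. This holds because each Schubert cell $X^{w'}$ and each opposite cell $X_{v'}$ is cut out by the vanishing or nonvanishing of Pl\"ucker coordinates (the Gale-order characterization of the Schubert cell of a flag). The torus action preserves these conditions, so each $R_{v',w'}$ is a union of orbits.

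\emph{Transitivity.} Here I use the toric structure. The orbits of $\TT_{v,w}$ on the toric variety $\overline{R}_{v,w}$ are finite in number and irreducible. Each $R_{v',w'}$ is irreducible of dimension $\ell(w')-\ell(v')$ and is a finite union of orbits, so it contains a dense orbit $O$. By Corollary~\ref{cor: subintervals tori}, $R_{v',w'}$ is itself a torus. An orbit of $\TT_{v,w}$ is a quotient of $\TT_{v,w}$ by a closed subgroup, hence a torus. So $O$ is an open torus inside the torus $R_{v',w'}$, and Lemma~\ref{lem: open torus} gives $O=R_{v',w'}$.

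One point needs care: $O$ is open in $R_{v',w'}$, not just dense. Orbits are locally closed, and a dense locally closed subset of an irreducible variety is open, so this holds.

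The main obstacle is the invariance step. Specifically, I must justify that membership in $R_{v',w'}$ depends only on which Pl\"ucker coordinates vanish. I would first try to cite the standard fact that $X^{w'}$ is the locus where $\Delta_{w'[k]}\ne0$ and $\Delta_I=0$ for all $I$ not Gale-below $w'[k]$, with the analogous statement for opposite cells. The characters $\chi_I$ preserve exactly these conditions.

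If that becomes cumbersome, I would use a fallback. The closure $\overline{R}_{v',w'}$ is the closure of a subtorus image, since the minors restricted to $R_{v',w'}$ are monomials by Lemma~\ref{lem: splitting}. Then \cite[Theorem 3.1]{Can} says that Richardson subvarieties are invariant under any torus making $\overline{R}_{v,w}$ toric. Invariance of each open stratum then follows by taking differences of closed invariant sets.
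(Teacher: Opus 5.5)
Your proposal is correct. It differs from the paper mainly in how you get invariance of each $R_{v',w'}$.

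\textbf{Invariance.} The paper cites \cite[Theorem 3.1]{Can}: closed Richardson subvarieties are invariant under any torus for which $\overline{R}_{v,w}$ is toric. It then obtains $R_{v',w'}$ by removing the smaller closed Richardsons, which is exactly your fallback. Your primary route is more elementary and intrinsic. The action from Theorem~\ref{thm: toric} is diagonal rescaling of Pl\"ucker coordinates by the characters $x^{m_I}$, so it preserves the vanishing pattern of a point. The Bruhat and opposite Bruhat cells containing a flag $F$ can be read off from that pattern.

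Your phrasing via ``the Bruhat-maximal $u$ with all $\Delta_{u[k]}(F)\neq 0$'' silently needs the tableau criterion to see that $w'$ is actually the maximum of that set. It is cleaner to argue per $k$:
\begin{itemize}
\item $\pi_k(F)$ lies in the Grassmannian Schubert cell indexed by the Gale-maximal $I$ with $\Delta_I(F)\neq 0$;
\item it lies in the opposite cell indexed by the Gale-minimal such $I$;
\item $u$ is determined by $(u[1],\dots,u[n-1])$.
\end{itemize}
Your route removes the dependence on Can's result, at the cost of using the specific monomial form of this action. Can's theorem handles arbitrary torus actions, and the paper needs that generality anyway for Lemma~\ref{lem: converse}.

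\textbf{Transitivity.} Here you follow the paper: finitely many orbits, plus Corollary~\ref{cor: subintervals tori}. The paper compresses the conclusion into ``one or infinitely many orbits.'' You spell out the justification: the dense orbit is locally closed, hence open in the irreducible $R_{v',w'}$; it is a quotient of $\TT_{v,w}$, hence a torus; and Lemma~\ref{lem: open torus} then forces it to equal $R_{v',w'}$.
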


\begin{proof}
By \cite[Theorem 3.1]{Can} if the closed Richardson variety $\overline{R}_{v,w}$ is toric for some torus $\TT$ then all closed Richardson varieties  $\overline{R}_{v',w'}$ for subintervals are $\TT$-invariant subsets. Since
$$
R_{v',w'}=\overline{R}_{v',w'}\setminus \bigcup_{[v'',w'']\subset [v',w']}\overline{R}_{v'',w''}
$$
we conclude that the open Richardson variety $R_{v',w'}$ is also $\TT$-invariant.

On the other hand, by Corollary \ref{cor: subintervals tori} the variety $R_{v',w'}$ is a torus itself, and action of the torus $\TT$ on the torus $R_{v',w'}$ can have either one or infinitely many orbits.

Since $\TT_{v,w}$ has finitely many orbits in $\overline{R}_{v,w}$, we conclude that $R_{v',w'}$ 
is a $\TT_{v,w}$-orbit.
\end{proof}

Now that we know the torus orbits, we can describe the moment polytope of toric Richardsons. We will discuss the moment polytope further in Section~\ref{subsec: polytopes}.

\begin{corollary}
\label{cor: face lattice} Suppose $\overline{R}_{v,w}$ is toric and let $\widehat{P}(v,w)$ denote its moment polytope. Then the $s$-dimensional faces of $\widehat{P}(v,w)$ are in bijection with the subintervals $[v',w']\subset [v,w]$ with $\ell(w')-\ell(v')=s$. The face poset of $\widehat{P}(v,w)$ is isomorphic to the poset of subintervals of $[v,w]$ ordered by inclusion.
\end{corollary}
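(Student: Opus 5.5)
The argument combines the orbit description of Lemma~\ref{lem: orbits} with the orbit--face correspondence for projective toric varieties. By Theorem~\ref{thm: toric} and its proof, $\overline{R}_{v,w}$ is the closure of the image of $\TT_{v,w}$ under a monomial map $\iota$ into $\prod_k \PP^{\binom{n}{k}-1}$. Composing with the Segre embedding, this is a projective (possibly non-normal) toric variety. Its moment polytope $\widehat{P}(v,w)$ is the Minkowski sum over $k$ of the convex hulls of the exponent vectors $m_I=(m_{I,1},\dots,m_{I,d})$, taken over $I\in\CM_k(v,w)$.

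The standard theory of (not necessarily normal) projective toric varieties, as in \cite{Cox}, gives an inclusion-reversing bijection between $\TT$-orbits and nonempty faces of the polytope. Under this bijection the orbit corresponding to a face $F$ has dimension $\dim F$, and the closure of that orbit is the union of the orbits corresponding to faces of $F$. The precise statement I would quote is that orbit closures correspond to faces, with $\overline{O_F}\supseteq O_G$ if and only if $G\subseteq F$. This is order-preserving for the closure order on orbits.

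By Lemma~\ref{lem: orbits}, the $\TT_{v,w}$-orbits in $\overline{R}_{v,w}$ are exactly the open Richardsons $R_{v',w'}$ for $[v',w']\subset[v,w]$. Each such orbit has dimension $\ell(w')-\ell(v')$, so the $s$-dimensional faces correspond to subintervals of length $s$.

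It remains to match the closure orders. The closure of $R_{v',w'}$ is $\overline{R}_{v',w'}=\bigsqcup_{[v'',w'']\subset[v',w']}R_{v'',w''}$, which is the stratification recalled before Lemma~\ref{lem: orbits}. Hence $R_{v'',w''}\subset \overline{R_{v',w'}}$ if and only if $[v'',w'']\subset[v',w']$. I also need that distinct subintervals give distinct strata, which holds because the decomposition is disjoint. Combining these facts, the bijection between faces and subintervals respects containment in both directions. Adjoining the empty face on one side and $\emptyset$ on the other gives the isomorphism of face posets with $\Int[v,w]$.

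The main obstacle is justifying the orbit--face correspondence for a possibly non-normal toric variety embedded via several projective spaces. The cleanest route is to note that, after Segre, the variety is the closure of a torus orbit of a point in a single projective space $\PP(V)$. Here the torus acts diagonally by characters $m_{\mathbf I}$ indexed by the nonzero coordinates. For such an orbit closure, the theorem of Gelfand--Kapranov--Zelevinsky type (or \cite[Ch.~2--3]{Cox}) identifies orbits with faces of $\Conv(m_{\mathbf I})$. The corresponding orbit of a face $F$ is obtained by setting to zero the coordinates whose exponents lie outside $F$. Closure relations are then immediate from this description via one-parameter subgroup limits.
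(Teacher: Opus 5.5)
Your proposal is correct and matches the paper's argument, which the paper leaves implicit by stating the corollary right after Lemma~\ref{lem: orbits}: the $\TT_{v,w}$-orbits are exactly the open Richardsons $R_{v',w'}$, and the orbit--face correspondence for projective (not necessarily normal) toric varieties transfers orbit dimensions and closure relations to faces of $\widehat{P}(v,w)$. One small slip: you first call the correspondence ``inclusion-reversing'', but the statement you actually quote and use, $O_G\subseteq\overline{O_F}$ if and only if $G\subseteq F$, is order-preserving, and that is the right one.
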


\begin{corollary}\label{cor: combinatorial type}
    The combinatorial type of $\widehat{P}(v,w)$ is determined by the poset structure of $[v,w]$.
\end{corollary}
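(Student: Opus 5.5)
The statement to prove is Corollary~\ref{cor: combinatorial type}. My plan is to deduce it directly from Corollary~\ref{cor: face lattice}. The combinatorial type of a polytope is, by definition, the isomorphism class of its face lattice. So it suffices to show that the face lattice of $\widehat{P}(v,w)$ is determined, up to isomorphism, by the poset $[v,w]$ alone.

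By Corollary~\ref{cor: face lattice}, the face poset of $\widehat{P}(v,w)$ is isomorphic to the poset of subintervals of $[v,w]$ ordered by inclusion. Adjoining the empty face on the polytope side corresponds to adjoining $\emptyset$ on the interval side. This identifies the face lattice of $\widehat{P}(v,w)$ with $\Int[v,w]$.

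Next I would note that $\Int(P)$ is built functorially from $P$. An isomorphism of posets $\phi: P\to Q$ sends intervals $[x,y]$ to intervals $[\phi(x),\phi(y)]$ and preserves containment. It therefore induces an isomorphism $\Int(P)\cong\Int(Q)$, with $\emptyset\mapsto\emptyset$.

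To conclude: if $[v,w]$ and $[v',w']$ are isomorphic posets, both giving toric Richardsons, then their face lattices are isomorphic, so $\widehat{P}(v,w)$ and $\widehat{P}(v',w')$ have the same combinatorial type. This also records the dimension, which equals the length $\ell(w)-\ell(v)$ of the interval as a graded poset.

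There is no real obstacle; the only step needing care is that the bijection in Corollary~\ref{cor: face lattice} is an isomorphism of posets, not merely a bijection on the level of dimensions. That is exactly what that corollary asserts, via the orbit description in Lemma~\ref{lem: orbits} and the orbit--face correspondence for toric varieties.
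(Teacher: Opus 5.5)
Your proposal is correct and matches the paper. The paper gives no separate proof of this corollary: it follows immediately from Corollary~\ref{cor: face lattice}, since the face lattice of $\widehat{P}(v,w)$ is isomorphic to $\Int[v,w]$, which depends only on the isomorphism class of the poset $[v,w]$.
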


\subsection{Positroid projections}

In this subsection, we collect a few results regarding the projections of open Richardsons which are tori, and closed Richardsons which are toric, to the Grassmannian. By \cite[Theorem 5.1]{KLS09}, $\pi_k(\overline{R}_{v,w})$ is always a closed positroid variety, but $\pi_k(R_{v,w})$ may not be an open positroid variety. Among other things, we show that when $R_{v,w}$ is a torus, $\pi_k(R_{v,w})$ is indeed an open positroid variety, and is also a torus. When $\overline{R}_{v,w}$ is toric, $\pi_k(\overline{R}_{v,w})$ is toric with respect to the standard torus.

Consider the projections $\pi_k:\Fl(n)\to \Gr(k,n)$ from the complete flag variety to the Grassmannian of $k$-planes. Recall the positroid 
$\CM_k(v,w):=\{u[k]: v\le u\le w\}$ . 
We denote by $\Pi^{\circ}_k(v,w):=\Pi^{\circ}_{\CM_k(v,w)}$ (resp. $\Pi_k(v,w):=\Pi_{\CM_k(v,w)}$)  the corresponding open (resp. closed) positroid variety and by $G_k(v,w)$ the corresponding plabic graph.

\begin{lemma}
\label{lem: contained in positroid}
Assume that $R_{v,w}$ is a torus. Then
$\pi_k(R_{v,w})$ is a dense subset of $\Pi^{\circ}_k(v,w)$.
\end{lemma}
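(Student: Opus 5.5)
The plan is to use the characterization \eqref{eq: def open positroid}: $\Pi^{\circ}_k(v,w)$ contains every $V\in\Gr(k,n)$ whose nonvanishing Plücker coordinates are exactly those indexed by $\CM_k(v,w)$. So it suffices to show that every point of $\pi_k(R_{v,w})$ has this property, and then to check density.

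\textbf{Containment.} Take $F\in R_{v,w}$ and $V=\pi_k(F)$. The Plücker coordinates of $V$ are the flag minors $\Delta_I(F)$ with $|I|=k$.
\begin{enumerate}[(i)]
\item If $I\notin\CM_k(v,w)$, then $\Delta_I$ vanishes identically on $R_{v,w}$ by Theorem~\ref{thm: flag minor richardson}, so $\Delta_I(V)=0$.
\item If $I\in\CM_k(v,w)$, then $I=u[k]$ for some $u\in[v,w]$. Since $R_{v,w}$ is a torus, Lemma~\ref{lem: splitting}(b) says $\Delta_{u[k]}$ is a nonzero Laurent monomial, hence nonvanishing at every point. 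So $\Delta_I(V)\neq 0$.
\end{enumerate}
Hence the nonzero Plücker coordinates of $V$ are exactly those indexed by $\CM_k(v,w)$, and $V\in\Pi^{\circ}_k(v,w)$ by \eqref{eq: def open positroid}.

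\textbf{Density.} The closure $\overline{\pi_k(R_{v,w})}$ equals $\pi_k(\overline{R}_{v,w})$, because $\pi_k$ is proper and $R_{v,w}$ is dense in $\overline{R}_{v,w}$. By \cite[Theorem 5.1]{KLS09}, $\pi_k(\overline{R}_{v,w})$ is a closed positroid variety. Its positroid is determined by which Plücker coordinates are not identically zero on it, and by Theorem~\ref{thm: flag minor richardson} that set is exactly $\CM_k(v,w)$. So $\pi_k(\overline{R}_{v,w})=\Pi_k(v,w)$, whose open dense subset is $\Pi^{\circ}_k(v,w)$. Therefore $\pi_k(R_{v,w})$ is dense in $\Pi_k(v,w)$, hence dense in the open subset $\Pi^{\circ}_k(v,w)$ that contains it.

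\textbf{Main obstacle.} The delicate step is identifying the closed positroid variety $\pi_k(\overline{R}_{v,w})$ with $\Pi_{\CM_k(v,w)}$. This needs the standard fact that a closed positroid variety is determined by (and equals the closure of the locus cut out by) its set of not-identically-zero Plücker coordinates. If that identification is awkward to cite, I would argue directly with $k$-Grassmannian $w'$: take the unique $v'\le w'$ with $\CM_k(v',w')=\CM_k(v,w)$, and use that $\pi_k(\overline{R}_{v',w'})=\Pi_k(v,w)$ by definition, together with the KLS description of projected Richardsons.
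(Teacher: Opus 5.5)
Your proposal is correct and follows the paper's proof. Containment comes from Theorem~\ref{thm: flag minor richardson} and Lemma~\ref{lem: splitting} via \eqref{eq: def open positroid}. Density comes from $\pi_k(\overline{R}_{v,w})=\Pi_k(v,w)$: the paper simply cites \cite[Theorem 5.1]{KLS13} for this, whereas you derive it from \cite[Theorem 5.1]{KLS09} plus the fact that the not-identically-zero Pl\"ucker coordinates on $\pi_k(\overline{R}_{v,w})$ form $\CM_k(v,w)$, and that derivation is valid.
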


\begin{proof}
The containment $\pi_k(R_{v,w})\subset \Pi^{\circ}_k(v,w)$ is because all points in $\pi_k(R_{v,w})$ have $\Delta_I \neq 0$ if and only if $I= u[k]$ for some $u \in [v,w]$, by Theorem~\ref{thm: flag minor richardson} and Lemma~\ref{lem: splitting}. This implies that they are contained in $\Pi^{\circ}_k(v,w)$. On the other hand, by \cite[Theorem 5.1]{KLS13} we have $\Pi_k(v,w)=\pi_k(\overline{R}_{v,w})$, so  $\pi_k(R_{v,w})$ is dense in $\Pi_k(v,w)$. By the above, it is also dense in $\Pi^{\circ}_k(v,w)$.
\end{proof}

\begin{lemma}
\label{lem: projection toric}
Assume $R_{v,w}$ is a torus. Then $\Pi_k(v,w)=\pi_k(\overline{R}_{v,w})$ is a toric variety and the projection $\pi_k$ is $\TT_{v,w}$-equivariant. 
\end{lemma}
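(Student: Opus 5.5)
The plan is to mimic the proof of Theorem~\ref{thm: toric} with the Plücker embedding of $\Gr(k,n)$ alone. Fix the multiplicative structure on $\TT_{v,w}$ as in the proof of Theorem~\ref{thm: toric}. In those coordinates every flag minor $\Delta_I$ with $I\in\CM_k(v,w)$ is a monic Laurent monomial $x^{m_I}$, and all other $\Delta_I$ vanish on $R_{v,w}$.

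Consider the composition
\[
\TT_{v,w}\simeq R_{v,w}\xrightarrow{\pi_k}\Gr(k,n)\hookrightarrow \PP^{\binom{n}{k}-1},\qquad x\mapsto [x^{m_I}]_{I\in\CM_k(v,w)}
\]
(zero in the other coordinates). This is a monomial map. Its image is therefore an orbit of the torus $\TT_{v,w}$ acting on $\PP^{\binom{n}{k}-1}$ by $t\cdot[z_I]=[t^{m_I}z_I]$, where $t^{m_I}$ is a character of $\TT_{v,w}$. The closure of a torus orbit is a (possibly non-normal) toric variety in the sense of \cite{Cox}, with respect to the quotient of $\TT_{v,w}$ acting effectively on it. By \cite[Theorem 5.1]{KLS13}, $\Pi_k(v,w)=\pi_k(\overline{R}_{v,w})$. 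Since $\pi_k$ is proper, $\pi_k(\overline{R}_{v,w})=\overline{\pi_k(R_{v,w})}$, which is the orbit closure above. This shows $\Pi_k(v,w)$ is toric.

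For equivariance, the $\TT_{v,w}$-action on $\overline{R}_{v,w}$ from Theorem~\ref{thm: toric} is, inside $\prod_k\PP^{\binom nk-1}$, given coordinatewise by $t\cdot[z^{(k)}_I]_k=[t^{m_I}z^{(k)}_I]_k$. The projection $\pi_k$ is the restriction of the coordinate projection onto the $k$th factor, so the two actions visibly commute with $\pi_k$. More carefully, the formula holds on the dense subset $R_{v,w}$, and both sides are morphisms $\TT_{v,w}\times\overline{R}_{v,w}\to\PP^{\binom nk-1}$, so equality extends to the closure.

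The main subtlety is that the action of $\TT_{v,w}$ on $\Pi_k(v,w)$ may have a kernel. The statement "toric" should be read with respect to the image torus, which is the torus of characters generated by the differences $m_I-m_J$. I would also remark, using Lemma~\ref{lem: contained in positroid}, that $\pi_k(R_{v,w})$ is then an orbit dense in $\Pi^\circ_k(v,w)$. It is also natural to compare with the standard $T$-action: $T$ rescales $\Delta_I$ by $\prod_{i\in I}t_i$. I would try to check that $\Pi_k(v,w)$ is $T$-toric as well, by showing that the lattice spanned by the $m_I-m_J$ equals the one spanned by the $e_I-e_J$ restricted to $\CM_k$. That comparison is the step I expect to be hardest, though it is not strictly needed for the lemma as stated.
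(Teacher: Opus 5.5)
Your proposal is correct and is essentially the paper's argument: the paper's proof just notes, from the proof of Theorem~\ref{thm: toric}, that the flag minors indexed by $k$-subsets are zero or Laurent monomials in the coordinates of $\TT_{v,w}$ and are exactly the Pl\"ucker coordinates of $\pi_k$. You spell this out in more detail, including the identification $\pi_k(\overline{R}_{v,w})=\overline{\pi_k(R_{v,w})}$ via properness and \cite{KLS13}, and equivariance via the projection onto the $k$th factor. Your concerns about a kernel and the comparison with $T$ are not needed here: the paper's notion of toric only asks for an action with an open orbit, and $T$-toricity is handled separately in Theorem~\ref{thm: positroid torus}(d).
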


\begin{proof}
This is immediate from the proof of Theorem \ref{thm: toric}: all flag minors for $I \in \binom{[n]}{k}$ are either zero or Laurent monomials in the coordinates of $\TT_{v,w}$, and these flag minors are exactly the Pl\"ucker coordinates in the projection.
\end{proof}

\begin{lemma}
\label{lem: disjoint projections}
Assume that $R_{v,w}$ is a torus and $[v',w']\subsetneq [v,w]$. Then either $\pi_k(R_{v',w'})\cap \pi_k(R_{v,w})=\emptyset$ or $\pi_k(R_{v',w'})=\pi_k(R_{v,w})$.
\end{lemma}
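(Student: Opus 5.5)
We work through the Grassmannian: both sets $\pi_k(R_{v',w'})$ and $\pi_k(R_{v,w})$ are described by which Pl\"ucker coordinates vanish. By Lemma~\ref{lem: contained in positroid}, $\pi_k(R_{v,w})\subset \Pi^\circ_k(v,w)$, and likewise $\pi_k(R_{v',w'})\subset \Pi^\circ_k(v',w')$. Corollary~\ref{cor: subintervals tori} says $R_{v',w'}$ is also a torus, so that lemma applies to the subinterval too. Every point of $\pi_k(R_{v,w})$ has nonvanishing set of Pl\"ucker coordinates exactly $\CM_k(v,w)$; this is the content of the proof of Lemma~\ref{lem: contained in positroid}, using Theorem~\ref{thm: flag minor richardson} and Lemma~\ref{lem: splitting}(b). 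Likewise, every point of $\pi_k(R_{v',w'})$ has nonvanishing set exactly $\CM_k(v',w')$. Since $[v',w']\subset[v,w]$, we have $\CM_k(v',w')\subseteq \CM_k(v,w)$.

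\textbf{Case 1: $\CM_k(v',w')\neq \CM_k(v,w)$.} Then the two images have distinct supports of nonzero Pl\"ucker coordinates, so they are disjoint.

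\textbf{Case 2: $\CM_k(v',w')=\CM_k(v,w)=:\CM$.} Both images lie in the same open positroid variety $\Pi^\circ_{\CM}$, and we want to show they are equal. We first show $\pi_k(R_{v,w})=\Pi^\circ_{\CM}$ whenever $R_{v,w}$ is a torus.
\begin{enumerate}[(i)]
\item By Lemma~\ref{lem: projection toric}, $\pi_k$ is $\TT_{v,w}$-equivariant, with $\TT_{v,w}$ acting on $\Gr(k,n)$ through the Laurent-monomial Pl\"ucker coordinates. So $\pi_k(R_{v,w})$ is a single orbit of a torus acting on $\Pi_{\CM}$.
\item Choose a multiplicative structure on $\pi_k(R_{v,w})$ (Definition~\ref{def: richardson w mult struc}). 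Then $\pi_k(R_{v,w})$ is a torus: it is the image of a torus under a homomorphism given by monomials, which is a quotient torus.
\item By Lemma~\ref{lem: contained in positroid}, $\pi_k(R_{v,w})$ is dense in $\Pi^\circ_{\CM}$. Being an orbit of an algebraic group action, it is locally closed, hence open in its closure. So it is an open subset of $\Pi^\circ_{\CM}$.
\item Show that $\Pi^\circ_{\CM}$ is itself a torus. The torus orbit $\pi_k(R_{v,w})$ is an open subset of the affine variety $\Pi^\circ_{\CM}$. By \eqref{eq: def open positroid} the Pl\"ucker coordinates in $\CM$ are nonvanishing on $\Pi^\circ_{\CM}$, and ratios of them generate the coordinate ring of the cluster variety via its frozen and mutable variables. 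A nonfrozen cluster variable restricted to the open torus must be a Laurent monomial in these ratios, since $\pi_k(R_{v,w})$ is the torus cut out by the Pl\"ucker coordinates. This forces it to be a unit on $\Pi^\circ_{\CM}$, hence frozen by \cite[Theorem 1.3]{GLS}. Corollary~\ref{cor: all frozen} then makes $\Pi^\circ_{\CM}$ a torus.
\item Apply Lemma~\ref{lem: open torus} to the open subtorus $\pi_k(R_{v,w})\subset\Pi^\circ_{\CM}$ to get $\pi_k(R_{v,w})=\Pi^\circ_{\CM}$.
\end{enumerate}
The same argument applied to $[v',w']$ gives $\pi_k(R_{v',w'})=\Pi^\circ_{\CM}$, so the two images are equal.

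The main obstacle is step (iv), showing that $\Pi^\circ_{\CM}$ is a torus. As an alternative, one could deduce directly that the open subset $\pi_k(R_{v,w})$ is all of $\Pi^\circ_{\CM}$. Its complement in the irreducible affine variety $\Pi^\circ_{\CM}$ would be a divisor, on which some Pl\"ucker coordinate in $\CM$ vanishes. But that coordinate is nonvanishing on all of $\Pi^\circ_{\CM}$, a contradiction, provided one knows $\Pi^\circ_{\CM}$ is exactly the locus where the coordinates in $\CM$ are nonzero. The containment in \eqref{eq: def open positroid} goes only one way, so this step needs care.
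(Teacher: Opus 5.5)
Your Case 1 is correct, but Case 2 has a genuine gap in step (iv), and your fallback argument has the same gap. The sentence ``By \eqref{eq: def open positroid} the Pl\"ucker coordinates in $\CM$ are nonvanishing on $\Pi^\circ_{\CM}$'' reads that display backwards. It only says that $\Pi^\circ_{\CM}$ \emph{contains} the locus where exactly the coordinates in $\CM$ are nonzero.

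For a general positroid your claim is false: only the Grassmann necklace coordinates are units on $\Pi^\circ_{\CM}$. For example, $\Delta_{13}$ vanishes at some points of the top cell of $\Gr(2,4)$. In the present setting the claim is true, but it is not an independent input. Indeed, $\pi_k(R_{v,w})$ is a closed subtorus of the torus $\{y\in\PP^{\binom{n}{k}-1}: y_I\neq 0 \iff I\in\CM\}$, and its closure is $\Pi_{\CM}$. Hence $\pi_k(R_{v,w})=\{V\in\Pi_{\CM}: \Delta_I(V)\neq 0 \text{ for all } I\in\CM\}$. Your claim is therefore equivalent to $\pi_k(R_{v,w})=\Pi^\circ_{\CM}$, i.e.\ to Theorem~\ref{thm: positroid torus}(a), which the paper deduces \emph{from} this lemma. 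So step (iv) assumes what Case 2 is trying to prove.

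The cluster-algebra reasoning around that sentence does not help. Being a Laurent monomial on a dense open torus does not make a function a unit on the whole variety; every cluster variable is a unit on its own cluster torus. You also give no reason why a mutable variable should even be nonvanishing on $\pi_k(R_{v,w})$. Likewise, in your divisor argument the complement of $\pi_k(R_{v,w})$ in $\Pi^\circ_{\CM}$ is exactly the zero locus of $\prod_{I\in\CM}\Delta_I$ there. No contradiction is available without already knowing these coordinates are units.

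The missing idea is simpler, and your step (i) already contains half of it. By Lemma~\ref{lem: orbits}, $R_{v',w'}$ is itself a single $\TT_{v,w}$-orbit in $\overline{R}_{v,w}$; here you use the action of the big torus $\TT_{v,w}$, not of $\TT_{v',w'}$. By Lemma~\ref{lem: projection toric}, $\pi_k$ is $\TT_{v,w}$-equivariant. Hence both $\pi_k(R_{v',w'})$ and $\pi_k(R_{v,w})$ are $\TT_{v,w}$-orbits in $\Pi_k(v,w)$. Two orbits of the same group are either disjoint or equal, which is exactly the asserted dichotomy. This needs no case split on constituents and no global information about $\Pi^\circ_{\CM}$. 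It is the paper's proof, which is phrased through the orbit $\mathbf{O}$ of a common point $p$, a dimension count, and Lemma~\ref{lem: open torus}.
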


\begin{proof}
By Lemma \ref{lem: projection toric} the projection $\pi_k$ is $\TT_{v,w}$-equivariant. Assume that a point $p$ is contained in the intersection $\pi_k(R_{v',w'})\cap \pi_k(R_{v,w})$. 

Since $R_{v', w'}$ is also a torus by Corollary~\ref{cor: subintervals tori}, Lemma \ref{lem: contained in positroid} implies that the image $\pi_k(R_{v',w'})$ is contained in $\Pi^{\circ}_{k}(v',w')$. Open positroid varieties labeled by different positroids are disjoint. Therefore $\CM_k(v',w')=\CM_k(v,w)$ and $\pi_k(R_{v',w'})$ is dense in $\Pi^{\circ}_k(v,w)$. In particular, we have
$$
\dim \Pi_k(v,w)=\dim \pi_k(R_{v',w'})=\dim \pi_k(R_{v,w}).
$$

The $\TT_{v,w}$-orbit $\mathbf{O}$ of $p$ is a torus which is contained in $\Pi_k(v,w)$ by Lemma \ref{lem: contained in positroid} and contains both $\pi_k(R_{v',w'})$ and $\pi_k(R_{v,w})$ by Lemma \ref{lem: orbits}. The above equality of dimensions implies that $\mathbf{O}$ is also the same dimension as $\pi_k(R_{v',w'})$, and $\pi_k(R_{v,w})$. By Lemma \ref{lem: open torus} we conclude that $\mathbf{O}=\pi_k(R_{v',w'})=\pi_k(R_{v,w})$.
\end{proof}

\begin{theorem}
\label{thm: positroid torus}
Assume that $R_{v,w}$ is a torus. Then:
\begin{itemize}
\item[(a)] We have $\pi_k(R_{v,w})=\Pi^{\circ}_k(v,w)$.
\item[(b)] The open positroid $\Pi^{\circ}_k(v,w)$ is a torus and the closed positroid variety is toric with respect to $\TT_{v,w}$.
\item[(c)] The plabic graph $G_{k}(v,w)$  is a forest. 
\item[(d)] The closed positroid variety $\Pi_k(v,w)=\pi_k(\overline{R}_{v,w})$ is toric with respect to the standard torus $T$. 
\end{itemize}
\end{theorem}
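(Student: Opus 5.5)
For (a), the plan is to show that $\pi_k(R_{v,w})$ is open in $\Pi_k(v,w)$ and closed in $\Pi^\circ_k(v,w)$; density from Lemma~\ref{lem: contained in positroid} then gives equality. The argument has three steps.

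First, I decompose $\Pi^\circ_k(v,w)$ using the closed Richardson. By \cite[Theorem 5.1]{KLS13}, $\Pi_k(v,w)=\pi_k(\overline{R}_{v,w})=\bigcup_{[v',w']\subset[v,w]}\pi_k(R_{v',w'})$. Each subinterval gives a torus $R_{v',w'}$ by Corollary~\ref{cor: subintervals tori}. So Lemma~\ref{lem: contained in positroid}, applied to $[v',w']$, shows that $\pi_k(R_{v',w'})\subset\Pi^\circ_k(v',w')$. Since open positroid varieties with different positroids are disjoint, $\Pi^\circ_k(v,w)$ is the union of those $\pi_k(R_{v',w'})$ with $\CM_k(v',w')=\CM_k(v,w)$. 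Lemma~\ref{lem: disjoint projections} says each such piece either equals $\pi_k(R_{v,w})$ or is disjoint from it. For a piece with the same positroid I must rule out disjointness.

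Second, I use the torus action. By Lemma~\ref{lem: projection toric}, $\pi_k$ is $\TT_{v,w}$-equivariant. By Lemma~\ref{lem: orbits}, each $R_{v',w'}$ is a $\TT_{v,w}$-orbit, so each $\pi_k(R_{v',w'})$ is a single orbit of $\TT_{v,w}$ acting on the toric variety $\Pi_k(v,w)$. By Lemma~\ref{lem: contained in positroid}, $\pi_k(R_{v',w'})$ is dense in $\Pi^\circ_k(v',w')=\Pi^\circ_k(v,w)$, so it has full dimension $\dim\Pi_k(v,w)$. A toric variety has only one orbit of top dimension, namely the open orbit. Hence $\pi_k(R_{v',w'})=\pi_k(R_{v,w})$, and $\Pi^\circ_k(v,w)=\pi_k(R_{v,w})$. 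The main obstacle is making sure that $\Pi_k(v,w)$ really is toric for $\TT_{v,w}$ with finitely many orbits, which is needed for the uniqueness of the top-dimensional orbit. This follows from Lemma~\ref{lem: projection toric}: $\Pi_k(v,w)$ is the closure of the image of a torus under a monomial map into projective space. It is therefore the closure of a single orbit of a torus acting linearly, hence a (possibly non-normal) toric variety with finitely many orbits.

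This gives (b) at once. $\Pi^\circ_k(v,w)=\pi_k(R_{v,w})$ is a $\TT_{v,w}$-orbit, hence a torus, being a quotient of a torus by a closed subgroup. Its closure is toric by Lemma~\ref{lem: projection toric}. Part (c) then follows from (b) and Lemma~\ref{lem: forest}.

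For (d), I argue that the image torus is contained in the standard torus. By (b), $\Pi^\circ_k(v,w)$ is the torus $\pi_k(R_{v,w})$, on which the nonzero Plücker coordinates $\Delta_I$, $I\in\CM_k(v,w)$, are Laurent monomials in the coordinates of $\TT_{v,w}$. So in $\PP^{\binom nk-1}$ this torus is an orbit of a subtorus of the diagonal torus, cut out by binomial relations among the $\Delta_I$. Moreover, $T$ acts on the Plücker coordinates by $\Delta_I\mapsto t^{e_I}\Delta_I$, and $\Pi^\circ_k(v,w)$ is $T$-stable. It then suffices to compare dimensions, that is, to show $\dim\Pi_k(v,w)=\dim P_k(v,w)$. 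The affine lattice spanned by $\{e_I\}$ then has the same rank as the lattice of exponent vectors of the $\TT_{v,w}$-monomials restricted to the torus, and both tori act through the same image in the diagonal torus of $\PP^{\binom nk-1}$. The natural argument is that $T$ already acts with an open orbit because the graph is a forest: for a plabic forest the positroid is a direct sum of uniform-type pieces, and one checks that $\dim\Pi_{\CM}$ equals (number of faces) $-1$, which equals $\dim P_{\CM}$ for forests. The alternative is to cite that a positroid variety is $T$-toric exactly when its plabic graph has no internal faces. I expect this dimension match to be the step needing the most care.
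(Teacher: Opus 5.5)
Your proposal is correct and is essentially the paper's proof. In (a) you collapse $\Pi^\circ_k(v,w)=\bigcup_{\CM_k(v',w')=\CM_k(v,w)}\pi_k(R_{v',w'})$ by noting that these pieces are full-dimensional $\TT_{v,w}$-orbits in an irreducible variety, which is what Lemma~\ref{lem: disjoint projections} plus irreducibility does in the paper; (b) and (c) follow from monomiality and Lemma~\ref{lem: forest}; and (d) rests on the same observation that for a forest $\dim\Pi_k(v,w)=\dim P_k(v,w)$. One slip in your sketch of (d): a plabic forest's positroid need not be a direct sum of uniform matroids (e.g.\ $n=6$, $k=3$ in Section~\ref{subsec-infinite-family} gives a connected, non-uniform positroid), but the count you actually need still holds: the number of faces minus one equals $n$ minus the number of tree components, which is $\dim P_k(v,w)$ since each tree is one connected component of the positroid.
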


\begin{proof}
(a) We have $\overline{R}_{v,w}=\bigsqcup_{v\le v'\le w'\le w}R_{x,y}$, so 
$$
\pi_k(\overline{R}_{v,w})=\bigcup_{v\le v'\le w'\le w}\pi_k(R_{v',w'}).
$$
By Lemma \ref{lem: contained in positroid}, $\pi_k(R_{v',w'})$ is contained in the positroid $\Pi^{\circ}_{k}(v',w')$ which either coincides with $\Pi^{\circ}_k(v,w)$ or is disjoint from it. Since $\pi_k(\overline{R}_{v,w})$ contains $\Pi^{\circ}_k(v,w)$, we can write
$$
\Pi^{\circ}_k(v,w)=\bigcup_{\stackrel{v\le v'\le w'\le w}{\CM_k(v',w')=\CM_k(v,w)}}\pi_k(R_{v',w'})
$$
By Lemma \ref{lem: disjoint projections} the projections $\pi_k(R_{v',w'})$ either coincide or do not intersect at all. Since  $\Pi^{\circ}_k(v,w)$ is irreducible, they cannot be disjoint and therefore $\pi_k(R_{v,w})=\Pi^{\circ}_k(v,w)$, as desired. 

(b) Part (a) implies that $\Pi^{\circ}_k(v,w)$ is a torus, as all nonzero Pl\"ucker coordinates are monomials in the coordinates of the torus $R_{v,w} = \TT_{v,w}$. The closed positroid is the closure of the image of a monomial map $\TT_{v,w} \to \PP^{\binom{n}{k}-1}$ so is toric with respect to $\TT_{v,w}$.

(c) The  plabic graph $G_k(v,w)$ is a forest by Lemma \ref{lem: forest}. This implies (d) because in this case, the dimension of $\Pi_k(v,w)$ is the same as the dimension of the $T$-moment polytope $P_{k}(v,w)$.
\end{proof}



\section{Moment polytopes of toric Richardsons}\label{subsec: polytopes}


In this section, we focus on the moment polytope $\widehat{P}(v,w)$ of the toric Richardson variety $\overline{R}_{v,w}$. Corollary~\ref{cor: face lattice} described the face lattice of this polytope, which is the poset of subintervals of $[v,w]$. Here, we give more detail on its geometry.

\subsection{The moment polytopes as Minkowski sums}

We fix $R_{v,w}$ which is a torus. As in Definition~\ref{def: richardson w mult struc}, we use $\TT_{v,w}$ to denote $R_{v,w}$ with a fixed multiplicative structure. We use $x_i$ to denote the coordinates of $\TT_{v,w}$.
Recall that on $R_{v,w}$, if $I \in \CM_k(v,w)$, then by \eqref{eq: Delta as monomial}
\[\Delta_I= x_1^{m_{I,1}} \dots x_d^{m_{I,d}}\]
for some $m_I=(m_{I,1},\ldots,m_{I,d}) \in \Z^d$.

\begin{definition}
We define the polytope $\widehat{P}_{k}(v,w)$ as
\[\widehat{P}_{k}(v,w)= \Conv(m_I: I \in \CM_k(v,w)).\]
We call this the \emph{$k$th summand} for $[v,w]$, for reasons that will be clear momentarily.
\end{definition}

Recall that the vectors $m_I$ are defined up to a shift which might depend on $|I|=k$ but not on $I$. This means that the polytopes  $\widehat{P}_{k}(v,w)$ are also defined up to a shift.

\begin{theorem}
\label{thm: polytope} Suppose $R_{v,w}$ is a torus, so by Theorem~\ref{thm: positroid torus} $\pi_k(\overline{R}_{v,w})=\Pi_k(v,w)$ is toric.
\begin{enumerate}[a)]
    \item The moment polytope of $\Pi_k(v,w)$ with respect to the action of the torus $\TT_{v,w}$ is $\widehat{P}_{k}(v,w)$. 
    \item The moment polytope $\widehat{P}(v,w)$ of $\overline{R}_{v,w}$ is equal to the Minkowski sum $\sum_{k=1}^{n-1}\widehat{P}_{k}(v,w)$.
\end{enumerate}
\end{theorem}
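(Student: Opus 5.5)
The plan is to use the standard description of the moment polytope of a projective toric variety given as the closure of the image of a monomial map. Recall from the proof of Theorem~\ref{thm: toric} that on $\TT_{v,w}\simeq(\C^\times)^d$ every nonzero flag minor is a monic Laurent monomial $\Delta_I=x^{m_I}$. For part a), by Theorem~\ref{thm: positroid torus}(a),(b) the positroid variety $\Pi_k(v,w)$ is the closure in $\PP^{\binom{n}{k}-1}$ of the image of the map $x\mapsto [x^{m_I}]_{I\in\CM_k(v,w)}$, with all other Pl\"ucker coordinates zero. The action of $\TT_{v,w}$ on $\PP^{\binom nk-1}$ is diagonal: it scales the coordinate $\Delta_I$ by the character $x^{m_I}$, and this extends the action on $\Pi_k(v,w)$ by Lemma~\ref{lem: projection toric}.

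The moment map for a diagonal action on projective space with weights $m_I$ is
\[
\mu_k([z])=\frac{\sum_I |z_I|^2\, m_I}{\sum_I |z_I|^2}.
\]
By the Atiyah--Guillemin--Sternberg convexity theorem, or directly from the theory of projective toric varieties in \cite{Cox}, the image of the closure of a single orbit is the convex hull of the weights whose coordinates are nonzero on that orbit. Here those coordinates are exactly the $\Delta_I$ with $I\in\CM_k(v,w)$, so the image is $\Conv(m_I)=\widehat P_k(v,w)$. The shift ambiguity in $m_I$ corresponds to the usual freedom in choosing the moment map, that is, to rescaling the linearization.

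For part b), I use the embedding $\jmath\colon\overline R_{v,w}\to\prod_k\PP^{\binom nk-1}$ from the proof of Theorem~\ref{thm: toric}. The polarization is $\bigotimes_k\mathcal O(1)$ pulled back, or equivalently the Segre embedding of the product. The moment map of a product with the diagonal $\TT_{v,w}$-action is the sum $\mu=\sum_k\mu_k\circ\mathrm{pr}_k$. I will describe the image via the Segre embedding. Its coordinates are products $\prod_k\Delta_{I_k}$, which restrict on the open orbit to $x^{\sum_k m_{I_k}}$, where each $I_k$ ranges independently over $\CM_k(v,w)$. All of these products are nonzero on $R_{v,w}$ by Lemma~\ref{lem: splitting}. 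Applying part a) to this single monomial map gives the image
\[
\Conv\Big(\textstyle\sum_k m_{I_k}\Big)=\sum_k \Conv(m_{I_k})=\sum_k\widehat P_k(v,w),
\]
since the convex hull of a Minkowski sum of finite sets is the Minkowski sum of their convex hulls. The main point needing care is the choice of ample line bundle: "the" moment polytope $\widehat P(v,w)$ must be understood with respect to the restriction of $\bigotimes_k \mathcal O_{\Gr(k,n)}(1)$, i.e.\ the embedding $\jmath$, which is the convention that makes $P(v,w)=\sum_k P_k(v,w)$ in the $T$-toric case.

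I expect the main obstacle to be justifying that the image is the full convex hull of all the weights $\sum_k m_{I_k}$, rather than of some subset. This holds because the image of an orbit closure is the convex hull of the weights of the coordinates that do not vanish identically on the orbit. Every Segre coordinate $\prod_k\Delta_{I_k}$ with $I_k\in\CM_k(v,w)$ is a nonzero monomial on the open torus, so all of these weights contribute; redundant weights in the interior do not affect the convex hull.
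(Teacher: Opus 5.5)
Your proposal is correct and takes essentially the same route as the paper: you realize $\Pi_k(v,w)$ and $\overline{R}_{v,w}$ as closures of images of monomial maps with exponent vectors $m_I$, pass through the Segre embedding, and apply $\Conv(A+B)=\Conv(A)+\Conv(B)$. Your remarks on the polarization, and on every Segre coordinate with $I_k\in\CM_k(v,w)$ being nonvanishing on the open torus, make explicit what the paper's short proof leaves implicit.
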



\begin{proof}
Both parts follow from the proof of Theorem \ref{thm: toric}, so we only need to explain the appearance of the Minkowski sum. We consider the Segre embedding
$$
R_{v,w}\xrightarrow{\iota}\prod_{k=1}^{n-1}\PP^{\binom{n}{k}-1}\hookrightarrow \PP^{N},\ N=\prod_{k=1}^{n-1}\binom{n}{k}-1.
$$
The coordinates in the image of $R_{v,w}$ are either zero or monomials with exponent vectors
$$
m_{I_1}+ m_{I_2} + \dots + m_{I_{n-1}}
$$
where $I_k\in \CM_{k}(v,w)$. Therefore the moment polytope for $\overline{R}_{v,w}$ is given by
$$
\Conv\left(m_{I_1}+ m_{I_2} + \dots + m_{I_{n-1}}\ :\ I_k\in \CM_{k}(v,w)\right)
$$
$$
=\Conv\left(m_{I_1}\ :\ I_1\in \CM_{1}(v,w)\right)+\dots+\Conv\left(m_{I_{n-1}}\ :\ I_{n-1}\in \CM_{n-1}(v,w)\right)
$$
$$
=\widehat{P}_1(v,w)+\dots+\widehat{P}_{n-1}(v,w).
$$
Here we used the identity $\Conv(A+B)=\Conv(A)+\Conv(B)$.

\end{proof}


\begin{corollary}
The vertices of $\widehat{P}_k(v,w)$ are exactly the $m_I$ for $I\in \CM_{k}(v,w)$.
\end{corollary}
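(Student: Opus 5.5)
The plan is to identify the faces of $\widehat{P}_k(v,w)$ with $\TT_{v,w}$-orbit closures in the toric variety $\Pi_k(v,w)$. By Theorem~\ref{thm: polytope}(a), $\widehat{P}_k(v,w)$ is the moment polytope of $\Pi_k(v,w)$ for the $\TT_{v,w}$-action. By construction it is the convex hull of the points $m_I$, so every vertex is some $m_I$. The work is to show that no $m_I$ lies in the relative interior of a positive-dimensional face, and that distinct $I$ give distinct points.

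First I would show that the $\TT_{v,w}$-fixed points of $\Pi_k(v,w)$ are exactly the coordinate points $\langle e_I\rangle$ for $I\in\CM_k(v,w)$. The first step is to check that each such coordinate point lies in $\Pi_k(v,w)$. Since $I=u[k]$ for some $u\in[v,w]$, the $T$-fixed flag $uB\in R_{u,u}\subset\overline{R}_{v,w}$ projects to $\langle e_I\rangle$. By Lemma~\ref{lem: orbits} the point $uB$ is a $\TT_{v,w}$-orbit, hence fixed. By Lemma~\ref{lem: projection toric} the map $\pi_k$ is equivariant, so $\langle e_I\rangle$ is $\TT_{v,w}$-fixed. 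A $\TT_{v,w}$-fixed point of a projective toric variety corresponds to a vertex of the moment polytope, namely the image of that point under the moment map. At the point $\langle e_I\rangle$ the only nonzero Pl\"ucker coordinate is $\Delta_I$, so the moment map sends it to its weight $m_I$. Hence every $m_I$ with $I\in\CM_k(v,w)$ is a vertex.

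It remains to see that the $m_I$ are pairwise distinct, so the vertices are in bijection with $\CM_k(v,w)$. Suppose $m_I=m_J$ with $I\ne J$. Then $\Delta_I/\Delta_J$ is constant on $\Pi_k^\circ(v,w)$ (monic monomials give constant $1$), so $\Delta_I=\Delta_J$ on all of $\Pi_k(v,w)$. Evaluating at the point $\langle e_I\rangle\in\Pi_k(v,w)$, where $\Delta_I\ne0=\Delta_J$, gives a contradiction.

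The main obstacle is the toric-geometry fact that the weight of each coordinate point in the closure of a monomial orbit is a vertex. If citing it directly is unsatisfactory, I would argue via one-parameter subgroups. The point $\langle e_I\rangle$ lies in the closure of the open orbit, and the orbit closures are exactly the coordinate subvarieties $\{\Delta_J=0,\ m_J\notin F\}$ for faces $F$. So $\langle e_I\rangle$ lies in the orbit attached to some face $F$ whose support contains only $I$. A face whose only lattice point is $m_I$ is the vertex $m_I$.
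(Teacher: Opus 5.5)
Your proof is correct and follows the same route as the paper: vertices of $\widehat{P}_k(v,w)$ correspond to $\TT_{v,w}$-fixed points of $\Pi_k(v,w)$, which are the coordinate points $\langle e_I\rangle$ with $I\in\CM_k(v,w)$, and these map to $m_I$. You differ only in two details: you show these points are $\TT_{v,w}$-fixed via Lemma~\ref{lem: orbits} and equivariance of $\pi_k$ (the paper instead identifies them with $T$-fixed points), and you prove explicitly that the $m_I$ are pairwise distinct, which the paper leaves implicit.
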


\begin{proof}
The vertices of  $\widehat{P}_k(v,w)$ correspond to the $\TT_{v,w}$-fixed points of $\Pi_k(v,w)=\pi_k(\overline{R}_{v,w})$. These are the same as $T$-fixed points, and given by coordinate flags for subsets $u[k],\ u\in [v,w]$, equivalently, by $I\in \CM_{k}(v,w)$.
\end{proof}




Recall from Definition~\ref{def: positroid polytope} the definition of the positroid polytope $P_k(v,w)$. This polytope lies in the affine hyperplane $\mathcal{H}_k$ in $\R^n$ consisting of vectors with coordinate sum $k$. Alternatively, we can think of $P_k(v,w)$
as the moment polytope of $\Pi_k(v,w)$ under the action of the standard torus $T$ where a nonzero minor $\Delta_I$ has weight $e_I$.

\begin{lemma}
\label{lem: projection}
There exist an integer matrix $C\in \Mat(d,n)$ and integer vectors $d^{(0)}, d^{(1)},\ldots,d^{(k)}\in \Z^n$ such that
$$
C\widehat{P}(v,w)+d^{(0)}=P(v,w),\ C\widehat{P}_k(v,w)+d^{(k)}=P_k(v,w)
$$
and
$$
Cm_{I}+d^{(k)}=e_{I}\quad \mathrm{for}\ I\in \CM_k(v,w).
$$
\end{lemma}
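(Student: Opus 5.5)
The plan is to realize the matrix $C$ as the linear map on weights induced by a homomorphism of tori $T\to\TT_{v,w}$ (or rather its dual on characters). The standard torus $T$ of diagonal matrices acts on $R_{v,w}$, and $R_{v,w}$ is identified with $\TT_{v,w}$. First I will show that the action of $T$ on $R_{v,w}\simeq\TT_{v,w}$ is given by translation by a one-parameter family of points. Concretely, for $t\in T$ the map $x\mapsto t\cdot x$ is an automorphism of the variety $\TT_{v,w}$. Each coordinate function $x_j\circ(t\cdot -)$ is a unit on the torus, hence a Laurent monomial $c_j(t)\prod_i x_i^{a_{ji}(t)}$. The exponents are locally constant in $t$ and equal the identity at $t=1$, so since $T$ is connected they are the identity everywhere. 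Thus $t\cdot x=\chi(t)x$ for some map $\chi:T\to\TT_{v,w}$, which is a group homomorphism of algebraic tori. It is therefore given by characters: $\chi(t)_j=\prod_i t_i^{C_{ji}}$ with $C$ an integer matrix. This gives a $d\times n$ matrix, so I will take the statement's $C\in\Mat(d,n)$ in the sense of the map $\R^d\to\R^n$, i.e.\ the transpose $C^T$ acting on exponent vectors.

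Next I compare weights. The minor $\Delta_I$ transforms under $T$ as $\Delta_I(t\cdot F)=t^{e_I}\Delta_I(F)$, up to the normalization $c_k$, which is a character of $T$ depending only on $k$ (coming from rescaling the first $k$ columns). On the other hand, $\Delta_I=x^{m_I}$ gives $\Delta_I(\chi(t)x)=\chi(t)^{m_I}x^{m_I}=t^{C^Tm_I}\Delta_I(x)$. Comparing the two characters of $T$ for all $I\in\CM_k(v,w)$ yields $C^Tm_I+d^{(k)}=e_I$, where $d^{(k)}$ is an integer vector depending only on $k$. It absorbs both the column-rescaling character and the ambiguity of $m_I$ up to a $k$-dependent shift. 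One subtlety needs handling: the identity $\Delta_I(t\cdot F)=t^{e_I}\Delta_I(F)$ holds only projectively in $\PP^{\binom nk-1}$. I will fix this by working with ratios $\Delta_I/\Delta_{I_0}$ for a fixed $I_0\in\CM_k(v,w)$, which are honest functions on $R_{v,w}$. This gives $C^T(m_I-m_{I_0})=e_I-e_{I_0}$, and then I set $d^{(k)}=e_{I_0}-C^Tm_{I_0}$.

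Finally, applying the affine map $y\mapsto C^Ty+d^{(k)}$ to $\widehat P_k(v,w)=\Conv(m_I)$ gives $\Conv(e_I)=P_k(v,w)$, since affine maps commute with taking convex hulls. Summing over $k$ and using Theorem~\ref{thm: polytope}(b) together with the Minkowski decomposition $P(v,w)=\sum_k P_k(v,w)$ gives the statement for $\widehat P(v,w)$ with $d^{(0)}=\sum_k d^{(k)}$, because linear maps commute with Minkowski sums. (The $k=n$ summand is a point and is absorbed into the shift.)

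I expect the main obstacle to be the first step: justifying that $T$ acts on $R_{v,w}$ by translations in the chosen multiplicative structure, i.e.\ that the automorphisms induced by $T$ are translations rather than general monomial automorphisms. The connectedness argument should settle it. Alternatively, I can compose with the monomial change of coordinates from Lemma~\ref{lem: open torus}, or check it directly in the MR parametrization, where $T$ rescales the parameters $t_j$ by characters.
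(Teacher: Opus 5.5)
Your proposal is correct and follows the paper's argument. The paper also takes $C$ to be the character map of the homomorphism $T\to\TT_{v,w}$ given by the $T$-action on $R_{v,w}$, and compares $T$-weights with $\TT_{v,w}$-weights; it packages this as a commutative triangle of moment maps.

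You supply details the paper leaves implicit:
\begin{itemize}
\item You justify that $T$ acts by translations. Your ``locally constant exponents'' step is cleanest justified as follows: the pullback of $x_j$ under the action map is a unit on the torus $T\times\TT_{v,w}$, hence a Laurent monomial in both sets of variables.
\item You handle the projective normalization via the ratios $\Delta_I/\Delta_{I_0}$.
\item You prove the statement for each summand $\widehat P_k(v,w)$ first and deduce it for $\widehat P(v,w)$ via Minkowski sums. The paper goes the other way.
\end{itemize}
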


\begin{proof}
The action of the standard torus $T$ on $R_{v,w}$ defines a homomorphism $T\to \TT_{v,w}$ which corresponds to the map of lattices $C:\Z^{d}\to \Z^{n}$. The action of $T$ on $\overline{R}_{v,w}$  factors through the action of $\TT_{v,w}$, and hence we get a commutative diagram of moment maps
$$
\begin{tikzcd}
\overline{R}_{v,w} \arrow{d}{\mu_{\TT}} \arrow{dr}{\mu_T} & \\
\R^{d}\arrow{r}{C} & \R^{n}
\end{tikzcd}
$$
and $C\widehat{P}(v,w)$ agrees with $P(v,w)$ up to translation. The proof of the other statements is similar. 
\end{proof}
 

The next lemma shows that the $k$th summand $\widehat{P}_k(v,w)$ is affinely equivalent to $P_k(v,w)$, the positroid polytope for the $k$th constituent of $[v,w]$.

\begin{lemma}
\label{lem: transformation}
For each $k$ there exists an integer matrix $A^{(k)}\in \Mat(n,d)$ and an integer vector $b^{(k)}\in \Z^d$ such that $\widehat{P}_k(v,w)=A^{(k)}P_k(v,w)+b^{(k)}$. Equivalently, there exist vectors $a^{(k)}_1,\ldots,a^{(k)}_n$ and $b^{(k)}$ in $\Z^d$ such that for all $I \in \CM_k(v,w)$,
\begin{equation}
\label{eq: mI split}
m_I=b^{(k)}+\sum_{i\in I} a^{(k)}_i.
\end{equation}
This defines an affine equivalence between $P_{k}(v,w)$ and $\widehat{P}_k(v,w)$.
\end{lemma}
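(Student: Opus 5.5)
Since $\widehat{P}_k(v,w)=\Conv(m_I : I\in \CM_k(v,w))$ and $P_k(v,w)=\Conv(e_I : I\in\CM_k(v,w))$, it suffices to produce an affine map $\R^n\to\R^d$ sending $e_I\mapsto m_I$ for every $I\in\CM_k(v,w)$, i.e.\ vectors $a^{(k)}_i,b^{(k)}\in\Z^d$ satisfying \eqref{eq: mI split}. To show this is an affine equivalence, we will combine it with Lemma~\ref{lem: projection}: there $C m_I + d^{(k)} = e_I$, so the two affine maps are mutually inverse on the vertex sets. Hence they restrict to inverse bijections between the affine spans, and so between the polytopes.

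To construct $a^{(k)}_i$, we work with the multiplicative structure given by the MR parametrization (Proposition~\ref{prop:MR-param}); a change of multiplicative structure acts on exponent vectors by an element of $\mathrm{GL}_d(\Z)$ plus shifts, which preserves the form \eqref{eq: mI split}. By Lemma~\ref{lem:minors in MR-param}, $\Delta_I(g)=\prod_{b\in P_I} t_b$, where $P_I$ is the unique NI path collection $I\to[k]$ in $G_{v,\bw}$. We want an additive rule, meaning we want to show that the set of bridges used by $P_I$ is determined additively by $I$. The cleanest route is through Theorem~\ref{thm: positroid torus}(a),(c): $\pi_k$ identifies $R_{v,w}$ with the open positroid $\Pi^\circ_k(v,w)$, which is a torus whose plabic graph $G=G_k(v,w)$ is a forest. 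Postnikov's boundary measurement parametrization of $\Pi^\circ_k(v,w)$ via a perfect orientation of $G$ expresses each Pl\"ucker coordinate $\Delta_I$ as a sum over flows from the source set to $I$. On a forest, each flow is determined by its boundary endpoints and is a union of unique tree paths. Moreover, the flow for $I$ is the symmetric-difference type combination of single paths from the base source set $I_0$.

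Concretely, I would use the gauge-fixed edge-weight torus of the forest and show that the weight of the unique flow for $I$ equals $\prod_{i\in I\setminus I_0}\wt(\text{path to } i)\prod_{j\in I_0\setminus I}\wt(\text{path from } j)^{-1}$, up to reorganization. This is additive in the indicator vector $e_I$. This gives $\Delta_I=c\cdot y^{\,b+\sum_{i\in I}a_i}$ in the coordinates $y$ of $\Pi^\circ_k$. Since $\pi_k\colon \TT_{v,w}\to \Pi^\circ_k(v,w)$ is an isomorphism of tori, Lemma~\ref{lem: open torus} (applied as in its proof) shows it is a monomial map up to constants. So the exponent vectors $m_I$ are an integral linear transform of these plus a shift, which preserves \eqref{eq: mI split}. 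Integrality of $a^{(k)}_i,b^{(k)}$ comes for free since all maps are between character lattices.

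The main obstacle is the additivity step on the forest. One must check that flows in a perfectly oriented forest decompose so that weights multiply along paths without overlap corrections. A possible issue is that two boundary paths share edges, and replacing a source changes the orientation along a whole path. Reversing a path inverts edge weights, which is exactly what makes the contribution multiplicative in $I$. If this becomes messy, my fallback is purely combinatorial. I would use that $P_k(v,w)$ of a forest positroid is a product-like polytope whose vertex set affinely spans with all affine dependencies among the $e_I$ generated by ``exchange squares'' $e_{Sab}+e_{Scd}=e_{Sac}+e_{Sbd}$ (or similar). I would then verify that the same relations hold for the $m_I$ using the three-term Pl\"ucker relations. In these relations one term vanishes (the positroid being a forest, no relation has three nonzero terms), so the other two monomials are equal, giving $m_{Sab}+m_{Scd}=m_{Sac}+m_{Sbd}$. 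The existence of the affine map then follows from the fact that these relations generate all affine relations among vertices of a matroid polytope's edges.
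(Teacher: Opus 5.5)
You reduce correctly to finding an affine map $e_I\mapsto m_I$, and you use Lemma~\ref{lem: projection} for the inverse exactly as the paper does. The gap is the step that carries all the content: that on $\Pi^\circ_k(v,w)$ the exponent vector of $\Delta_I/\Delta_{I_0}$ is an affine function of $e_I$. You call this the main obstacle but never prove it. You only assert a formula ``up to reorganization'' in terms of undefined paths ``to $i$'' and ``from $j$''. The remark that reversing a path inverts edge weights concerns changing the perfect orientation, which plays no role once one orientation is fixed, so it does not give multiplicativity.

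The fallback has its own gap. You claim that the exchange relations $e_{Sab}+e_{Scd}=e_{Sac}+e_{Sbd}$, with all four sets in $\CM_k(v,w)$, generate every affine relation among the $e_I$. This is a nontrivial statement in the circle of White's conjecture, and you neither prove it nor can cite it in this form. Your Pl\"ucker argument only produces relations coming from three-term Pl\"ucker relations, and nothing shows that these span.

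Separately, $\pi_k\colon\TT_{v,w}\to\Pi^\circ_k(v,w)$ is surjective but not an isomorphism. In Example~\ref{ex: base case}, $d=4$ while $\widehat{P}_2(v,w)$ is a square. This is harmless, since any morphism of tori pulls units back to constant multiples of Laurent monomials.

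Your flow route can be completed by a cut argument. In a perfectly oriented forest, let $A_e$ be the set of boundary vertices on the tail side of an edge $e$ within its component. Since $e$ is the only edge joining the two sides, conservation at internal vertices forces the flow for $I$ to use $e$ exactly $|I_0\cap A_e|-|I\cap A_e|$ times. Writing $\epsilon_e$ for the basis vector recording the weight of $e$, the exponent vector $\sum_e\bigl(|I_0\cap A_e|-|I\cap A_e|\bigr)\epsilon_e$ is then visibly affine in $e_I$. You would still need to cite that the gauge-fixed boundary measurement map is an open embedding, hence onto the torus $\Pi^\circ_k(v,w)$ by Lemma~\ref{lem: open torus}.

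The paper avoids flows altogether by using the standard torus $T$:
\begin{itemize}
\item By \cite[Theorem 1.3]{GLS}, every nonzero $\Delta_I$ is a Laurent monomial $\prod_{j=1}^{s}\Delta_{I_j}^{\ell_{I,j}}$ in the $s$ Grassmann necklace minors, which are the frozen variables.
\item Comparing $T$-weights gives $e_I=\sum_j\ell_{I,j}e_{I_j}$.
\item The $e_{I_j}$ are affinely independent, because $\dim P_k(v,w)=s-1$. So the $\ell_{I,j}$ are determined linearly by $e_I$, and hence $m_I=\sum_j\ell_{I,j}m_{I_j}$ is affine in $e_I$.
\end{itemize}
This $T$-weight comparison is the idea your proposal misses. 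Because $\Pi_k(v,w)$ is $T$-toric of the same dimension, $T$-weights already determine exponent vectors, so no combinatorial additivity argument is needed.
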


\begin{proof}
By Theorem \ref{thm: positroid torus} we get that $\Pi^{\circ}_k(v,w)$ is a torus and the corresponding plabic graph $G_k(v,w)$ is a forest. All cluster variables in the corresponding cluster algebra are frozen. These cluster variables are Pl\"ucker coordinates $\Delta_{I_1},\ldots,\Delta_{I_s}$ corresponding to the $s$ faces of $G_k(v,w)$ (the subsets $I_1,\dots, I_s$ are the elements of the \emph{Grassmann necklace}). These Pl\"ucker coordinates are algebraically independent. Note also that $\Pi^{\circ}_k(v,w)$ has dimension $s-1$, and this is the same as the dimension of the 
polytopes $P_k(v,w)$ and $\widehat{P}_k(v,w)$, as both polytopes are moment map images of $\Pi_k(v,w)$ with respect to the tori $T$ and $\TT_{v,w}$ respectively.

The nonzero Pl\"ucker coordinates $\{\Delta_I : I \in \CM_k(v,w)\}$ of $\Pi^{\circ}_k(v,w)$ are units; this follows for example from Theorem \ref{thm: positroid torus} (a). By \cite[Theorem 1.3]{GLS}, this implies that the nonzero Pl\"ucker coordinates are Laurent monomials in the frozen variables $\Delta_{I_1},\ldots,\Delta_{I_s}$:
$$
\Delta_I=\Delta_{I_1}^{\ell_{I,1}}\cdots\Delta_{I_s}^{\ell_{I,s}} \quad \text{for } \ell_{I,1},\ldots,\ell_{I,s}\in \Z.
$$
By comparing the weights of $T$ and $\TT_{v,w}$ we get
\begin{equation}
\label{eq: Grassmann necklace generates}
e_I=\ell_{I,1}e_{I_1}+\ldots+\ell_{I,s}e_{I_s}\ \quad \text{and} \quad m_I=\ell_{I,1}m_{I_1}+\ldots+\ell_{I,s}m_{I_s} \quad \text{for } I\in \CM_{k}(v,w).
\end{equation}

The points $e_{I_1},\ldots,e_{I_s}$ span a simplex in $\R^{n-1}$ (otherwise $P_k(v, w)$ would not be dimension $s-1$). So there exists an affine transformation $\beta\mapsto A^{(k)}\beta+b^{(k)}$ from $\Z^{n}$ to $\Z^d$ such that $A^{(k)}e_{I_j}+b^{(k)}=m_{I_j}$ for $j=1,\ldots,s$.
By \eqref{eq: Grassmann necklace generates} we get $A^{(k)}e_I+b^{(k)}=m_I$ for all $I\in \CM_{k}(v,w)$ and 
$$
A^{(k)}P_{k}(v,w)+b^{(k)}=\widehat{P}_k(v,w).
$$

Let $a^{(k)}_i$ be the columns of the matrix $A_k$, then
$$
A^{(k)}\left(\sum_{i\in I}e_i\right)+b^{(k)}=\sum_{i\in I}\left(A^{(k)}e_i\right)+b^{(k)}=\sum_{i\in I}a^{(k)}_i+b^{(k)},
$$
and the equation \eqref{eq: mI split} follows. More abstractly, we can say that the matrix $A^{(k)}$ defines a homomorphism $\TT_{v,w}\to T$ and the action of $\TT_{v,w}$ on $\Pi_k(v,w)$ factors  through the action of $T$. 

Finally, by Lemma \ref{lem: projection} for all $I\in \CM_{k}(v,w)$ we get 
$$
C(A^{(k)}e_I+b^{(k)})+d^{(k)}=Cm_I+d^{(k)}=e_I
$$ 
This implies that $C$ and $A^{(k)}$ define inverse affine transformations between $P_k(v,w)$ and $\widehat{P}_k(v,w)$, and hence these polytopes are affine equivalent.
\end{proof}

\begin{remark}
\label{rmk: shift}
Adding the same vector $a$ to all $a_i^{(k)}$ is equivalent to changing $b^{(k)}$ to $b^{(k)}+ka$. This will shift the polytope $\widehat{P}_k(v,w)$ by a constant vector. 
\end{remark}

\subsection{Vertices and edges}

We already know from Corollary~\ref{cor: face lattice} that the vertices of $\widehat{P}_{v,w}$ are in bijection with the $\TT_{v,w}$-fixed points of $\overline{R}_{v,w}$, which are exactly the 0-dimensional open Richardsons $R_{u,u}$ with $u \in [v,w]$. The next corollary expresses these vertices in terms of the exponent vectors $m_I$ and the vectors from Lemma~\ref{lem: transformation}.

\begin{corollary}
The vertices of the polytope $\widehat{P}(v,w)$ are given by
\begin{equation}
\label{eq: vertices}
X_{u}:=\sum_{k=1}^{n-1}m_{u[k]}=\sum_{k=1}^{n-1}\sum_{i=1}^{k}a^{(k)}_{u(i)}+\sum_{k=1}^{n-1}b^{(k)}.
\end{equation}
for $u \in [v,w]$.
\end{corollary}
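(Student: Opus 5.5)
The plan is to identify the torus-fixed points of $\overline{R}_{v,w}$ and compute their images under the moment map explicitly, using the Segre-type description of the moment polytope from the proof of Theorem~\ref{thm: polytope}.

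First, by Corollary~\ref{cor: face lattice} (equivalently Lemma~\ref{lem: orbits}), the vertices of $\widehat{P}(v,w)$ are in bijection with the zero-dimensional $\TT_{v,w}$-orbits, namely the points $R_{u,u}=\{uB\}$ for $u\in[v,w]$. For a toric variety embedded by monomials, the moment image of a fixed point is the exponent vector of the unique coordinate that survives at that point, or more precisely the vertex of the polytope corresponding to the face/orbit. At the coordinate flag $uB$ the nonzero flag minors are exactly $\Delta_{u[k]}$, one for each $k$. Hence in the product of projective spaces the point $uB$ has exactly one nonzero coordinate in each factor $\PP^{\binom nk-1}$, namely $\Delta_{u[k]}$. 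Under the Segre embedding its only nonzero coordinate is $\prod_k\Delta_{u[k]}$, whose exponent vector on $\TT_{v,w}$ is $\sum_{k=1}^{n-1}m_{u[k]}$.

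To make this rigorous, I would argue that the vertex of $\widehat{P}(v,w)=\sum_k\widehat{P}_k(v,w)$ corresponding to $uB$ is the sum of the vertices of the summands $\widehat{P}_k(v,w)$ corresponding to the fixed points $\pi_k(uB)$. By Lemma~\ref{lem: projection toric} the projection $\pi_k$ is equivariant, and by the preceding corollary the vertex of $\widehat{P}_k(v,w)$ attached to the fixed point $\pi_k(uB)=\langle e_{u(1)},\dots,e_{u(k)}\rangle$ is $m_{u[k]}$. An alternative route is to take the limit of a generic one-parameter subgroup $\lambda$ with $\lim_{t\to0}\lambda(t)\cdot x=uB$. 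Then $u[k]$ minimizes $\langle\lambda,m_I\rangle$ over $\CM_k(v,w)$ for every $k$ simultaneously, so $\sum_k m_{u[k]}$ is the face of the Minkowski sum minimized by $\lambda$, which is a vertex. Distinct $u$ give distinct vertices by the bijection of Corollary~\ref{cor: face lattice}.

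The second equality in \eqref{eq: vertices} is then a direct substitution of \eqref{eq: mI split} with $I=u[k]$, giving $m_{u[k]}=b^{(k)}+\sum_{i=1}^k a^{(k)}_{u(i)}$, followed by summing over $k$. The only delicate point is justifying that the vertex attached to the fixed point $uB$ is exactly the exponent vector of its surviving monomial. I would handle this with the one-parameter-subgroup argument, since this is the standard correspondence for projective toric varieties given by monomial maps \cite{Cox}.
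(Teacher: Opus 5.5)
Your proposal is correct and follows the paper's argument. You identify the vertices with the fixed points $uB$ via Corollary~\ref{cor: face lattice}, note that $\pi_k(uB)$ is the coordinate subspace for $u[k]$ with moment image $m_{u[k]}$, sum over $k$ using the Segre/Minkowski-sum description from the proof of Theorem~\ref{thm: polytope}, and substitute \eqref{eq: mI split}; the one-parameter-subgroup argument is an optional extra layer of rigor that the paper leaves implicit.
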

\begin{proof}
By Corollary \ref{cor: face lattice} the vertices $X_u$ of $\widehat{P}(v,w)$ correspond to the coordinate flags 
$\mathcal{F}(u)=uB/B$
for permutations $u\in [v,w]$. The image of $\mathcal{F}(u)$ under the projection $\pi_k$ is the coordinate subspace   which corresponds to the subset $u[k]$ and to the vertex $m_{u[k]}$ in the polytope $\widehat{P}_{k}(v,w)$. Therefore by the proof of Theorem~\ref{thm: polytope} we get
$$
X_u=\mu(\mathcal{F}(u))=\sum_{k=1}^{n-1}\mu_k(\pi_k(\mathcal{F}(u)))=\sum_{k=1}^{n-1}m_{u[k]}.
$$
\end{proof}



Next, we study the compatibility between the edge vectors in different constituent polytopes $\widehat{P}_k(v,w)$, and the respective matrices $A^{(k)}$. Recall that any edge in the Minkowski sum of polytopes is parallel to an edge in one of the summands, or is the Minkowski sum of parallel edges in different summands.  The following result clarifies and strengthens this observation. For two vectors $m, m'$, we use the notation $\overline{m m'}$ for the difference $m'-m.$


\begin{lemma}
\label{lem: edge vectors 1}
Assume that $i<j$, $u'=u\cdot (i\ j)$ covers $u$ in Bruhat order, and $u,u' \in [v,w]$. Then for all $i\le k,k'<j$ the edge vectors   $\overrightarrow{m_{u[k]}m_{u'[k]}}$ in $\widehat{P}_{k}(v,w)$ and  $\overrightarrow{m_{u[k']}m_{u'[k']}}$ in $\widehat{P}_{k'}(v,w)$ are the same. 
\end{lemma}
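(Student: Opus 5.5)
The idea is to compute both edge vectors on the one-dimensional orbit $R_{u,u'}$, where each is the exponent of the same character of $\TT_{v,w}$.

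\emph{Setup.} Since $u\lessdot u'$ with $u,u'\in[v,w]$, Lemma~\ref{lem: orbits} shows that $R_{u,u'}\subset\overline{R}_{v,w}$ is a one-dimensional $\TT_{v,w}$-orbit. Its closure is $R_{u,u'}\sqcup\{uB\}\sqcup\{u'B\}$. The action of $\TT_{v,w}$ on $\overline{R}_{v,w}$ comes from the proof of Theorem~\ref{thm: toric}, and it is given in Pl\"ucker coordinates by monomial scaling. For $x\in\TT_{v,w}$, any point $p\in\overline{R}_{v,w}$ and $I\in\CM_k(v,w)$ we have
\[\Delta_I(x\cdot p)=\lambda_k(x,p)\,x^{m_I}\,\Delta_I(p),\]
where the factor $\lambda_k(x,p)$ depends only on $k$ (projective rescaling). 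This identity holds on $R_{v,w}$ by definition. It holds on all of $\overline{R}_{v,w}$ by continuity, because the action on the closure is the restriction of the diagonal monomial action on $\prod_k\PP^{\binom nk-1}$.

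\emph{Key step.} Next I will describe $R_{u,u'}$ explicitly. It is the $T$-stable curve joining $uB$ and $u'B$. I plan to take the representatives $M(t)$ obtained from the permutation matrix of $u$ (columns $e_{u(1)},\dots,e_{u(n)}$) by replacing column $i$ with $e_{u(i)}+t\,e_{u(j)}$, for $t\in\C^\times$. Then $M(0)$ represents $uB$, the limit $t\to\infty$ is $u'B$, and the curve is the closure of $R_{u,u'}$. This is the standard description of the $T$-curve attached to a Bruhat cover; alternatively one can check directly that the flag minors of $M(t)$ are nonzero exactly on $\CM_k(u,u')=\{u[k],u'[k]\}$, using Theorem~\ref{thm: flag minor richardson} and the fact that $R_{u,u'}$ is one-dimensional.

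For $i\le k<j$ the first $k$ columns of $M(t)$ are $e_{u(1)},\dots,e_{u(i)}+te_{u(j)},\dots,e_{u(k)}$, so
\[\Delta_{u[k]}(M(t))=\epsilon_k,\qquad \Delta_{u'[k]}(M(t))=\epsilon'_k\,t,\]
where $\epsilon_k,\epsilon'_k\in\{\pm1\}$ are signs and $u'[k]=u[k]\setminus\{u(i)\}\cup\{u(j)\}$. Hence on $R_{u,u'}$ we get
\[\Delta_{u'[k]}/\Delta_{u[k]}=\pm t,\]
the same function up to sign for every $k\in[i,j-1]$.

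\emph{Conclusion.} Fix $p=M(1)\in R_{u,u'}$. The scalar $\lambda_k$ cancels in the ratio, so
\[\frac{\Delta_{u'[k]}}{\Delta_{u[k]}}(x\cdot p)=x^{m_{u'[k]}-m_{u[k]}}\cdot\frac{\Delta_{u'[k]}}{\Delta_{u[k]}}(p).\]
Since $x\cdot p\in R_{u,u'}$, the left side equals $\pm t(x\cdot p)$. Taking the ratio of this identity for $k$ and for $k'$ (both in $[i,j-1]$) gives $x^{\alpha_k-\alpha_{k'}}=\pm1$ for all $x\in\TT_{v,w}$, where $\alpha_k=\overrightarrow{m_{u[k]}m_{u'[k]}}$. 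The set $\TT_{v,w}$ is connected and the sign is locally constant, so the sign equals $1$ (its value at $x=1$). A character that is identically $1$ has zero exponent vector, so $\alpha_k=\alpha_{k'}$.

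The main obstacle is the identification of $R_{u,u'}$ with the explicit curve $M(t)$ in the key step. Concretely, one must verify that $M(t)B\in X^{u'}\cap X_u$. If this is awkward, a fallback uses only the structure already established: on the one-dimensional torus $R_{u,u'}$, Theorem~\ref{thm: flag minor richardson} together with Lemma~\ref{lem: splitting} shows that the only nonzero $k$-minors are $\Delta_{u[k]}$ and $\Delta_{u'[k]}$, both units. Then a three-term incidence Pl\"ucker relation \eqref{eq: incidence Plucker} linking levels $k$ and $k+1$, with $I=u[k]\setminus\{u(i)\}$ and $J=u[k+1]\cup\{u(j)\}$ (note $u(j)\notin u[k+1]$ for $k+1<j$), relates $\Delta_{u'[k]}/\Delta_{u[k]}$ and $\Delta_{u'[k+1]}/\Delta_{u[k+1]}$. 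Iterating this relation in $k$ gives the same conclusion.
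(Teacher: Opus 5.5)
Your argument is correct, and it takes a genuinely different route from the paper.

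The paper stays on the open torus $R_{v,w}$. It writes the three-term incidence Pl\"ucker relation for $I=u[k]\setminus u(i)$ and $J=u[k+1]\cup u(j)$. It kills the third term $\Delta_{I\cup u(k+1)}\Delta_{J\setminus u(k+1)}$ because in MR coordinates every nonzero flag minor is a monic monomial (Lemma~\ref{lem:minors in MR-param}), and three signed monic monomials cannot sum to zero. It then compares exponents in the resulting binomial identity $\Delta_{u[k]}\Delta_{u'[k+1]}=\pm\Delta_{u'[k]}\Delta_{u[k+1]}$.

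You instead use $\TT_{v,w}$-equivariance to move the question onto the one-dimensional orbit $R_{u,u'}$. There $\Delta_{u'[k]}/\Delta_{u[k]}$ is visibly the same coordinate $\pm t$ for every $k\in[i,j-1]$. This identifies each $\alpha_k$ with the character through which $\TT_{v,w}$ acts on $R_{u,u'}\cong\C^\times$. In fact you get $t(x\cdot p)=x^{\alpha_k}$ exactly, so the sign-and-connectedness step is not needed.

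The step you flagged as the main obstacle is fine:
\begin{itemize}
\item $M(t)=u(1+tE_{ji})=(1+tE_{u(j)u(i)})u\in B_-u$, because $u(j)>u(i)$.
\item The $\mathrm{SL}_2$ factorization $1+tE_{ji}\in(1+t^{-1}E_{ij})\,\dot{s}H\,(1+t^{-1}E_{ij})$, with $\dot{s}$ a signed permutation matrix for $(i~j)$ and $H$ diagonal, gives $M(t)B=(1+t^{-1}E_{u(i)u(j)})u'B\in X^{u'}$.
\item The map $t\mapsto M(t)B$ is injective and its closure adds only $uB$ and $u'B$, so its image is all of $R_{u,u'}$.
\end{itemize}

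Your fallback also works, and it streamlines the paper's step. Restricted to $R_{u,u'}$, the third term vanishes automatically, because $u[k+1]\setminus u(i)\notin\{u[k],u'[k]\}=\CM_k(u,u')$ by Theorem~\ref{thm: flag minor richardson}. So no monic-monomial argument is needed there.

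As for what each approach buys: the paper's proof yields an explicit binomial relation among flag minors on all of $R_{v,w}$, but it leans on the LGV/MR machinery behind Lemma~\ref{lem:minors in MR-param}. Yours avoids that machinery and exhibits the standard toric reason for the lemma: edge directions of a moment polytope are the torus weights on one-dimensional orbits. The cost is that you use the $\TT_{v,w}$-invariance of $R_{u,u'}$ from Lemma~\ref{lem: orbits}.
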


\begin{proof}
It is sufficient to prove the statement for $k'=k+1<j$. Consider two subsets $$I=\{u(1),\ldots,u(i-1),u(i+1),\ldots,u(k)\}$$ and 
$$J=\{u(1),\ldots,u(i-1),u(i+1),\ldots,u(k),u(i),u(j),u(k+1)\}.$$ Note that $J\setminus I=\{u(i),u(j),u(k+1)\}$. The incidence Plucker relation \eqref{eq: incidence Plucker} for $I,J$ reads as
$$
\pm\Delta_{I\cup u(i)}\Delta_{J\setminus u(i)}\pm \Delta_{I\cup u(j)}\Delta_{J\setminus u(j)}\pm \Delta_{I\cup u(k+1)}\Delta_{J\setminus u(k+1)}=0.
$$
Note that $I\cup u(i)=u[k]$ and $J\setminus u(i)=u'[k+1]$ while
$I\cup u(j)=u'[k]$ and $J\setminus u(j)=u[k+1]$, so in particular both products $\Delta_{I\cup u(i)}\Delta_{J\setminus u(i)}$ and $\Delta_{I\cup u(j)}\Delta_{J\setminus u(j)}$ are nonzero monomials. Assume that the third product $\Delta_{I\cup u(k+1)}\Delta_{J\setminus u(k+1)}$ is also nonzero. By Lemma \ref{lem:minors in MR-param} we can assume that all three products are monic monomials, but then the relation is impossible. Therefore
the third term vanishes and we get
\begin{equation}
\label{eq: incidence plucker edge 2}
\Delta_{u[k]}\Delta_{u'[k+1]}=\Delta_{u'[k]}\Delta_{u[k+1]}.
\end{equation}

By comparing the weights of both sides of \eqref{eq: incidence plucker edge 2} we get the identity:
$$
\sum_{s\in I}a^{(k)}_s+a^{(k)}_{u(i)}+\sum_{s\in I}a^{(k+1)}_s+a^{(k+1)}_{u(j)}+a^{(k+1)}_{u(k+1)}=\sum_{s\in I}a^{(k)}_i+a^{(k)}_{u(j)}+\sum_{s\in I}a^{(k+1)}_s+a^{(k+1)}_{u(i)}+a^{(k+1)}_{u(k+1)}.
$$
This implies 
\begin{equation}
\label{eq: edge vectors aligned}
a^{(k)}_{u(i)}-a^{(k)}_{u(j)}=a^{(k')}_{u(i)}-a^{(k')}_{u(j)}.
\end{equation} 
and the result follows.
\end{proof}

\begin{corollary}
\label{cor: edge vector}
Assume that $u'=u(i\ j)$ covers $u$ and $u, u' \in [v,w]$. Then the edge vector between the corresponding vertices $X_u,X_{u'}$ of $\widehat{P}(v,w)$ equals 
$$
\overrightarrow{X_uX_{u'}}=(j-i)\left(a^{(i)}_{u(j)}-a^{(i)}_{u(i)}\right),
$$
and in particular each entry of this integral vector is divisible by $j-i$. 
\end{corollary}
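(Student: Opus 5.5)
We compute the edge vector directly from the vertex formula \eqref{eq: vertices}: $\overrightarrow{X_uX_{u'}} = X_{u'}-X_u = \sum_{k=1}^{n-1}\left(m_{u'[k]}-m_{u[k]}\right)$. The plan is to decide, for each $k$, whether this summand vanishes, and to identify the nonzero ones using Lemma~\ref{lem: edge vectors 1}.

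\textbf{Step 1: the vanishing summands.} Since $u'=u\cdot(i\ j)$ agrees with $u$ except in positions $i$ and $j$, we have $u'[k]=u[k]$ unless $i\le k<j$. So only the $j-i$ values $k=i,\dots,j-1$ contribute.

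\textbf{Step 2: a single nonzero summand.} For such $k$,
\[
u'[k]=\left(u[k]\setminus\{u(i)\}\right)\cup\{u(j)\}.
\]
Both $u[k]$ and $u'[k]$ lie in $\CM_k(v,w)$, so \eqref{eq: mI split} applies to each. The term $b^{(k)}$ and the shared elements cancel, giving
\[
m_{u'[k]}-m_{u[k]} = a^{(k)}_{u(j)}-a^{(k)}_{u(i)}.
\]

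\textbf{Step 3: the summands agree.} Lemma~\ref{lem: edge vectors 1}, in the form \eqref{eq: edge vectors aligned}, says this difference is independent of $k$ for $i\le k<j$. Hence every contributing summand equals $a^{(i)}_{u(j)}-a^{(i)}_{u(i)}$, and summing the $j-i$ equal summands gives
\[
\overrightarrow{X_uX_{u'}}=(j-i)\left(a^{(i)}_{u(j)}-a^{(i)}_{u(i)}\right).
\]

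\textbf{Step 4: divisibility.} The $a^{(k)}_s$ are integer vectors by Lemma~\ref{lem: transformation}, so every entry of the edge vector is divisible by $j-i$.

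The only delicate point is Step 3. We need the hypothesis of Lemma~\ref{lem: edge vectors 1}, namely that $u'$ covers $u$ with $u,u'\in[v,w]$, which is exactly the hypothesis here. The lemma is stated with $k'=k+1<j$, and we chain it across $k=i,\dots,j-1$. We also use that the two vertices $X_u, X_{u'}$ are joined by an edge of $\widehat{P}(v,w)$, which holds by Corollary~\ref{cor: face lattice}, though the computation itself does not need it.
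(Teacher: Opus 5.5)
Your proposal is correct and is the argument the paper intends: the paper states this corollary without proof, as an immediate consequence of the vertex formula \eqref{eq: vertices}, the fact that $u'[k]\neq u[k]$ only for $i\le k<j$, and Lemma~\ref{lem: edge vectors 1}. That lemma is already stated for all pairs $k,k'$ in the range $[i,j-1]$, so the chaining is unnecessary but harmless. For the divisibility, you can also observe that the common summand equals $m_{u'[i]}-m_{u[i]}\in\Z^d$, which avoids relying on integrality of the individual vectors $a^{(k)}_s$.
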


\begin{remark}\label{rem:edges-in-Bruhat-interval-polytope}
    We note that each cover relation $u \lessdot u(i ~j) = u'$ also corresponds to a Bruhat interval polytope $P(u, u')$ which is a single edge, with vertices $x_u:=u \cdot (n-1, \dots, 0)$ and $x_u:=u' \cdot (n-1, \dots, 0)$. One can check that 
    \[\overrightarrow{x_{u} x_{u'}} = (j-i) (e_{u(j)} - e_{u(i)}).\]
\end{remark}

\begin{remark}
The product of positroid varieties $\Pi_k(v,w)$ is a toric variety with respect to the $(n-1)^2$-dimensional torus $T^{n-1}$, where each copy of the standard torus $T$ acts on the respective factor $\Pi_k(v,w)$. 

The Richardson variety $\overline{R}_{v,w}$ is cut out in  $\Pi_{k=1}^{n-1}\Pi_k(v,w)$ by the incidence Pl\"ucker relations \eqref{eq: incidence Plucker}. We can interpret $\TT_{v,w}$ as the maximal subtorus of  $T^{n-1}$ which makes the relations \eqref{eq: incidence Plucker} homogeneous. 

The tuple of matrices $(A^{(1)},\ldots,A^{(n-1)})$ from Lemma \ref{lem: transformation} describes the embedding $\TT_{v,w}\hookrightarrow T^{n-1}$. The condition \eqref{eq: edge vectors aligned} is one constraint describing the compatibility of $(A^{(1)},\ldots,A^{(n-1)})$ with the incidence Pl\"ucker relations, and it would be interesting to find a complete set of such constraints. See \cite[Section 7.3]{flagmatroid} for related constructions.

\end{remark}

Next, we consider a 2-dimensional face in the polytope $\widehat{P}(v,w)$. It is a quadrilateral with vertices $u,u_1,u_2,u_3$ which corresponds to a rank 2 subinterval $[u,u_3] \subset [v,w]$. Up to the symmetries $u_1\leftrightarrow u_2$ and $u\leftrightarrow u_3$, there are two types of such intervals and faces described by the following result.

\begin{lemma}
\label{lem: 2d face}
\begin{enumerate}[(a)]
    \item  Suppose $i,j,k,l$ are all distinct, and
$
u_1=u\cdot (i\ j),\  u_2=u\cdot (k\ l),\ u_3=u\cdot (i\ j)(k\ l).
$
Then 
\begin{equation}
\label{eq: parallelogram}
\overrightarrow{X_{u}X_{u_1}}=\overrightarrow{X_{u_2}X_{u_3}},\quad \overrightarrow{X_{u}X_{u_2}}=\overrightarrow{X_{u_1}X_{u_3}},
\end{equation}
so $u,u_1,u_2,u_3$ are vertices of a parallelogram.
\item Suppose $i<j<k$, and
$
u_1=u\cdot (i\ j),\  u_2=u\cdot (j\ k),\ u_3=u\cdot (i\ j\ k).
$
Then
\begin{equation}
\label{eq: trapezoid}
\overrightarrow{X_{u_1}X_{u_3}}=\frac{k-j}{j-i}\overrightarrow{X_uX_{u_1}}+\overrightarrow{X_uX_{u_2}},\quad \overrightarrow{X_{u_2}X_{u_3}}=\frac{k-i}{j-i}\overrightarrow{X_{u}X_{u_1}}.
\end{equation}
so $u,u_1,u_2,u_3$ are vertices of a trapezoid.
\end{enumerate}
\end{lemma}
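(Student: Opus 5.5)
The plan is to compute all edge vectors of the face using Corollary~\ref{cor: edge vector}. That corollary expresses each edge vector in terms of the vectors $a^{(k)}$ of Lemma~\ref{lem: transformation}. Lemma~\ref{lem: edge vectors 1} then lets us move freely between different values of $k$ inside the relevant range. Throughout, set
\[
\alpha_k(p,q):=a^{(k)}_{q}-a^{(k)}_{p}.
\]
By Lemma~\ref{lem: edge vectors 1}, for a cover $u\lessdot u(i\ j)$ the quantity $\alpha_k(u(i),u(j))$ is independent of $k\in[i,j-1]$. The edge vector is $(j-i)$ times this common value.

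For (a), assume without loss of generality that $i<j$ and $k<l$. The edge $X_u\to X_{u_1}$ equals $(j-i)\,\alpha_i(u(i),u(j))$. The edge $X_{u_2}\to X_{u_3}$ is the cover $u_2\lessdot u_2(i\ j)$. Since $u_2(i)=u(i)$ and $u_2(j)=u(j)$, by Corollary~\ref{cor: edge vector} it equals $(j-i)\,\alpha_i(u(i),u(j))$ as well. So the two edges agree, and the symmetric argument gives the second identity of \eqref{eq: parallelogram}. One point to check is that these really are cover relations inside $[v,w]$. This holds because $[u,u_3]$ is a rank-2 interval with atoms $u_1,u_2$ and coatom-set $\{u_1,u_2\}$.

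For (b), with $i<j<k$, I will write $p=u(i)$, $q=u(j)$, $r=u(k)$. Then $u_3=u(i\ j\ k)$ acts as $u_3(i)=r$, $u_3(j)=p$, $u_3(k)=q$, or a variant; I will fix the cycle convention so that $u_3=u_1(j\ k)=u_2(i\ k)$, or the other decomposition, whichever is consistent with the covers. The four edges are then:
\begin{itemize}
\item[(i)] $X_u\to X_{u_1}=(j-i)\,\alpha(p,q)$, with $\alpha$ valid for $k'\in[i,j-1]$;
\item[(ii)] $X_u\to X_{u_2}=(k-j)\,\alpha(q,r)$, valid for $k'\in[j,k-1]$;
\item[(iii)] $X_{u_1}\to X_{u_3}$, a transposition in positions $(j,k)$ or $(i,k)$ of $u_1$;
\item[(iv)] $X_{u_2}\to X_{u_3}$, similarly.
\end{itemize}
An edge involving positions $(i,k)$ gives $(k-i)\,\alpha_{k'}$ for any $k'\in[i,k-1]$. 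This range overlaps both $[i,j-1]$ and $[j,k-1]$, and that overlap is the key. In $\widehat{P}_{k'}$ with $k'\in[i,j-1]$ and with $k'\in[j,k-1]$ separately, I will also use that the face is closed: the sum of edge vectors around it vanishes, $X_u\to X_{u_1}\to X_{u_3}$ equals $X_u\to X_{u_2}\to X_{u_3}$. This sum identity holds in each summand $\widehat{P}_{k'}$, since there the vertices are $m_{u[k']}$ and the $a^{(k')}$ are additive.

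Concretely, take $u_3=u_2(i\ k)$ (positions), so $u_2=(\dots p\dots r\dots q\dots)$ and the edge (iv) equals $(k-i)\,\alpha_{k'}(p,r)$ evaluated at $k'=i$. To connect this to $\alpha_i(p,q)$, I will use the Plücker-type relation from the proof of Lemma~\ref{lem: edge vectors 1} in $\widehat{P}_{k'}$, with $k'\in[i,j-1]$. There, $u[k']\ni p$, $u_1[k']\ni q$, and $u_2[k']\ni p$, while $u_3[k']\ni r$. The closure of the quadrilateral gives $\alpha_{k'}(p,q)=\alpha_{k'}(p,r)$ restricted to this range. Indeed, $m_{u[k']}=m_{u_2[k']}$, since $u[k']=u_2[k']$, so the edge $X_{u_1}\to X_{u_3}$ in this summand is $\alpha_{k'}(q,r)$, which combines with the others. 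Carrying out this bookkeeping for both ranges of $k'$, and summing the summand contributions with multiplicities $j-i$ and $k-j$ (summands outside $[i,k-1]$ contribute zero), should yield \eqref{eq: trapezoid}.

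The main obstacle is exactly this bookkeeping. I need to identify which of the two cycle decompositions gives genuine covers, and to verify that the common values from Lemma~\ref{lem: edge vectors 1} on overlapping ranges force $\alpha(p,r)$ to equal $\alpha(p,q)$ on one side and $\alpha(q,r)$ on the other. I would first test the claim on $[1324,\,3412]$-type small examples to fix conventions before writing the general computation.
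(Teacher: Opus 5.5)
Your part (a) is correct and matches the paper: apply Corollary~\ref{cor: edge vector} to $u\lessdot u_1$ and to $u_2\lessdot u_2(i\ j)=u_3$, using $u_2(i)=u(i)$ and $u_2(j)=u(j)$.

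Part (b), however, is not actually proved, and the bookkeeping you sketch goes wrong. Write $\alpha_{k'}(p,q)=a^{(k')}_q-a^{(k')}_p$ as you do. You correctly identify the key point: the edge swapping positions $i,k$ has Lemma~\ref{lem: edge vectors 1}-range $[i,k-1]$, and this range contains both $i$ and $j$. But you then misapply it.

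With the paper's convention (right multiplication, $i\mapsto j\mapsto k\mapsto i$) one has $u_3=u_1(j\ k)=u_2(i\ k)$. Your first guess $u_3(i)=r$ is the other $3$-cycle, $u_3=u_1(i\ k)=u_2(i\ j)$, which gives the mirror-image relations. So in positions $i,j,k$ we have $u_1=(q,p,r)$, $u_2=(p,r,q)$ and $u_3=(q,r,p)$. This corrects several of your steps:
\begin{itemize}
\item Edge (iv) swaps the values $p,q$ of $u_2$, so it equals $(k-i)\alpha_{k'}(p,q)$, not $(k-i)\alpha_{k'}(p,r)$.
\item For $k'\in[i,j-1]$ we have $u_3[k']=u_1[k']\ni q$, not $r$. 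So edge (iii) contributes $0$ to those summands, not $\alpha_{k'}(q,r)$.
\item Closing the quadrilateral inside a single summand is a telescoping identity and gives no information.
\end{itemize}
The identities you aim for, $\alpha(p,r)=\alpha(p,q)$ on $[i,j-1]$ and $\alpha(p,r)=\alpha(q,r)$ on $[j,k-1]$, are not what is needed. The second is in fact false: it says $\alpha_{k'}(p,q)=0$ for $k'\in[j,k-1]$. But Lemma~\ref{lem: edge vectors 1} for the cover $u_2\lessdot u_3$ shows this quantity equals $\alpha_i(p,q)=\overrightarrow{X_uX_{u_1}}/(j-i)\neq 0$.

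The missing step is short. Corollary~\ref{cor: edge vector} gives
\[
\overrightarrow{X_uX_{u_1}}=(j-i)\,\alpha_i(p,q),\qquad \overrightarrow{X_uX_{u_2}}=(k-j)\,\alpha_j(q,r),
\]
\[
\overrightarrow{X_{u_1}X_{u_3}}=(k-j)\,\alpha_j(p,r),\qquad \overrightarrow{X_{u_2}X_{u_3}}=(k-i)\,\alpha_i(p,q).
\]
Lemma~\ref{lem: edge vectors 1} applied to the cover $u_2\lessdot u_2(i\ k)=u_3$ at the two indices $i,j\in[i,k-1]$ gives $\alpha_i(p,q)=\alpha_j(p,q)$. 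Since $\alpha_j(p,r)=\alpha_j(p,q)+\alpha_j(q,r)$ by definition, we get
\[
\overrightarrow{X_{u_1}X_{u_3}}=\tfrac{k-j}{j-i}\overrightarrow{X_uX_{u_1}}+\overrightarrow{X_uX_{u_2}},\qquad \overrightarrow{X_{u_2}X_{u_3}}=\tfrac{k-i}{j-i}\overrightarrow{X_uX_{u_1}}.
\]
This is \eqref{eq: trapezoid}, and it is exactly the paper's proof.
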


\begin{proof}
Part (a) is clear from Corollary \ref{cor: edge vector}, let us prove (b).
Note that $u_3=u_1(j\ k)=u_2(i\ k).$ Now by Corollary \ref{cor: edge vector} we get
$$
\overrightarrow{X_uX_{u_1}}=(j-i)\left(a^{(i)}_{u(j)}-a^{(i)}_{u(i)}\right),\ 
\overrightarrow{X_uX_{u_2}}=(k-j)\left(a^{(j)}_{u(k)}-a^{(j)}_{u(j)}\right),\ 
\overrightarrow{X_{u_1}X_{u_3}}=(k-j)\left(a^{(j)}_{u(k)}-a^{(j)}_{u(i)}\right),
$$
and
$$
\overrightarrow{X_{u_2}X_{u_3}}=(k-i)\left(a^{(i)}_{u(j)}-a^{(i)}_{u(i)}\right)=(k-i)\left(a^{(j)}_{u(j)}-a^{(j)}_{u(i)}\right).
$$
The last equation follows from Lemma \ref{lem: edge vectors 1} since $i<j<k$. Now the equations \eqref{eq: trapezoid} are clear.
\end{proof}

\begin{remark}
    In the setting of Lemma~\ref{lem: 2d face}, the Bruhat interval polytope $P(u, u_3)$ is also a quadrilateral, with edges $\overrightarrow{x_{u} x_{u_1}}$,$\overrightarrow{x_{u} x_{u_2}}$, $\overrightarrow{x_{u_1} x_{u_3}}$, $\overrightarrow{x_{u_2} x_{u_3}}$ (using the notation of Remark~\ref{rem:edges-in-Bruhat-interval-polytope}). Using Remark~\ref{rem:edges-in-Bruhat-interval-polytope}, one can verify the edges of $P(u,u_3)$ satisfy precisely the relations of Lemma~\ref{lem: 2d face}. In other words, the edges in the face of $\widehat{P}(v,w)$ corresponding to $[u, u_3]$ satisfy the same relations as the edges in the Bruhat interval polytope $P(u, u_3)$. This is reminiscent of Dyer's vector space from \cite{Dyer97}. 

\end{remark}

\begin{corollary}
\label{cor: polytope from atoms}
The vertices 
\[\{X_v\} \cup \{X_u: v \lessdot u \leq w\}\]
uniquely determine the polytope $\widehat{P}(v,w)$. 
\end{corollary}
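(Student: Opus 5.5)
Since $\widehat{P}(v,w)$ is the convex hull of its vertices $X_u$, $u\in[v,w]$ (Corollary~\ref{cor: face lattice}), it suffices to show that every $X_u$ is determined by $X_v$ and the atom vertices $X_{u}$ with $v\lessdot u$. I will induct on $\ell(u)-\ell(v)$, using the $2$-dimensional faces described in Lemma~\ref{lem: 2d face} to propagate edge vectors. Throughout, the combinatorics of $[v,w]$ is treated as known, i.e.\ which transposition labels each cover relation; only the positions of the vertices in $\R^d$ are unknown.

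First I show that every edge vector $\overrightarrow{X_xX_y}$, for a cover $x\lessdot y$ in $[v,w]$, is determined. The induction is on $\ell(x)-\ell(v)$. For $x=v$ the edge vector is $X_y-X_v$, which is given. Now suppose $x>v$ and $x\lessdot y$. Since $x>v$, pick some $z$ with $v\le z\lessdot x$. The interval $[z,y]$ has rank $2$. Since the Bruhat order is thin, it contains exactly two elements strictly between, $x$ and some $x'$, and by Theorem~\ref{thm: polytope intro} (Corollary~\ref{cor: face lattice}) it corresponds to a quadrilateral face.

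By Lemma~\ref{lem: 2d face}, that face is a parallelogram or a trapezoid. In either case the relations \eqref{eq: parallelogram} or \eqref{eq: trapezoid} (applied with the roles of $u,u_1,u_2,u_3$ assigned appropriately, possibly after the symmetries $u_1\leftrightarrow u_2$) express the top edge $\overrightarrow{X_xX_y}$ as a linear combination, with coefficients depending only on the transpositions, of the two bottom edges $\overrightarrow{X_zX_x}$ and $\overrightarrow{X_zX_{x'}}$. Both bottom edges start at $z$, which has smaller rank than $x$, so they are known by induction.

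The step I expect to need care is this one: checking that in each case of Lemma~\ref{lem: 2d face} the two top edges really are determined by the two bottom edges. For the parallelogram this is immediate. For the trapezoid, the two bottom edges $\overrightarrow{X_uX_{u_1}}$ and $\overrightarrow{X_uX_{u_2}}$ determine both top edges by \eqref{eq: trapezoid}. I also have to handle the case where the labeling puts the transposition $(i\ j)$ versus $(j\ k)$ on either bottom edge; this is the symmetric case $u_1\leftrightarrow u_2$, where the analogous formulas come from Corollary~\ref{cor: edge vector} and Lemma~\ref{lem: edge vectors 1} in the same way. The trapezoid case should also include the other cycle $(i\ k\ j)$ giving $u_3$, which is handled identically.

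Finally, $X_u$ for arbitrary $u\in[v,w]$ is obtained by choosing a maximal chain $v=u_0\lessdot u_1\lessdot\cdots\lessdot u_r=u$ and setting $X_u=X_v+\sum_t\overrightarrow{X_{u_{t-1}}X_{u_t}}$. Every edge vector in this sum is known by the previous step. Hence all vertices, and therefore $\widehat{P}(v,w)=\Conv(X_u : u\in[v,w])$, are determined.
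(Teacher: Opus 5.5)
Your proposal is correct and follows the paper's proof: every rank-$2$ subinterval is a quadrilateral face on which the relations of Lemma~\ref{lem: 2d face} determine the two upper edges from the two lower ones, and one inducts upward from $X_v$ and the atoms. Your edge-by-edge induction makes this more explicit than the paper does. One addition to your case list: include the $3$-cycle faces in which a lower edge carries the long transposition $(i\ k)$ (locally the patterns of $[213,321]$ and $[132,321]$ in $S_3$), which are handled by the same computation with Corollary~\ref{cor: edge vector} and Lemma~\ref{lem: edge vectors 1}.
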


\begin{proof}
Given a two-dimensional face as in Lemma \ref{lem: 2d face}, the equations \eqref{eq: parallelogram} and \eqref{eq: trapezoid} uniquely determine  the vectors
$\overrightarrow{X_{u_1}X_{u_3}}$ and $\overrightarrow{X_{u_2}X_{u_3}}$ given the vectors $\overrightarrow{X_{u}X_{u_1}}$ and $\overrightarrow{X_{u}X_{u_2}}$. Therefore given the vertices $u,u_1,u_2$ one can uniquely reconstruct the vertex $u_3$ and proceed by induction. 
\end{proof}

\subsection{Moment polytopes using MR coordinates}

The previous results on the moment polytope $\widehat{P}_{v,w}$ and the $k$th summand polytope $\widehat{P}_k(v,w)$ hold regardless of the choice of multiplicative structure on $R_{v,w}$. In this subsection, we explicitly describe $\widehat{P}_k(v,w)$ when the coordinates on $R_{v,w}$ are the MR parameters. 

Fix a reduced expression $\bw$ for $w$, the corresponding 3D plabic graph $G_{v, \bw}$, and MR matrix $g$. We also define the \emph{trimmed graph} $G_{v, \bw}^k$ which is obtained from $G_{v, \bw}$ by deleting the edges which end at $k+1, \dots, n$ on the right. An \emph{NI path collection} in $G_{v,w}^k$ is a NI path collection from $k$ sources on the left to the $k$ sinks on the right. 

Recall from Lemma~\ref{lem:minors in MR-param} that for $I \in \CM_k(v,w)$, there is a unique NI path collection $P$ in $G_{v, \bw}^k$ which starts at $I$, and
\[\Delta_I(g) = \prod_{b_j \in P} t_j\]
where the product is over bridges in $P$. This immediately implies the following.

\begin{proposition} \label{prop:polytope from MR param}
    Using MR parameters as the coordinates on the torus $R_{v,w}$, the $k$th summand polytope $\widehat{P}^{MR}_k(v,w)$ is 
    \[\widehat{P}^{MR}_k(v,w)= \Conv\left(e_J: J \text{ is the set of bridges used in an NI path collection in } G_{v, \bw}^k\right) \subset \R^{J^\circ}.\]
\end{proposition}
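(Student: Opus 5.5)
The plan is to unwind the definition of $\widehat{P}_k(v,w)$ in the MR coordinates. By definition,
\[\widehat{P}_k(v,w)=\Conv(m_I : I\in \CM_k(v,w)),\]
where $m_I\in \Z^{J^\circ}$ is the exponent vector of $\Delta_I(g)$ as a Laurent monomial in the MR parameters $\{t_j\}_{j\in J^\circ}$. Proposition~\ref{prop:MR-param} says these parameters give a multiplicative structure on $R_{v,w}$, so $\R^d$ is identified with $\R^{J^\circ}$. It therefore suffices to show that the set $\{m_I : I\in\CM_k(v,w)\}$ equals the set $\{e_J\}$, where $J$ runs over bridge sets of NI path collections in $G^k_{v,\bw}$.

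First I would handle a single $I\in\CM_k(v,w)$. By Lemma~\ref{lem:minors in MR-param}, there is a unique NI path collection $P:I\to[k]$, and $\Delta_I(g)=\prod_{b_j\in P}t_j$. Each bridge is used at most once: the paths are vertex-disjoint, and a single directed path cannot traverse the same bridge twice, since bridges only point downward and horizontal edges only rightward, so the graph is acyclic. Hence $m_I=e_{J(P)}$, where $J(P)$ is the set of bridges used by $P$.

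Next I would identify NI path collections $P:I\to[k]$ in $G_{v,\bw}$ with NI path collections in the trimmed graph $G^k_{v,\bw}$ starting at $I$. A path ending at a sink in $[k]$ never uses the deleted terminal edges ending at $k+1,\dots,n$. Conversely, an NI path collection in $G^k_{v,\bw}$ with $k$ sources must end at the $k$ sinks $[k]$. This gives the bijection.

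Finally, for the reverse inclusion: if $P$ is an NI path collection in $G^k_{v,\bw}$ from some source set $I$, then $\Delta_I(g)\neq 0$ by \eqref{eq:minors from paths}, because that formula has no cancellation. So $I\in\CM_k(v,w)$, and $e_{J(P)}=m_I$. Both vertex sets therefore agree, and so do their convex hulls.

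The only mildly delicate point is the subtle sign issue: the multiplicative structure is the MR one, in which minors are exactly monic monomials. That is precisely the content of Lemma~\ref{lem:minors in MR-param}, so there is no real obstacle.
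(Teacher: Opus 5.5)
Your proposal is correct and takes the same approach as the paper. The paper simply notes that the statement is immediate from Lemma~\ref{lem:minors in MR-param}, which gives $\Delta_I(g)=\prod_{b_j\in P}t_j$ for the unique NI path collection $P$ from $I$ to $[k]$. Your write-up makes explicit what the paper leaves implicit: square-freeness via acyclicity, the identification with NI path collections in $G^k_{v,\bw}$, and the reverse inclusion via the cancellation-free formula \eqref{eq:minors from paths}.
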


This shows that in particular, $\widehat{P}^{MR}_k(v,w)$ is contained inside the unit cube $[0,1]^{\ell(w) - \ell(v)}$. The path collection from $v[k] \to [k]$ does not use any bridges, so the origin is always a vertex of $\widehat{P}^{MR}_k(v,w)$. 

\section{Examples} \label{sec:examples}
In this section, we give several examples of toric Richardson varieties $\overline{R}_{v,w}$, their moment polytopes 
$$\widehat{P}(v, w) = \widehat{P}_1(v, w)+ \cdots + \widehat{P}_{n-1}(v, w),$$ 
and the affine transformations from the positroid polytopes $P_k(v,w)$ to the constituent polytopes $\widehat{P}_k(v,w)$. The exact realization of the polytopes $\widehat{P}(v, w)$ and $\widehat{P}_k(v,w)$ in $\R^d$ depends on the choice of coordinates $x_1,\ldots,x_d$ on the torus $\TT_{v,w}$.

  Recall from Section \ref{subsec: polytopes} that for $I\in \CM_k$ we have $\Delta_I=x^{m_I}$ for some $m^I\in \Z^d$. The vertices of $\widehat{P}_k(v,w)$ are $\{m_I: I \in \CM_k(v,w)\}$ and by \eqref{eq: vertices} the vertices of $\widehat{P}(v,w)$ are given by the sums over nested sequences of subsets:
  $$
  m_{I_1}+m_{I_2}+\ldots+m_{I_{n-1}},\ I_1\subset I_2\subset \cdots \subset I_{n-1},\ |I_k|=k.
  $$
  
  For simplicity, we omit commas and parentheses and use the notation $m_I[\textcolor{red}{I}]$ to indicate the exponent vector and the subset labeling it. For example, if $m_{13} = (0,1,1,0)$, we write $0110[\textcolor{red}{13}]$. We use $\mathbf{0}$ for the zero vector in $\mathbb{Z}^{d}$.

We will often use the following useful fact, which follows from \cite[Corollary 4.10]{FZ}. 

\begin{proposition}
\label{prop: FZ}
A matrix $M$ represents a flag is in the opposite Schubert cell $X_v$ if and only if the following conditions hold for all $i<j$:
\begin{itemize}
\item[(a)] If $v(i)>v(j)$ then $\Delta_{v[i] \setminus v(i) \cup v(j)}=0$
\item[(b)] If $v(i)<v(j)$ then $\Delta_{v[i]}\neq 0$.
\end{itemize}
\end{proposition}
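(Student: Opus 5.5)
We derive Proposition~\ref{prop: FZ} from the standard description of the opposite Schubert cell $X_v = B_- vB/B$ in terms of flag minors, reducing to the criterion of \cite[Corollary 4.10]{FZ}. A flag $\CF_M$ lies in $X_v$ if and only if, for each $k$, the lexicographically minimal (in the appropriate order) nonvanishing $k$-subset among the flag minors $\Delta_I(M)$ is $v[k]$. Equivalently, $\Delta_{v[k]}(M)\neq 0$ for all $k$, and $\Delta_I(M)=0$ for all $I$ that are strictly smaller than $v[k]$ in Gale order. The opposite Schubert cell is the set where the ``Gale-minimal'' nonzero Plücker coordinates in each $\Gr(k,n)$ are exactly $v[k]$.

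The plan is to show that this family of conditions is equivalent to the reduced list in (a), (b). We use two ingredients.

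\emph{Step 1 (necessity).} If $M$ represents a point of $X_v$, then (b) holds, since all $\Delta_{v[k]}$ are nonzero on $X_v$. For (a), take $i<j$ with $v(i)>v(j)$. The set $v[i]\setminus v(i)\cup v(j)$ is obtained from $v[i]$ by replacing $v(i)$ with the smaller value $v(j)\notin v[i]$, so it is strictly Gale-smaller than $v[i]$. Hence its minor vanishes on $X_v$, because $X_v$ is $B_-$-stable and its $k$-projection lies in the opposite Schubert cell of $\Gr(k,n)$ indexed by $v[k]$.

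\emph{Step 2 (sufficiency).} This is the content of \cite[Corollary 4.10]{FZ}. Fomin--Zelevinsky show that the double Bruhat cell membership $x\in B_- vB_-$ (or, here, the flag version) is cut out by exactly these ``minimal'' conditions. It requires vanishing of the minors obtained by a single descent-type exchange, together with nonvanishing of $\Delta_{v[i]}$ at positions that are not descents in the relevant sense. To apply it, we translate their statement for matrices in $G$ to flags. We check that their minors $\Delta_{v\omega_i,\,\omega_i}$ and the exchanged ones $\Delta_{v s\cdots\omega_i,\omega_i}$ become our column-initial flag minors $\Delta_{v[i]}$ and $\Delta_{v[i]\setminus v(i)\cup v(j)}$. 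We also check that right multiplication by $B$ rescales the minors of each size uniformly, so both the vanishing and nonvanishing conditions are well defined on $\Fl(n)$.

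We expect the main obstacle to be the bookkeeping in Step 2. The conditions in \cite{FZ} are indexed by simple reflections and fundamental weights rather than by pairs $i<j$. They may also be stated only for $i$ with $v(i)<v(i+1)$, or for $B_-$ versus $B$ conventions. So one must check that the set of conditions (a), (b) over all $i<j$ contains their list and is implied by membership in $X_v$. This is immediate from Step 1, since the extra conditions are redundant but true on $X_v$.
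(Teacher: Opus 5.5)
Your overall route is the paper's, which gives no argument beyond citing \cite[Corollary 4.10]{FZ}. Your Step 1 (necessity via the Gale-order description of $X_v$) is correct. The gap is in Step 2, which is where all the content sits.

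First, your paraphrase of the cited criterion cannot be what is being cited, because that criterion is false. You describe conditions indexed by simple reflections: vanishing of the single adjacent descent-type exchanges, plus nonvanishing of $\Delta_{v[i]}$ at non-descents. Take $v=321$ and $M$ with columns $(1,0,1)^T,(0,1,0)^T,(0,0,1)^T$. The adjacent-descent exchanges vanish: $\Delta_{\{2\}}=\Delta_{\{1,3\}}=0$. Even $\Delta_{\{3\}}=1$ and $\Delta_{\{2,3\}}=-1$ are nonzero. Yet $\Delta_{\{1\}}=1\neq 0$, so $\CF_M$ is not the single point $X_{321}=w_0B/B$. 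What excludes it is (a) for the non-adjacent pair $(i,j)=(1,3)$.

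Second, your closing claim that the check is ``immediate from Step 1'' conflates two things. Step 1 is necessity. To transfer sufficiency you need that (a) and (b) imply every condition on the cited list, and that is not a literal containment. For instance, (b) only asserts $\Delta_{v[i]}\neq 0$ when some $j>i$ has $v(j)>v(i)$. If the cited list asks for $\Delta_{v[i]}\neq0$ for every $i$, the missing ones must be derived from (a) and the nondegeneracy of $M$.

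The simplest repair avoids the translation entirely, and you already have the needed ingredient from Step 1. Let $u$ be the unique permutation with $\CF_M\in B_-uB/B$. On that cell $\Delta_{u[k]}\neq 0$, and $\Delta_I=0$ unless $I\ge u[k]$ in Gale order. Show $u[k]=v[k]$ by induction on $k$.

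Suppose $u[k-1]=v[k-1]$ and write $u[k]=v[k-1]\cup\{p\}$ with $p\notin v[k-1]$.
\begin{itemize}
\item If $p<v(k)$, then $p\notin v[k]$, so $p=v(j)$ for some $j>k$ with $v(j)<v(k)$. Condition (a) for the pair $(k,j)$ gives $\Delta_{u[k]}=\Delta_{v[k-1]\cup\{p\}}=0$, a contradiction.
\item If $p>v(k)$, then $v[k]$ is strictly Gale-below $u[k]$, so $\Delta_{v[k]}=0$. This contradicts (b) if some $j>k$ has $v(j)>v(k)$. Otherwise every element outside $v[k]$ is smaller than $v(k)$, which contradicts $p\notin v[k]$, $p>v(k)$.
\end{itemize}
Hence $p=v(k)$, and so $u=v$.

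This argument uses (a) at $i=k$ for every $j>k$, not just $j=k+1$. That is exactly why the statement is phrased over all pairs $i<j$, and why any criterion built only from adjacent exchanges must fail.
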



\subsection{First examples}

\begin{example}\label{ex: base case}
We revisit \cite[Example 5.17]{GKSS2}. 
Consider $w=4231$ and $v=1324\in S_4$. The interval $[v,w]$ is a $4$-dimensional hypercube, meaning it is isomorphic to the Boolean lattice of rank $4$, and $R_{v,w}$ is isomorphic to a $4$-dimensional torus $\TT_{v,w}$ with coordinates $x_1, x_2, x_3, x_4$. Indeed, one can check from Proposition \ref{prop: FZ} (or Lemma \ref{lem: defining eq for 5.18} below) the following matrix parametrizes
$R_{v,w}$:
 \begin{equation}\label{eq:braid matrix for w}
    M=\left(
\begin{matrix}
x_2x_3 & -x_3 & x_4 & -1 \\ x_2 & -1 & 0 & 0 \\ x_1 & 0 & -1 & 0 \\ 1 & 0 & 0 & 0
\end{matrix}
\right).
\end{equation}
One can verify using Sage and Theorem \ref{thm: polytope} that 

\begin{equation*}
\begin{aligned}
   & \widehat{P}_1(v,w)=\Conv(0110[\textcolor{red}{1}], 0100[\textcolor{red}{2}], 1000[\textcolor{red}{3}], 0000[\textcolor{red}{4}]), \\
   & \widehat{P}_2(v,w)=\Conv(1010[\textcolor{red}{13}],0010[\textcolor{red}{14}],1000[\textcolor{red}{23}],0000[\textcolor{red}{24}]),\\
   & \widehat{P}_3(v,w)=\Conv(1001[\textcolor{red}{123}],0001[\textcolor{red}{124}],0010[\textcolor{red}{134}],0000[\textcolor{red}{234}])
\end{aligned}
\end{equation*}





Here we omit commas and parentheses for brevity and use the notation $m_I[\textcolor{red}{I}]$ as mentioned above, so $1010[\textcolor{red}{13}]$ means that the exponent vector $m_{13}$ is equal to $(1,0,1,0)$. The polytopes $\widehat{P}_1(v,w)$ and $\widehat{P}_3(v,w)$ are 3-dimensional simplices and $\widehat{P}_2(v,w)$ is a square. We have
$$
\widehat{P}(v,w)=\widehat{P}_1(v,w)+\widehat{P}_2(v,w)+\widehat{P}_3(v,w),
$$
and one can check that $\widehat{P}(v,w)$ is indeed combinatorially equivalent to the 4-dimensional cube.  

The equations \eqref{eq: edge vectors aligned}  comparing the edge vectors in different polytopes  
have the form
$$
a^{(1)}_1-a^{(1)}_2=a^{(2)}_1-a^{(2)}_2=a^{(3)}_1-a^{(3)}_2,\quad a^{(1)}_3-a^{(1)}_4=a^{(2)}_3-a^{(2)}_4=a^{(3)}_3-a^{(3)}_4.
$$
In  other words, each edge vector of the square $\widehat{P}_2(v,w)$ must also be an edge vector of both $\widehat{P}_1(v,w)$ and $\widehat{P}_3(v,w)$. The remaining edge vectors of the latter two polytopes are unconstrained. 


The MR parametrization leads to a slightly different polytope $\widehat{P}^{\MaRi}(v,w)$, which is affinely equivalent to $\widehat{P}(v,w)$. 
The MR matrix for $\bw=s_1s_2s_3s_2s_1$ and $v=1324$ is 
 \begin{equation}\label{eq:braid matrix for w}
    g=\left(
\begin{matrix}
1 & 0 & 0 & 0 \\ t_1 & 0 & -1 & 0\\ t_5 & 1 & -t_2 & 0 \\ t_3t_5 & t_3 & 0 & 1
\end{matrix}
\right).
\end{equation}
Using Proposition~\ref{prop:polytope from MR param}, as illustrated in Figure~\ref{fig:path-collections}, (or by direct calculation), we obtain

\begin{equation*}
    \begin{aligned}
&\widehat{P}_1^{\MaRi}(v,w)=\Conv(0000[\textcolor{red}{1}],1000[\textcolor{red}{2}],0001[\textcolor{red}{3}],0011[\textcolor{red}{4}]), \\
&\widehat{P}_2^{\MaRi}(v,w)=\Conv(0000[\textcolor{red}{13}],0010[\textcolor{red}{14}],1000[\textcolor{red}{23}],1010[\textcolor{red}{24}]),\\
&\widehat{P}_3^{\MaRi}(v,w)=\Conv(0000[\textcolor{red}{123}],0010[\textcolor{red}{124}], 0110[\textcolor{red}{134}],1110[\textcolor{red}{234}]).
    \end{aligned}
\end{equation*}

\begin{figure}[ht]
    \centering
    \begin{subfigure}{0.48\textwidth} 
        \centering
        \begin{tikzpicture}[scale=1.3, line width=0.8pt] 
            \foreach \y/\label in {0/1, 0.5/2, 1.0/3, 1.5/4} {
                \draw [-{Stealth[scale=0.6]}] (0,\y) -- (1,\y);
                \draw (1,\y) -- (1.5,\y);
                \node[left, scale=0.8] at (0,\y) {$\label$};
            }
            \draw (1.5,0) -- (2.5,0); \draw (2.0,0.5) -- (2.5,0.5); \draw (2.0,1) -- (2.5,1); \draw (1.5,1.5) -- (2.5,1.5);
            \draw [-{Stealth}](0.25, 0.5) -- (0.25, 0.0);
            \node[right, purple, scale=0.8] at (0.25, 0.25) {$t_1$};
            \filldraw[fill=white] (0.25, 0.5) circle (2pt);
            \draw[blue] (0,1)--(1.5,1.0);
            \filldraw[fill=black] (0.75, 0.5) circle (2pt);
            \draw[-{Stealth}] (0.75, 1.0) -- (0.75, 0.5);
            \node[right, purple, scale=0.8] at (0.75, 0.75) {$t_2$};
            \filldraw[fill=white] (0.75, 1.0) circle (2pt);
            \filldraw[fill=black] (1.25, 1.0) circle (2pt);
            \draw[-{Stealth}] (1.25, 1.5) -- (1.25, 1.0);
            \node[right, purple, scale=0.8] at (1.25, 1.25) {$t_3$};
            \filldraw[fill=white] (1.25, 1.5) circle (2pt);
            \draw[red] (0,0)--(2.5,0);
            \filldraw[fill=black] (0.25, 0.0) circle (2pt);
            \filldraw[fill=black] (2.25, 0.0) circle (2pt);
            \draw[blue] (2.0,0.5)--(2.5,0.5);
            \draw[-{Stealth}] (2.25, 0.5) -- (2.25, 0.0);
            \node[right, purple, scale=0.8] at (2.25, 0.25) {$t_5$};
            \filldraw[fill=white] (2.25, 0.5) circle (2pt);
            \draw[thick,blue] (1.5,1.0) to [out=0,in=180] (2.0,0.5);
            \draw[color=white, line width=4] (1.5,0.5) to [out=0,in=180] (2.0,1.0);
            \draw[thick] (1.5,0.5) to [out=0,in=180] (2.0,1.0);
        \end{tikzpicture}
        \caption{$P:\{1,3\} \to [2]$, $\wt(P)=1$}
    \end{subfigure}%
    \hfill
    \begin{subfigure}{0.48\textwidth}
        \centering
        \begin{tikzpicture}[scale=1.3, line width=0.8pt]
            \foreach \y/\label in {0/1, 0.5/2, 1.0/3, 1.5/4} {
                \draw [-{Stealth[scale=0.6]}] (0,\y) -- (1,\y);
                \draw (1,\y) -- (1.5,\y);
                \node[left, scale=0.8] at (0,\y) {$\label$};
            }
            \draw (1.5,0) -- (2.5,0); \draw (2.0,0.5) -- (2.5,0.5); \draw (2.0,1) -- (2.5,1); \draw (1.5,1.5) -- (2.5,1.5);
            \filldraw[fill=black] (0.25, 0.0) circle (2pt);
            \draw [-{Stealth}](0.25, 0.5) -- (0.25, 0.0);
            \node[right, purple, scale=0.8] at (0.25, 0.25) {$t_1$};
            \filldraw[fill=white] (0.25, 0.5) circle (2pt);
            \draw[blue] (0,1.5)--(1.25,1.5);
            \filldraw[fill=black] (0.75, 0.5) circle (2pt);
            \draw[-{Stealth}] (0.75, 1.0) -- (0.75, 0.5);
            \node[right, purple, scale=0.8] at (0.75, 0.75) {$t_2$};
            \filldraw[fill=white] (0.75, 1.0) circle (2pt);
            \draw[-{Stealth},blue] (1.25, 1.5) -- (1.25, 1.0);
            \draw[blue](1.25,1.0)--(1.5,1.0);
            \filldraw[fill=black] (1.25, 1.0) circle (2pt);
            \node[right, purple, scale=0.8] at (1.25, 1.25) {$t_3$};
            \filldraw[fill=white] (1.25, 1.5) circle (2pt);
            \draw[red] (0,0)--(2.5,0);
            \filldraw[fill=black] (0.25, 0.0) circle (2pt);
            \filldraw[fill=black] (2.25, 0.0) circle (2pt);
            \draw[blue] (2.0,0.5)--(2.5,0.5);
            \draw[-{Stealth}] (2.25, 0.5) -- (2.25, 0.0);
            \node[right, purple, scale=0.8] at (2.25, 0.25) {$t_5$};
            \filldraw[fill=white] (2.25, 0.5) circle (2pt);
            \draw[thick,blue] (1.5,1.0) to [out=0,in=180] (2.0,0.5);
            \draw[color=white, line width=4] (1.5,0.5) to [out=0,in=180] (2.0,1.0);
            \draw[thick] (1.5,0.5) to [out=0,in=180] (2.0,1.0);
        \end{tikzpicture}
        \caption{$P:\{1,4\}\to [2], \text{wt}(P)=t_3$}
    \end{subfigure}

    \vspace{0.8cm} 

    \begin{subfigure}{0.48\textwidth}
        \centering
        \begin{tikzpicture}[scale=1.3, line width=0.8pt]
            \foreach \y/\label in {0/1, 0.5/2, 1.0/3, 1.5/4} {
                \draw [-{Stealth[scale=0.6]}] (0,\y) -- (1,\y);
                \draw (1,\y) -- (1.5,\y);
                \node[left, scale=0.8] at (0,\y) {$\label$};
            }
            \draw (1.5,0) -- (2.5,0); \draw (2.0,0.5) -- (2.5,0.5); \draw (2.0,1) -- (2.5,1); \draw (1.5,1.5) -- (2.5,1.5);
              \draw[red] (0.25,0)--(2.5,0);
               \draw [-{Stealth},red](0.25, 0.5) -- (0.25, 0.0);
            \filldraw[fill=black] (0.25, 0.0) circle (2pt);
           
            \node[right, purple, scale=0.8] at (0.25, 0.25) {$t_1$};
            \draw[red] (0,0.5)--(0.25,0.5);
            \filldraw[fill=white] (0.25, 0.5) circle (2pt);
            \draw[blue] (0,1)--(1.5,1.0);
            \filldraw[fill=black] (0.75, 0.5) circle (2pt);
            \draw[-{Stealth}] (0.75, 1.0) -- (0.75, 0.5);
            \node[right, purple, scale=0.8] at (0.75, 0.75) {$t_2$};
            \filldraw[fill=white] (0.75, 1.0) circle (2pt);
           
            \filldraw[fill=black] (1.25, 1.0) circle (2pt);
            \draw[-{Stealth}] (1.25, 1.5) -- (1.25, 1.0);
            \node[right, purple, scale=0.8] at (1.25, 1.25) {$t_3$};
            \filldraw[fill=white] (1.25, 1.5) circle (2pt);
           
            \filldraw[fill=black] (2.25, 0.0) circle (2pt);
            \draw[blue] (2.0,0.5)--(2.5,0.5);
            \draw[-{Stealth}] (2.25, 0.5) -- (2.25, 0.0);
            \node[right, purple, scale=0.8] at (2.25, 0.25) {$t_5$};
            \filldraw[fill=white] (2.25, 0.5) circle (2pt);
            \draw[thick,blue] (1.5,1.0) to [out=0,in=180] (2.0,0.5);
            \draw[color=white, line width=4] (1.5,0.5) to [out=0,in=180] (2.0,1.0);
            \draw[thick] (1.5,0.5) to [out=0,in=180] (2.0,1.0);
        \end{tikzpicture}
        \caption{$P:\{2,3\}\to [2], \text{wt}(P)=t_1$}
    \end{subfigure}%
    \hfill
    \begin{subfigure}{0.48\textwidth}
        \centering
        \begin{tikzpicture}[scale=1.3, line width=0.8pt]
            \foreach \y/\label in {0/1, 0.5/2, 1.0/3, 1.5/4} {
                \draw [-{Stealth[scale=0.6]}] (0,\y) -- (1,\y);
                \draw (1,\y) -- (1.5,\y);
                \node[left, scale=0.8] at (0,\y) {$\label$};
            }
            \draw (1.5,0) -- (2.5,0); \draw (2.0,0.5) -- (2.5,0.5); \draw (2.0,1) -- (2.5,1); \draw (1.5,1.5) -- (2.5,1.5);
            \draw[red] (0,0.5)--(0.25,0.5);
                 \draw[red] (0.25,0)--(2.5,0);
                  \draw [-{Stealth},red](0.25, 0.5) -- (0.25, 0.0);
            \filldraw[fill=black] (0.25, 0.0) circle (2pt);
           
            \node[right, purple, scale=0.8] at (0.25, 0.25) {$t_1$};
            
            \filldraw[fill=white] (0.25, 0.5) circle (2pt);
            \draw[blue] (0,1.5)--(1.25,1.5);
            \filldraw[fill=black] (0.75, 0.5) circle (2pt);
            \draw[-{Stealth}] (0.75, 1.0) -- (0.75, 0.5);
            \node[right, purple, scale=0.8] at (0.75, 0.75) {$t_2$};
            \filldraw[fill=white] (0.75, 1.0) circle (2pt);
            \draw[-{Stealth},blue] (1.25, 1.5) -- (1.25, 1.0);
            \draw[blue](1.25,1.0)--(1.5,1.0);
            \filldraw[fill=black] (1.25, 1.0) circle (2pt);
            \node[right, purple, scale=0.8] at (1.25, 1.25) {$t_3$};
            \filldraw[fill=white] (1.25, 1.5) circle (2pt);
       
            \filldraw[fill=black] (2.25, 0.0) circle (2pt);
            \draw[blue] (2.0,0.5)--(2.5,0.5);
            \draw[-{Stealth}] (2.25, 0.5) -- (2.25, 0.0);
            \node[right, purple, scale=0.8] at (2.25, 0.25) {$t_5$};
            \filldraw[fill=white] (2.25, 0.5) circle (2pt);
            \draw[thick,blue] (1.5,1.0) to [out=0,in=180] (2.0,0.5);
            \draw[color=white, line width=4] (1.5,0.5) to [out=0,in=180] (2.0,1.0);
            \draw[thick] (1.5,0.5) to [out=0,in=180] (2.0,1.0);
        \end{tikzpicture}
        \caption{$P:\{2,4\}\to [2], \text{wt}(P)=t_1t_3$}
    \end{subfigure}
    \caption{NI path collections on a 3D plabic graph and their weights.}
    \label{fig:path-collections}
\end{figure}
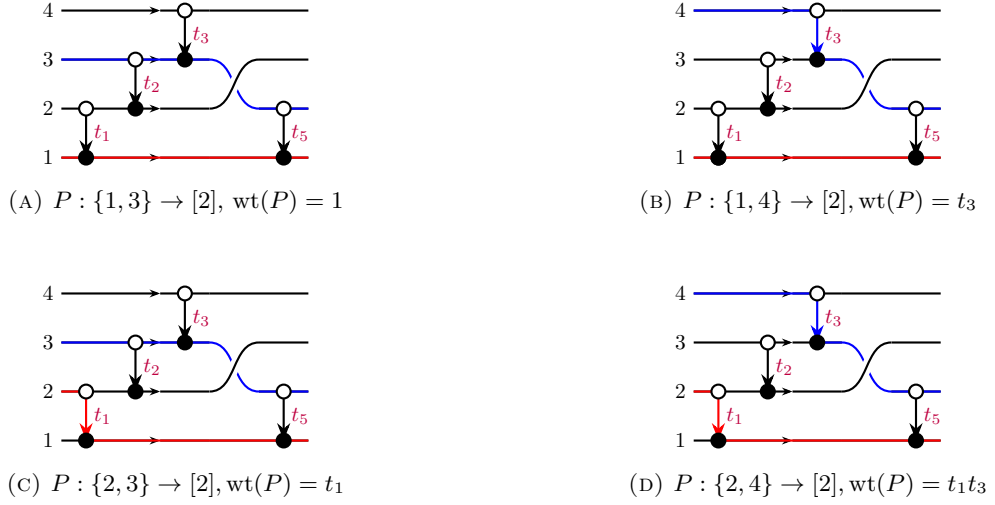


and again
$$
\widehat{P}^{\MaRi}(v,w)=\widehat{P}_1^{\MaRi}(v,w)+\widehat{P}_2^{\MaRi}(v,w)+\widehat{P}_3^{\MaRi}(v,w).
$$

Lastly, we give affine transformations $\alpha \mapsto A^{(k)} \alpha + b^{(k)}$ that maps the positroid polytope ${P}_k(v,w)$ to $\widehat{P}_k(v,w)$.  

\begin{equation*}
    A^{(1)}:  =\begin{pmatrix}
    0 & 0 & 1 & 0\\
    1 & 1 & 0 & 0\\
    1 & 0 & 0 & 0\\
    0 & 0 & 0 & 0
    \end{pmatrix}
,\ b^{(1)}=0\end{equation*}

\begin{equation*}
    A^{(2)}: =\begin{pmatrix}
    0 & 0 & 1 & 0\\
    0 & 0 & 0 & 0\\
    1 & 0 & 0 & 0\\
    0 & 0 & 0 & 0
    \end{pmatrix}
    ,\ b^{(2)}=0
\end{equation*}

\begin{equation*}
   A^{(3)}: =\begin{pmatrix}
    0 & 0 & 0 & -1\\
    0 & 0 & 0 & 0\\
    0 & -1 & 0 & 0\\
    0 & 0 & -1 & -1
    \end{pmatrix}
    ,\ b^{(3)}=\left(
    \begin{matrix}
    1 \\ 0\\ 1\\ 2
    \end{matrix}
    \right)
\end{equation*}

\end{example}

\begin{example}\label{ex:4-crown}($4$-crown)
We consider an example of an expected toric Richardson $\overline{R}_{v,w}$ where $[v,w]$ is a $4$-crown.
Consider $w=4231$ and $v=2143\in S_4$. Then one can check by using Proposition \ref{prop: FZ} that 
$R_{v,w}$ is parametrized by the matrix
 \begin{equation}\label{eq:braid matrix for w}
    M=\left(
\begin{matrix}
0 & -x_3 & x_1^{-1}x_2x_3 & -1 \\ x_2 & -1 & 0 & 0 \\ x_1 & 0 & -1 & 0 \\ 1 & 0 & 0 & 0
\end{matrix}
\right),
\end{equation}
where $x_1,x_2,x_3$ are coordinates in the torus $\TT$.
We get

\begin{equation*}
    \begin{aligned}
&\widehat{P}_1(v,w)=\Conv(010[\textcolor{red}{2}],100[\textcolor{red}{3}],000[\textcolor{red}{4}]), \\
&\widehat{P}_2(v,w)=\Conv(011[\textcolor{red}{12}],101[\textcolor{red}{13}],001[\textcolor{red}{14}],100[\textcolor{red}{23}],000[\textcolor{red}{24}]),\\
&\widehat{P}_3(v,w)=\Conv(-111[\textcolor{red}{124}],001[\textcolor{red}{134}],000[\textcolor{red}{234}]).
    \end{aligned}
\end{equation*}
Note that $\widehat{P}_1(v,w)$ and $\widehat{P}_3(v,w)$ are triangles, while $\widehat{P}_2(v,w)$ is a square pyramid. The polytope $\widehat{P}(v,w)$ is depicted in Figure \ref{fig:4crown polytope}. 
One can see that all faces are quadrilaterals. The cone in the dual fan corresponding to a 4-valent vertex is not simplicial, so the toric variety is not smooth.\\

Alternatively, one can compute the polytope from
the \emph{MR} matrix 
 \begin{equation*}\label{eq:braid matrix for w}
    g=\left(
\begin{matrix}
0 & -1 & 0 & 0 \\ 1 & -t_1 & 0 & 0\\ t_2 & 0 & 0 & -1 \\ t_4 & 0 & 1 & 0
\end{matrix}
\right).
\end{equation*}
Using Lemma \ref{lem:minors in MR-param}, we obtain 

\begin{equation*}
    \begin{aligned}
&\widehat{P}_1^{\MaRi}(v,w)=\Conv(000[\textcolor{red}{2}],010[\textcolor{red}{3}],001[\textcolor{red}{4}]), \\
&\widehat{P}_2^{\MaRi}(v,w)=\Conv(000[\textcolor{red}{12}],010[\textcolor{red}{13}],001[\textcolor{red}{14}],110[\textcolor{red}{23}],101[\textcolor{red}{24}]),\\
&\widehat{P}_3^{\MaRi}(v,w)=\Conv(000[\textcolor{red}{124}],010[\textcolor{red}{134}],110[\textcolor{red}{234}]).
    \end{aligned}
\end{equation*}

\begin{figure}
    \centering
    \includegraphics[width=0.5\linewidth]{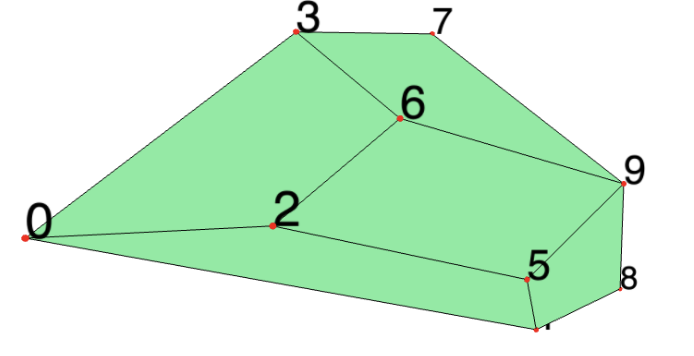}
    \caption{The polytope $\widehat{P}(v,w)$ from $4$-crown}
    \label{fig:4crown polytope}
\end{figure}

\end{example}


\begin{remark}
    Though the examples presented in detail here are rank-symmetric, this is not true in general for intervals with  no 2-crowns. For example, for $v=32154$ and $w=53241$ in $S_5$, the interval $[v,w]$ has rank sizes $(1,5,9,6,1)$.
\end{remark}

\subsection{An infinite family of examples}\label{subsec-infinite-family}
For even $n$, we consider $w=(1\ n)$ and $v=2143 \dots n (n-1)=s_2s_4\cdots s_{n-2}$ as in \cite[Remark 5.18]{GKSS2}. 
The Schubert cell $X^w$ is parametrized by the matrix below.
$$
M=\left(
\begin{matrix}
z_{n-1} & -z_n & z_{n+1} & \cdots & z_{2n-3} & -1\\
z_{n-2} & -1 & 0 & \cdots & 0  & 0\\
\vdots &  & \ddots &  & \vdots & \vdots \\
z_2 & 0 & \cdots & -1 & 0 & 0\\
z_1 & 0 & \cdots & 0 & -1 & 0\\
1 & 0 & \cdots & 0 &  0 & 0\\ 
\end{matrix}
\right)
$$
\begin{lemma}\label{lem: defining eq for 5.18}
The open Richardson variety $R_{v,w}$ is the subset of $X^w$ where all $z_i$ are invertible, $z_{n-1}=z_{n-2}z_{n}$ and 
\begin{equation}
\label{eq: z binomial}
z_{i}z_{2n-2-i}=z_{i+1}z_{2n-3-i}\ \mathrm{for\ all\ even\ } i< n-2.
\end{equation}
\end{lemma}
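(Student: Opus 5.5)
The plan is to apply Proposition~\ref{prop: FZ} directly to the matrix $M$, which already parametrizes $X^w$ (so $R_{v,w}=X^w\cap X_v$ is exactly the locus in these coordinates where the conditions (a) and (b) hold). First I will record the one-line notation of $v$: it is the product of commuting simple transpositions, so its only inversions are the adjacent swapped pairs $(i,i+1)$ with $v(i)>v(i+1)$. Hence condition (a) of Proposition~\ref{prop: FZ} produces one vanishing condition $\Delta_{v[i]\setminus v(i)\cup v(i+1)}=0$ for each swapped pair. Condition (b) produces nonvanishing conditions $\Delta_{v[i]}\neq 0$ for all $i$ that are not descents. I also need the remaining instances of (a) for non-adjacent pairs $i<j$ with $v(i)>v(j)$, but for this $v$ there are none.

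Second, I will compute the relevant flag minors of $M$ explicitly. Because columns $2,\dots,n-1$ of $M$ have a single $-1$ below the top row plus one entry ($\pm z_{n},z_{n+1},\dots$) in the top row, a minor using rows $I$ and the first $k$ columns can be expanded along the first column and the top row. Doing so, it is (up to sign) either a single $z$, a product of two $z$'s, or a difference of two such monomials, according to whether $1\in I$ and which rows below the top are included. In particular, the initial minors $\Delta_{v[i]}$ of condition (b) come out as monomials in the $z$'s. The vanishing minors of condition (a) come out as binomials. The binomial from the pair involving the top-left corner gives $z_{n-1}=z_{n-2}z_n$, and the binomials from the interior swapped pairs give $z_iz_{2n-2-i}=z_{i+1}z_{2n-3-i}$ for even $i<n-2$. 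I will organize this as a short computation indexed by the even positions, checking the indices against the small case $n=4$.

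Third, I will show that, given these binomial equations, the nonvanishing conditions in (b) are equivalent to all $z_i$ being invertible. One direction is easy, since each $\Delta_{v[i]}$ is a monomial in the $z$'s. For the converse, (b) directly forces only some of the $z_i$ to be nonzero, namely those appearing as factors of the monomials $\Delta_{v[i]}$. The other $z_i$ must then be shown nonzero using the binomial relations: each relation expresses a product containing a yet-uncontrolled variable as a product of variables already known to be nonzero. I would run an induction along the even indices $i$, starting from $z_{n-1}=z_{n-2}z_n$.

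I expect this third step to be the main obstacle. There the bookkeeping must show that every $z_j$ is reached and none is left unconstrained. As a sanity check, the resulting variety is then a torus of dimension $(2n-3)-1-\frac{n-4}{2}=\ell(w)-\ell(v)$, as it must be.
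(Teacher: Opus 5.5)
Your overall route, imposing the conditions of Proposition~\ref{prop: FZ} on $M$, is the paper's. However, the computation in your second step is wrong exactly where it matters.

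For this $v$ one has $v[i]=[i]$ for odd $i$ and $v[i]\setminus v(i)\cup v(i+1)=[i]$ for even $i$. So every condition you take from (a), and the odd-$i$ conditions from (b), concern the leading principal minors $\Delta_{[k]}$. The upper-left $k\times k$ block of $M$ is arrow-shaped with a full top row, so
\[
\Delta_{[k]}=(-1)^{k-1}\Bigl(z_{n-1}+\sum_{r=2}^{k}(-1)^{r-1}z_{n-r}z_{n+r-2}\Bigr)
\]
has $k$ terms. For instance, for $n=6$ one gets $\Delta_{[3]}=z_5-z_4z_6+z_3z_7$ and $\Delta_{[4]}=-z_5+z_4z_6-z_3z_7+z_2z_8$. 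Hence your claim that each relevant minor is a monomial or a difference of two monomials is false. For even $i\ge 4$ the vanishing minors are not binomials, and for odd $i\ge 3$ the minors $\Delta_{[i]}$ are not monomials on $X^w$. Your check at $n=4$ cannot detect this, since there $\Delta_{[2]}$ is the only vanishing minor.

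The missing idea is the recurrence $\Delta_{[i]}=-\Delta_{[i-1]}+z_{n-i}z_{n+i-2}$, obtained by cofactor expansion along the last column, combined with induction on $i$. This is exactly how the paper argues:
\begin{itemize}
\item $\Delta_{[2]}=0$ gives $z_{n-1}=z_{n-2}z_n$.
\item For odd $i\ge 3$, the already imposed condition $\Delta_{[i-1]}=0$ turns $\Delta_{[i]}$ into the monomial $z_{n-i}z_{n+i-2}$.
\item Then $\Delta_{[i+1]}=0$ becomes the binomial $z_{n-i-1}z_{n+i-1}=z_{n-i}z_{n+i-2}$.
\end{itemize}
So the binomials and monomials hold only modulo the earlier vanishing conditions, and that is what makes the FZ conditions equivalent to the stated system.

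The same induction settles what you call the main obstacle. The variables $z_{n-1}$, $z_{n-i}$, $z_{n+i-2}$ are nonzero because the odd principal minors are. Then $z_{n-i-1}$ and $z_{n+i-1}$ are nonzero by the binomial. Together these exhaust $z_1,\dots,z_{2n-3}$.

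Finally, you misread condition (b). Since $v(n)=n$, it requires $\Delta_{v[i]}\neq 0$ for every $i\le n-1$, not only at non-descents. At the descents (even $i$) this reads $\Delta_{\{1,\dots,i-1,i+1\}}=\pm z_{n-i-1}z_{n+i-2}\neq 0$. That minor is a genuine monomial and is automatic once all $z_j$ are invertible, but you must check it to show your locus lies in $X_v$.
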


\begin{proof}
We use Proposition \ref{prop: FZ} to characterize the matrices in the opposite Schubert cell $X_v$. 
First, we consider the case $j=i+1$. For $i$ odd, we have $v(i)<v(i+1)$ and $v([i])=[i]$, so $\Delta_{[i]}\neq 0$. For $i$ even, we have $v(i)>v(i+1)$ and $v(\{1,\ldots,i-1,i+1\})=[i]$, so $\Delta_{[i]}=0$.

We have a recurrence relation for principal minors of $M$:
\begin{equation}
\label{eq: principal minors recursion}
\Delta_{[i]}=-\Delta_{[i-1]}+z_{n-i}z_{n+i-2}
\end{equation}
For $i=1$ we get $\Delta_{[1]}=z_{n-1}\neq 0$. For $i=2$ we get $\Delta_{[2]}=-z_{n-1}+z_{n}z_{n-2}=0$, so $z_{n-1}=z_{n-2}z_{n}$ and $z_{n},z_{n-2}\neq 0$. Now we proceed by induction in $i$. If $i>1$ is odd then $\Delta_{[i-1]}=0$ hence by \eqref{eq: principal minors recursion} we get $\Delta_{[i]}=z_{n-i}z_{n+i-2}\neq 0$, so both $z_{n-i}$ and $z_{n+i-2}$ are nonzero. Furthermore, 
$$
\Delta_{[i+1]}=-\Delta_{[i]}+z_{n-i-1}z_{n+i-1}=-z_{n-i}z_{n+i-2}+z_{n-i-1}z_{n+i-1}=0.
$$
This is equivalent to the equation \eqref{eq: z binomial} and also implies that $z_{n-i-1},z_{n+i-1}\neq 0$. So all $z_i$ are nonzero. 

It remains to check that there are no additional relations if $j>i+1$. Indeed, in this case $v(i)<v(j)$ and $v([i])=[i]$ for $i$ odd or $i=2n$ and $v(i)=\{1,
\ldots,i-1,i+1\}$ for $i<2n$ even. In the former case we already checked $\Delta_{[i]}\neq 0$, and in the latter we can use the identity
$$
\Delta_{\{1,
\ldots,i-1,i+1\}}=z_{n-i-1}z_{n+i-2}\neq 0.
$$
\end{proof}
Lemma \ref{lem: defining eq for 5.18} implies that one can choose the coordinates $x_i$ on $\TT_{v,w}$ as follows:
$$
x_{i}=\begin{cases}
z_i & \mathrm{if}\ i\le n-2, \\
z_n & \mathrm{if}\ i=n-1, \\
z_{2i+1-n}  & \mathrm{if}\ n\le i\le \frac{3n}{2}-2. 
\end{cases}
$$

Then all remaining $z_i$ can be written as monomials in $x_j$:
$$
z_{n-1}=x_{n-1}x_{n-2},\ z_{2i-n}=\frac{x_{3n-2i-1}x_{i-1}}{x_{3n-2i-2}}\ \mathrm{if}\ n+1\le i\le \frac{3n}{2}-2.
$$

We can write all flag minors as monomials in the $x_i$. For convenience, we define $x_0:=1$.

\begin{lemma}
\label{lem: 5.1.8 matroid}
The flag minors of a point in $R_{v,w}= \TT_{v,w}$ are as follows.
\begin{itemize}
\item[(1)] For $k$ odd, we have
$$
\Delta_{[k]}=\begin{cases}
x_{n-2}x_{n-1} & \mathrm{if}\ k=1, \\
x_{n-k}x_{n+\frac{k-3}{2}}  & \mathrm{if}\ k\neq 1.\\
\end{cases}
$$

\item[(2)]  For any $k$ and $1\le i\le k<j$ we have 

\begin{equation*}
\Delta_{[k]\setminus i\cup j}=
    \begin{cases}
    
  x_{n-j} & \mathrm{if}\ i=1\\
  x_{n-j}x_{n-1}  & \mathrm{if}\ i=2\\
  x_{n-j}x_{n+\frac{i-3}{2}} & \mathrm{odd}\ i\ge 3\\ 
x_{n-\frac{i+4}{2}}^{-1}x_{n-j}x_{n+\frac{i-4}{2}}x_{n-\frac{i+2}{2}}  & \mathrm{even}\ i\ge 4\\        
    \end{cases}
\end{equation*}

\item[(3)] $\Delta_{[n]}=1$
\end{itemize}
All other flag minors $\Delta_{I}$ vanish on $R_{v,w}$.
\end{lemma}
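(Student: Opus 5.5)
The plan is to compute every flag minor of $M$ directly, using the very sparse shape of its first $k$ columns, and then to rewrite the result in the coordinates $x_i$ using the relations of Lemma~\ref{lem: defining eq for 5.18}. Theorem~\ref{thm: flag minor richardson} serves as a consistency check on which minors survive.

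\textbf{Support of the minors.} For $2\le c\le n-1$, column $c$ of $M$ has exactly two nonzero entries: one in the top row, equal to $\pm z_{n+c-2}$, and a $-1$ in a single other row. Column $1$ is $(z_{n-1},\dots,z_1,1)^T$.

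Expand $\Delta_I$, the minor on rows $I$ and columns $1,\dots,k$, along columns $2,\dots,k$. In each nonvanishing term, each of these columns contributes either its $-1$ or its top entry. At most one column can use the top row. Column $1$ then supplies the single remaining row of $I$.

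Consequently $\Delta_I\neq 0$ forces $I$ to consist of the $k-1$ rows carrying the $-1$'s, with at most one of them traded for the top row, plus one extra row. In the paper's row labelling this says exactly that $I=[k]$ or $I=[k]\setminus i\cup j$ with $i\le k<j$. This proves the final claim that all other minors vanish, and it matches $\{u[k]:u\in[v,w]\}$.

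\textbf{Explicit values.} For $I=[k]$, I would use the recursion \eqref{eq: principal minors recursion} together with the vanishing $\Delta_{[i]}=0$ for even $i$. This gives $\Delta_{[k]}=z_{n-k}z_{n+k-2}$ for odd $k$, and $\Delta_{[1]}=z_{n-1}$.

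For $I=[k]\setminus i\cup j$, the expansion has at most two terms. The column-$1$ entry contributes $z_{n-j}$, and the remaining factor is essentially $\Delta_{[i]}$-like on the rows below $i$. So the minor equals $z_{n-j}$ times a principal-type minor of size depending on $i$. That factor is $1$ for $i=1$; it is $z_n$ for $i=2$; it equals $\Delta_{[i]}/z_{n-i}$-type data for odd $i$; and for even $i$ one uses the vanishing $\Delta_{[i]}=0$ to collapse the two terms into a single monomial. For $I=[n]$, $\Delta_{[n]}=\det M=1$, which can be read off after a sign check.

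\textbf{Rewriting in the $x$ coordinates.} Finally I substitute $z_{n-1}=x_{n-1}x_{n-2}$, $z_{2i-n}=x_{3n-2i-1}x_{i-1}/x_{3n-2i-2}$, and $z_i=x_i$ etc. into the expressions above. This yields the stated formulas, for instance $x_{n-k}x_{n+\frac{k-3}{2}}$ for odd $k\ge3$.

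\textbf{Main obstacle.} I expect the hardest step to be the even $i\ge4$ case. There the two expansion terms must be combined using the binomial relation \eqref{eq: z binomial}. The reindexing then produces the inverse factor $x_{n-\frac{i+4}{2}}^{-1}$, and the signs and index shifts need careful bookkeeping, which I would first verify on $n=6$.
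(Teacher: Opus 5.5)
Your overall route is the right one: read off the support of each minor from the sparsity of the first $k$ columns of $M$, get $\Delta_{[k]}$ from \eqref{eq: principal minors recursion}, then substitute the $x_i$. The paper gives no separate proof of this lemma, and the same computations appear in the proof of Lemma~\ref{lem: defining eq for 5.18}. Your support argument, part (1) and $\det M=1$ are fine. Note that the vanishing of $\Delta_{[k]}$ for even $k$ comes from the Richardson equations, not from the support count.

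Your treatment of $\Delta_{[k]\setminus i\cup j}$ is inaccurate, though. In columns $1,\dots,k$, the row $j>k$ has only the entry $z_{n-j}$ (with $z_0:=1$), in column $1$. Since row $i$ is missing, column $i$ is forced to use its top entry $\pm z_{n+i-2}$, and every other column uses its $-1$. So the expansion has exactly one term, and the sign of the permutation cancels the signs of the entries. This gives $\Delta_{[k]\setminus 1\cup j}=z_{n-j}$, and $\Delta_{[k]\setminus i\cup j}=z_{n-j}z_{n+i-2}$ for $2\le i\le k$. The case $k=i$, $j=i+1$ is the identity $\Delta_{\{1,\ldots,i-1,i+1\}}=z_{n-i-1}z_{n+i-2}$ from the proof of Lemma~\ref{lem: defining eq for 5.18}. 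There is no $\Delta_{[i]}$-type factor and no pair of terms to collapse. The relation \eqref{eq: z binomial} enters only when rewriting $z_{n+i-2}$ for even $i$.

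The genuine problem is your last step, \emph{this yields the stated formulas}: it does not for even $i\ge 6$. Solving $z_{n-i}z_{n+i-2}=z_{n-i+1}z_{n+i-3}$ gives $z_{n+i-2}=x_{n-i+1}\,x_{n+\frac{i-4}{2}}\,x_{n-i}^{-1}$. This is consistent with the substitution rule stated just before the lemma. It agrees with the printed $x_{n-\frac{i+2}{2}}\,x_{n+\frac{i-4}{2}}\,x_{n-\frac{i+4}{2}}^{-1}$ only when $i=4$. For instance, take $n=8$, $k=6$, $i=6$, $j=7$. Then the minor is $z_1z_{12}=x_1x_3x_9/x_2$, while the statement predicts $x_1x_4x_9/x_3$.

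So the even-$i$ case cannot be proved as printed. What your method actually establishes is $\Delta_{[k]\setminus i\cup j}=x_{n-i}^{-1}x_{n-j}x_{n+\frac{i-4}{2}}x_{n-i+1}$ for even $i\ge 4$, with all other cases as stated. Your proposed check on $n=6$ would not catch this, since only $i=4$ occurs there; you need $n\ge 8$.
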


\begin{corollary}
The vectors $a_i^{(k)}$ can be chosen as follows:
\begin{enumerate}
\item For $k=1$, $b^{(1)}=0$ and
$$
a_i^{(1)}=\begin{cases}
e_{n-2}+e_{n-1} & \mathrm{if}\ i=1, \\
e_{n-i}  & \mathrm{otherwise}.\\
\end{cases}
$$
    
\item For $k=2$, $b^{(2)}=0$ and
$$
a_i^{(2)}=\begin{cases}
0  & \mathrm{if}\ i=2, \\
e_{n-i}  & \mathrm{otherwise}.\\
\end{cases}
$$

\item For $k\ge 3$ odd, $b^{(k)}=e_{n-k}+e_{n+\frac{k-3}{2}}+B_k$ and
 $$
a_i^{(k)}=\begin{cases}
0 & \mathrm{if}\ i=1, \\
-e_{n-1} & \mathrm{if}\ i=2, \\
-e_{n+\frac{i-3}{2}}   & \mathrm{for}\ \mathrm{ odd}\ i\le k, \\
-e_{n+\frac{i-4}{2}}-e_{n-\frac{i+2}{2}}+e_{n-\frac{i+4}{2}} & \mathrm{for}\ \mathrm{ even}\ \ i\le k-1, \\

e_{n-i}-e_{n-k}-e_{n+\frac{k-3}{2}} & \mathrm{otherwise},\\
\end{cases}
$$

\item For $k\ge 4$ even, $b^{(k)}=0$ and
$$
a_i^{(k)}=\begin{cases}
0 & \mathrm{if}\ i=1, \\
-e_{n-1} & \mathrm{if}\ i=2, \\
-e_{n+\frac{i-3}{2}}   & \mathrm{for}\ \mathrm{ odd}\ i\le k-1, \\
-e_{n+\frac{i-4}{2}}-e_{n-\frac{i+2}{2}}+e_{n-\frac{i+4}{2}} & \mathrm{for}\ \mathrm{ even}\ \ i\le k, \\

e_{n-i}+B_k & \mathrm{otherwise}.\\
\end{cases}
$$

\end{enumerate}
In (3) and (4), we use
\begin{equation*}
    \begin{aligned}
& B_k:=e_{n-1}+\sum_{l=1}^{\left\lfloor\frac{k-1}{2}\right\rfloor}e_{n+l-1}+\sum_{l=2}^{\left\lfloor\frac{k}{2}\right\rfloor}(e_{n+l-2}+e_{n-l-1}-e_{n-l-2})
    \end{aligned}
\end{equation*}
and we set $e_0:=0$.

\end{corollary}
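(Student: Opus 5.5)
The corollary is a direct translation of Lemma~\ref{lem: 5.1.8 matroid} into the affine form \eqref{eq: mI split}. For each $k$ and each nonzero flag minor $\Delta_I$ with $|I|=k$, we have to check that the exponent vector $m_I$ equals $b^{(k)}+\sum_{i\in I}a^{(k)}_i$. The vectors are only determined up to the ambiguity of Remark~\ref{rmk: shift}, so it is enough to show that the proposed choice reproduces every $m_I$. By Lemma~\ref{lem: 5.1.8 matroid}, the nonzero minors of size $k$ are $\Delta_{[k]}$ (when $k$ is odd) and $\Delta_{[k]\setminus i\cup j}$ for $1\le i\le k<j$. The plan is therefore to check two families of identities:
\[
b^{(k)}+\sum_{s=1}^k a^{(k)}_s = m_{[k]},
\qquad
b^{(k)}+\sum_{s=1}^k a^{(k)}_s - a^{(k)}_i + a^{(k)}_j = m_{[k]\setminus i\cup j}.
\]
In the second identity we use the \emph{formal} value of the first sum even when $\Delta_{[k]}=0$ (that is, when $k$ is even).

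The cases $k=1,2$ go first and are immediate. For $k=1$ the minors are $\Delta_{\{j\}}$, and these are exactly $a^{(1)}_j$. For $k=2$ the minors are $\Delta_{\{1,j\}}=x_{n-j}$ (case $i=2$ removed) and $\Delta_{\{2,j\}}=x_{n-j}x_{n-1}$ (case $i=1$ removed). With $a^{(2)}_2=0$ and $a^{(2)}_1=e_{n-1}$, these match.

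For $k\ge3$ the order is as follows. First compute $S_k:=\sum_{s=1}^k a^{(k)}_s$ for the entries with $s\le k$; these are the "inside" entries, which do not depend on $k$ except through the boundary index. The telescoping structure of the even-index terms $-e_{n+\frac{i-4}{2}}-e_{n-\frac{i+2}{2}}+e_{n-\frac{i+4}{2}}$ should give $S_k=-B_k$ up to the boundary term. Concretely, I expect the odd terms together with $-e_{n-1}$ to produce $-e_{n-1}-\sum_l e_{n+l-1}$, and the even terms to produce $-\sum_l(e_{n+l-2}+e_{n-l-1}-e_{n-l-2})$. Matching exactly the floor bounds $\lfloor\frac{k-1}{2}\rfloor$ and $\lfloor\frac k2\rfloor$ in $B_k$ is what distinguishes the odd and even cases, and it is why in the odd case the index $k$ itself is treated as odd, with its term absorbed into $b^{(k)}$.

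Given this, the checks proceed as follows. For odd $k$, $b^{(k)}+S_k=e_{n-k}+e_{n+\frac{k-3}{2}}$, which is $m_{[k]}$ by Lemma~\ref{lem: 5.1.8 matroid}(1). Then adding $a^{(k)}_j=e_{n-j}-e_{n-k}-e_{n+\frac{k-3}{2}}$ and subtracting $a^{(k)}_i$ gives $e_{n-j}-a^{(k)}_i$. Comparing with the four cases in Lemma~\ref{lem: 5.1.8 matroid}(2) shows that $-a^{(k)}_i$ equals exactly the $i$-dependent factor, namely $0$, $e_{n-1}$, $e_{n+\frac{i-3}{2}}$, or $e_{n+\frac{i-4}{2}}+e_{n-\frac{i+2}{2}}-e_{n-\frac{i+4}{2}}$. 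For even $k$, $b^{(k)}=0$, and $S_k+a^{(k)}_j=-B_k+e_{n-j}+B_k=e_{n-j}$, so again the minor is $e_{n-j}-a^{(k)}_i$. Thus in both parities the statement reduces to the single identity $S_k=-B_k$ (with $S_k$ summed to $k-1$ in the odd case, since the $k$-th term is absorbed) together with the table of $i$-dependent factors.

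The main obstacle is the index bookkeeping in $S_k=-B_k$. This covers the convention $e_0=0$ (relevant when $n-\frac{i+4}{2}$ reaches $0$, that is, near $i=n-2$ as $k$ approaches $n$), the boundary cases $i=k$ and $i=k-1$, and the edge case $k=n-1$. I would verify it by induction on $k$, going from $k$ to $k+2$. Each step adds one odd and one even term, and $B_{k+2}-B_k$ visibly equals $e_{n+\lfloor\frac{k+1}{2}\rfloor-1}+(e_{n+l-2}+e_{n-l-1}-e_{n-l-2})$ with $l=\lfloor\frac{k+2}{2}\rfloor$. After that, I would sanity-check small $n$ (for example $n=4,6$) against Example~\ref{ex:4-crown}.
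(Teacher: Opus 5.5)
Your route is the paper's. You show $\sum_{i=1}^{k}a^{(k)}_i=-B_k$ and deduce $b^{(k)}+\sum_{s\in[k]\setminus i\cup j}a^{(k)}_s=e_{n-j}-a^{(k)}_i$, plus $b^{(k)}+\sum_{s\le k}a^{(k)}_s=m_{[k]}$ for odd $k$. You then match $-a^{(k)}_i$ with the $i$-dependent factor in Lemma~\ref{lem: 5.1.8 matroid}(2), and you also write out the even case that the paper only calls ``similar.'' The structure is sound, but this proof is pure bookkeeping, and several of your bookkeeping steps are wrong as written.

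\textbf{The $k=2$ minors are swapped.} By Lemma~\ref{lem: 5.1.8 matroid}(2), removing $i=1$ gives $\Delta_{\{2,j\}}=x_{n-j}$, and removing $i=2$ gives $\Delta_{\{1,j\}}=x_{n-j}x_{n-1}$. Under your assignment, the choice $a^{(2)}_1=e_{n-1}$, $a^{(2)}_2=0$ would \emph{not} match. Under the correct one it does: these are exactly the paper's equations $a^{(2)}_1+a^{(2)}_j=e_{n-j}+e_{n-1}$ and $a^{(2)}_2+a^{(2)}_j=e_{n-j}$.

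\textbf{For odd $k$, the sum must run over all of $[k]$.} The identity is $\sum_{i=1}^{k}a^{(k)}_i=-B_k$. The term $a^{(k)}_k=-e_{n+\frac{k-3}{2}}$ is precisely the $l=\frac{k-1}{2}$ summand of $\sum_{l\le\lfloor (k-1)/2\rfloor}e_{n+l-1}$ in $B_k$. Summing only to $k-1$ leaves you off by $e_{n+\frac{k-3}{2}}$. Nothing is ``absorbed into $b^{(k)}$'': the extra $e_{n-k}+e_{n+\frac{k-3}{2}}$ in $b^{(k)}$ is there only to produce $m_{[k]}$. Your displayed step $b^{(k)}+S_k=e_{n-k}+e_{n+\frac{k-3}{2}}$ already uses the full sum, so just drop that parenthetical.

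\textbf{No induction is needed.} Write odd $i=2l+1$ and even $i=2l\ge4$. Then $a^{(k)}_2$, the odd terms and the even terms match the three pieces of $B_k$ one for one, with exactly the bounds $\lfloor\frac{k-1}{2}\rfloor$ and $\lfloor\frac k2\rfloor$.

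\textbf{The convention $e_0=0$ is misplaced in your account.} For even $i\le n-2$ the index $n-\frac{i+4}{2}$ is at least $\frac n2-1\ge1$, so $e_0$ never arises there. It enters only through $e_{n-j}$ with $j=n$, which matches $x_0=1$ in the lemma.

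\textbf{Your sanity check uses the wrong example.} The $n=4$ member of this family is $v=s_2=1324$, $w=4231$, i.e.\ Example~\ref{ex: base case}. There $A^{(1)},A^{(2)},A^{(3)}$ and $b^{(3)}=(1,0,1,2)$ are exactly cases (1)--(3) of the corollary. The $4$-crown example, with $v=2143$, is a different interval.
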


\begin{proof}
    The proof essentially follows from Lemma \ref{lem: 5.1.8 matroid}. Part (1) is clear. To prove (2), we can check that the $a_j^{(k)}$ above is indeed a solution of the system of linear equations $a_1^{(2)}+a_j^{(2)}=e_{n-j}+e_{n-1}$ and $a_2^{(2)}+a_j^{(2)}=e_{n-j}$ for $3\le j\le n$. 
    
    For (3), we first compute $\sum_{i=1}^{k}a_{i}^{(k)}=-B_k$. Then 
    $$
    A^{(k)}e_{[k]}+b^{(k)}=-B_k+e_{n-k}+e_{n-\frac{k-3}{2}}+B_k=e_{n-k}+e_{n-\frac{k-3}{2}}
    $$
    while for $i\le k,j\ge k$ we get
    $$
    A^{(k)}e_{[k]\setminus i\cup j}+b^{(k)}=\left(A^{(k)}e_{[k]}+b^{(k)}\right)-a_i^{(k)}+a_j^{(k)}=
    $$
    $$
    e_{n-k}+e_{n-\frac{k-3}{2}}-a_i^{(k)}+a_j^{(k)}=e_{n-j}-a_{i}^{(k)}.
    $$
    This agrees with $m_{[k]\setminus i\cup j}$ by Lemma \ref{lem: 5.1.8 matroid}. The proof  for   (4) is similar.
\end{proof}


Alternatively, for the choice of the reduced word $\bw=s_1s_2\cdots s_{n-1}\cdots s_2s_1$ and $v=s_2s_4\cdots s_{n-2}$, the nonzero flag minors of MR matrix can be explicitly computed using the following two lemmas.

\begin{lemma}\label{lem: basecase MR for minors}
Consider the choice of $v$ and $\bw$ as above. Then following flag minors of the associated \emph{MR} matrix are computed as follows:

\begin{enumerate}[(a)]
  \item The minors $\Delta_{v[k]}=1$.
    \item For $i\ge 2$,
    \begin{equation*} 
\Delta_{i}=
    \begin{cases}
 t_{2q-1}\cdot(\prod_{j=0}^{q-2}t_{2n-3-2j}) & \mathrm{for}\ i=2q,\\
 \prod_{j=0}^{q-2}t_{2n-3-2j} & \mathrm{for}\ i=2q-1.\\
    \end{cases}
\end{equation*}
\end{enumerate}
\end{lemma}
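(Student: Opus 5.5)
The plan is to read both statements directly off the 3D plabic graph $G_{v,\bw}$. By Lemma~\ref{lem: 2 crown} together with the earlier discussion of this family, $R_{v,w}$ is a torus. Lemma~\ref{lem:minors in MR-param} then says that each nonzero flag minor $\Delta_I(g)$ is the product of the bridge weights along the \emph{unique} NI path collection $I\to[k]$. So for each claimed minor it is enough to exhibit one NI path collection from the source set to $[k]$ and read off its bridges. Uniqueness is supplied by that lemma, and Lemma~\ref{lem: minors from paths} removes any sign issues.

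\textbf{Step 1: locate the PDS and the bridges.} In $\bw=s_1s_2\cdots s_{n-1}s_{n-2}\cdots s_1$ (length $2n-3$), position $p\le n-1$ carries $s_p$ and position $n-1+m$ carries $s_{n-1-m}$. The rightmost reduced subexpression for $v=s_2s_4\cdots s_{n-2}$ uses the letters $s_{n-2},s_{n-4},\dots,s_2$ in the descending half. Hence
\[
J^+=\{n,n+2,\dots,2n-4\},\qquad J^\circ=\{1,\dots,n-1\}\cup\{n+1,n+3,\dots,2n-3\}.
\]
So $G_{v,\bw}$ has an ascending staircase of bridges $t_1,\dots,t_{n-1}$, the bridge $t_p$ joining wires $p+1\to p$. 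This is followed by an alternation of crossings on $(n-2,n-1),(n-4,n-3),\dots,(2,3)$ and bridges $t_{n+1},t_{n+3},\dots,t_{2n-3}$ on $(n-2,n-3),\dots,(2,1)$.

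\textbf{Step 2: part (a).} For $k$ odd, $v[k]=[k]$, and the horizontal paths on wires $1,\dots,k$ form an NI collection. No crossing mixes wire $k$ with wire $k+1$, since crossings sit only at pairs $(2j,2j+1)$. For $k$ even, $v[k]=[k-1]\cup\{k+1\}$. Here the straight paths on wires $1,\dots,k-1$, together with the path from $k+1$ that is carried to wire $k$ by the crossing $s_k$, form a bridge-free NI collection. Each has empty bridge set, so $\Delta_{v[k]}=1$.

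\textbf{Step 3: part (b).} Consider a single path from source $i\ge2$ to sink $1$. In the ascending half, the only bridge leaving wire $i$ downward is $t_{i-1}$. After using it the path is on wire $i-1$, and every later ascending bridge lies strictly above it. So the path either drops once, via $t_{i-1}$, or stays on wire $i$. In the descending half the path must then reach wire $1$. This forces an alternation: a bridge $t_{2n-3-2j}$ takes it from an odd wire $2j+3$ down to $2j+2$, and the crossing $s_{2j+2}$ sends it from $2j+2$ to $2j+1$.

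For $i=2q-1$ odd, the path stays on wire $2q-1$ in the ascending half. Dropping there would put it on an even wire $2q-2$ that is swapped upward by a crossing, which fails. Descending, it then uses the bridges $t_{2n-3},t_{2n-5},\dots,t_{2n-3-2(q-2)}$.

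For $i=2q$, the path must first use $t_{2q-1}$ to reach the odd wire $2q-1$, and then follows the same descent.

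The main obstacle is exactly this bookkeeping of which wire the path occupies at each crossing and bridge in the descending half, including the boundary cases $q=1,2$ and the top wires near $n-1,n$. I would handle it by an induction on $q$: the path from $2q+1$ agrees with the path from $2q-1$ after its first descending bridge and crossing. The index match $t_{2n-3-2j}\leftrightarrow$ wires $(2j+2,2j+1)$ is checked once, from the position formula in Step 1.
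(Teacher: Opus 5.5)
Your proposal is correct and takes the same approach as the paper: Lemma~\ref{lem:minors in MR-param} gives uniqueness of the NI path collection, and you exhibit the bridge-free collections for $v[k]$ and the single path from source $i$. The paper does this ``by inspection'', and you make it explicit.

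One sentence in Step 3 is wrong as written and should be fixed. In the descending half, each crossing comes \emph{before} its adjacent bridge. So the crossing $s_{2j+2}$ carries the path from wire $2j+3$ to wire $2j+2$. Then the bridge $t_{2n-3-2j}$, which sits on the letter $s_{2j+1}$, drops it to wire $2j+1$. A crossing $s_{2j+2}$ cannot send wire $2j+2$ to $2j+1$. Your version contradicts your Step 1 and your closing index match, but the bridge sets and monomials you end up with are correct.
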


\begin{proof}
 Recall from Lemma~\ref{lem:minors in MR-param} that there is an unique NI path collection $P: I \to [k]$ for any $I \in \CM_k(v,w)$, and thus we only need to specify the NI path collection for each $I$. For (a), if $k$ is odd, the desired path collection consists of paths that start at $1,2,\cdots, k$ and do not use any bridges. This path collection ends at $[k]$ because $v[k]=[k]$ and is non-intersecting. Analogously, for $k$ even, the desired path collection consists of paths that start at $[k-1]\cup \{k+1\}$ that do not use any bridges. For (b), by inspection the desired NI path collection consists of the single path which starts at $i$ and uses the bridges with the weights in (b). See Figure~\ref{fig:infinite-family-graph}.

\end{proof}

\begin{remark}
It is clear that $\Delta_{2k}=t_{2k-1}\Delta_{2k-1}$.
\end{remark}

 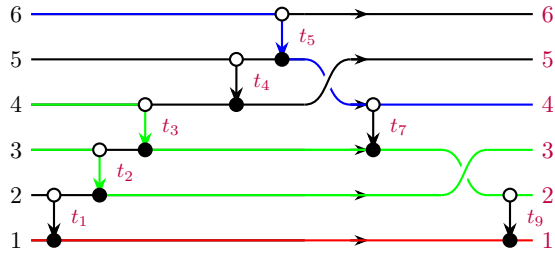
\begin{figure}[ht]
    \centering

\begin{tikzpicture}[scale=1.2, line width=0.8pt]
    \foreach \y/\label in {0/1, 0.5/2, 1.0/3, 1.5/4, 2.0/5, 2.5/6} {
        \draw (0,\y) -- (3,\y);
        \draw[-{Stealth[scale=0.8]}] (3.5, \y) -- (3.7, \y);
        \node[left, scale=0.9] at (0,\y) {$\label$};
        \node[right, scale=0.9,purple] at (5.5,\y) {$\label$};
    
    }
 \draw [red](0,0) -- (5.5,0); 
    
    \filldraw[fill=black] (0.25, 0) circle (2pt);
    \draw[-{Stealth}] (0.25, 0.5) -- (0.25, 0);
    \filldraw[fill=white] (0.25, 0.5) circle (2pt);
    \node[right, purple, scale=0.8] at (0.35, 0.25) {$t_1$};

    \draw[green]  (0,1)--(0.75,1);
    \draw[green]  (0.75,0.5)--(4.5,0.5);
    \draw[green,-{Stealth}] (0.75, 1) -- (0.75, 0.5);
    \filldraw[fill=white] (0.75, 1) circle (2pt);
    \filldraw[fill=black] (0.75, 0.5) circle (2pt);
    \node[right, purple, scale=0.8] at (0.85, 0.75) {$t_2$};

     \draw[green]  (0,1.5)--(1.25,1.5);
    \draw[green]  (1.25,1)--(4.5,1.0);
     \draw[-{Stealth},green] (1.25, 1.5) -- (1.25, 1);
    \filldraw[fill=black] (1.25, 1) circle (2pt);
   
    \filldraw[fill=white] (1.25, 1.5) circle (2pt);
    \node[right, purple, scale=0.8] at (1.35, 1.25) {$t_3$};


    
    \filldraw[fill=black] (2.25, 1.5) circle (2pt);
    \draw[-{Stealth}] (2.25, 2) -- (2.25, 1.5);
    \filldraw[fill=white] (2.25, 2) circle (2pt);
    \node[right, purple, scale=0.8] at (2.35, 1.75) {$t_{4}$};

         \draw[blue]  (0,2.5)--(2.75,2.5);
             \draw[blue]  (2.75,2)--(3,2);
             \draw[-{Stealth},blue] (2.75, 2.5) -- (2.75, 2);
    \filldraw[fill=black] (2.75, 2) circle (2pt);
    \filldraw[fill=white] (2.75, 2.5) circle (2pt);
    \node[right, purple, scale=0.8] at (2.85, 2.25) {$t_{5}$};


\draw (3,2.5)--(5.5,2.5);\draw (3.5,2) -- (5.5,2); \draw[blue] (3.5,1.5) -- (5.5,1.5);\draw[green] (5,0.5) -- (5.5,0.5);\draw[green] (5,1) -- (5.5,1);\draw[green] (3,0.5) -- (4.5,0.5);\draw[green] (3,1) -- (4.5,1);


     \draw[thick,blue] (3,2) to [out=0,in=180] (3.5,1.5);
    \draw[color=white, line width=4] (3,1.5) to [out=0,in=180] (3.5,2);
    \draw[thick] (3,1.5) to [out=0,in=180] (3.5,2.0);

\draw[blue] (3.5,1.5)--(3.75,1.5);
 \filldraw[fill=black] (3.75, 1) circle (2pt);
    \draw[-{Stealth}] (3.75, 1.5) -- (3.75,1);
    \filldraw[fill=white] (3.75, 1.5) circle (2pt);
    \node[right, purple, scale=0.8] at (3.85, 1.25) {$t_{7}$};


     \draw[thick,green] (4.5,1) to [out=0,in=180] (5,0.5);
    \draw[color=white, line width=4] (4.5,0.5) to [out=0,in=180] (5,1);
    \draw[thick,green] (4.5,0.5) to [out=0,in=180] (5,1.0);

    \filldraw[fill=black] (5.25, 0) circle (2pt);
    \draw[-{Stealth}] (5.25, 0.5) -- (5.25,0);
    \filldraw[fill=white] (5.25, 0.5) circle (2pt);
    \node[right, purple, scale=0.8] at (5.35, 0.25) {$t_{9}$};

\end{tikzpicture}
\caption{3D plabic graph for $n=6$ for the family of examples in Section~\ref{subsec-infinite-family}. The colored paths give examples of the types of paths in the proof of Lemma~\ref{lem: infinite series MR minor} for $\Delta_{[4] \setminus 2 \cup 6} $: the type 1 path is in \textcolor{red}{red}, type 2 paths are in \textcolor{green}{green}, and the type 3 path is in \textcolor{blue}{blue}.}
    \label{fig:infinite-family-graph}
\end{figure}

\begin{lemma}\label{lem: infinite series MR minor}
Assume the choice of $v$ and $\bw$ as above. Then

\begin{equation*}
    \Delta_{[k]\setminus i\cup j}=\begin{cases}
      (\prod_{q=i+1}^{k}t_{q-1})\cdot \Delta_j/{\Delta_k}   & \mathrm{for}\ k\ \mathrm{odd}, \\
      (\prod_{q=i+1}^{k}t_{q-1})\cdot {\Delta_j}/{\Delta_{k+1}}   & \mathrm{for}\ k\ \mathrm{even}.\\
    \end{cases}
\end{equation*}

\end{lemma}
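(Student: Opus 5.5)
Since $R_{v,w}$ is a torus and we use MR coordinates, Lemma~\ref{lem:minors in MR-param} says every nonzero flag minor $\Delta_I(g)$ equals $\prod_{b\in P}\wt(b)$ for the unique NI path collection $P:I\to[k]$ in the trimmed graph $G^k_{v,\bw}$. So the plan is to exhibit, for $I=[k]\setminus i\cup j$ (with $1\le i\le k<j$, the only nonvanishing non-principal shape by Theorem~\ref{thm: flag minor richardson} and the analogous description in Lemma~\ref{lem: 5.1.8 matroid}), one explicit NI path collection and read off its bridges. Uniqueness means we need not check that no other collection exists.

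The graph $G_{v,\bw}$ for $\bw=s_1s_2\cdots s_{n-1}\cdots s_2s_1$ consists of an ascending staircase of bridges $t_1,\dots,t_{n-1}$ (bridge $t_q$ between wires $q,q+1$), followed by a descending staircase in which the letters $s_2,s_4,\dots,s_{n-2}$ of the PDS are crossings and the odd letters are bridges $t_{2n-1-r}$ between wires $r,r+1$ ($r$ odd), as in Figure~\ref{fig:infinite-family-graph}. Following the coloring there, I build $P$ from three types of paths. \emph{Type 1:} sources $1,\dots,i-1$ travel horizontally, using no bridges; they end at the same positions the path collection for $v[k]$ would use. \emph{Type 2:} each source $q\in\{i+1,\dots,k\}$ steps down once via the ascending bridge $t_{q-1}$ to wire $q-1$, then runs horizontally through the descending part. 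This contributes exactly $\prod_{q=i+1}^k t_{q-1}$. \emph{Type 3:} the source $j$ follows the single path used for $\Delta_j$ in Lemma~\ref{lem: basecase MR for minors}(b), but it stops at the height where it fills the sink vacated by the shift. That sink is position $k$ when $k$ is odd, and, because of the crossing $s_k$ in the PDS, position $k+1$ interchanged with $k$ when $k$ is even.

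The weight of the type 3 path is the product of the descending bridges that $\Delta_j$'s path uses below that level. Truncating the path is exactly what the division by $\Delta_k$ ($k$ odd) or $\Delta_{k+1}$ ($k$ even) removes, since the path for $\Delta_j$ passes through the path for $\Delta_k$ or $\Delta_{k+1}$. Concretely, by Lemma~\ref{lem: basecase MR for minors}(b), $\Delta_j/\Delta_k$ for $k=2q'-1$ is the product of $t_{2n-3-2m}$ for the indices $m$ between those of $k$ and $j$, with the extra factor $t_{j-1}$ when $j$ is even. This matches the descending bridges traversed between wire $j$ and level $k$. The parity distinction between $\Delta_k$ and $\Delta_{k+1}$ comes from the fact that for even $k$ the sinks are reached through the crossing on wires $k,k+1$, exactly as in the proof of Lemma~\ref{lem: basecase MR for minors}(a).

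The remaining work is to check that the union of these paths is non-intersecting and lands on $[k]$ with the trimmed-graph constraint. Type 2 paths sit one wire below their sources, type 1 paths sit below them, and the type 3 path stays above wire $k$ until the final stretch, so the paths occupy disjoint wires at every horizontal position. I expect the main obstacle to be the bookkeeping near the descending staircase. There I must verify that the type 3 path's descent does not collide with the type 2 paths at the crossings $s_{2m}$, and that the endpoint is correct in the two parity cases. I would first do this carefully for $n=6$ using Figure~\ref{fig:infinite-family-graph}, then argue inductively in $j$: extending the path from $j$ to $j+2$ adds exactly one ascending bridge and one descending bridge above all other paths.
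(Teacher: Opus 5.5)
Your plan follows the paper's proof: the same three families of paths, uniqueness from Lemma~\ref{lem:minors in MR-param} so that exhibiting one collection suffices, and non-intersection checked by inspection. However, the endpoint bookkeeping you flagged as the main obstacle is wrong for odd $k$, because you let the type 1 and type 2 paths run ``horizontally''.

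The PDS crossings $s_2,s_4,\dots,s_{n-2}$ reroute those paths as well. A path that enters the descending part on wire $a$ and takes no further bridge ends at sink $v(a)$. So types 1 and 2 together end at $v[k-1]$, and the free sink is $[k]\setminus v[k-1]$.
\begin{itemize}
\item For $k$ even, the free sink is $k$, as you say.
\item For odd $k\ge 3$, it is $k-1$, not $k$. Sink $k$ is already taken by the type 2 path from source $k$ (or, if $i=k$, by the type 1 path from $k-1$), which the crossing $s_{k-1}$ lifts from wire $k-1$ to wire $k$. A type 3 path ending at sink $k$ would therefore collide with it.
\end{itemize}

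The correct type 3 path for odd $k$ reaches wire $k$, either through the descending bridge on wires $k,k+1$ or, when $j=k+1$, through the ascending bridge $t_k$. The crossing $s_{k-1}$ then carries it to sink $k-1$. It omits the descending bridges on wires $(k-2,k-1),\dots,(1,2)$, whose weights $t_{2n-k},t_{2n-k+2},\dots,t_{2n-3}$ multiply to $\Delta_k$. This matches the paper's proof, which has the path end at $k-1$ for $k$ odd.

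You can see this already for $n=4$, which is Example~\ref{ex:running ex for 3d}, with $I=\{1,2,4\}$. The path from source $2$ is sent to sink $3$ by $s_2$, and the path from $4$ goes down $t_3$ and ends at sink $2$. This gives $\Delta_{124}=t_3=\Delta_4/\Delta_3$. Your weight formula is unaffected by the correction; only the description of the collection changes.

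There are two smaller slips.
\begin{itemize}
\item The descending bridge on wires $r,r+1$ ($r$ odd) is $t_{2n-2-r}$, not $t_{2n-1-r}$; the lowest one is $t_{2n-3}$.
\item The type 3 weight consists of the bridges of the $\Delta_j$ path \emph{above} the stopping level, together with $t_{j-1}$ for even $j$. The bridges below that level are the ones cancelled by $\Delta_k$ or $\Delta_{k+1}$.
\end{itemize}
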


\begin{proof}
The desired NI path collection consists of 
 \begin{itemize}
     \item Type 1 paths: the paths from the proof of Lemma \ref{lem: basecase MR for minors} (a) which start at $1, \dots i-1$. 
     \item Type 2 paths: paths starting at $l$ for $i+1\le l\le k$ which use only the bridge $t_{l-1}$.
     \item Type 3 path: this starts the same as the path from the proof of Lemma~\ref{lem: basecase MR for minors} (b) which starts at $j$, but does not use the bridges $t_a$ for $a>2n-k$. This path ends at $v(k)$, which is $k-1$ if $k$ is odd and is $k$ if $k$ is even.
 \end{itemize}

 To see that these paths are in fact non-intersecting, note that the type 1 paths consist of wires in a wiring diagram for $v$, and, after taking the bridge $t_l$, the type 2 paths also consist of wires in the wiring diagram. By inspection, (see Figure \ref{fig:infinite-family-graph} for an example) these wires are distinct since the type 2 paths are above the type 1 paths. 
It is also clear from Figure \ref{fig:infinite-family-graph} that the type $3$ path lies above the other paths so does not intersect those.



\end{proof}

\begin{corollary}
The vectors $a_m^{(k)}$ can be chosen as follows:

\begin{enumerate}
\item For $k=1$, $b^{(1)}=0$ and
$$
a_m^{(1)}=\begin{cases}
0 & \mathrm{if}\ m=1, \\
e_{2k-1}+\sum_{l=0}^{k-2}e_{2n-3-2j} & \mathrm{if}\ m=2k.\\
\sum_{j=0}^{k-2}e_{2n-3-2j} & \mathrm{if}\ m=2k-1.\\
\end{cases}
$$
\item For $k\ge 2$ even, $b^{(k)}=0$ and

$$
a_m^{(k)}=\begin{cases}
-\sum_{l=m}^{k-1}e_l & \mathrm{if}\ 1\le m\le k-1,\\
0 & \mathrm{if}\ m=k\\
a_m^{(1)}-a_{k+1}^{(1)}+\sum_{l=1}^{k-1}le_l & \mathrm{if}\ m\ge k+1\\
\end{cases}
$$

\item For $k\ge 3$ odd, $b^{(k)}=-\sum_{j=1}^{k}(k-j)e_{j}$ and
$$
a_m^{(k)}=\begin{cases}
\sum_{l=1}^{i}e_{l-1} & \mathrm{if}\ 1\le m\le k, \\
a_{m}^{(1)}-a_{k}^{(1)}+\sum_{l=1}^{k}e_{l-1} & \mathrm{if}\ m\ge k+1,                 \\
\end{cases}
$$
where we set $e_0=0$.
\end{enumerate}

\end{corollary}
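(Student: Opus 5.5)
We must verify the last Corollary: the vectors $a_m^{(k)}$ and $b^{(k)}$ (in MR coordinates) satisfy $m_I=b^{(k)}+\sum_{i\in I}a_i^{(k)}$ for every $I\in\CM_k(v,w)$. By Lemma~\ref{lem: transformation} such vectors exist and are determined up to the shift of Remark~\ref{rmk: shift}. So it suffices to check the identity on all of $\CM_k(v,w)$, whose members are listed by Lemma~\ref{lem: 5.1.8 matroid}: for $k=1$ the singletons, and for $k\ge2$ the sets $v[k]$ and $[k]\setminus i\cup j$ with $i\le k<j$. Here $v[k]=[k]$ for $k$ odd and $v[k]=[k-1]\cup\{k+1\}$ for $k$ even. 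The exponent vectors $m_I$ are read off from Lemmas~\ref{lem: basecase MR for minors} and~\ref{lem: infinite series MR minor}.

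For $k=1$ I read $a^{(1)}_m$ directly from $\Delta_m$ in Lemma~\ref{lem: basecase MR for minors}(b), with $\Delta_1=1$. The case index in the statement should be renamed, i.e. $m=2q$ and $m=2q-1$. For $k\ge2$ I will use the structure of Lemma~\ref{lem: infinite series MR minor}. There
\[
m_{[k]\setminus i\cup j}=\sum_{q=i+1}^{k}e_{q-1}+m_{\{j\}}-m_{\{k'\}},
\]
where $k'=k$ for $k$ odd and $k'=k+1$ for $k$ even. This expression splits as a function of $i$ plus a function of $j$. The plan is therefore as follows.

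First, fix the base value $b^{(k)}+\sum_{l\in v[k]}a^{(k)}_l=\mathbf{0}$, using Lemma~\ref{lem: basecase MR for minors}(a).

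Second, for $j\ge k+1$ require $a^{(k)}_j=a^{(1)}_j+c_k$ for a constant vector $c_k$ depending only on $k$. This makes the $j$-dependence come out as $m_{\{j\}}$.

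Third, for indices $m\le k$ (or $m\le k+1$ when $k$ is even), require $a^{(k)}_m$ to match the partial sums $\sum_{q=m+1}^{k}e_{q-1}$ up to a common constant.

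Fourth, check the formula on $v[k]$ itself and on the pairs $(i,j)$. This reduces to two linear identities in $c_k$ and $b^{(k)}$: the two identities the proposal must confirm, which are precisely the claimed values $c_k=-a^{(1)}_{k+1}+\sum_l le_l$ (for $k$ even) and $b^{(k)}=-\sum_j(k-j)e_j$ (for $k$ odd).

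The main obstacle is the even case. Here $v[k]$ omits $k$ but contains $k+1$, so the removed index $i$ may equal $k+1$ only in the form $v[k]\setminus\{k+1\}\cup\{j\}$. I also need to reconcile the indexing $[k]\setminus i\cup j$ in Lemma~\ref{lem: infinite series MR minor} with which sets actually lie in $\CM_k$. I would handle this by rewriting every member of $\CM_k(v,w)$ for $k$ even as $v[k]\setminus\{p\}\cup\{j\}$ and recomputing the telescoping sum $\sum_{l=m}^{k-1}e_l$ in that form. Before doing the general bookkeeping, I would test $n=6$, $k=2,3,4$ against Figure~\ref{fig:infinite-family-graph}, so that any typos in the displayed formulas (such as $i$ versus $m$ in case (3), and the summation index $l$ versus $j$) can be corrected. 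The remaining verification is routine linear algebra.
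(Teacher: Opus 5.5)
Your strategy is the paper's: substitute the proposed $a^{(k)}_m,b^{(k)}$ into $m_I=b^{(k)}+\sum_{i\in I}a^{(k)}_i$ and compare with the exponents from Lemmas~\ref{lem: basecase MR for minors} and~\ref{lem: infinite series MR minor}. You also correctly flag the typos $m=2k\to m=2q$ in (1) and $\sum_{l=1}^{i}\to\sum_{l=1}^{m}$ in (3).

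However, the even-case paragraph you call the main obstacle rests on a misreading, and the fix you propose cannot work. There is nothing to reconcile. By Lemma~\ref{lem: 5.1.8 matroid}, for $k$ even $\CM_k(v,w)=\{[k]\setminus i\cup j:\ 1\le i\le k<j\le n\}$, which is exactly the indexing of Lemma~\ref{lem: infinite series MR minor}, and $v[k]$ is just the case $i=k$, $j=k+1$. Rewriting every member as a single exchange $v[k]\setminus\{p\}\cup\{j\}$ is impossible in general: for $n=4$, $k=2$ one has $\{2,4\}\in\CM_2$, while $v[2]=\{1,3\}$ differs from it in two elements. Likewise, in your third step the index $k+1$ does not go with the indices $\le k$. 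It falls under the third case of (2), with $a^{(k)}_{k+1}=\sum_{l=1}^{k-1}l\,e_l$.

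The device you are missing is that the base point of the expansion need not lie in $\CM_k$. The identity $\sum_{l\in[k]\setminus i\cup j}a^{(k)}_l=\sum_{l=1}^{k}a^{(k)}_l-a^{(k)}_i+a^{(k)}_j$ is purely formal, so one expands around $[k]$ even though $\Delta_{[k]}\equiv 0$ for $k$ even. (A minor point: for even $k$ the vectors are not unique up to the shift of Remark~\ref{rmk: shift}, since $\dim P_k(v,w)=n-2$. This is harmless, because you only need one solution.)

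With that device the check is immediate. For $k$ even, $\sum_{m=1}^{k}a^{(k)}_m=-\sum_{m=1}^{k-1}\sum_{l=m}^{k-1}e_l=-\sum_{l=1}^{k-1}l\,e_l$, hence
\[
\sum_{l\in[k]\setminus i\cup j}a^{(k)}_l=-\sum_{l=1}^{k-1}l\,e_l+\sum_{l=i}^{k-1}e_l+a^{(1)}_j-a^{(1)}_{k+1}+\sum_{l=1}^{k-1}l\,e_l=\sum_{l=i}^{k-1}e_l+a^{(1)}_j-a^{(1)}_{k+1}.
\]
This is the exponent of $\bigl(\prod_{q=i+1}^{k}t_{q-1}\bigr)\Delta_j/\Delta_{k+1}$, since $m_{\{j\}}=a^{(1)}_j$.

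For $k$ odd, $a^{(k)}_m=\sum_{l=1}^{m-1}e_l$ for $m\le k$. So $\sum_{m=1}^{k}a^{(k)}_m=\sum_{l=1}^{k-1}(k-l)e_l=-b^{(k)}$, which gives $m_{[k]}=\mathbf{0}$, and
\[
b^{(k)}+\sum_{l\in[k]\setminus i\cup j}a^{(k)}_l=-a^{(k)}_i+a^{(k)}_j=\sum_{l=i}^{k-1}e_l+a^{(1)}_j-a^{(1)}_k,
\]
matching $\bigl(\prod_{q=i+1}^{k}t_{q-1}\bigr)\Delta_j/\Delta_k$. Note that in the odd case you must also confirm the constant $a^{(k)}_j-a^{(1)}_j=\sum_{l=1}^{k-1}e_l-a^{(1)}_k$, not only the value of $b^{(k)}$.
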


\begin{proof}
The part (1) directly follows from (a) and (c) of Lemma \ref{lem: basecase MR for minors}. The part (2) and (3) essentially follows from Lemma \ref{lem: infinite series MR minor}. To be specific, 
for $k\ge 2$ even, we can check that $a_m^{(k)}$ above is indeed a solution of the system of linear equations $
A^{(k)}e_{[k]\setminus i \cup j}=
(\sum_{l=1}^{k}a_l^{(k)})-a_i^{(k)}+a_j^{(k)}=\sum_{j=i+1}^{k}e_{j-1}+a_{j}^{(1)}-a_{k+1}^{(1)}$ for $1\le i\le k<j$. Similarly, for $k\ge 3$ odd, we can check that $a_m^{(k)}$ above is indeed a solution of the system of linear equations $A^{(k)}e_{[k]\setminus i \cup j}+b^{(k)}=(\sum_{l=1}^{k}a_l^{(k)})-a_i^{(k)}+a_j^{(k)}-\sum_{l=1}^{k}(k-l)e_{l}=\sum_{l=i+1}^{k}e_{l-1}+a_{j}^{(1)}-a_{k}^{(1)}$ and $A^{(k)}e_{[k]}+b^{(k)}=0$.

\end{proof}


We can also describe the constituent positroid polytopes geometrically.

\begin{lemma}
\label{lem: polytope even}~
\begin{enumerate}[(a)]
    \item If $k$ is even, the positroid polytope $P_k(v,w)$ (and hence $\widehat{P}_k(v,w)$) is affinely equivalent to the product of a $(k-1)$-dimensional simplex and $(n-k-1)$-dimensional simplex. 
    \item If $k$ is odd, the positroid polytope $P_k(v,w)$ (and hence $\widehat{P}_k(v,w)$) is affinely equivalent to the cone over the product of a $(k-1)$-dimensional simplex and $(n-k-1)$-dimensional simplex.  
\end{enumerate}
\end{lemma}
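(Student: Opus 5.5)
The plan is to read off the vertex set of $P_k(v,w)$ directly from Lemma~\ref{lem: 5.1.8 matroid}, which lists the nonzero flag minors on $R_{v,w}$, i.e.\ the constituent $\CM_k(v,w)$ (Theorem~\ref{thm: flag minor richardson}). By that lemma, the $k$-subsets in $\CM_k(v,w)$ are exactly the sets $[k]\setminus i\cup j$ with $1\le i\le k<j\le n$, together with $[k]$ itself when $k$ is odd. For even $k$ the set $[k]$ is absent, since $\Delta_{[k]}=0$ on $R_{v,w}$, as shown in the proof of Lemma~\ref{lem: defining eq for 5.18}. The statement about $\widehat{P}_k(v,w)$ then follows from the affine equivalence of Lemma~\ref{lem: transformation}, so it suffices to treat $P_k(v,w)=\Conv(e_I:I\in\CM_k(v,w))$.

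For (a), I would write $e_{[k]\setminus i\cup j}=e_{[k]}-e_i+e_j$. The vertex set is then $e_{[k]}+\{-e_i+e_j: i\in[k],\,j\in[k+1,n]\}$, and its convex hull is the translate
\[
e_{[k]}+\bigl(-\Delta_{[k]}\bigr)+\Delta_{[k+1,n]},
\]
where $\Delta_{S}=\Conv(e_s:s\in S)$. This is a Minkowski sum of simplices lying in complementary coordinate subspaces, so it is affinely isomorphic to $\Delta^{k-1}\times\Delta^{n-k-1}$. Concretely, the linear map $(x,y)\mapsto -x+y$ is injective on the product of the affine hulls, because the coordinates are disjoint.

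For (b), the vertex set acquires the extra point $e_{[k]}$. I would translate by $-e_{[k]}$, so the polytope becomes $\Conv\bigl(\{0\}\cup(-\Delta_{[k]}+\Delta_{[k+1,n]})\bigr)$. The product polytope $Q=-\Delta_{[k]}+\Delta_{[k+1,n]}$ lies in the affine hyperplane $\sum_{j>k}x_j=1$, while $0$ does not. Hence the convex hull is the pyramid, i.e.\ the cone, over $Q$. The only point to check is that $0\notin\operatorname{aff}(Q)$, which is immediate from that linear functional. The dimension count gives $(k-1)+(n-k-1)+1=n-1$, matching $\dim R_{v,w}$ restrictions; in particular $e_{[k]}$ is a genuine vertex not in the hull of the others.

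The step needing the most care is the correct extraction of $\CM_k(v,w)$ from Lemma~\ref{lem: 5.1.8 matroid}. One must confirm that every pair $i\le k<j$ really gives a nonzero minor, including the edge cases $i\in\{1,2\}$ and $k=1$. For $k=1$ there is no issue: $P_1$ is the full simplex, which is the cone over $\Delta^0\times\Delta^{n-2}$. One must also confirm that no other subsets occur, which is the last sentence of that lemma. Once the vertex set is fixed, both parts are elementary convex geometry.
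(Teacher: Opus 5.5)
Your proposal is correct and follows the paper's proof essentially verbatim. You read off $\CM_k(v,w)$ from Lemma~\ref{lem: 5.1.8 matroid} and write $P_k(v,w)=e_{[k]}+\Conv(-e_i:i\le k)+\Conv(e_j:j>k)$ as a product of simplices for even $k$; for odd $k$ you observe that the extra vertex $e_{[k]}$ lies off the affine hull of that sum, using the single hyperplane $\sum_{j>k}x_j=1$ where the paper uses the codimension-2 subspace, which is the same point. The one blemish is your dimension aside: $n-1$ does not match $\dim R_{v,w}=\tfrac{3n}{2}-2$ (it is just the dimension of the hyperplane of coordinate sum $k$), and that remark is not needed for the argument.
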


\begin{proof}
a) For $k$ even, the nonzero minors are $I=[k]\setminus i\cup j$ for $1\le i\le k$ and $k<j\le n$. We have 
\begin{align*}
P_k(v,w)=&\Conv\left(e_{[k]}-e_i+e_j:\ 1\le i\le k,\ k<j\le n\right)
\\
=&e_{[k]}+\Conv\left(-e_i+e_j:\ 1\le i\le k,\ k<j\le n\right)
\\
=&e_{[k]}+\Conv\left(-e_i:\ 1\le i\le k\right)+\Conv\left(e_j:k<j\le n\right)
\end{align*}
where in the last step we used the identity of Minkowski sums $\Conv(A+B)=\Conv(A)+\Conv(B)$. Now $e_{[k]}$ is an overall shift, $\Conv\left(-e_i:\ 1\le i\le k\right)$ is a $(k-1)$-dimensional simplex, and $\Conv\left(e_j:k<j\le n\right)$ is a $(n-k-1)$-dimensional simplex which lives in an orthogonal space, so the result follows.

b) For $k$ odd, the nonzero minors are $I=[k]$ and $I=[k]\setminus i\cup j$ for $1\le i\le k$ and $k<j\le n$. As in Lemma \ref{lem: polytope even} we can write 
$$
P_k(v,w)=e_{[k]}+\Conv\left(\mathbf{0}, \Conv\left(-e_i:\ 1\le i\le k\right)+\Conv\left(e_j:k<j\le n\right)\right).
$$
The sum $\Conv\left(-e_i:\ 1\le i\le k\right)+\Conv\left(e_j:k<j\le n\right)$ is contained in the codimension 2 hyperplane in $\R^n$ where the sum of the first $k$ coordinates equals $-1$ and the sum of the last $(n-k)$ coordinates equals $1$.
Since $\mathbf{0}$ is not contained in this hyperplane, we get the cone over $\Conv\left(-e_i:\ 1\le i\le k\right)+\Conv\left(e_j:k<j\le n\right)$.
\end{proof}





 Now we completely describe the plabic graphs $G_k(v,w)$.

\begin{lemma}\label{lem: plabic graph for 5.1.8}
Suppose that we are given with $k$ and $n$ as above. Then $G_k(v,w)$ follows the following dichotomy:

\begin{itemize}
    \item [(a)] For $k$ odd, $G_k(v,w)$ is depicted in Figure \ref{fig: plabic graph for 5.1.8} (left).
    \item [(b)]For $k$ even, $G_k(v,w)$ is depicted in Figure \ref{fig: plabic graph for 5.1.8} (right).
\end{itemize}

\end{lemma}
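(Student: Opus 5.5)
The plan is to identify $G_k(v,w)$ through its positroid: a reduced plabic graph is determined up to equivalence by $\CM_G$ (the bijection recalled in Section~\ref{sec:background}). So it suffices to exhibit a candidate forest $G$ (the graphs in the figure) and check that the source sets of its perfect orientations are exactly $\CM_k(v,w)$. From Lemma~\ref{lem: 5.1.8 matroid}, for $k$ even we have $\CM_k(v,w)=\{[k]\setminus i\cup j:\ 1\le i\le k<j\le n\}$. For $k$ odd, the same collection together with $[k]$ itself. By Theorem~\ref{thm: positroid torus}(c) we already know $G_k(v,w)$ is a forest, which supports the candidates.

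For $k$ even, the positroid is the set of bases $B$ with $|B\cap[k]|=k-1$ and $|B\cap[k+1,n]|=1$. This is the direct sum of the uniform matroid $U_{k-1,k}$ on $[k]$ and the uniform matroid $U_{1,n-k}$ on $[k+1,n]$. A uniform matroid $U_{1,m}$ is realized by a single black vertex joined to the $m$ boundary vertices, and $U_{m-1,m}$ by a single white vertex joined to the $m$ boundary vertices. Since $[k]$ and $[k+1,n]$ are cyclic intervals, the disjoint union of these two stars is planar, giving a two-component forest. I will check with the perfect-orientation rule: a white vertex has exactly one incoming edge, so exactly one of $1,\dots,k$ is a sink. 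A black vertex has exactly one outgoing edge, so exactly one of $k+1,\dots,n$ is a source. This reproduces $\CM_k$, and the dimension $n-2$ matches Lemma~\ref{lem: polytope even}(a) as a consistency check.

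For $k$ odd, the extra basis $[k]$ means we need a source set with no elements of $[k+1,n]$ and all of $[k]$. I propose connecting the white star on $[k]$ and the black star on $[k+1,n]$ by a single edge between the two internal vertices, giving a tree. In a perfect orientation, the connecting edge either enters the white vertex or leaves the black vertex.
\begin{itemize}
\item If the edge is directed from the black vertex to the white vertex, then all boundary edges at the black vertex point inward. Every element of $[k+1,n]$ is then a source, and the white vertex sends all its edges out to $[k]$, making every element of $[k]$ a sink. That contradicts the $k$-sources count.
\item So the edge must be directed the other way, from the white vertex to the black vertex. Then the white vertex has one incoming edge from some $i\in[k]$ or none from the boundary.
\end{itemize}

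I will redo the orientation analysis carefully, since the degenerate case is exactly where the extra basis $[k]$ should appear. If this naive tree fails, the fix is to insert degree-two vertices or swap colors so that the orientation analysis yields precisely: one configuration with source set $[k]$, and configurations $[k]\setminus i\cup j$. This matches the cone structure of Lemma~\ref{lem: polytope even}(b), whose dimension $n-1$ requires one more face than the even case, hence the connecting edge.

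The main obstacle is this odd case: pinning down the exact tree and verifying, by enumerating the perfect orientations, that its positroid is precisely $\CM_k$ with no extra sets like $[k]\setminus\{i,i'\}\cup\{j,j'\}$. If the direct enumeration is messy, I will instead compute the Grassmann necklace of $\CM_k$ from Lemma~\ref{lem: 5.1.8 matroid} and read off the decorated permutation, then match it against the trip permutation of the drawn forest.
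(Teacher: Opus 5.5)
Your strategy is the paper's: reduced plabic graphs are determined by their positroids, so it suffices to check that the source sets of perfect orientations of the drawn graph are exactly $\CM_k(v,w)$. You also read $\CM_k(v,w)$ correctly off Lemma~\ref{lem: 5.1.8 matroid}. The gap is that you have the colors backwards.

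Under the paper's convention, a white vertex has exactly one incoming edge, a black vertex has exactly one outgoing edge, and $I_{\mathcal{O}}$ is the set of boundary sources. A boundary leaf whose edge points into an internal vertex is therefore a source. Hence a white star on $m$ leaves has exactly one source and realizes $U_{1,m}$, and a black star realizes $U_{m-1,m}$. This is the opposite of what you claim; compare Lemma~\ref{lem: hypercube base case}, where $G_1$ is a white star.

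Your even-case check (``a white vertex has exactly one incoming edge, so exactly one of $1,\dots,k$ is a sink'') misreads the orientation, and this second error cancels the first. In fact your graph, a white star on $[k]$ plus a black star on $[k+1,n]$, has source sets $\{i\}\cup([k+1,n]\setminus j)$ of size $n-k$. That is the complementary positroid. The figure has the opposite coloring: the black vertex is attached to $1,\dots,k$ and the white vertex to $k+1,\dots,n$.

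The same swap is what derails your odd case. In your first bullet you read the orientation correctly and get source set $[k+1,n]$. That is not a contradiction that lets you discard the orientation, since every perfect orientation contributes a basis. It is a symptom of the swapped colors.

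With the correct coloring and one interior edge joining the two vertices, only the direction of that edge is free:
\begin{itemize}
\item If it points from black to white, it is the black vertex's unique outgoing edge. So all of $[k]$ are sources and all of $[k+1,n]$ are sinks, giving $[k]$.
\item If it points from white to black, the black vertex has exactly one outgoing boundary edge (one sink $i\in[k]$). The white vertex has exactly one incoming boundary edge (one source $j>k$). This gives $[k]\setminus i\cup j$ with $i,j$ arbitrary.
\end{itemize}
Deleting the interior edge gives the even case by the same count. This is exactly the paper's proof. No degree-two vertices or Grassmann necklace computation is needed, and sets like $[k]\setminus\{i,i'\}\cup\{j,j'\}$ cannot occur.
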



\begin{proof}
It is sufficient to check that the set of all perfect orientations on $G_k(v,w)$ yields the correct positroid $\CM_k(v,w)$. 

For $k$ odd, suppose that the interior edge of the left graph is directed from the black vertex to the white vertex. Then the boundary vertices $1,\ldots,k$ are sources and $k+1,\ldots,n$ are sinks. Such an orientation $\mathcal{O}$ corresponds to $I_{\mathcal{O}}=[k]$. Otherwise the interior edge is directed from white to black vertex, there is a unique sink $i$ among $1,\ldots,k$ and a unique source $j$ among $k+1,\ldots,n$. Here $i\le k$ and $j>k$ are arbitrary, and we recover $I_{\mathcal{O}}=[k]\setminus i\cup j$. The proof for $k$ even is similar.
\end{proof}

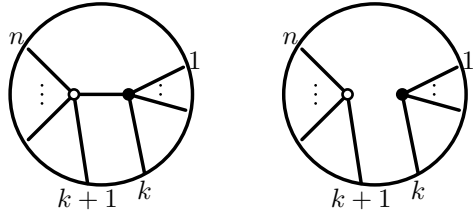
\begin{figure}

\begin{tikzpicture}[scale=0.6, line cap=round, line join=round]

\begin{scope}[shift={(0,0)}]

\draw[line width=1.3pt] (0,0) circle (2);

\coordinate (W) at (-0.6,0);
\coordinate (B) at (0.6,0);

\draw[line width=1.3pt] (W) -- (B);

\draw[line width=1.3pt] (W) -- (-1.6,1.0);
\draw[line width=1.3pt] (W) -- (-1.6,-1.0);

\draw[line width=1.3pt] (B) -- (1.8,0.6);
\draw[line width=1.3pt] (B) -- (1.85,-0.35);

\draw[line width=1.3pt] (W) -- (-0.3,-1.97);

\draw[line width=1.3pt] (B) -- (0.95,-1.75);

\fill (-1.3,0.2) circle (0.03);
\fill (-1.3,0.0) circle (0.03);
\fill (-1.3,-0.2) circle (0.03);

\fill (1.3,0.2) circle (0.03);
\fill (1.3,0.0) circle (0.03);
\fill (1.3,-0.2) circle (0.03);

\draw[line width=1pt, fill=white] (W) circle (0.11);
\draw[line width=1pt, fill=black] (B) circle (0.11);

\node at (-1.85,1.25) {$n$};
\node at (2.05,0.75) {$1$};

\node at (-0.3,-2.3) {$k+1$};
\node at (0.95,-2.1) {$k$};

\end{scope}


\begin{scope}[shift={(6.0,0)}]

\draw[line width=1.3pt] (0,0) circle (2);

\coordinate (W) at (-0.6,0);
\coordinate (B) at (0.6,0);


\draw[line width=1.3pt] (W) -- (-1.6,1.0);
\draw[line width=1.3pt] (W) -- (-1.6,-1.0);

\draw[line width=1.3pt] (B) -- (1.8,0.6);
\draw[line width=1.3pt] (B) -- (1.85,-0.35);

\draw[line width=1.3pt] (W) -- (-0.3,-1.97);

\draw[line width=1.3pt] (B) -- (0.95,-1.75);

\fill (-1.3,0.2) circle (0.03);
\fill (-1.3,0.0) circle (0.03);
\fill (-1.3,-0.2) circle (0.03);

\fill (1.3,0.2) circle (0.03);
\fill (1.3,0.0) circle (0.03);
\fill (1.3,-0.2) circle (0.03);

\draw[line width=1pt, fill=white] (W) circle (0.11);
\draw[line width=1pt, fill=black] (B) circle (0.11);

\node at (-1.85,1.25) {$n$};
\node at (2.05,0.75) {$1$};

\node at (-0.3,-2.3) {$k+1$};
\node at (0.95,-2.1) {$k$};

\end{scope}

\end{tikzpicture}

\caption{For $n$ even, set $w=(1~n)$ and $v=s_2 s_4 \dots s_{n-2}$. The plabic graphs $G_k(v,w)$ for $k$ odd (left) and $k$ even (right).}
\label{fig: plabic graph for 5.1.8}

\end{figure}


\subsection{An infinite family from big hypercubes}\label{subsec-hypercube}

In the recent paper \cite{hypercube}, the authors defined two permutations $v_n, w_n \in S_{2^n}$ (called $x_n$ and $y_n$ in that work) and showed that the poset $[v_n,w_n]$ is a hypercube of dimension $n\cdot2^{n-1}$. In our context, this means that $\overline{R}_{v_n,w_n}$ is a toric variety of dimension $n\cdot2^{n-1}$, and the corresponding moment polytope $\widehat{P}(v_n,w_n)$ is combinatorially equivalent to a cube. 
We first recall the definition of $v_n$ and $w_n$ from \cite[Definition 2.4]{hypercube}.

\begin{definition} \label{defn: xn and yn}
We inductively define the permutations $v_n\in S_{2^n}$ in a one-line notation as follows: \begin{enumerate}
    \item $v_1=(1,2)\in S_2$,
    \item $v_{n+1} =\left(v_{n}(1),v_n(1)+2^n,v_n(2),v_n(2)+2^n,\cdots,v_n(2^n),v_n(2^n)+2^n\right)\in S_{2^{n+1}}$.
    \item $w_{n+1} = (v_{n+1}({2^{n+1}}),v_{n+1}({2^{n+1}-1}), \dots, v_{n+1}(2), v_{n+1}(1) ).$
\end{enumerate}

\end{definition}

The authors of \cite{hypercube} give an explicit description of the interval $[v_n, w_n]$, which we now recall. A \emph{basic $j$-interval} is an interval $[c\cdot 2^j+1,(c+1)2^{j}]\subset [1,2^n]$. The basic $j$-intervals have size $2^j$ and subdivide $[1,2^n]$ into $2^{n-j}$ equal parts.

\begin{definition}
A permutation $u\in S_{2^n}$ is called dyadically well-distributed if for any basic $j$-interval $S$ and any basic $(n-j)$-interval $T$ there exists a unique $a\in S$ such that $u(a)\in T$. 
\end{definition}

The main theorem of \cite{hypercube} then states that $u\in [v_n,w_n]$ if and only if $u$ is dyadically well-distributed. We can use this result to describe the positroids $\CM_k(n):=\CM_k(v_n,w_n)$.

\begin{proposition}
\label{prop: hypercube matroid}
We have $I=u[k]$ for some dyadically well-distributed permutation $u$ (equivalently, $I\in \CM_k(n)$)  if and only if for any $0\le j\le n$ and any basic $(n-j)$-interval $T$ one has
\begin{equation}
\label{eq: hypercube matroid}
\left\lfloor\frac{k}{2^j}\right\rfloor \le |I\cap T|\le \left\lceil\frac{k}{2^j}\right\rceil.
\end{equation}
In particular, if $2^j$ divides $k$ then for any  basic $(n-j)$-interval $T$ we get $|I\cap T|=\frac{k}{2^j}$.
\end{proposition}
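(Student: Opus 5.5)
The plan is to prove both directions directly from the definition of dyadically well-distributed permutations, using the main theorem of \cite{hypercube} only to identify $\CM_k(n)=\{u[k]: u \text{ dyadically well-distributed}\}$ (via Theorem~\ref{thm: flag minor richardson}). Necessity is a counting argument; sufficiency goes by induction on $n$ via a recursive description of well-distributed permutations.

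\emph{Necessity.} Let $u$ be dyadically well-distributed and let $T$ be a basic $(n-j)$-interval. By definition, the set of positions $u^{-1}(T)$ meets every basic $j$-interval of positions in exactly one point. The initial segment $[k]$ is a union of $\lfloor k/2^j\rfloor$ complete basic $j$-intervals together with part of at most one further basic $j$-interval. Hence
\[|u[k]\cap T| = |[k]\cap u^{-1}(T)| \in \left\{\left\lfloor k/2^j\right\rfloor, \left\lceil k/2^j\right\rceil\right\}.\]
If $2^j$ divides $k$, the two bounds coincide.

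\emph{Recursive description.} First I would record the following characterization. Let $P_1=[1,2^{n-1}]$ and $P_2=[2^{n-1}+1,2^n]$ be the two halves of the positions. Then $u$ is well-distributed if and only if both of the following hold.
\begin{enumerate}[(i)]
\item Each value pair $\{2c+1,2c+2\}$ (a basic $1$-interval) has exactly one element in $u(P_1)$ and one in $u(P_2)$. This is the case $j=n-1$ with $S=P_1,P_2$.
\item The permutations $u_1,u_2\in S_{2^{n-1}}$, obtained by restricting $u$ to $P_1$, respectively $P_2$, and collapsing each value pair $\{2c+1,2c+2\}$ to $c+1$, are both well-distributed.
\end{enumerate}
This holds because for $j\le n-1$ every basic $j$-interval of positions lies inside $P_1$ or $P_2$. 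Each basic $(n-j)$-interval of values is a union of pairs, and corresponds to a basic $(n-1-j)$-interval after collapsing. The case $j=n$ is automatic. The same collapsing shows that for $I$ meeting each pair in the controlled way, condition \eqref{eq: hypercube matroid} for a value block $T$ translates into the same condition for $n-1$, with the same $j$ and a suitably adjusted $k$.

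\emph{Sufficiency, by induction on $n$.} The base case $n=0$ is trivial. Suppose $I$ satisfies \eqref{eq: hypercube matroid}.

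If $k\le 2^{n-1}$, the case $j=n-1$ gives $|I\cap\text{pair}|\le 1$ for every pair. Collapse $I$ to $I'\subset[2^{n-1}]$ with $|I'|=k$. Then $|I'\cap T'|=|I\cap T|$ for corresponding blocks, so $I'$ satisfies the $(n-1)$-condition, and induction gives a well-distributed $u_1$ with $u_1[k]=I'$. Lift $u_1$ to $P_1$ by choosing, in each pair, the element of $I$ if there is one and an arbitrary element otherwise. Put the complementary elements on $P_2$, ordered by any well-distributed $u_2$. Then $u[k]=I$.

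If $k>2^{n-1}$, every pair meets $I$. Let $D$ be the set of pairs contained in $I$, so $|D|=k-2^{n-1}=:k'$. On $P_1$, choose the $I$-element in each pair meeting $I$ once and an arbitrary element of each pair in $D$, ordered by any well-distributed $u_1$. On $P_2$, use induction applied to the collapsed $D'$ with size $k'$. This requires checking
\[|D\cap T|=|I\cap T|-|T|/2 \quad\text{and}\quad \lfloor k/2^j\rfloor-2^{n-1-j}=\lfloor k'/2^j\rfloor,\]
and similarly for ceilings, which is routine. Then $u[k]=u(P_1)\cup u_2$-first-$k'$ $= I$.

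The main care needed is in stating the recursive characterization precisely, in particular the index bookkeeping between basic $(n-j)$-intervals for $n$ and basic $(n-1-j)$-intervals for $n-1$. After that, both cases of the induction are direct.
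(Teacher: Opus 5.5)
Your proof is correct. The necessity half is the same counting argument as the paper's. The paper first treats $2^j\mid k$ and then sandwiches $k$ between consecutive multiples of $2^j$; your ``full blocks plus one partial block'' count does both at once.

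For sufficiency you take a genuinely different route.

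\textbf{The paper's route.} It stays inside $S_{2^n}$ and argues greedily. Writing $k=2^l k'$ with $k'$ odd, it descends through nested basic intervals $T_l\supset T_{l+1}\supset\cdots$ on which $I$ has the ceiling count. This locates an element $i$ whose removal preserves \eqref{eq: hypercube matroid} for $k-1$. It does the analogous thing for adding an element, and so builds a full chain $I_1\subset\cdots\subset I_{2^n-1}$ through $I$ of sets satisfying the inequalities. Finally it observes that the permutation read off from such a chain is dyadically well-distributed. This last step follows from the divisible case, since $|u[c2^j]\cap T|=c$ for all $c$.

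\textbf{Your route.} You first prove a recursive characterization: condition (i) is level $j=n-1$, and for $j\le n-1$ every position block lies in one half while every value block is a union of pairs. You then construct $u$ by induction on $n$. Your case $k\le 2^{n-1}$ is essentially Lemma~\ref{lem: layer 1} with $L=n-1$. Your case $k>2^{n-1}$ could alternatively be reduced to it by reversing $u$ and complementing $I$, as in Lemma~\ref{lem: complement}, but your direct treatment via the doubled pairs $D$ is fine.

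\textbf{What each buys.} Your approach gives a fully explicit construction with nothing left to the reader. The paper leaves the adding step to ``a similar argument'' and the final well-distributedness to ``one can check''. Your recursion also shows that the number $N_n$ of dyadically well-distributed permutations satisfies $N_n=2^{2^{n-1}}N_{n-1}^2$. Hence $N_n=2^{n2^{n-1}}$, consistent with $[v_n,w_n]$ being a hypercube of rank $n2^{n-1}$. The paper's approach instead shows, at fixed $n$, that sets satisfying \eqref{eq: hypercube matroid} can always be shrunk or grown by one element. This is closer to the flag-matroid viewpoint used elsewhere in the paper.

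One small point to state explicitly: you use ``any well-distributed $u_1$'' and ``any well-distributed $u_2$'', which requires that such a permutation of $[2^{n-1}]$ exists. This follows at once from your own recursion starting from $S_1$, or from $v_{n-1}$ via \cite{hypercube}.
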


\begin{proof}
Let us prove that \eqref{eq: hypercube matroid} is a necessary condition. Assume first that $2^j$ divides $k$, then $[k]$ can be decomposed into $\frac{k}{2^j}$ basic $j$-intervals. The permutation $u$ maps exactly one element in each of them into $T$, so $|u[k]\cap T|=\frac{k}{2^j}$.

More generally, define $k'=2^j\left\lfloor\frac{k}{2^j}\right\rfloor$ and $k''=2^j\left\lceil\frac{k}{2^j}\right\rceil$, then $k'\le k\le k''$ and 
$$
|u[k']\cap T|\le |u[k]\cap T|\le |u[k'']\cap T| 
$$
which implies \eqref{eq: hypercube matroid}.

Conversely, suppose we are given a subset $I$ satisfying \eqref{eq: hypercube matroid}. Let us prove that we can remove one element $i\in I$ such that $I\setminus i$ also satisfies \eqref{eq: hypercube matroid}. 
Indeed, suppose $k=2^l\cdot k'$ where $k'$ is odd. Each basic $(n-l)$-interval  contains exactly $k'$ elements of $I$. Let us pick a basic $n-l$ interval and denote by $T_l$.  We can split $T_l$ into two halves, one of them contains $\lfloor \frac{k'}{2}\rfloor$ elements of $I$ and the other $\lceil \frac{k'}{2}\rceil$ elements of $I$, let us call the latter $T_{l+1}$. 

More generally, assume that for $j\ge l$ and some $(n-j)$-interval $T_j$ we have $|I\cap T_{j}|>\left\lfloor \frac{k}{2^j}\right\rfloor$. Then 
$$
2\left\lfloor \frac{k}{2^{j+1}}\right\rfloor\le \left\lfloor \frac{k}{2^j}\right\rfloor<|I\cap T_{j}|
$$
so $I$ intersects one of the halves of $T_j$ by more than $\left\lfloor \frac{k}{2^{j+1}}\right\rfloor$ elements. We can call this half $T_{j+1}$ and proceed by induction. This yields a decreasing sequence of basic intervals $T_{l}\supset T_{l+1}\supset T_{l+1}\supset \cdots$ which all intersect in one element $i$.  By construction, $I\setminus i$ satisfies \eqref{eq: hypercube matroid} for $k-1$ instead of $k$.

By a similar argument, we can add an element to $I$ preserving the conditions \eqref{eq: hypercube matroid}. By iterating these procedures, we construct a nested sequence of subsets $I_1\subset \ldots \subset I_{n-1}$ such that $I_k=I$ and  $I_s$ satisfy the condition \eqref{eq: hypercube matroid} for all $s$. This sequence corresponds to a permutation $u$, and one can check that $u$ is indeed dyadically well-distributed.

\end{proof}


We will use Proposition~\ref{prop: hypercube matroid} to give an inductive description of the plabic graphs $G_k(n):=G_k(v_n, w_n)$. Examples of these graphs are in Figures~\ref{fig:n16-hypercube} and \ref{fig:n32-hypercube}. Recall that in Theorem \ref{thm: positroid torus}, we proved that the plabic graph $G_k(n)$ is a forest. 

\begin{definition}\label{defn: layer}

If $G$ is a tree plabic graph, the \emph{distance} of a vertex $v\in G$ from the boundary is the length of the shortest path from a boundary vertex to $a$.
\end{definition}

Just as the definition of $v_n, w_n$ is inductive, the easiest description of $G_k(n)$ is also inductive.

\begin{lemma}\label{lem: hypercube base case}
The graph $G_1(n)$ is a $2^n$-star graph with a white central vertex, meaning there is one internal vertex, which is white and adjacent to all $2^n$ boundary vertices.
\end{lemma}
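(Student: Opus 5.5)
We would identify the positroid $\CM_1(n)$ first and then match it with the star graph. By Proposition~\ref{prop: hypercube matroid} with $k=1$, a singleton $I=\{i\}$ lies in $\CM_1(n)$ if and only if, for every $0\le j\le n$ and every basic $(n-j)$-interval $T$, we have
\[
\lfloor 1/2^j\rfloor \le |I\cap T|\le \lceil 1/2^j\rceil .
\]
For $j=0$ the only basic $n$-interval is $[1,2^n]$, and the condition reads $|I\cap T|=1$. For $j\ge 1$ the bounds are $0\le |I\cap T|\le 1$. Both hold trivially for a singleton, so $\CM_1(n)=\binom{[2^n]}{1}$ is the uniform matroid of rank $1$.

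As a sanity check that avoids Proposition~\ref{prop: hypercube matroid}, we can argue directly with Theorem~\ref{thm: flag minor richardson}. We have $\CM_1(n)=\{u(1): u\in[v_n,w_n]\}$, so we need, for each $i\in[2^n]$, a dyadically well-distributed $u$ with $u(1)=i$. One candidate is $u=t\circ v_n$, where $t$ is an XOR-type relabeling: write values minus one in binary and flip a fixed set of bits. Such a $t$ permutes each family of basic intervals, so $t\circ v_n$ is still dyadically well-distributed, and it can send $v_n(1)=1$ to any $i$. This check is optional, since the proposition already gives the answer.

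Next we compute the positroid of the star graph $G$: one white interior vertex joined to all $2^n$ boundary vertices. In a perfect orientation the white vertex has exactly one incoming edge. That edge comes from a single boundary vertex $i$, which is therefore a source, and every other boundary vertex is a sink. Thus the source sets are exactly the singletons $\{i\}$, each realized once, so $\CM_G$ is the uniform matroid of rank $1$. The star is a tree, hence a reduced plabic graph. Since reduced plabic graphs up to equivalence are in bijection with positroids \cite[Proposition 11.7]{Postnikov}, and $\CM_1(n)=\CM_G$, we conclude $G_1(n)$ is the star. There is no real obstacle; the only point needing care is the convention for the uniform positroid's plabic graph, namely that a white star gives rank $1$ and a black star would give rank $2^n-1$. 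The perfect-orientation count above settles this.
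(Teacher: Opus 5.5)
Your proof is correct, but it identifies the rank-one constituent $\CM_1(n)$ by a different route from the paper.

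\textbf{The paper's route.} The paper argues from the endpoints. It shows by induction that $v_n(1)=1$ and $w_n(1)=2^n$, and then invokes Proposition~\ref{prop: FZ} to conclude that every $1\times 1$ minor $\Delta_{\{i\}}$ is generically nonzero. It then only observes that the white star has, for each $i$, a perfect orientation with source set $\{i\}$.

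\textbf{Your route.} You specialize Proposition~\ref{prop: hypercube matroid} to $k=1$, where the inequalities collapse to $|I|=1$. Alternatively, you use XOR relabelings of values: these permute every family of basic intervals, so they preserve dyadic well-distribution. Combined with Theorem~\ref{thm: flag minor richardson}, this realizes every $i$ as some $u(1)$. You also enumerate all perfect orientations of the white star, so you show the source sets are \emph{exactly} the singletons. That is slightly more complete than the paper's one-directional remark.

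\textbf{Comparison.} The paper's argument is shorter and appears to use only the first entries of $v_n$ and $w_n$. However, those entries alone do not determine $\CM_1$. For example, $[132,312]\subset S_3$ has $v(1)=1$ and $w(1)=3$, yet $\CM_1(132,312)=\{\{1\},\{3\}\}$. So the paper's step implicitly relies on a fuller use of Proposition~\ref{prop: FZ} for the specific permutation $v_n$.

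Your argument supplies that missing input explicitly, through the dyadic description of $[v_n,w_n]$ from \cite{hypercube}. The price is dependence on that theorem. Your first version additionally depends on the sufficiency half of Proposition~\ref{prop: hypercube matroid}, whose proof in the paper is only sketched. Your XOR check sidesteps this, since for $k=1$ it constructs the required well-distributed permutation directly.
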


\begin{proof}
One can easily prove by induction using Definition \ref{defn: xn and yn} that $v_n(1)=1$ and $v_n(2^n)=2^{n}$, so $w_n(1)=2^n$ and $w_n(2^n)=1$.
Using Proposition \ref{prop: FZ}, this implies that all $1\times 1$ flag minors $\Delta_{\{i\}}$ are nonzero. The graph described in the proposition has, for any $i$, a perfect orientation with source set $\{i\}$, so is the plabic graph for $\CM_1(n)$.
\end{proof}

\begin{figure}
    \centering
    \includegraphics[width=0.6\linewidth]{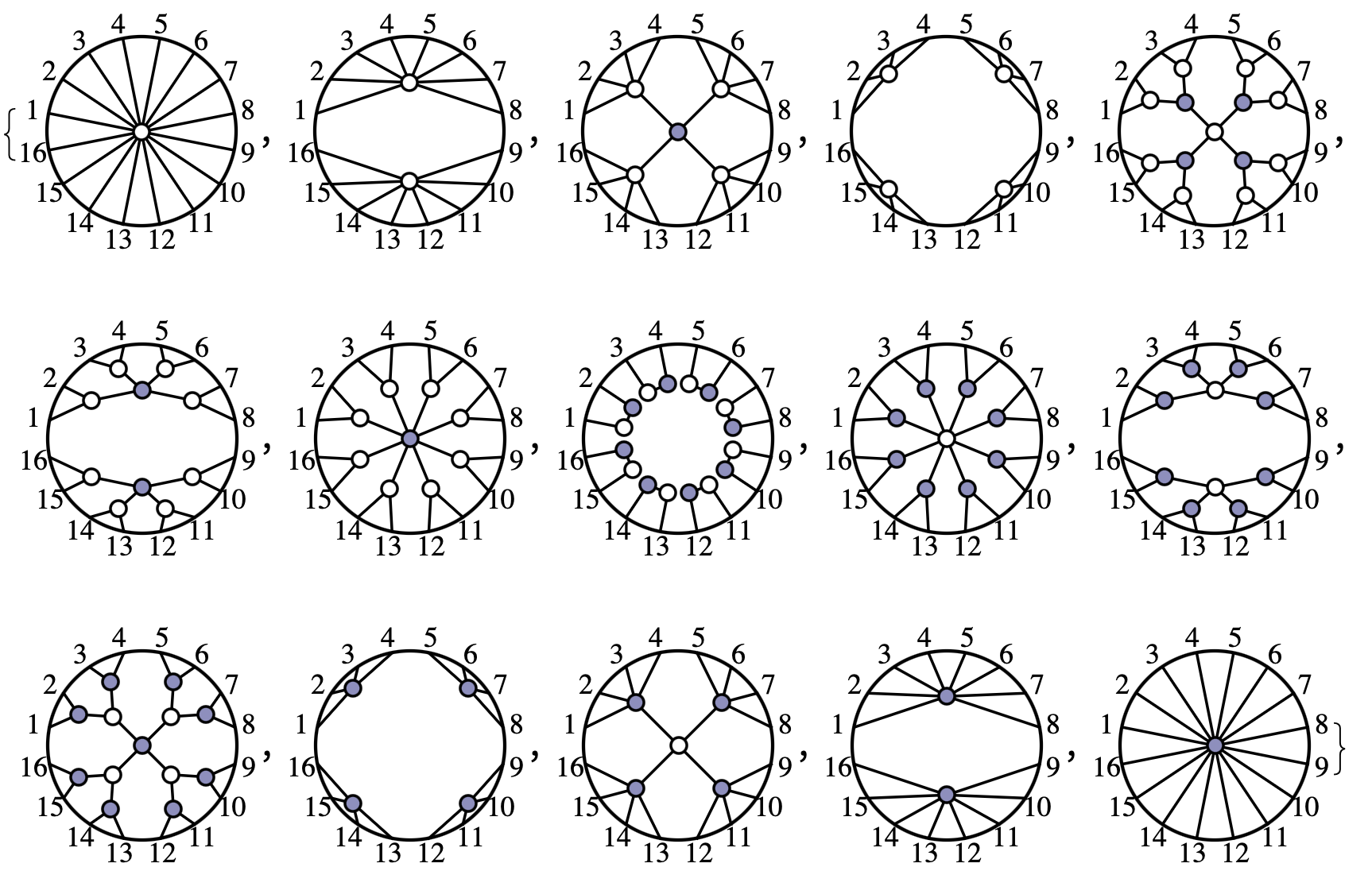}
    \caption{Graphs $G_k(4)$ for constituent positroids}
    \label{fig:n16-hypercube}
\end{figure}

Next, we recursively describe the graphs $G_k(n)$. For convenience, we temporarily allow plabic graphs $G$ with boundary vertices labeled by a totally ordered set $I$, and say $G$ is a plabic graph on $I$.


\begin{theorem}\label{thm: hypercube}
Assume $k=2^l\cdot k'$ where $k'$ is odd. This plabic graph on $[2^n]$ is obtained as follows.
\begin{enumerate}
\item  If $l>0$, then $G_k(n)$ is the disjoint union of $2^l$ copies of $G_{k'}(n-l)$, one on each basic $(n-l)$-interval.

\item If $l=0$ so $k=k'$, then $2^{L-1}<k<2^L$ for some $L$.

\begin{enumerate}
    \item If $L=0$, then $k=1$ and $G_1(n)$ is a star graph with a white internal vertex.
    \item If $L=n$, then $G_k(n)$ is obtained from $G_{n-k}(n)$ by swapping the colors of all internal vertices.
    \item If $0<L< n$, then consider the graph $G_k(L)$ on $0', \dots, ({2^L}-1)'$. The graph $G_k(n)$ is obtained from $G_k(L)$ by making all boundary vertices of $G_k(L)$ white and, for each $c$, adding $2^{n-L}$ legs from the vertex $c'$ to the basic $(n-L)$-interval $[c \cdot 2^{n-L}+1, (c+1)2^{n-L}]$.
\end{enumerate}

\end{enumerate}
\end{theorem}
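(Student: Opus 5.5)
The plan is to verify each graph through its positroid. By the bijection between reduced plabic graphs and positroids (\cite[Proposition 11.7]{Postnikov}), and since plabic forests are reduced, it suffices to show that for each described forest $G$ the source sets of its perfect orientations are exactly the $k$-subsets $I$ characterized in Proposition~\ref{prop: hypercube matroid}. The key combinatorial input is the observation that condition \eqref{eq: hypercube matroid} is \emph{hierarchical}. For $j\le$ (something) the constraint on a basic $(n-j)$-interval $T$ only depends on how $I$ distributes among the basic sub-intervals of $T$. So I would argue each case by decomposing $I$ along the dyadic tree of basic intervals.

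\textbf{Case (1), $l>0$.} Take $j=l$. Since $2^l\mid k$, each basic $(n-l)$-interval $T$ satisfies $|I\cap T|=k'$. For $j\ge l$, every basic $(n-j)$-interval lies in a unique basic $(n-l)$-interval. After relabelling that interval as $[2^{n-l}]$, the condition becomes \eqref{eq: hypercube matroid} for $n-l$ and $k'$, because $\lfloor k/2^j\rfloor=\lfloor k'/2^{j-l}\rfloor$. For $j<l$, the conditions follow automatically, since $2^j\mid k$ and the counts add. Hence $\CM_k(n)$ is the product over the $2^l$ basic $(n-l)$-intervals of copies of $\CM_{k'}(n-l)$. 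The plabic graph of a direct sum of positroids on consecutive blocks is the disjoint union of their graphs, which gives (1).

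\textbf{Case (2a).} This is Lemma~\ref{lem: hypercube base case}.

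\textbf{Case (2b).} Use duality. The condition \eqref{eq: hypercube matroid} for $I$ and $k$ is equivalent to the same condition for $[2^n]\setminus I$ and $2^n-k$, using $|T|=2^{n-j}$ and the identity $2^{n-j}-\lceil k/2^j\rceil=\lfloor(2^n-k)/2^j\rfloor$. So $\CM_k(n)$ is the dual positroid of $\CM_{2^n-k}(n)$. Dualizing corresponds to swapping the colors of the internal vertices, since perfect orientations are preserved and sources become sinks. I read the $n-k$ in the statement as $2^n-k$; for odd $k$ with $L=n$ this has $2^n-k<2^{n-1}$, so the recursion terminates.

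\textbf{Case (2c), $0<L<n$.} For $j\ge L$ we have $k<2^j$, so $\lfloor k/2^j\rfloor=0$ and $\lceil k/2^j\rceil=1$. Thus $I$ meets each basic $(n-L)$-interval in at most one element. Let $J\subset\{0,\dots,2^L-1\}$ record which basic $(n-L)$-intervals $I$ meets. The conditions for $j\le L$ then say exactly that $J$, viewed in $[2^L]$, satisfies \eqref{eq: hypercube matroid} with $n$ replaced by $L$. Hence $\CM_k(n)=\{I: |I\cap T_c|\le 1 \text{ for each } c,\ J(I)\in\CM_k(L)\}$, where $T_c$ is the $c$-th basic $(n-L)$-interval.

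For the graph side, consider the described graph. The vertex $c'$, now white, has one edge toward $G_k(L)$ and $2^{n-L}$ legs. A perfect orientation at a white vertex has exactly one incoming edge. If the edge from $G_k(L)$ points away from $c'$, then the single incoming edge is a leg, and exactly one boundary vertex in $T_c$ is a source. Otherwise all legs point outward, and no vertex in $T_c$ is a source. This matches the source condition of $G_k(L)$ at $c'$, which gives the stated description of $\CM_k(n)$.

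The main obstacle is making the orientation matching at $c'$ precise with the right sign convention. In particular, "$c'$ is a source of $G_k(L)$" should mean the boundary edge points inward. One must also check that the glued graph is a legitimate plabic forest: the degree-2 white vertex is harmless, and planarity holds because the legs to consecutive intervals respect the cyclic order. The rest reduces to the floor and ceiling bookkeeping above.
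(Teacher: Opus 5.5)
Your proposal is correct and takes essentially the same route as the paper. Both reduce the theorem to checking that the perfect orientations of the described forests produce the positroids characterized in Proposition~\ref{prop: hypercube matroid}: your three floor/ceiling computations are Lemmas~\ref{lem: disconnected}, \ref{lem: complement}, and \ref{lem: layer 1}, and your orientation arguments (independent choices on components, reversing every edge after swapping colors, one incoming edge at the new white vertex $c'$) match the paper's, with your reading of $G_{n-k}(n)$ as $G_{2^n-k}(n)$ being the intended one.
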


\begin{proof}
(2a) is Lemma~\ref{lem: hypercube base case}. 
We describe the effect of (1), (2b), (2c) on perfect orientations. We verify the corresponding statements hold for the positroids $\CM_k(n)=\CM_k(v_n,w_n)$ in a series of lemmas below.

In (1), choosing a perfect orientation of $G = G_1 \sqcup \dots \sqcup G_r$ is equivalent to choosing a perfect orientation on each connected component $G_i$ independently. This corresponds to Lemma \ref{lem: disconnected} below.

In (2b), if $G^{op}$ is obtained from $G$ by switching the colors of every internal vertex, then reversing the direction of every edge gives a bijection between perfect orientations of $G$ and $G^{op}$. The effect of this bijection on source sets $I_{\mathcal{O}}$ is taking complements. This corresponds to Lemma \ref{lem: complement} below.

In (2c), every perfect orientation of $G_k(n)$ restricts to a perfect orientation of $G_k(L)$. Each perfect orientation of $G_k(L)$ can be extended to many different perfect orientations of $G_k(n)$. Indeed, fix a perfect orientation on $G_k(L)$. Given a distance 1 white vertex $v$, we consider the edge $e$ connecting $v$ to a distance 2 vertex. If $e$ is oriented towards $v$, then we direct all $2^{n-L}$ legs of $v$ towards the boundary and these boundary vertices do not contribute to $I_{\mathcal{O}}$. If $e$ is oriented away from $v$ then we may choose any of the $2^{n-L}$ legs of $v$ to orient towards $v$. This boundary vertex contributes to $I_{\mathcal{O}}$.
This corresponds to Lemma \ref{lem: layer 1} below.
\end{proof}

\begin{remark}
    The graph $G_{2^l}(n)$ has $2^l$ connected components where each component is a $2^{n-l}$-star graph with white central vertex. In Figures \ref{fig:n16-hypercube} and \ref{fig:n32-hypercube}, the components of the graphs $G_{2^{n-1}}$ have one interior white vertex and one interior black vertex, both of degree 2. Such a component is equivalent to a $2$-star with a white central vertex.
\end{remark}

\begin{example}
Consider the graph $G_{10}(5)$ as an example, see Figure \ref{fig:n32-hypercube}. First, $10=2^1\cdot 5$, so the graph $G_{10}(5)$ consists of two copies of $G_5(4)$ by (1), compare with Figure \ref{fig:n16-hypercube}. 

Next, we would like to describe $G_5(4)$. We have $2^2<5<2^3$ for $k=5$, so $L=3<4=n$. By (2c), $G_5(4)$ is obtained from $G_5(3)$ by making all boundary vertices white and adding two legs to each boundary vertex. For $G_5(3)$, $L=n=3$ so by (2b), $G_5(3)$ is $G_3(3)$ with colors swapped. 
The graph $G_3(3)$ is obtained from $G_3(2)$ by attaching two legs to each boundary vertex. Finally, the graph $G_3(2)$ is obtained from $G_1(2)$ by swapping colors, so is a $4$-star with black center vertex.
One can verify all these steps using Figure \ref{fig:n16-hypercube} and Figure \ref{fig:n32-hypercube}.
\end{example}

\begin{figure}
    \centering
    \includegraphics[width=0.6\linewidth]{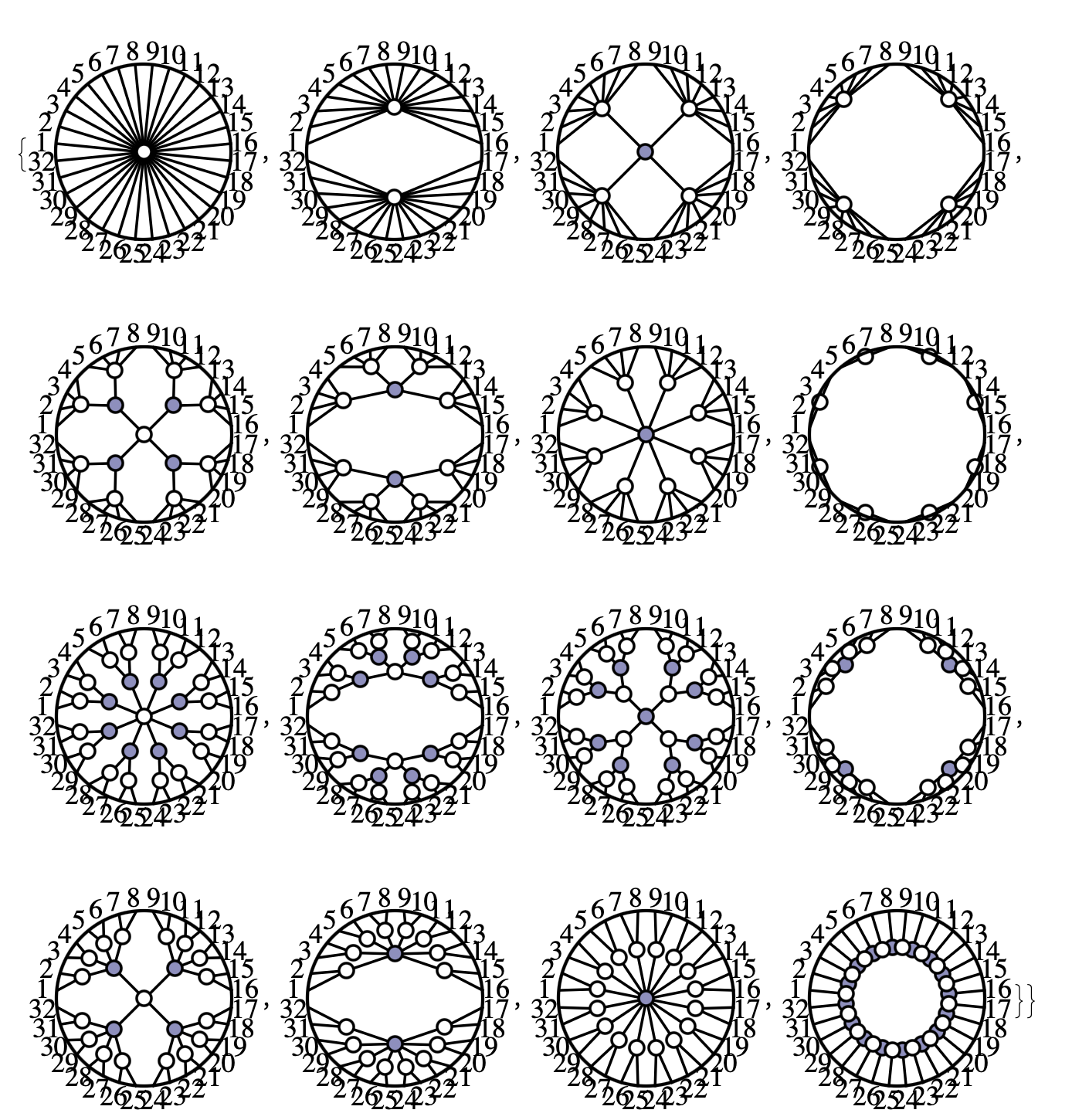}
    \caption{Graphs $G_k(5)$ for $k\le 16$. For $k\ge 16$, the graph $G_k(5)$ is the same as $G_{32-k}(5)$ up to switching the colors of vertices.}
    \label{fig:n32-hypercube}
\end{figure}

\begin{lemma}
\label{lem: disconnected}
Assume $k=2^l\cdot k'$ where $k'$ is odd. Then $I\in \CM_k(n)$ if and only if for each basic $(n-l)$-interval $T$ one has $|I\cap T|=k'$ and $I\cap T\in \CM_{k'}(n-l)$.
\end{lemma}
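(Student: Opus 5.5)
The plan is to deduce the lemma directly from the characterization in Proposition~\ref{prop: hypercube matroid}, comparing the inequalities \eqref{eq: hypercube matroid} for $\CM_k(n)$ with those for $\CM_{k'}(n-l)$ applied blockwise. Write $k=2^l k'$ with $k'$ odd. First, apply \eqref{eq: hypercube matroid} with $j=l$. Since $2^l\mid k$, the ``in particular'' clause gives $|I\cap T|=k/2^l=k'$ for every basic $(n-l)$-interval $T$.

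Next, identify each basic $(n-l)$-interval $T=[c\,2^{n-l}+1,(c+1)2^{n-l}]$ with $[1,2^{n-l}]$ by the shift $x\mapsto x-c\,2^{n-l}$. Under this shift, basic $(n-j)$-intervals contained in $T$, for $j\ge l$, correspond exactly to basic $((n-l)-(j-l))$-intervals of $[1,2^{n-l}]$. I will also use the identity
\[
\left\lfloor \frac{k}{2^j}\right\rfloor=\left\lfloor \frac{k'}{2^{j-l}}\right\rfloor ,
\]
and the same identity with ceilings. Together these say that the conditions \eqref{eq: hypercube matroid} for $I$ and $k$, restricted to $j\ge l$ and to intervals inside $T$, are precisely the conditions \eqref{eq: hypercube matroid} for the shifted set $I\cap T$ with $k'$ and $n-l$, ranging over $j-l\in[0,n-l]$. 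Applying Proposition~\ref{prop: hypercube matroid} to $S_{2^{n-l}}$, these conditions say exactly that $I\cap T\in\CM_{k'}(n-l)$. Note that every basic $(n-j)$-interval with $j\ge l$ lies inside a unique basic $(n-l)$-interval, so the conditions for $j\ge l$ decompose over the blocks $T$. This proves the forward direction.

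For the converse, assume $|I\cap T|=k'$ and $I\cap T\in\CM_{k'}(n-l)$ for all $T$. By the previous paragraph, \eqref{eq: hypercube matroid} holds for all $j\ge l$, so it remains to check $j<l$. A basic $(n-j)$-interval $T'$ with $j<l$ is a disjoint union of $2^{l-j}$ basic $(n-l)$-intervals. Hence $|I\cap T'|=2^{l-j}k'=k/2^j$, which is an integer, so the floor and ceiling bounds coincide with this value and are satisfied. Proposition~\ref{prop: hypercube matroid} then gives $I\in\CM_k(n)$.

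I expect the only real obstacle to be bookkeeping rather than substance: I need to confirm that the indexing of basic intervals under the shift matches the convention in Proposition~\ref{prop: hypercube matroid}, and that the floor/ceiling identity holds at the extreme values of $j$, including $j=n$, where the basic $0$-intervals are singletons.
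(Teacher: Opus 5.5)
Your proposal is correct and follows essentially the same route as the paper. Both arguments read off $|I\cap T|=k'$ from \eqref{eq: hypercube matroid} at $j=l$, and both treat $j<l$ by splitting a basic $(n-j)$-interval into $2^{l-j}$ basic $(n-l)$-intervals. Both then use $\lfloor k/2^j\rfloor=\lfloor k'/2^{j-l}\rfloor$ (and the same for ceilings) for $j>l$, which matches the remaining conditions with those defining $I\cap T\in\CM_{k'}(n-l)$. Your version is somewhat more careful, since it makes the shift identification of basic intervals and the two directions explicit, whereas the paper handles them together.
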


\begin{proof}
The condition $|I\cap T|=k'$ is necessary by \eqref{eq: hypercube matroid} for $j=l$. If $j<l$ then $2^j$ divides $k$ and  each basic $(n-j)$-interval $T'$ must contain $2^{l-j}k'$ elements of $I$. On the other hand, $T'$ can be decomposed into $2^{l-j}$ basic $(n-l)$-intervals and each of them contains $k'$ elements of $I$, and \eqref{eq: hypercube matroid} follows.

Finally, we need to analyze the case $j>l$. In this case we can decompose each basic $(n-l)$-interval $T$ into $2^{j-l}$ basic $(n-j)$-intervals $T'$, and 
$$
\left\lfloor\frac{k'}{2^{j-l}}\right\rfloor =\left\lfloor\frac{k}{2^j}\right\rfloor \le |I\cap T|\le \left\lceil\frac{k}{2^j}\right\rceil=\left\lceil\frac{k'}{2^{j-l}}\right\rceil
$$
so $I\cap T\in \CM_{k'}(n-l)$.
\end{proof}

\begin{lemma}
\label{lem: complement}
We have $I\in \CM_k(n)$ if and only if $[2^n]\setminus I=\overline{I}\in \CM_{2^n-k}(n)$. 
\end{lemma}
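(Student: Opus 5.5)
The plan is to reduce the statement to the combinatorial criterion of Proposition~\ref{prop: hypercube matroid}, which characterizes $\CM_k(n)$ purely by the inequalities \eqref{eq: hypercube matroid}. Then I will check that these inequalities for $I$ and size $k$ are equivalent to the same inequalities for $\overline{I}$ and size $2^n-k$.

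Fix $0\le j\le n$ and a basic $(n-j)$-interval $T$. It has size $2^{n-j}$, so
\[
|\overline{I}\cap T| = 2^{n-j}-|I\cap T|.
\]
Since $2^{n-j}$ is an integer,
\[
\left\lfloor\frac{2^n-k}{2^j}\right\rfloor = 2^{n-j}-\left\lceil\frac{k}{2^j}\right\rceil
\quad\text{and}\quad
\left\lceil\frac{2^n-k}{2^j}\right\rceil = 2^{n-j}-\left\lfloor\frac{k}{2^j}\right\rfloor .
\]
Substituting these into \eqref{eq: hypercube matroid} for $\overline{I}$, the condition becomes
\[
2^{n-j}-\left\lceil\tfrac{k}{2^j}\right\rceil\le 2^{n-j}-|I\cap T|\le 2^{n-j}-\left\lfloor\tfrac{k}{2^j}\right\rfloor .
\]
Subtracting from $2^{n-j}$ and reversing the inequalities gives exactly \eqref{eq: hypercube matroid} for $I$. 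This holds for every $j$ and every $T$, so $I\in\CM_k(n)$ if and only if $\overline{I}\in\CM_{2^n-k}(n)$.

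The only point needing care is that Proposition~\ref{prop: hypercube matroid} is stated for general $k$, including $k$ between $2^{n-1}$ and $2^n$. Its proof does not use $k\le 2^{n-1}$, so I will use it as stated.

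As a sanity check, there is an independent argument. The reversal map $u\mapsto u\,w_0$ preserves dyadic well-distribution, because reversing positions sends basic intervals to basic intervals. It sends $u[2^n-k]$ to the complement of $u[k]$. Via the theorem of \cite{hypercube}, this gives the same statement without the proposition. I will present the inequality argument as the main proof, since it is a direct algebraic check with no hard step.
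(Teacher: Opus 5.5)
Your proof is correct and is essentially the paper's own argument. Both reduce to Proposition~\ref{prop: hypercube matroid}, use $|\overline{I}\cap T|=2^{n-j}-|I\cap T|$ together with the floor/ceiling complement identities, and conclude that the inequalities \eqref{eq: hypercube matroid} for $I$ and for $\overline{I}$ are equivalent. Your side check via $u\mapsto uw_0$ is also valid, but it is not needed.
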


\begin{proof}
For each basic $(n-j)$-interval $T$ we have $|\overline{I}\cap T|=2^{n-j}-|I\cap T|$. On the other hand,
$$
\left\lfloor\frac{2^n-k}{2^{j}}\right\rfloor =2^{n-j}-\left\lceil\frac{k}{2^j}\right\rceil,\ \left\lceil\frac{2^n-k}{2^{j}}\right\rceil= 2^{n-j}-\left\lfloor\frac{k}{2^j}\right\rfloor
$$
and the result follows.
\end{proof}

\begin{lemma}
\label{lem: layer 1}
Assume $k\le 2^L$ with $L<n$. Then the subsets $I\in \CM_k(n)$ can be characterized as follows:
\begin{itemize}
\item[(a)] For each basic $(n-L)$-interval $T$, we have $|I\cap T|\le 1$.
\item[(b)] Let us identify the set of basic $(n-L)$-intervals with $[2^L]$ and define a subset $\widetilde{I}\subset [2^L]$ describing the $(n-L)$-intervals $T$ such that $I\cap T\neq \emptyset$.
Then have $I\in \CM_k(n)$ if and only if $\widetilde{I}\in \CM_k(L)$. 
\end{itemize}
\end{lemma}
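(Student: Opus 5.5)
The plan is to reduce everything to the numerical characterization of Proposition~\ref{prop: hypercube matroid}. We check that the inequalities \eqref{eq: hypercube matroid} for $I\subset[2^n]$ with $|I|=k$ split into two groups: those indexed by $j\ge L$ and those indexed by $j<L$.

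\textbf{The conditions with $j\ge L$.} Here a basic $(n-j)$-interval has size $2^{n-j}\le 2^{n-L}$. Since $k\le 2^L\le 2^j$, we have $\lfloor k/2^j\rfloor\in\{0,1\}$ and $\lceil k/2^j\rceil=1$, using $k\ge 1$. So the upper bound in \eqref{eq: hypercube matroid} reads $|I\cap T|\le 1$.
\begin{itemize}
\item For $j=L$ this is exactly condition (a).
\item For $j>L$ it follows from (a), because each basic $(n-j)$-interval lies inside a basic $(n-L)$-interval.
\item The lower bound is $0$ unless $k=2^j$, which forces $k=2^L$ and $j=L$. In that case (a) together with $|I|=k=2^L$ forces $|I\cap T|=1$ for every basic $(n-L)$-interval $T$, so the lower bound holds as well.
\end{itemize}
Thus, given (a), all conditions with $j\ge L$ hold automatically. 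Conversely, if $I\in\CM_k(n)$, then the $j=L$ instance of \eqref{eq: hypercube matroid} gives (a). So (a) is necessary, and we may assume it from now on.

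\textbf{The conditions with $j<L$.} Assume (a), so that $|I|=|\widetilde I|=k$ and $\widetilde I\subset[2^L]$ is well defined. Each basic $(n-j)$-interval $T$ of $[2^n]$ is a union of $2^{L-j}$ consecutive basic $(n-L)$-intervals. These correspond exactly to a basic $(L-j)$-interval $\widetilde T\subset[2^L]$, since basic intervals are dyadic blocks and the identification of basic $(n-L)$-intervals with $[2^L]$ is order-preserving. By (a), $|I\cap T|=|\widetilde I\cap\widetilde T|$. Hence the $j$-th condition of \eqref{eq: hypercube matroid} for $I$ in $[2^n]$ coincides with the $j$-th condition for $\widetilde I$ in $[2^L]$, with the same bounds $\lfloor k/2^j\rfloor$ and $\lceil k/2^j\rceil$.

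\textbf{The conditions $j=L$ for $\widetilde I$ in $[2^L]$.} This case also needs checking. The intervals are now singletons, and the condition says $\lfloor k/2^L\rfloor\le|\widetilde I\cap\{c\}|\le\lceil k/2^L\rceil$. This holds automatically for a set, with equality forced when $k=2^L$. Applying Proposition~\ref{prop: hypercube matroid} on both sides (with $n$ replaced by $L$ for $\widetilde I$) then gives $I\in\CM_k(n)\iff\widetilde I\in\CM_k(L)$ under (a). This proves (b).

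The main obstacle is bookkeeping rather than substance. One must check that the coarse basic intervals of $[2^n]$ correspond bijectively to the basic intervals of $[2^L]$ under the identification. One must also handle the boundary case $k=2^L$ carefully, where the lower bounds are nontrivial.
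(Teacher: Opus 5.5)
Your proposal is correct and follows essentially the same route as the paper. Both arguments reduce to Proposition~\ref{prop: hypercube matroid}: the $j=L$ instance gives (a), the $j>L$ conditions follow from (a) because basic intervals are nested, and the $j<L$ conditions for $I$ are identified with those for $\widetilde I$ through the bijection between basic $(n-j)$-intervals of $[2^n]$ and basic $(L-j)$-intervals of $[2^L]$. You also write out two details the paper leaves implicit, namely the boundary case $k=2^L$ (where the lower bound at $j=L$ is nontrivial) and the automatic singleton condition $j=L$ on the $\widetilde I$ side.
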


\begin{proof}
Since $\frac{k}{2^L}\le 1$, the condition (a) is necessary by  \eqref{eq: hypercube matroid} with $j=L$. We need to check the inequalities \eqref{eq: hypercube matroid} for all other $j$.

If $j>L$ then $\frac{k}{2^j}< 1$ and \eqref{eq: hypercube matroid} requires that $|I\cap T'|\le 1$ for all basic $(n-j)$-intervals $T'$. On the other hand, each $T'$ is contained in some basic $(n-L)$-interval $T$, and $|I\cap T'|\le |I\cap T|\le 1$. 

If $j<L$ then each basic $(n-j)$-interval $T''$ is the union of $2^{L-j}$ basic $(n-L)$-intervals. By arguing as in (b), we can associate to $T''$ a basic $(L-j)$-interval $\widetilde{T''}$ in $[2^L]$. This gives a bijection between $(n-j)$-intervals in $[2^n]$ and $(L-j)$-intervals in $[2^L]$. Now $|T''\cap I|=|\widetilde{T''}\cap \widetilde{I}|$, so the conditions \eqref{eq: hypercube matroid} for $I$ and $\widetilde{I}$ are equivalent. 
\end{proof}

We finish with one example of the polytopes $\widehat{P}_k(v_n,w_n):=\widehat{P}_k(n)$. 


\begin{example}\label{ex: hypercube n=8}
Consider $n=3$. We have two permutations $v_3=(1,5,3,7,2,6,4,8)$ and $w_3=(8,4,6,2,7,3,5,1)$ in $S_8$ in one-line notation. Then one may verify using Proposition \ref{prop: FZ} and Sage that $R_{v_3,w_3}$ is parametrized by the matrix 
$$
M=\left(
\begin{matrix}
x_1x_4x_8 & -x_4x_8 & x_6x_8 & -x_8 & x_9x_{B} & -x_{\B} & x_{\CC} & -1\\
x_1x_4 & -x_4 & x_{6} & -1 & 0  & 0 & 0 & 0\\
x_1x_5 & -x_5 & 0 & 0 & -x_9 & 1 & 0 & 0 \\
x_1 & -1 & 0 & 0 & 0 & 0 & 0 & 0\\
x_2x_7 & 0 & -x_7 & 0 & x_{\A} & 0 & -1 & 0\\
x_2 & 0 & -1 & 0 &  0 & 0 & 0 & 0\\ 
x_3 & 0 & 0 & 0 & -1 & 0 & 0 & 0\\
1 & 0 & 0 & 0 & 0 & 0 & 0 & 0\\
\end{matrix}
\right)
$$
and thus $R_{v_3,w_3}\simeq (\C^*)^{12}=\TT$ where coordinates on $\TT$ is $x_i$ for $1\le i\le 12$. We use the notation $\A=10$, $\B=11$, $\CC=12$.

From this data, we obtain $\widehat{P}_{k}(3)$ as follows:  

\begin{equation*}
    \begin{aligned}
        \widehat{P}_{1}(3)&=\Conv\left(e_{148}[\textcolor{red}{1}], e_{14}[\textcolor{red}{2}],e_{15}[\textcolor{red}{3}],e_{1}[\textcolor{red}{4}],e_{27}[\textcolor{red}{5}],e_{2}[\textcolor{red}{6}],e_{3}[\textcolor{red}{7}],\mathbf{0}[\textcolor{red}{8}]\right),\\
        \widehat{P}_2(3)&=\Conv(e_{2478}[\textcolor{red}{15}],e_{248}[\textcolor{red}{16}],e_{348}[\textcolor{red}{17}],e_{48}[\textcolor{red}{18}],e_{247}[\textcolor{red}{25}],e_{24}[\textcolor{red}{26}],e_{34}[\textcolor{red}{27}],e_{4}[\textcolor{red}{28}],\\
        &\ \ \ \ \ \ \ \ \ \
        e_{257}[\textcolor{red}{35}],e_{25}[\textcolor{red}{36}],e_{35}[\textcolor{red}{37}],e_{5}[\textcolor{red}{38}],e_{27}[\textcolor{red}{45}],e_{2}[\textcolor{red}{46}],e_{3}[\textcolor{red}{47}], \mathbf{0}[\textcolor{red}{48}]),\\
        \widehat{P}_3(3)&=\Conv(e_{25678}[\textcolor{red}{135}],e_{2568}[\textcolor{red}{136}],e_{3568}[\textcolor{red}{137}],e_{568}[\textcolor{red}{138}],
        e_{2678}[\textcolor{red}{145}],e_{268}[\textcolor{red}{146}],e_{368}[\textcolor{red}{147}],e_{68}[\textcolor{red}{148}],\\
        &\ \ \ \ \ \ \ \ \ \
        e_{2567}[\textcolor{red}{235}],e_{256}[\textcolor{red}{236}],e_{356}[\textcolor{red}{237}],e_{56}[\textcolor{red}{238}],
        e_{267}[\textcolor{red}{245}],e_{26}[\textcolor{red}{246}],e_{36}[\textcolor{red}{247}],e_{6}[\textcolor{red}{248}],\\
          &\ \ \ \ \ \ \ \ \ \ e_{3478}[\textcolor{red}{157}],e_{478}[\textcolor{red}{158}],e_{348}[\textcolor{red}{167}],e_{48}[\textcolor{red}{168}],e_{347}[\textcolor{red}{257}],e_{47}[\textcolor{red}{258}],e_{34}[\textcolor{red}{267}],e_{4}[\textcolor{red}{268}],\\
          &\ \ \ \ \ \ \ \ \ \ 
          e_{357}[\textcolor{red}{357}],
          e_{57}[\textcolor{red}{358}],e_{35}[\textcolor{red}{367}],e_{5}[\textcolor{red}{368}],e_{37}[\textcolor{red}{457}],e_{7}[\textcolor{red}{458}],e_{3}[\textcolor{red}{467}],\mathbf{0}[\textcolor{red}{468}]),\\
        \hat{P}_4(3)&=\Conv(e_{3578}[\textcolor{red}{1357}],e_{578}[\textcolor{red}{1358}],e_{358}[\textcolor{red}{1367}],e_{58}[\textcolor{red}{1368}], 
        e_{378}[\textcolor{red}{1457}],e_{78}[\textcolor{red}{1458}],e_{38}[\textcolor{red}{1467}],e_{8}[\textcolor{red}{1468}],\\
        &\ \ \ \ \ \ \ \ \ \ 
        e_{357}[\textcolor{red}{2357}],e_{57}[\textcolor{red}{2358}],e_{35}[\textcolor{red}{2367}],e_{5}[\textcolor{red}{2368}],
        e_{37}[\textcolor{red}{2457}],e_7[\textcolor{red}{2458}],e_3[\textcolor{red}{2467}], \mathbf{0}[\textcolor{red}{2468}]),\\
\hat{P}_5(3)&=\Conv(e_{3579\B}[\textcolor{red}{12357}],e_{579\B}[\textcolor{red}{12358}],e_{359\B}[\textcolor{red}{12367}],e_{59\B}[\textcolor{red}{12368}],e_{379\B}[\textcolor{red}{12457}],e_{79\B}[\textcolor{red}{12458}],e_{39\B}[\textcolor{red}{12467}],\\
&\ \ \ \ \ \ \ \ \ \
e_{9\B}[\textcolor{red}{12468}], e_{3789}[\textcolor{red}{13457}],e_{789}[\textcolor{red}{13458}],e_{389}[\textcolor{red}{13467}],e_{89}[\textcolor{red}{13468}],e_{358\A}[\textcolor{red}{13567}],
e_{58\A}[\textcolor{red}{13568}],\\
          &\ \ \ \ \ \ \ \ \ \ 
e_{578}[\textcolor{red}{13578}],e_{58}[\textcolor{red}{13678}],e_{38\A}[\textcolor{red}{14567}], e_{8\A}[\textcolor{red}{14568}],e_{78}[\textcolor{red}{14578}],e_{8}[\textcolor{red}{14678}],e_{379}[\textcolor{red}{23457}],e_{79}[\textcolor{red}{23458}],e_{39}[\textcolor{red}{23467}],\\
 &\ \ \ \ \ \ \ \ \ \ e_{9}[\textcolor{red}{23468}],e_{35\A}[\textcolor{red}{13568}],e_{5\A}[\textcolor{red}{23568}],e_{57}[\textcolor{red}{23578}],e_{5}[\textcolor{red}{23678}], e_{3\A}[\textcolor{red}{24567}],e_{\A}[\textcolor{red}{24568}],e_{7}[\textcolor{red}{24578}],\mathbf{0}[\textcolor{red}{24678}]),\\
\hat{P}_6(3)&=\Conv(e_{35\A\B}[\textcolor{red}{123567}],e_{5\A\B}[\textcolor{red}{123568}],e_{57\B}[\textcolor{red}{123578}],e_{5\B}[\textcolor{red}{123678}],e_{3\A\B}[\textcolor{red}{124567}],e_{\A\B}[\textcolor{red}{124568}],\\
 &\ \ \ \ \ \ \ \ \ \
e_{7\B}[\textcolor{red}{124578}],e_{\B}[\textcolor{red}{124678}],e_{38\A}[\textcolor{red}{134567}],e_{8\A}[\textcolor{red}{134568}],e_{78}[\textcolor{red}{134578}],\\
 &\ \ \ \ \ \ \ \ \ \
 e_{8}[\textcolor{red}{134678}],e_{3\A}[\textcolor{red}{234567}],e_{\A}[\textcolor{red}{234568}],e_{7}[\textcolor{red}{234578}],\mathbf{0}[\textcolor{red}{234678}]),\\
\hat{P}_7(3)&=\Conv(e_{3\A\CC}[\textcolor{red}{1234567}],e_{\A\CC}[\textcolor{red}{1234568}],e_{7\CC}[\textcolor{red}{1234578}],e_{\CC}[\textcolor{red}{1234678}],\\
 &\ \ \ \ \ \ \ \ \ \ 
e_{5\B}[\textcolor{red}{1235678}],e_{\B}[\textcolor{red}{1245678}],e_{8}[\textcolor{red}{1345678}],\mathbf{0}[\textcolor{red}{2345678}]).
    \end{aligned}
\end{equation*}

\end{example}

\bibliographystyle{plain}
\bibliography{bibliography.bib}

\end{document}